\documentclass[reqno]{amsart} \input{common.tex}
\usepackage{float}
\usepackage{multirow}
\numberwithin{theorem}{section}
\numberwithin{equation}{section}
\usepackage{caption}
\title{$L^4$-norms of automorphic forms in the depth aspect}
\author{Marius Fischer}
\email{marius.fischer@math.au.dk}
\address{Department of Mathematics, Aarhus University, 1530-432, DK-8000 Aarhus C, Denmark}
\subjclass[2020]{Primary 11F41; Secondary 11F70, 11S80}

\begin{document}

\maketitle

\begin{abstract}
Let $p$ be an odd prime, and suppose $f$ is an $L^2$-normalised newform for $\Gamma_0(p^n)$ with bounded spectral parameters and trivial central character. We prove the optimal $L^4$-norm bound $\lVert f \rVert_4 \ll_{p,\varepsilon}(p^{n})^\varepsilon $ for all $\varepsilon >0$ as $n \rightarrow \infty$.
\end{abstract} 

\tableofcontents

\section{Introduction}

A major problem is to understand how Laplace eigenfunctions distribute as their parameters tend to infinity. A common approach to this problem is to bound their $L^p$-norms for $2<p \leq \infty$ while keeping the $L^2$-norm fixed. On arithmetic hyperbolic surfaces, much progress has been made in this direction, and a recent breakthrough of Ki \cite{Ki2023L4norms} established the optimal $L^4$-norm bound $\lVert f \rVert_4 \ll_\varepsilon t_f^{\varepsilon}$ for all $\varepsilon>0$ when $f$ is a Hecke-Maa{\ss} cusp form on $\mathrm{SL}_2(\mathbb{Z})$ of weight $0$ with Laplace eigenvalue $\frac{1}{4} + t_f^2$ tending to infinity.

There are strong analogies between the spectral limit $t_f \rightarrow \infty$ and the \emph{depth aspect}, where one considers newforms for $\Gamma_0(p^n)$ with $p$ a fixed prime and $n$ tending to infinity (see for instance \cite{Nelson2018,Hu_Saha_2020}). In this paper we establish a depth aspect analogue of Ki's result. Let $f$ be a newform for $\Gamma_0(p^n)$ with bounded spectral parameters (i.e. weight and Laplace eigenvalue) and trivial central character. We assume that $f$ is $L^2$-normalised with respect to the standard probability measure on $\Gamma_0(p^n)\backslash \mathbb{H}$ and prove the following theorem:

\begin{theorem}\label{main_theorem}
Let $p$ be an odd prime, and suppose $f$ is an $L^2$-normalised newform for $\Gamma_0(p^n)$ with bounded spectral parameters and trivial central character. Then, for any $\varepsilon >0$, the bound $\lVert f \rVert_4 \ll_{p,\varepsilon}(p^n)^\varepsilon$ holds.
\end{theorem}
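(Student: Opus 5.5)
We follow the strategy of Ki, adapted to the depth aspect. We begin by expanding $\lvert f\rvert^2$ spectrally on $\Gamma_0(p^n)\backslash\mathbb{H}$. After subtracting the constant term $\langle \lvert f\rvert^2,1\rangle = 1$, Parseval gives
\[
\lVert f\rVert_4^4 = 1 + \sum_{g} \lvert\langle \lvert f\rvert^2, g\rangle\rvert^2 + (\text{Eisenstein contribution}),
\]
where $g$ runs over an orthonormal basis of cusp forms of level $p^n$, including oldforms. We then organise this basis by newforms $g_0$ of level $p^m$ with $m\le n$, together with their translates under $p$-adic diagonal elements. Locally at $p$, this amounts to choosing an orthonormal basis of $K_0(p^n)$-fixed vectors in each $\pi_{g_0,p}$. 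Each period $\langle \lvert f\rvert^2,g\rangle$ is evaluated by Ichino's triple product formula. Its modulus squared becomes $L(1/2,\pi_f\times\pi_f\times\pi_g)$, which factors as $L(1/2,\mathrm{ad}\,\pi_f\times\pi_g)\,L(1/2,\pi_g)$, divided by adjoint $L$-values at $1$ and multiplied by the product of local integrals $I_v$. The archimedean factor is harmless because the spectral parameters of $f$ are bounded. It gives exponential decay in $t_g$, so effectively only $g$ with $t_g\ll n^{O(1)}$ matter.

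The first main step is computing the local factor $I_p$ for $\pi_{f,p}$ of conductor $p^n$ with trivial central character. Here $p$ odd lets us treat all supercuspidals as dihedral, and principal series as twists $\chi\boxplus\chi^{-1}$. For the local vectors we would use the microlocal/Whittaker description of the newform (Nelson, Hu–Saha), meaning its matrix coefficient is explicitly supported on $K_0(p^{\lceil n/2\rceil})$-type sets. The expected output is the following. $I_p$ vanishes unless the level of $g$ satisfies $m\le n$ in a restricted range, and roughly $I_p\asymp p^{-n}\cdot p^{m/2}$ up to $p^{n\varepsilon}$, summed over the old-vectors. In particular the effective family of $g$ has size $\asymp p^{m}$ at level $m$ with total weight $p^{-n}$ times a power saving. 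Combined with $L(1,\mathrm{ad})^{-1}\ll p^{n\varepsilon}$, the bound reduces to showing
\[
\sum_{m\le n} p^{-m}\!\!\!\sum_{g\ \text{new of level } p^m,\ t_g\ll n^{A}} L(1/2,\mathrm{ad}\,\pi_f\times \pi_g)\,L(1/2,\pi_g)\ll p^{n\varepsilon}.
\]
This is a Lindelöf-on-average statement for a family of size $p^m$ with degree-$6$ conductors roughly $p^{2n}\cdot p^{O(m)}$.

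The main obstacle is this moment bound, and we would attack it as Ki does. Apply Cauchy–Schwarz to separate the two $L$-factors. The second moment of $L(1/2,\pi_g)$ over level $p^m$ is $\ll p^{m(1+\varepsilon)}$ by the standard Kuznetsov/large sieve argument in the depth aspect. For $\sum_g L(1/2,\mathrm{ad}\,\pi_f\times\pi_g)^2$, use approximate functional equations. Then use a depth-aspect large sieve for $\mathrm{GL}_2$ forms of level $p^m$, or Kuznetsov with $p$-adic Bessel/Kloosterman sums via $p$-adic stationary phase. We would also exploit that the coefficients $\lambda_{\mathrm{ad} f}$ are essentially supported on squares, through $\lambda_f(n)^2$. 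This reduces the length of the Dirichlet polynomial from $p^{n+O(m)}$ to something the large sieve over a family of size $p^m$ handles with only $p^{n\varepsilon}$ loss. The case $m$ close to $n$ is where the lengths balance worst, and there we expect to need the explicit $p$-adic structure of $I_p$ (vanishing for certain $g$) to save the final power. The Eisenstein contribution is handled identically, with $L(1/2,\pi_g)$ replaced by $\lvert\zeta(1/2+it)\rvert^2$.
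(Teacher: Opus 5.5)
\textbf{Gap: the final moment bound is not provable by the tools you propose.} What you describe is the Watson--Ichino triple-product route, not Ki's method. Ki, and this paper, never pass to $L(1/2,\mathrm{ad}\,\pi_f\times\pi_g)$ at all.

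Your claim that the coefficients of $\mathrm{ad}\,\pi_f$ are ``essentially supported on squares'' is false. Away from $p$, $L(s,\mathrm{ad}\,\pi_f)=\zeta(2s)\sum_{\ell}\lambda_f(\ell^2)\ell^{-s}$, so the Dirichlet series runs over all $\ell$ and nothing is shortened. For $m$ small compared to $n$, the conductor of $L(s,\mathrm{ad}\,\pi_f\times\pi_g)$ is roughly $p^{2n+m}$, so the approximate functional equation has length about $p^{n+m/2}$. The large sieve over a family of size $\asymp p^m$ then only gives $\sum_g \lvert L(1/2,\mathrm{ad}\,\pi_f\times\pi_g)\rvert^2\ll p^{n+m/2+\varepsilon}$. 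That is a power $p^{n-m/2}$ worse than the $p^{m+\varepsilon}$ your Cauchy--Schwarz step requires.

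Your displayed reduction has further problems:
\begin{itemize}
\item Taken literally, its $m=0$ term (a family of only $n^{O(1)}$ level-one forms) demands $L(1/2,\mathrm{ad}\,\pi_f\times\pi_g)\ll p^{n\varepsilon}$. That is Lindel\"of for individual degree-$6$ $L$-functions of conductor $\approx p^{2n}$, where convexity gives $p^{n/2}$.
\item The weight $p^{-m}$ in the display contradicts your own guess $I_p\asymp p^{-n}p^{m/2}$, and the local-factor claims (vanishing range, size) are not justified.
\item The Eisenstein part is not ``identical''. Level $p^n$ has many cusps, so you get $\lvert L(1/2+it,\mathrm{ad}\,\pi_f\otimes\chi)L(1/2+it,\chi)\rvert^2$ for characters $\chi$ of conductor up to $p^{n/2}$, not just $\lvert\zeta(1/2+it)\rvert^2$.
\end{itemize}
So the proposal reduces the theorem to a Lindel\"of-on-average statement for which you have no proof.

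\textbf{What the paper does instead.} It avoids $L$-functions of $\mathrm{ad}\,\pi_f$ entirely. After balancing ($f_b=f|_k[a(p^{-n_2})]$), it integrates $\lvert F_b\rvert^4$ over an adelic Siegel domain and inserts the Fourier--Whittaker expansion. It then applies Parseval in $x\in\mathbb{A}/\mathbb{Q}$ and Cauchy--Schwarz. This produces a sum over $m_1+m_2=m_3+m_4$ weighted by $\lambda(m_1')^4m_1/(m_1')^2\,\lvert I_\infty(\mathbf{m})\rvert\,\lvert I_p(\mathbf{m})\rvert$. The only global input is the known bound $\sum_{m\le X,\,p\nmid m}\lambda(m)^4\prec X$.

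All the difficulty is moved to the local integral $I_p(\mathbf{m})$. Explicit formulas for the balanced $p$-adic Whittaker newvector, including the $p$-adic Airy transition range, combine with the power-series substitution lemma. Together they show $I_p(\mathbf{m})=0$ unless $v(m_1m_2-m_3m_4)$ is large. That vanishing supplies exactly the saving needed when counting tuples $\mathbf{m}$. This is the idea your proposal is missing.
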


\noindent
Interpolating using the $L^2$-normalisation $\lVert f \rVert_2 =1$ and the sup-norm bound $\lVert f \rVert _\infty \ll_\varepsilon(p^n)^{\frac{1}{4} +\varepsilon}$ due to Saha \cite[Theorem 3.2]{saha}, we deduce the $L^q$-norm bound
\begin{equation*}
\lVert f \rVert_q \ll_{p,\varepsilon}(p^n)^{\delta(q) + \varepsilon} \quad \textrm{where} \quad \delta(q):=
\begin{cases}
  0 & \textrm{if }2 \leq q \leq 4\\
  \frac{1}{4}-\frac{1}{q} & \textrm{if }4 \leq q \leq \infty
\end{cases}.
\end{equation*}
By recent work of Assing and Toma \cite{assing_toma_2024}, we have the stronger bound $\lVert f \rVert _\infty \ll_\varepsilon(p^n)^{\frac{5}{24}+\varepsilon}$ if $n$ is divisible by $4$, which gives $\delta(q)=5/24-5/6q$, when $q \geq 4$, and $n$ traverses the positive multiples of $4$.

We will see in the proof of Theorem \ref{main_theorem} that the bound $\lVert f \rVert_4 \ll_{p,\varepsilon}(p^n)^\varepsilon$ can be explicated as $\lVert f \rVert_4 \leq C_\varepsilon p^a(p^n)^\varepsilon$, where $C_\varepsilon$ only depends on $\varepsilon$ and the spectral parameters of $f$, and $a$ is some absolute constant (one can take $a  =10$). Hence it is also possible to deduce non-trivial horizontal estimates for $\lVert f \rVert_4$ when $p \rightarrow \infty$, and $n$ is some large, fixed integer.\\ 

\noindent
Prior to Ki's work, the $L^4$-norm problem was typically addressed \cite{humphries-khan,luo,blomer,buttcane_khan,Buttcane_Khan_2017} using Watson's formula \cite{Watson2008Rankin}, which relates $\lVert f \rVert_4$ to triple product $L$-functions. However, this approach has led to the optimal bound only in the case of dihedral Maa{\ss} forms \cite{luo} and truncated Eisenstein series \cite{spinu2003,humphries2018}.

Ki begins more directly by integrating over a Siegel domain and applying Parseval's identity and fourth moment bounds for Hecke eigenvalues (see \cite[Proposition 2.1]{Ki2023L4norms}). In this way, the problem reduces to estimating integrals of four-fold products of archimedean Bessel functions. Analysing these integrals is the main step in the argument and requires uniform asymptotic formulas for the Bessel functions involved. For an exposition of Ki's proof see \cite{Nelson-notes}.

In our proof of Theorem \ref{main_theorem}, we start from the adelic Fourier expansion of the $\Gamma_0(p^n)$-newform $f$, and by analogous arguments we reduce the problem to understanding integrals of the form
\begin{equation}\label{key_integrals}
\int_{t \in \mathcal{O}^\times} W(m_1 t) W(m_2 t) \overline{W(m_3 t) W(m_4 t) }\, d^\times t,
\end{equation}
where $W$ is a translate of the $p$-adic Whittaker newvector, and $m_1$, $m_2$, $m_3$ and $m_4$ are positive integers satisfying $m_1 + m_2 = m_3 + m_4$. While this reduction is similar to Ki's argument, estimating \eqref{key_integrals} poses two new challenges:\\

\begin{enumerate}[(i)]
\item Ki's argument uses classical asymptotic expansions for archimedean Bessel functions (see for example \cite[Lemma 3.1]{Ghosh_2013}), but in our case, an analogous description of the newvector is not available. Starting from the PhD-thesis of Edgar Assing \cite{assing_thesis, assing_on_the_size}, we derive a description of the $p$-adic Whittaker newvector of $f$ that is valid uniformly in the local representation at $p$ generated by $f$. This derivation requires an intricate analysis of the $p$-adic Airy function and more general oscillatory integrals with cubic phase. The formulas for the Whittaker function can be found in Theorem \ref{whittaker_formulas}, and an overview is given in Table \ref{heuristic} below (and Table \ref{heuristic_balanced} for the balanced variant). As we explain below, our formulas reveal a strong analogy with the archimedean case, and may be of independent interest.\\
  
\item To estimate the archimedean analogue of \eqref{key_integrals}, Ki detects cancellation using a criterion on the sign of the second derivative of the phase function for which we found no clear $p$-adic analogue. Instead, a careful calculation reveals that the phase function can be expressed in terms of a power series that linearises after a suitable substitution. Using this substitution, we can determine under which conditions \eqref{key_integrals} vanishes in terms of the integers $m_1$, $m_2$, $m_3$ and $m_4$.\\
\end{enumerate}

To illustrate the analogy alluded to in (i), we first recall the archimedean setting. Let $W_t : \mathrm{GL}_2(\mathbb{R}) \rightarrow \mathbb{R}$ denote the archimedean Whittaker function of a weight $0$ Maa{\ss} form with Laplace eigenvalue $\frac{1}{4} + t^2$. This function is right-invariant under $\mathrm{SO}(2)$ and transforms via $e^{2 \pi i x}$ under left-translation by $\left(
\begin{smallmatrix}
1&x\\
0&1 \\
\end{smallmatrix}
\right)$. By the Iwasawa decomposition of $\mathrm{GL}_2(\mathbb{R})$, $W_t(y)$ is completely determined by its values on the diagonal matrices $a(y) := \left(
\begin{smallmatrix}
y&0\\
0&1 \\
\end{smallmatrix}
\right)$ for $y>0$. Writing $W_t(y)$ for $W_t(a(y))$, it is well-known that $W_t(y) = \sqrt{y} e^{\frac{\pi}{2}t}K_{it}(y)$, where $K_{it}(y)$ is the $K$-Bessel function. Defining the transition parameter $u:= y-t$ and the phase function $H(\xi)$ as in \cite[Equation (19)]{Ghosh_2013}, we have the following description of $W_t(y)$ and its basic upper bounds that fall into three ranges.\\

\noindent
\textbf{Oscillatory range:} If $u<0$ and $u \gg   t^{\frac{1}{3}}$, then
\begin{equation*}
W_t(y) = \frac{\sqrt{2 \pi y}}{(t^2 -y^2 )^{\frac{1}{4}}} \mathrm{sin}\left( \frac{\pi}{4}+t H \left( \frac{y}{t} \right) \right) \left( 1 + \O\left(\frac{1}{t H \left( \tfrac{y}{t} \right)}\right) \right) \ll
\begin{cases}
  t^{-\frac{1}{2}}y^{\frac{1}{2}} & u \asymp t\\
  t^{\frac{1}{4}}u^{-\frac{1}{4}} & u = o(t)
\end{cases}.
\end{equation*}
\textbf{Transition range:} If $u \ll t^{\frac{1}{3}}$ and $\mathrm{Ai}$ denotes the Airy function, then
\begin{equation*}
W_t(y) = \pi \sqrt{y} \left( \frac{2}{t} \right)^{\frac{1}{3}}\mathrm{Ai} \left((y-t) \left( \frac{y}{2} \right)^{-\frac{1}{3}}  \right)+\O(t^{-\frac{2}{3}}) \ll t^{1/6}.
\end{equation*}
\textbf{Exponential decay:} If $u>0$ and $u \gg t^{\frac{1}{3}}$, then
\begin{equation*}
W_t(y) = \frac{1}{2} \frac{\sqrt{2 \pi y}}{(y^2 -t^2)^{\frac{1}{4}}} e^{-tH \left( \frac{y}{t} \right)}\left( 1 + \O\left(\frac{1}{t H \left( \tfrac{y}{t} \right)}\right) \right) \ll t^{\frac{1}{4}} u^{-\frac{1}{4}} e^{-t H ( \frac{y}{t})}.
\end{equation*}
The formulas are \emph{uniform} meaning that all implied constants are absolute, which makes them an important tool in analytic questions related to automorphic forms. In addition to Ki's $L^4$-norm bound, they are used in the study of sup-norm problems \cite{BlomerHolowinsky2010}, nodal domains \cite{Ghosh_2013}, distribution of zeros \cite{LesterMatomakiRadziwill2018} and restriction problems \cite{BlomerCorbett2021}.

Now, let $F$ be a non-archimedean local field of characteristic $0$ and odd residue characteristic $p$. Let $\mathcal{O}$ denote the ring of integers, $\varpi$ a uniformiser and $q$ the cardinality of the residue field. Suppose $\pi$ is an infinite dimensional, irreducible, admissible unitary representation of $\mathrm{GL}_2(F)$ having trivial central character. Let $W_\pi: \mathrm{GL}_2(F) \rightarrow \mathbb{C}$ be the Whittaker newvector of $\pi$ normalised to take the value $1$ at the identity. In analogy with the archimedean case, $W_\pi$ is right-invariant under the compact part of a torus, namely the group of diagonal units $\left(
\begin{smallmatrix}
u_1&0\\
0&u_2 \\
\end{smallmatrix}
\right)$ where $u_1 , u_2 \in \mathcal{O}^\times$. In Proposition \ref{representatives}, we show how $W_\pi$ is determined by its value on the matrices $a(y) \kappa$, where $y \in F^\times$ and $\kappa$ traverses a certain set of representatives (see the table below). The analogy with archimedean case is most clearly seen when $\kappa = \left(
\begin{smallmatrix}
1&0\\
\varpi^{n/2}&1 \\
\end{smallmatrix}
\right)$, where $n$ is the conductor exponent of $\pi$. Here the analogue of the spectral parameter $t$ is $q^{\frac{n}{2}}$, and we define $\Delta= \Delta(y) := 1+ 4 b^2 y^{-2}$ for $y \in \mathcal{O}$, where $b \in \mathcal{O}^\times$ is a constant associated to $\pi$ (see Section \ref{formulas_for_the_whittaker_function}). In the archimedean setting, $\lvert y \rvert$ and $\lvert \Delta \rvert$ correspond to the ratios $y/t$ and $ u/t$ respectively. As shown in the table below, $W_\pi(a(y)\kappa)$ mimics the behaviour of $W_t(y)$ in a very precise sense: It exhibits an oscillatory range and a transition (Airy) range, and the exponential decay is reflected by vanishing. Moreover, the sizes of these ranges and the corresponding upper bounds on $W_\pi(a(y) \kappa)$ match the archimedean case exactly. For precise formulas for $W_\pi$, we refer to Theorem \ref{whittaker_formulas}.


\begin{table}[H]
\centering 
\renewcommand{\arraystretch}{2}
\begin{tabular}{|c|c|c|c|c|c|}
\hline
$\kappa$                        & $\gamma$                       & \multicolumn{2}{|c|}{Support}                                       & Type         & Max          \\
\hline
$\left(
		\begin{smallmatrix}
                  0&1\\
                  
			1&\varpi^\gamma \\
		\end{smallmatrix}
		\right)$                   & $1\leq \gamma \leq \infty$             & \multicolumn{2}{|c|}{$\lvert y \rvert=q^n$}                                       & Const.     & $\O\left(1\right)$       \\
\hline
\multirow{4}{*}{$\left(
			\begin{smallmatrix}
				1&0\\
				\varpi^\gamma&1 \\
			\end{smallmatrix}\right)$}  & $0\leq \gamma < \frac{n}{2}$             &\multicolumn{2}{|c|}{$\lvert y \rvert=q^{n-2 \gamma}$}                                     & Osc.  & $\O\left(1\right)$        \\ \cline{2-6}
		
                         & \multirow{2}{*}{$\gamma= \frac{n}{2}$} &\multicolumn{2}{|c|}{$\pi$ principal series, $0<|y|<1$}                        & Osc.  & $\O(|y|^{\frac{1}{2}})$     \\ \cline{3-6}
		
                           &                          &\multirow{2}{*}{$\lvert y\rvert=1$, $\Delta \in (\mathcal{O})^2$}                 & $q^{-\frac{n}{3}} \ll \lvert \Delta \rvert \leq 1$       & Osc.  & $\O (\lvert \Delta \rvert^{-\frac{1}{4}})$        \\ \cline{4-6}
  
                           &                          &                                                 & $\lvert \Delta \rvert \ll q^{-\frac{n}{3}}$   & Airy          & $\O\left(q^{\frac{n}{12}}\right)$      \\ \cline{2-6}
  
                           & $\frac{n}{2}<\gamma < n$               & \multicolumn{2}{|c|}{$\lvert y \rvert=1$}                                        & Osc.    & $\O\left(1\right)$      \\ \cline{2-6}
  
                           & $n\leq \gamma \leq \infty$              & \multicolumn{2}{|c|}{$|y|=1$}                                        & Const.       & $\O\left(1\right)$       \\  
\hline
\end{tabular}
\caption{The behaviour of the newvector on the matrices $a(y) \kappa$.}\label{heuristic}
\end{table}


As explained above, our proof of Theorem \ref{main_theorem} further strengthens the analogy between Maa{\ss} forms of large eigenvalue and newforms of large depth. One of the first results that manifested this analogy is the sup-norm bound $\lVert f \rVert _\infty \ll_\varepsilon(p^n)^{\frac{5}{24}+\varepsilon}$ due to Hu and Saha \cite{Hu_Saha_2020} where $f$ is a newform of level $p^n$ for an indefinite quaternion division algebra over $\mathbb{Q}$. This bound is the depth aspect analogue of the bound $\lVert f \rVert _\infty \ll_\varepsilon t_f^{\frac{5}{24}+\varepsilon}$ in the eigenvalue aspect due to Iwaniec and Sarnak \cite{Iwaniec_Sarnak_1995}, and there are many similarities between the proofs. In our work, we not only establish the optimal $L^4$-norm bound $\lVert f \rVert_4 \ll_\varepsilon(p^n)^\varepsilon$, but also provide a detailed description of the $p$-adic Whittaker newvector that can potentially be used to study related problems in the future.

\begin{remark}
In Theorem \ref{main_theorem}, we assume that $p$ is odd because otherwise \cite{assing_thesis} does not provide a description of the Whittaker newvector. However, \cite{assing_thesis} gives integral representations for $W_\pi$ that are valid even when $p=2$, as long as $\pi$ is not a non-dihedral supercuspidal representation. In that case, if we also impose that $\pi$ has trivial central character, there are only $16$ possibilities for $\pi$, and the conductor exponent of $\pi$ is at most $7$ \cite[Proposition 3.9]{saha_2024}. By only considering $n \geq 8$, we believe that stationary phase arguments similar to \cite{assing_thesis} can be used to find formulas for $W_\pi$ when $p=2$, and the condition $p\neq 2$ can be removed from Theorem \ref{main_theorem}.
\end{remark}

\begin{remark}
In order to highlight the novelties of our methods, we do not prove a hybrid bound for $\lVert f \rVert_4$ in terms of the weight and Laplace eigenvalue of $f$. Instead, we have used a pointwise bound on the archimedean Whittaker function that does not make the dependence on these parameters explicit. In the case of Maa{\ss} forms, a hybrid bound can likely be obtained by combining our work with Ki's proof.
\end{remark}

\begin{remark}
Our argument cannot be generalised to compact quotients because it relies on having a Fourier expansion and an associated fourth moment bound available. For alternative approaches, we refer to the work of Humphries-Khan \cite{humphries-khan} and Marshall \cite{marshall}.
\end{remark}

\subsection{Structure of the paper}
We now outline how the paper is structured. In Section \ref{classical_automorphic_forms} - \ref{geometric_considerations} we phrase the problem adelically and in terms of the $p$-adic newvector $W_p$ and its balanced variant $W_b$ (see Section \ref{balancing}). This reformulation will eventually reduce the problem to estimating integrals of the form
\begin{equation*}
 \int_{x \in \mathbb{A} / \mathbb{Q} } \int_{y \geq y_0} \int_{t \in \mathcal{O}^\times} \Bigg| \sum_{m\neq 0} \psi(p^{-\lceil n/2\rceil}m x) W(m t) \frac{\lambda(\lvert m \rvert')}{\sqrt{\lvert m \rvert'}} W _\infty(p^{-\lceil n/2 \rceil}m y)\Bigg|^4 d^\times t\frac{d y}{y^2}d x 
\end{equation*}
which we denote $\mathcal{N}(W)$. Here $\psi : \mathbb{A} / \mathbb{Q} \rightarrow \mathbb{C}^\times$ is an additive character, $\lambda(m)$ the Hecke eigenvalues of $f$, $m'$ the prime-to-$p$ part of $m$, and $W _\infty$ the Whittaker function at the infinite place. The function $W: \mathbb{Q}_p^\times \rightarrow \mathbb{C} $ will range over components of a dyadic decomposition of the balanced newvector $W_b$. The dyadic decomposition is explained in Section \ref{dyadic_decomposition}. To estimate $\mathcal{N}(W)$ we unfold as in Ki's paper, which introduces a sum over tuples $\mathbf{m} =(m_1,m_2 , m_3 , m_4) $ with $m_1 + m_2 = m_3 + m_4$ involving $\lambda(m_1 )^4$, the integral
\begin{equation}\label{key_integral}
I_p(\mathbf{m}) := \int_{t \in \mathcal{O}^\times} W(m_1 t) W(m_2 t) \overline{W(m_3 t) W(m_4t)} d^\times t,
\end{equation}
and some archimedean factors. Understanding the integrals $I_p(\mathbf{m})$ is the key to the proof and requires detailed formulas for the $p$-adic Whittaker function.

Before describing the Whittaker function, we collect some properties of oscillatory integrals in Section \ref{oscillatory_integrals}. In particular, we study the $p$-adic Airy function which, as in the archimedean case, shows up in a transition region of the Whittaker function. We prove explicit upper bounds which, in some ranges, improves the best bounds we found in litterature.

In Section \ref{formulas_for_the_whittaker_function}, we give formulas for the $p$-adic Whittaker function in the coordinates on $\mathrm{GL}_2(\mathbb{Q}_p)$ described in Section \ref{geometric_considerations}. Explicit formulas are given in Theorem \ref{whittaker_formulas}, and a useful overview can be found in Table \ref{heuristic} (and Table \ref{heuristic_balanced} for the balanced newvector). In comparison to the archimedean Whittaker function used in Ki's argument, our description of the Whittaker function becomes slightly more involved because, as $n$ increases, the invariance of the newvector shrinks so there will be more cases to consider.

In Section \ref{dyadic_decomposition}, we dyadically decompose the balanced newvector, and in Section \ref{first_estimates_section} we explain how to estimate $\mathcal{N}(W)$ when $W$ is a component of this dyadic decomposition. This estimate gives an upper bound for $\mathcal{N}(W)$ in terms of the support of $W$, the size of $W$, and for how many tuples $\mathbf{m}$ the integral $I_p(\mathbf{m})$ vanishes. In the proof we estimate the Hecke eigenvalues using a fourth moment bound similar to the one in Ki's paper, see Equation (\ref{fourth_moment}).

In Section \ref{estimating_via_size_and_support}, we use the estimate from Section \ref{first_estimates_section} only taking into account the size and support of $W$. In Section \ref{the_strategy_for_proving_proposition_11.2} and \ref{the_proof_of_proposition_11.2} we handle the cases where size and support properties of $W$ are not sufficient. We show that in these case, $I_p(\mathbf{m})$ reduces to a linear combination of oscillatory integrals of a very specific form which allows us to prove vanishing of $I_p(\mathbf{m})$ for sufficiently many $\mathbf{m}$.

\subsection{Asymptotic notation}
Throughout the rest of the paper, we use the following assymptotic notation: The number $n$ denotes a positive integer tending to infinity, and everything is allowed to depend on $n$ unless it has been declared to be fixed. We write $A \ll B$ if $\lvert A \rvert \leq C \lvert B \rvert$ for some fixed $C$. We write $A \ll_{v_1 , ..., v_k} B$ if we want to emphasize that the implied constant depends on some parameters $v_1 ,..., v_k$. The notation $A \asymp B$ means that both $A \ll B$ and $B \ll A$ hold, and finally we write $A \prec B$ if $A \ll_{\varepsilon}(p^n)^\varepsilon B$ for all $\varepsilon >0$. Hence the aim is to prove the bound $\lVert f \rVert_4 \prec 1$.

\subsection{Acknowledgements}
I am grateful to my supervisor Paul Nelson for suggesting this project and for many helpful discussions. I would also like to thank Edgar Assing who read an early draft of this paper and gave many useful comments. I also thank the following people for their comments: Anshul Adve, Valentin Blomer, Jack Buttcane, Trajan Hammonds, Yueke Hu, Peter Humphries, Haseo Ki, Rizwanur Khan, Simon Marshall and Abhishek Saha. This work is supported by grant VIL54509 from Villum Fonden.

\section{Classical automorphic forms}\label{classical_automorphic_forms}
Before adelising the problem, we collect some properties of $f$ as a classical automorphic form. We start with a word on normalisation. In the introduction, we assumed that $f$ is $L^2$-normalised, but it will be more convenient to work with the \emph{arithmetic normalisation} i.e. we assume that the first Fourier coefficient is $1$. These two normalisations only differ by the adjoint $L$-value $L(\mathrm{ad}f,1)$ which satisfies $(p^n)^{-\varepsilon} \ll L(\mathrm{ad} f,1) \ll(p^n)^\varepsilon$ so the aim is still to prove $\lVert f \rVert_4 \prec 1$.\\

\noindent
Let $\mathbb{H} := \left\{ z=x+iy \in \mathbb{C}\,:\, y>0 \right\}$ denote the upper half plane. If $\phi: \mathbb{H} \rightarrow \mathbb{C}$ is a function, $k \in \mathbb{Z}$, and $g \in \mathrm{GL}_2(\mathbb{R})^+ $, we define the weight $k$ action of $g$ on $\phi$ by
\begin{equation*}
\phi |_k [g](z):= \left( \frac{c z+d}{\lvert cz +d \rvert} \right)^{-k} \phi \left( \frac{a z +b}{c z +d} \right) \quad \mathrm{if} \quad g =
\begin{pmatrix}
a&b\\
c&d \\
\end{pmatrix}.
\end{equation*}
Thus $f |_k [\gamma]=f$ for all $\gamma \in \Gamma_0(p^n)$. The weight $k$ hyperbolic Laplace operator on $\mathbb{H}$ is denoted $\Delta_k$, and we have $\Delta_k f = \nu(1- \nu) f$ for some $\nu \in \mathbb{C}$. The Fourier expansion of $f$ is given by
\begin{equation*}
f(z) =\sum_{m \neq 0} \frac{\lambda( m )}{\sqrt{\lvert m \rvert}} \mathrm{sgn}(m)^a W_{\frac{\mathrm{sgn}(m)k}{2},\nu -\frac{1}{2}}(4 \pi \lvert m \rvert y) e(m x)
\end{equation*}
where, for $m >0$, $\lambda(m)$ denotes the $m$\textsuperscript{th} Hecke eigenvalue of $f$. If $f$ is a holomorphic modular form, $\lambda(m)=0$ for $m<0$, and otherwise $\lambda(m)= \lambda(\lvert m \rvert)$. The number $a$ is $0$ or $1$ if $f$ is even or odd respectively, and we set $e(x):= e^{2 \pi i x}$.  For $\alpha \in \mathbb{R}$ and $\nu \in \mathbb{C}$, $W_{\alpha , \nu}(y)$ is the solution to Whittaker's differential equation having a specified asymptotics as $y \rightarrow \infty$, see \cite[Section 3.6]{goldfeld_hundley}. We will not write down explicit formulas for $W_{\alpha , \nu}$; In our case we use the following pointwise estimate: For any $N \geq 1$, we have
\begin{equation}\label{pointwise_whittaker_bound}
W_{\alpha, \nu}(y) \ll_{N, \alpha, \nu}
\begin{cases}
  \lvert y  \rvert^\beta & \lvert y \rvert \leq 1\\
  \lvert y \rvert^{-N} & \lvert y \rvert \geq 1
\end{cases}
\end{equation}
where $1/4<\beta <1/2$ is some absolute constant, see \cite[Proposition 3.2.3]{PMIHES_2010__111__171_0}.\\

\noindent
Our result is unconditional, and in particular it does not depend on the Ramanujan-Petersson conjecture for $\lambda(m)$. Instead, we use the following bound
\begin{equation}\label{fourth_moment}
\sum_{\substack{
1 \leq m \leq X\\ p \nmid m 
}} \lambda(m)^4 \prec X
\end{equation}
valid for all $X \geq 1$. This is proven in the same way as \cite[Lemma 3.6]{Ghosh_2013}. Moreover, this bound is uniform in $p$. On one occasion, we would like to bound individual eigenvalues, but then we only need $\lambda(m)^4 \ll m$ which follows from the best known bounds towards the Ramanujan-Petersson conjecture.\\

\section{Balancing}\label{balancing}

We introduce balanced variants of $\Gamma_0(p^n)$ and $f$. This is necessary because $\Gamma_0(p^n)$ is unbalanced in the sense that its cusp have different widths. For example $\infty$ has width $1$ whereas $0$ has width $p^n$. The reason why this is an issue becomes clear by working adelically. Then one finds that the Whittaker function is constant or oscillates linearly in a region in $\mathrm{GL}_2(\mathbb{Q}_p)$ of too large volume, and there is no cancellation in the integral $I_p(\mathbf{m})$ defined in \eqref{key_integral}. When working with a balanced variant, we shrink this region so that estimating trivially is sufficient. And indeed working in the unbalanced setting, our methods give the bound $\lVert f \rVert^4_4 \prec p^{\frac{n}{2}}$ which is \emph{worse} than the trivial bound obtained by interpolating using the best known bound for $\lVert f \rVert _\infty$ and the $L^2$-normalisation $\lVert f \rVert_2 \asymp 1$.\\

\noindent
Write $n = n_1 + n_2$ where $(n_1, n_2)=(n/2,n/2)$ if $n$ is even, and $(n_1, n_2)=((n-1)/2,(n+1)/2)$ if $n$ is odd. The balanced variant of $\Gamma_0(p^n)$ is the congruence subgroup
\begin{equation*}
\Gamma_n := a(p^{n_2}) \Gamma_0(p^n)a(p^{-n_2}) =
\begin{pmatrix}
\mathbb{Z}&p^{n_2} \mathbb{Z}\\
p^{n_1} \mathbb{Z}& \mathbb{Z} \\
\end{pmatrix}\cap \mathrm{SL}_2(\mathbb{Z}),
\end{equation*}
and if we set $f_b:= f \mid_k \left[ a(p^{-n_2}) \right]$, $f_b$ is an automorphic form for $\Gamma_n$ of weight $k$ which we call the \emph{balanced variant of $f$}. By a change of variables, $\lVert f \rVert_4 = \lVert f_b\rVert_4$ where now
\begin{equation*}
\lVert f_b \rVert_4 := \left[ \frac{1}{\mathrm{vol}(\Gamma_n \backslash \mathbb{H})} \int_{\Gamma_n \backslash \mathbb{H}} \lvert f_b(x + i y) \rvert^4 \frac{d x d y}{y^2}\right]^{\frac{1}{4}}.
\end{equation*}

\section{Adelisation}
In this section, we phrase the problem adelically. Our motivation is that $\Gamma_n$ has of the order of $p^{\frac{n}{2}}$ cusps and working classically, we would have consider the Fourier expansion of $f_b$ at each cusp. Instead, we reduce the problem to estimating the integrals defined in Equation \eqref{key_integral}.

\subsection{Adelisation of classical automorphic forms}
Let $\mathbb{A}$ denote the ring of adeles over $\mathbb{Q}$, $\mathbb{A}_f$ the ring of finite adeles, and $\widehat{\mathbb{Z}} \leq \mathbb{A}_f $ the subring of profinite integers. Let $\psi$ denote the additive character of $\mathbb{Q} / \mathbb{A} $ whose infinite part is $\psi _\infty (x)=e(x):=e^{2 \pi i x}$. We have a factorisation $\psi = \psi _\infty \prod_{v \nmid \infty} \psi_v$ where $\psi_v $ is an unramified additive character of $\mathbb{Q}_v$ for each finite place $v$.\\

\noindent
Suppose now that $K$ is an open compact subgroup of $\mathrm{GL}_2(\mathbb{A}_f)$ such that the image of the determinant map $K \rightarrow \mathbb{A}_f^\times$ contains $\widehat{\mathbb{Z}}^\times$. Since $\mathbb{Q}$ has class number one, strong approximation \cite[Theorem 3.3.1]{Bump_1997} tells us that
\begin{equation}\label{strong_approximation}
\mathrm{GL}_2(\mathbb{A})=\mathrm{GL}_2(\mathbb{Q})\mathrm{GL}_2(\mathbb{R})^+ K.
\end{equation}
Here $\mathrm{GL}_2(\mathbb{Q})$ is diagonally embedded into $\mathrm{GL}_2(\mathbb{A})$, and $\mathrm{GL}_2(\mathbb{R})^+$ denotes the matrices in $\mathrm{GL}_2(\mathbb{R})$ with positive determinant which we embed into the infinite part of $\mathrm{GL}_2(\mathbb{A} )$. Given $K$ as above, let $\Gamma_K $ denote the pre-image of $K$ under the diagonal embedding $ \mathrm{SL}_2(\mathbb{Z})\hookrightarrow \mathrm{GL}_2(\mathbb{A}_f)$. Since $K$ is open and compact in $\mathrm{GL}_2(\mathbb{A}_f)$, this is a finite index subgroup of $\mathrm{SL}_2(\mathbb{Z})$. If $\phi$ is an automorphic form of weight $k$ for $\Gamma_K $ and trivial central character, we define its adelisation as the function $\Phi : \mathrm{GL}_2(\mathbb{A} ) \rightarrow \mathbb{C} $ given by
\begin{equation*}
\Phi(g) := \phi \mid_k \left[ g _\infty \right](i)
\end{equation*}
where we have used Equation \eqref{strong_approximation} to write $g = \gamma g _\infty k_0$ for $\gamma \in \mathrm{GL}_2(\mathbb{Q})$, $g _\infty \in \mathrm{GL}_2(\mathbb{R})^+$ and $k_0 \in K$. This is an adelic automorphic form on $\mathrm{GL}_2(\mathbb{A})$ with trivial central character.

Suppose now that $\phi$ and hence $\Phi$ is cuspidal. To compute the $L^4$-norm of $\phi$ adelically, we use that the natural map
\begin{equation*}
\Gamma_K \backslash \mathrm{SL}_2(\mathbb{R}) / \mathrm{SO}(2) \rightarrow Z(\mathbb{A} ) \mathrm{GL}_2(\mathbb{Q}) \backslash \mathrm{GL}_2(\mathbb{A} ) / (\mathrm{SO}(2) \times K)
\end{equation*}
is a homeomorphism, and, if we normalise measures correctly, that
\begin{equation*}
\frac{1}{\mathrm{vol}(\Gamma_K \backslash \mathbb{H})} \int_{\Gamma_K \backslash \mathbb{H} } \lvert \phi(x + i y) \rvert^4 \frac{d x d y}{y^2}= \int_{Z(\mathbb{A}) \mathrm{GL}_2(\mathbb{Q}) \backslash \mathrm{GL}_2(\mathbb{A} )}\lvert \Phi(g) \rvert^4 d g.
\end{equation*}
Returning to our newform $f$ and its balanced variant $f_b$, we have $\Gamma_0(p^n) = \Gamma_{K_0(p^n )}$, and $\Gamma_n = \Gamma_{K_n}$ where
\begin{equation*}
K_0(p^n) =
\begin{pmatrix}
\widehat{\mathbb{Z}}&\widehat{\mathbb{Z}}\\
p^n \widehat{\mathbb{Z} }& \widehat{\mathbb{Z}} \\
\end{pmatrix} \cap \mathrm{GL}_2(\mathbb{A}_f)\quad \text{and} \quad K_n =
\begin{pmatrix}
\widehat{\mathbb{Z} }& p^{n_2}\widehat{\mathbb{Z}}\\
p^{n_1} \widehat{\mathbb{Z}}&\widehat{\mathbb{Z}} \\
\end{pmatrix} \cap \mathrm{GL}_2(\mathbb{A}_f).
\end{equation*}
\noindent
Let $F$ and $F_b$ denote the adelisations of $f$ and $f_b$ respectively. Since $K_n = a^{-1} K_0(p^n) a$ where
\begin{equation*}
a =(I,a(p^{n_2}), a(p^{n_2}),...) \in \mathrm{GL}_2(\mathbb{A}),
\end{equation*}
we find that $F_b(g)=F(g a)$, and since $\mathrm{GL}_2(\mathbb{A})$ is unimodular, we have
\begin{equation}\label{adelic_norm}
\lVert f \rVert_4^4 = \int_{\mathrm{PGL}_2(\mathbb{Q}) \backslash \mathrm{PGL}_2(\mathbb{A} )} \lvert F_b(g) \rvert^4 d g
\end{equation}
where we have identified $Z(\mathbb{A}) \mathrm{GL}_2(\mathbb{Q})\backslash \mathrm{GL}_2(\mathbb{A})$ with $\mathrm{PGL}_2(\mathbb{Q})\backslash \mathrm{PGL}_2(\mathbb{A})$.

\subsection{Adelic Siegel domain}
To estimate the right hand side of Equation \eqref{adelic_norm}, we construct a Siegel domain for the quotient $\mathrm{PGL}_2(\mathbb{Q}) \backslash \mathrm{PGL}_2(\mathbb{A})$. If $G$ is a group with a left action on a set $X$, we say that $\Omega$ is a Siegel domain for this action if $\Omega \rightarrow G \backslash X$ is surjective, and $g \Omega \cap \Omega \neq \emptyset$ for only finitely many $g \in G$. If $\Omega $ is a Siegel domain for $\mathrm{PGL}_2(\mathbb{Q} ) \backslash \mathrm{PGL}_2(\mathbb{A})$, then we would have $\lVert f \rVert_4^4 \asymp \int_\Omega \lvert F_b \rvert^4 d g$. Let
\begin{equation*}
K = \mathrm{PSO}(2) \times \prod_{v \nmid \infty} \mathrm{PGL}_2(\mathbb{Z}_v) \leq \mathrm{PGL}_2(\mathbb{A} ).
\end{equation*}
\begin{lemma}
A Siegel domain for $\mathrm{PGL}_2(\mathbb{Q}) \backslash \mathrm{PGL}_2(\mathbb{A})$ is given by
  \begin{equation*}
    \Omega = \left\{ n(x) a(y) k\,:\,x \in \mathbb{A} / \mathbb{Q},\, y \geq y_0,\, k \in K \right\}
  \end{equation*}
for any $y_0 \in (0,\sqrt{3}/2]$ where for $y \in \mathbb{R}_{>0}$, $a(y)$ denotes the embedding of $\left(
\begin{smallmatrix}
y&0\\
0&1 \\
\end{smallmatrix}
\right)$ into the infinite part of $\mathrm{PGL}_2(\mathbb{A} )$. Moreover, for any measurable function $h: \Omega \rightarrow \mathbb{C}$, we have
\begin{equation*}
\int_{\Omega }h(g) d g =C \int_{x \in \mathbb{A} / \mathbb{Q} } \int_{y \geq y_0} \int_{k \in K } h(n(x)a(y)k)d k\, \frac{d y}{y^2}\, d x
\end{equation*}
where $d k$ and $d x$ denote the probability Haar measures on $\mathbb{A} / \mathbb{Q} $ and $K $ respectively, $d y$ denotes the Lebesgue measure on $\mathbb{R} $, and $C>0$ is some absolute constant.
\end{lemma}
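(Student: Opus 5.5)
The plan is to reduce everything to the classical Siegel domain for $\mathrm{PSL}_2(\mathbb{Z})$ acting on $\mathbb{H}$ via strong approximation, and to get the integration formula from the Iwasawa decomposition.

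\textbf{Surjectivity.} Take $g \in \mathrm{PGL}_2(\mathbb{A})$. By \eqref{strong_approximation} with $K_f=\prod_{v\nmid\infty}\mathrm{PGL}_2(\mathbb{Z}_v)$, write $g=\gamma g_\infty k_f$ with $\gamma\in\mathrm{PGL}_2(\mathbb{Q})$, $g_\infty\in\mathrm{PGL}_2(\mathbb{R})^+$ and $k_f \in K_f$. Next choose $\delta\in\mathrm{PSL}_2(\mathbb{Z})$ with $\delta g_\infty \cdot i$ in the standard fundamental domain $\{|x|\le 1/2,\ |z|\ge 1\}$; this domain has $\mathrm{Im} z\ge\sqrt3/2\ge y_0$. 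Replacing $\gamma$ by $\gamma\delta^{-1}$ and $k_f$ by $\delta k_f$ (diagonal $\delta$ lies in $K_f$), we get $\delta g_\infty=n(x_\infty)a(y)k_\infty$ with $y\ge y_0$. Since the $x$-variable ranges over $\mathbb{A}/\mathbb{Q}$ and $\mathbb{A}=\mathbb{Q}+\mathbb{R}+\widehat{\mathbb{Z}}$, and finite unipotents $n(\widehat{\mathbb{Z}})$ lie in $K_f$, the set $\{n(x)\,:\,x\in\mathbb{A}/\mathbb{Q}\}$ covers what is needed. Hence every class has a representative in $\Omega$.

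\textbf{Finiteness.} Suppose $\gamma\Omega\cap\Omega\neq\emptyset$ with $\gamma\in\mathrm{PGL}_2(\mathbb{Q})$.
\begin{itemize}
\item At the finite places, $\gamma$ lies in $n(\mathbb{A}_f)K_f\,n(\mathbb{A}_f)$ locally. Since $\mathbb{Q}$ has class number one and $\mathbb{A}=\mathbb{Q}+\mathbb{R}+\widehat{\mathbb{Z}}$, I expect to reduce, modulo $n(\mathbb{Q})$, to $\gamma\in\mathrm{PGL}_2(\mathbb{Z})$.
\item At the infinite place, $\gamma$ then moves a point with $\mathrm{Im}\ge y_0$ to another such point. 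A standard argument gives finitely many $\gamma$ up to the unipotent $n(\mathbb{Z})$ part, or via $\mathrm{Im}(\gamma z)=y/|cz+d|^2$, which bounds $c,d$.
\end{itemize}
The unipotent ambiguity must be treated carefully: the $x$-coordinate is taken modulo $\mathbb{Q}$, so $n(\mathbb{Q})$ is already quotiented out. The finiteness claim should therefore be read with $x$ in a compact fundamental domain for $\mathbb{A}/\mathbb{Q}$, for instance $[0,1)\times\widehat{\mathbb{Z}}$. Then the intersecting $\gamma$ are integral matrices with bounded entries, hence finitely many.

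\textbf{Measure formula.} Use the Iwasawa decomposition $\mathrm{PGL}_2(\mathbb{A})=N(\mathbb{A})A(\mathbb{A})K$ and the Haar measure $dg=|y|_{\mathbb{A}}^{-1}\,dx\,d^\times y\,dk$. Locally at each finite place, the $A(\mathbb{Q}_v)$ component can be moved into $K_v$ up to powers of $p$; globally these are absorbed by $\mathbb{Q}^\times$, using $\mathbb{A}^\times=\mathbb{Q}^\times\mathbb{R}_{>0}\widehat{\mathbb{Z}}^\times$. This leaves only $y\in\mathbb{R}_{>0}$ with measure $dy/y^2$. Pushing the Haar measure through $n(x)a(y)k\mapsto g$ then gives the formula with some absolute constant $C$.

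\textbf{Main obstacle.} The step I expect to require the most care is the finiteness bookkeeping, in particular how the unipotent $n(\mathbb{Q})$ and the center interact with the quotient $x\in\mathbb{A}/\mathbb{Q}$. Everything else is the classical $\mathrm{SL}_2(\mathbb{Z})$ argument.
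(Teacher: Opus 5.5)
Your proposal is correct and follows essentially the same route as the paper: strong approximation combined with the classical Siegel set for $\mathrm{SL}_2(\mathbb{Z})\backslash\mathbb{H}$ gives surjectivity, integrality at the finite places plus the classical bound on $c,d$ gives finiteness, and the $BK$-decomposition of Haar measure gives the integration formula. Your point that $x$ must range over a compact fundamental domain such as $[0,1)\times\widehat{\mathbb{Z}}$, so that the finite components of $n(x)$ lie in $K_f$, is exactly what the paper uses implicitly when it asserts $gz_v^{-1}\in\mathrm{GL}_2(\mathbb{Z}_v)$; the only minor slip is that the integral over $\Omega$ itself needs no absorption of finite torus components by $\mathbb{Q}^\times$, since elements of $\Omega$ already have unit torus part at every finite place.
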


\begin{proof}
  For the first part of the lemma, we make use of strong approximation and the basic fact that $\Omega_0 = \left\{ z \in \mathbb{H}\,:\,\lvert \operatorname{Re} z \rvert \leq 1/2,\, \operatorname{Im} z \geq y_0 \right\}$ is a Siegel domain for $\mathrm{SL}_2(\mathbb{Z}) \backslash \mathbb{H}$.  Suppose that $g \in \mathrm{PGL}_2(\mathbb{Q})$. By strong approximation, we can write $g = \gamma g _\infty k_0$ where $\gamma \in \mathrm{GL}_2(\mathbb{Q})$, $g \in \mathrm{GL}_2(\mathbb{R})^+$, and $k_0$ lies in the finite part of $K$. We have $g _\infty i = \delta (x + i y)$ for some $\delta  \in \mathrm{SL}_2(\mathbb{Z})$ and $x , y  \in \mathbb{R}$ with $\lvert x \rvert \leq 1/2$, and $y \geq y_0$. Since $x + i y = n(x ) a(y) i$, we have $g _\infty = \delta  n(x)a(y) k _\infty$ for some $k _\infty \in \mathrm{PSO}(2)$. Hence $g = \gamma \delta n(x) a(y) k _\infty k_0$ where the four middle factors are embedded into the infinite part of $\mathrm{PGL}_2(\mathbb{A})$. If we rewrite this expression as $(\gamma \delta \delta_f) n(x) a(y) (k _\infty \delta_f k_0) $ where $\delta_f$ is the diagonal embedding of $\delta$ into $\mathrm{PGL}_2(\mathbb{A}_f)$ then $\gamma \delta \delta_f \in \mathrm{GL}_2(\mathbb{Q})$, $n(x) \in N(\mathbb{A} / \mathbb{Q})$, $y \geq y_0$, and $k _\infty \delta_f k_0 \in K$. Hence $\Omega \rightarrow \mathrm{PGL}_2(\mathbb{Q}) \backslash \mathrm{PGL}_2(\mathbb{A})$ is surjective.

  If $g \Omega \cap \Omega \neq \emptyset$ for some $g \in \mathrm{PGL}_2(\mathbb{Q})$, we can find $g_1 , g_2 \in \Omega$ such that $g g_1 = g_2$ in $\mathrm{PGL}_2(\mathbb{A})$ which lifts to an equality $g g_1 = z g_2$ in $\mathrm{GL}_2(\mathbb{A})$ for some $z \in Z(\mathbb{A})$. Comparing determinants at the infinite place shows that $\det g >0$. If $z=(z_v )_v$, comparing the finite places on each sides gives that $g z_v^{-1} \in \mathrm{GL}_2(\mathbb{Z}_v)$ for all finite $v$, and since $z_v \in \mathbb{Z}_v^\times$ for almost all $v$, there is a positive rational number $m$ such that $m g \in \mathrm{SL}_2(\mathbb{Z})$. Replacing $z$ by $mz$, we can assume that $g \in \mathrm{SL}_2(\mathbb{Z})$. Now, by letting the infinite parts on both sides act on $i \in \mathbb{H}$, we see that $g \Omega_0 \cap \Omega_0 \neq \emptyset$ so there are indeed only finitely many possibilities for $g$.

  For the last part, we use that $\mathrm{PGL}_2(\mathbb{A}) = B K$ where $B$ denotes the subgroup of upper triangular matrices. Since $\mathrm{PGL}_2(\mathbb{A})$ is unimodular, \cite[Proposition 2.1.5]{Bump_1997} implies that for any measurable $h : \mathrm{PGL}_2(\mathbb{A}) \rightarrow \mathbb{C}$, we have
  \begin{equation*}
    \int_{g \in \mathrm{PGL}_2(\mathbb{A})} h(g) d g = C\int_{b \in B} \int_{k \in K } h(b k)\, d k \, d b
  \end{equation*}
where $d b$ is the left Haar measure on $B$, $d k$ is the right (and left) Haar measure on $K$ and $C$ is some normalisation constant. When restricted to $\Omega$, it is easy to verify that $d b = d x \frac{d y}{y^2}$ so the proof is complete. 
\end{proof}

From the above lemma we deduce the following estimate:

\begin{equation}\label{adelic_estimate}
\lVert f \rVert_4^4 \asymp \int_{x \in \mathbb{A} / \mathbb{Q}} \int_{y \geq y_0} \int_{k \in K} \lvert F_b(n(x) a(y) k) \rvert^4 d k\frac{d y}{y^2} d x.
\end{equation}

\subsection{Whittaker expansion}

The next step is to expand $F_b$ into its Fourier-Whittaker expansion which we derive from that of $F$. Let $\pi$ denote the cuspidal automorphic representation generated by $f$. Then $F$ is the unique (up to scaling) non-zero vector in $\pi$ which is invariant under $K_0(p^n)$. We can uniquely realise $\pi$ in the $\psi$-Whittaker space $\mathcal{W}(\psi)$ consisting of complex valued functions on $\mathrm{GL}_2(\mathbb{A})$ transforming via $\psi$ under left translation by elements of $N(\mathbb{A})$ and satisfying certain regularity conditions. More precisely, we embed $\pi$ into $\mathcal{W}(\psi)$ using the assignment $\Phi \mapsto W_\Phi \in \mathcal{W}(\psi)$ which sends $\Phi \in \pi$ to
\begin{equation*}
W_\Phi (g) := \int_{x \in \mathbb{A} / \mathbb{Q} } \Phi(n(x)g) \psi^{-1}(x) d x.
\end{equation*}
If $\Phi \in \pi$, it has a Fourier-Whittaker expansion
\begin{equation*}
\Phi(g)  = \sum_{m \in \mathbb{Q}^\times} W_\Phi(a(m)g).
\end{equation*}
We can factor $\pi$ as a restricted tensor product $\otimes_v ' \pi_v$, and likewise each $\pi_v$ has a unique Whittaker model in $\mathcal{W}(\psi_v)$, functions on $\mathrm{GL}_2(\mathbb{Q}_v)$ transformning via $\psi_v$ under left translation by $N(\mathbb{Q}_v)$ subject to additional regularity conditions if $v= \infty$. If $W_F$ is the Whittaker function of $F$, it has a factorisation $W_F(g )= \prod_v W_v(g_v)$ where, for $v$ finite, $W_v \in \mathcal{W}(\psi_v)$ is the unique vector in the Whittaker model for $\pi_v$ which satisfies $W_v(I) = 1$. Here we use that $f$ is arithmetically normalised as explained in Section \ref{classical_automorphic_forms}. Moreover, $W_v$ is right invariant under
\begin{equation}\label{compact_subgroup}
\begin{pmatrix}
\mathbb{Z}_v& \mathbb{Z}_v\\
p^n \mathbb{Z}_v& \mathbb{Z}_v \\
\end{pmatrix} \cap \mathrm{GL}_2(\mathbb{Z}_v).
\end{equation}
When $v \neq p, \infty$, $\pi_v$ is unramified and $W_v$ can be expressed in terms of the Satake parameters of $\pi_v$. If $v=p$, there are several possibilites for $\pi_p$, and $W_p$ is harder to describe. In Section \ref{whittaker_formulas}, we give formulas for $W_p$ when $p$ is odd, and $n$ is sufficiently large.\\

With this setup, we return the integral in Equation \eqref{adelic_estimate}. Recalling that $F_b$ is the right translate of $F$ by $a=(I,a(p^n), a(p^n), ...)$, we have
\begin{equation*}
F_b(n(x)a(y)k)=\sum_{m \in \mathbb{Q}^\times}\psi(mx) W _\infty( a(my) k _\infty) \prod_{v \nmid \infty} W_v(a(m)k_v a(p^{n_2})) 
\end{equation*}
for $x \in \mathbb{A} / \mathbb{Q} $, $y \geq y_0$ and $k =(k _\infty , k_2 , k_3 ,...) \in K$. Since our newform $f$ is of weight $k$, $W _\infty(a(my)k _\infty)= e^{ik \theta} W _\infty(a(m y))$ if $k _\infty = \left(
\begin{smallmatrix}
\cos \theta& -\sin \theta\\
\sin \theta& \cos \theta \\
\end{smallmatrix}
\right) \in \mathrm{SO}(2)$. For $v$ finite, a basic observation is that $W_v(a(m) k_v a(p^{n_2}))=0$ when $\lvert m \rvert_v > \lvert p^{- n_2} \rvert_v$. This follows from $\psi_v$ being unramified and the translation properties of $W_v$ under $N(\mathbb{Q}_v)$ and the subgroup in \eqref{compact_subgroup}. When $v \neq p, \infty$, $W_v$ is right-invariant under $k_va(p^{n_2})$, so we reduce the sum to the following expression for $F_b(n(x) a(y) k)$:
\begin{multline*}
  \sum_{m \in \mathbb{Z} \setminus \left\{ 0 \right\}} \psi(p^{-n_2}m x) e^{i k \theta} W _\infty(a(p^{-n_2}my))\\
  \cdot \left[ \prod_{v \neq \infty, p} W_v(a(p^{-n_2}m)) \right] W_p(a(p^{- n_2}m) k_p a(p^{n _2})).
\end{multline*}
For $g \in \mathrm{GL}_2(\mathbb{Q}_p)$, we define the balanced (or conjugate) newvector by $W_b(g):= W_p(a(p^{-n_2})g a(p^{n_2}))$ so that $W_p(a(p^{- n_2}m)k_p a(p^{n_2}))= W_b(a(m) k_p)$. $W_b$ is no longer an element of $\mathcal{W}(\psi_p)$ but lives in $\mathcal{W}(\psi_p ')$ where $\psi_p '(x):= \psi_p (p^{- n_2}x)$. At the finite places $q \neq p$, the unramified Whittaker function is given, on the diagonal, by $W_v(a(y))=\lambda(q^{r})/ \sqrt{q^{r}}$ if $v_q(y)=r$ where $\lambda( q^r)$ is the $q^r$th Hecke eigenvalue of $f$. By multiplicativity of Hecke eigenvalues, we deduce that $\prod_{v \neq \infty, p} W_v(a(p^{-n_2}m))= \lambda(|m| ')/\sqrt{|m|'}$ where $m'$ denotes the prime-to-$p$ part of $m$, i.e. $m ' = m / p^{v_p(m)}$. At the infinite place, we have
\begin{equation*}
W _\infty(a(p^{-n_2}my))=
\begin{cases}
  \mathrm{sgn}(m)^a W_{\frac{\mathrm{sgn}(m)k}{2}, \nu -\frac{1}{2}}(4 \pi \lvert m \rvert y)  & \text{if $f$ is a Maass form}\\
  \mathbf{1}_{m >0} W_{\frac{k}{2}, \nu -\frac{1}{2}}(4 \pi m y) & \text{if $f$ is a modular form}
\end{cases}
\end{equation*}
where all notation has been explain in Section \ref{classical_automorphic_forms}. We conclude that the size of $\lVert f \rVert_4^4$ is comparable to that of
\begin{multline}\label{adelic_reformulation}
  \int_{x \in \mathbb{A} / \mathbb{Q} } \int_{y \geq y_0} \int_{k \in \mathrm{PGL}_2(\mathbb{Z}_p)} \\
  \Bigg\lvert \sum_{m \neq 0} \psi(p^{-n_2}m x) W _\infty(a(p^{-n_2}m y)) \frac{\lambda(|m| ')}{\sqrt{ \lvert m \rvert '}} W_b(a(m)k) \Bigg\rvert^4 d k\, \frac{d y}{y^2}\,d x. 
\end{multline}
Before opening up this integral, we need to decompose the integral over $\mathrm{PGL}_2(\mathbb{Z}_p)$ into integrals over $\mathbb{Z}_p^\times$ which is done in the next two sections. 

\section{Geometric considerations}\label{geometric_considerations}

The purpose of this section is to set up the coordinates on $\mathrm{GL}_2(\mathbb{Q}_p)$ that we will use to describe the $p$-adic Whittaker newvector and its balanced variant.  Moreover, we give an expression for the Haar measure when these coordinates are restricted to $\mathrm{PGL}_2(\mathbb{Z}_p)$ which allows to break up the integral in \eqref{adelic_reformulation} into integrals over $\mathbb{Z}_p^\times$.\\

\noindent
Our descriptions are valid for $\mathrm{GL}_2(F)$ where $F$ is any non-archimedean local field of characteristic $0$ and residue characteristic $p$ (we can even allow $p=2$). Given such an $F$, let $\mathcal{O}$ denote the ring of integers of $F$ and $\mathfrak{p}$ the maximal ideal of $\mathcal{O}$. We write $\varpi$ for a generator of $\mathfrak{p}$ and $v$ for the valuation on $F$ normalised such that $v(\varpi)=1$. If $q := \#\mathcal{O} / \mathfrak{p}$ is the size of the residue field, we set $\lvert x \rvert := q^{- v(x)}$. We adopt the convention that $\varpi^{\infty}$ means $0$. The Haar measures $d x$ and $d^\times x$ on $(F,+)$ and $(F^\times ,\,\cdot\,)$ respectively are normalised such that $\mathcal{O}$ and $\mathcal{O}^\times$ have volume $1$. Then $d^\times x = \zeta_F(1) \frac{d x}{\lvert x \rvert}$ where $\zeta_F(s):=(1-q^{-s})^{-1}$ is the local $\zeta$-function of $F$.\\

\noindent
Let $G= \mathrm{GL}_2(F)$, and $K = \mathrm{GL}_2(\mathcal{O})$. The center of $G$ is denoted $Z$. We normalise the Haar measure on $G$ such that $K$ has volume $1$. In addition to $K$, we define the congruence subgroups
\begin{equation*}
K_1(m):=
\begin{pmatrix}
1 + \mathfrak{p}^m&\mathcal{O}\\
\mathfrak{p}^m&\mathcal{O} \\
\end{pmatrix}\quad \text{for} \quad m=1,2,3,...
\end{equation*}
so that the $p$-adic Whittaker function is right invariant under $K_1(n)$ where $n$ is the conductor exponent of $\pi_p$. The subgroup of diagonal matrices in $G$ is denoted $A$, and for $y \in F^\times$, we set $a(y):= \left(
\begin{smallmatrix}
y&0\\
0&1 \\
\end{smallmatrix}
\right) \in A$. The subgroup of upper triangular unipotent matrices is denoted $N$, and we set $n(x ):= \left(
\begin{smallmatrix}
1&x\\
0&1 \\
\end{smallmatrix}
\right) \in N$ for $x \in F$.   

\subsection{Coordinates on $GL_2$}
To motivate our choice of coordinates, recall that if $W _\infty$ denotes the Whittaker function at the infinite place of a newform for $\mathrm{SL}_2(\mathbb{Z})$, $W _\infty$ transforms on the left by $e^{2 \pi i x}$ under $\left(
\begin{smallmatrix}
1&x\\
0&1 \\
\end{smallmatrix}
\right)$ for $x \in \mathbb{R}$, is right invariant under $\mathrm{SO}(2)$ and invariant under scalar matrices. Because of the Iwasawa decomposition of $\mathrm{GL}_2(\mathbb{R})^+$, it is enough to understand $W _\infty$ on the matrices $\left(
\begin{smallmatrix}
y&0\\
0&1 \\
\end{smallmatrix}
\right)$ for $y >0$. When $W_p$ is the $p$-adic Whittaker function of our $\Gamma_0(p^n)$-newform $f$, it is no longer right invariant under the maximal compact subgroup $K$, but as $n$ varies, it is still right invariant under $\cap_m K_1(m)$ which in particular contains $K_A := A \cap K$, the subgroup of diagonal units. Since $A$ normalises $N$, we have well-defined two-sided action of $A \times K_A$ on $N \backslash G$, and when $\kappa$ traverses a set of orbit representatives, it is enough to describe $W_p$ on the matrices $a(y) \kappa$ for $y \in F^\times$. One can think of $(a(y), \kappa)$ as ``polar coordinates'' on $N \backslash G$.

\begin{proposition}\label{representatives}
A set of representatives for the two-sided action of $A\times K_A$ on $N\backslash G$ is given by the cosets of

\begin{equation*}
\begin{pmatrix}
1&0\\
\varpi^{\gamma}&1 \\
\end{pmatrix}\quad0 \leq \gamma \leq \infty\quad\text{and}\quad
\begin{pmatrix}
0&1\\
1&\varpi^{\gamma} \\
\end{pmatrix}\quad 1 \leq \gamma \leq \infty.
\end{equation*}
\end{proposition}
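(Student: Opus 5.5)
The plan is to reduce to the Iwasawa decomposition $G = NAK$ and then analyse the residual action on $K$. Every coset $Ng$ can be written as $N a k$ with $a\in A$, $k\in K$. Since $A$ acts on the left of $N\backslash G$, the $A\times K_A$-orbits on $N\backslash G$ correspond to double cosets $(N\cap K)A_K\backslash K / K_A$ once $A$ has been absorbed. More precisely, $Ng$ lies in the same orbit as $Nk$, and two elements $Nk, Nk'$ with $k,k'\in K$ are in the same orbit if and only if $k' = n a k t$ with $t\in K_A$, $a\in A$ and $n\in N$. The point here is that the factor $na = k'(kt)^{-1}$ lies in $K$, so it belongs to $B\cap K = (N\cap K)K_A$; the unit scalars then absorb into $K_A$.

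The problem is therefore to classify $B(\mathcal O)\backslash K/K_A$, where $B(\mathcal O)$ denotes the upper triangular matrices in $K$. I will use the Bruhat-type decomposition of $K$ according to the valuation of the lower-left entry $c$ of $k=\left(\begin{smallmatrix} a&b\\ c&d\end{smallmatrix}\right)$, splitting into two cases.

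\textbf{Case $d\in\mathcal O^\times$.} Right multiplication by $\mathrm{diag}(1,d^{-1})\in K_A$ makes $d=1$. Left multiplication by $n(-b)$ then kills $b$, and left multiplication by a diagonal unit makes $a=1$, so $k$ becomes $\left(\begin{smallmatrix}1&0\\ c&1\end{smallmatrix}\right)$. Conjugating by $\mathrm{diag}(u,1)$, using one factor on each side, scales $c$ by a unit. We may therefore take $c=\varpi^\gamma$ with $0\le\gamma\le\infty$.

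\textbf{Case $d\in\mathfrak p$.} Then $c$ must be a unit since $\det k\in\mathcal O^\times$, and by the same row and column operations we may normalise $c=1$ and $b=1$. Next, left multiplication by $n(x)$ with $x\in\mathcal O$ clears $a$, giving $\left(\begin{smallmatrix}0&1\\ 1&d\end{smallmatrix}\right)$. The remaining freedom $\mathrm{diag}(u^{-1},u)$, applied on the left together with an appropriate diagonal on the right, scales $d$ by a unit. This lets us take $d=\varpi^\gamma$ with $1\le\gamma\le\infty$.

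It remains to show these representatives are pairwise inequivalent, which I expect to be the main (if mild) obstacle. The approach is to find invariants of $k\mapsto bkt$ with $b\in B(\mathcal O)$ and $t\in K_A$:
\begin{itemize}
\item The bottom row $(c,d)$ transforms as $(c,d)\mapsto \beta(ct_1,dt_2)$ with $\beta,t_1,t_2$ units, so $v(c)$ and $v(d)$ are invariants.
\item In the first family $v(d)=0$ and $v(c)=\gamma$.
\item In the second family $v(c)=0$ and $v(d)=\gamma\ge1$.
\end{itemize}
These invariants separate all listed representatives, including the overlap $\gamma=0$ of the first family, where $v(d)=0$ distinguishes it from the second. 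Throughout, I will take care to track that the $A$-part never leaves $K$, so no information is lost, and that scalars are harmless because they lie in $K_A$ after multiplying by a unit.
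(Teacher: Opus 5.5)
Your argument is correct, but it takes a different route from the paper's. The paper does not use the Iwasawa decomposition. It identifies $N\backslash G$ with $(F^2\setminus\{(0,0)\})\times F^\times$ via $g\mapsto((c,d),\det g)$. On these coordinates, $\mathrm{diag}(a_1,a_2)\in A$ acts by $((c,d),\det g)\mapsto((a_2c,a_2d),a_1a_2\det g)$, and $\mathrm{diag}(u_1,u_2)\in K_A$ acts by scaling $c$ and $d$ separately by units. It then reads off that $v(c)-v(d)\in\mathbb{Z}\cup\{\pm\infty\}$ is a complete invariant, which gives existence and distinctness of the representatives at once.

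You instead use $G=NAK$ to move every orbit into $K$. You correctly identify the residual equivalence as $B(\mathcal{O})\backslash K/K_A$, the key point being $na=k'(kt)^{-1}\in B\cap K=(N\cap K)K_A$. You then use the pair $(v(c),v(d))$, with $\min\{v(c),v(d)\}=0$, as the invariant. The paper's argument is shorter and needs no row or column manipulation. Yours is longer, but as a by-product it gives the description of $K_\kappa$ inside $K$ as $\{v(c)=\gamma\}$ (resp.\ $\{v(d)=\gamma\}$ for the second family), which the paper proves separately in Lemma \ref{decomposing_pgl2}.

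There are two small slips in your second case. Both are easily repaired because $B(\mathcal{O})$ contains all diagonal units.

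\begin{enumerate}
\item Left multiplication by $n(-a)$ sends $\left(\begin{smallmatrix}a&1\\1&d\end{smallmatrix}\right)$ to $\left(\begin{smallmatrix}0&1-ad\\1&d\end{smallmatrix}\right)$, not $\left(\begin{smallmatrix}0&1\\1&d\end{smallmatrix}\right)$. You need one more left diagonal unit to restore the top-right entry to $1$.
\item Taking $\mathrm{diag}(u^{-1},u)$ on the left forces $\mathrm{diag}(u^{-1},u)$ on the right to preserve the shape $\left(\begin{smallmatrix}0&1\\1&*\end{smallmatrix}\right)$. That multiplies $d$ by $u^2$, so only by squares. Use $\mathrm{diag}(u^{-1},1)$ on the left and $\mathrm{diag}(1,u)$ on the right instead, which sends $d\mapsto ud$.
\end{enumerate}
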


\begin{proof}
We have a bijection
\begin{equation*}
N \backslash G \longrightarrow (F^2 \setminus \left\{ (0,0) \right\}) \times  F^\times,\quad g=\left(
\begin{smallmatrix}
a&b\\
c&d \\
\end{smallmatrix}
\right)\mapsto ((c,d),\det g). 
\end{equation*}
Under this identification, the left-action of $A$ corresponds to $\left(
\begin{smallmatrix}
a_1&0\\
0&a_2 \\
\end{smallmatrix}
\right)((c,d),\det g)=((a_2 c, a_2 d),a_1 a_2 \det g)$, and the right-action of $K_A$ corresponds to $((c,d),\det g)\left(
\begin{smallmatrix}
u_1&0\\
0&u_2 \\
\end{smallmatrix}
\right)=((u_1c, u_2d), u_1 u_2 \det g)$. It is then clear that each orbit contains a representative of the form $((1,\varpi^\gamma),1)$ where $0 \leq \gamma \leq \infty$ or $((\varpi^\gamma,1),-1)$ where $1 \leq \gamma \leq \infty$. These lie in distinct orbits since, for an element $((c,d),\det g)$, the quantity $v(c)-v(d)$ is invariant under the action of $A \times K_A$. The cosets in the statement of the proposition are exactly the pre-images of these elements.
\end{proof}

If $\gamma \geq n$, and $\kappa =
(\begin{smallmatrix}
1&0\\
\varpi^\gamma&1 \\
\end{smallmatrix})$, we note that $\kappa \in K_1(n)$ so $W_p(a(y) \kappa)= W_p(a(y))$ which is understood via the Kirillov model of $\pi_p$. In the remaining cases, we extract our formulas for $W_p$ from Assing's thesis \cite{assing_thesis}. Here another set of coordinates is used. There is a decomposition \cite[p. 17]{assing_thesis}:
\begin{equation}\label{decomposition}
G=\bigsqcup_{t\in \mathbb{Z}}\,\,\bigsqcup_{0\leq l\leq n}\,\,\bigsqcup_{v\in \mathcal{O}^{\times}/(1+\mathfrak{p}^{l_n} )}ZNg_{t,l,v}K_1(n),
\end{equation}
where
\begin{equation*}
g_{t,l,v}=
\begin{pmatrix}
0&\varpi^t\\
-1&-v\varpi^{-l} \\
\end{pmatrix},\quad\text{and}\quad l_n=\min\{l,n-l\}.
\end{equation*}
Thus determining $W_p(g_{t, l, v})$ for all $t$, $l$ and $v$ gives a complete description of $W_p$. To use the results of \cite{assing_thesis}, we decompose the matrices $a(y)\kappa$ found above according to \eqref{decomposition}. Write $y=w\varpi^s$ where $w\in \mathcal{O}^{\times}$ and $s\in \mathbb{Z}$. Then one can verfiy the following identities:
\begin{equation*}
\begin{split}
\begin{pmatrix}y
&0\\
\varpi^{\gamma}&1 \\
\end{pmatrix} & =
\begin{pmatrix}
-\varpi^{\gamma}&0\\
0&-\varpi^{\gamma} \\
\end{pmatrix}
\begin{pmatrix}
 1&\varpi^{-\gamma}(1+\varpi^n)y\\
 0&1 \\
\end{pmatrix}
g_{-2\gamma+s,\gamma,w^{-1}}
\begin{pmatrix}
1-\varpi^n &-\varpi^{n-\gamma}\\
w\varpi^{n+\gamma}&w(1+\varpi^n) \\
\end{pmatrix},\\
\begin{pmatrix}
0&y\\
1&\varpi^{\gamma} \\
\end{pmatrix}&=
\begin{pmatrix}
-1&0\\
0&-1 \\
\end{pmatrix}
\begin{pmatrix}
1&-\varpi^ny\\
0&1 \\
\end{pmatrix}g_{s,0,1}
\begin{pmatrix}
1+w\varpi^n&\varpi^{\gamma}+w(\varpi^{n+\gamma}+1)\\
-w\varpi^n&-w(\varpi^{n+\gamma}+1)\\ 
\end{pmatrix}.
\end{split}
\end{equation*}
The last matrix in the first decomposition lies in $K_1(n)$ since we assume that $\gamma \leq n$. We deduce:

\begin{lemma}\label{transform}
Let $y\in F^{\times}$, and write $y=w\varpi^s$ where $w\in \mathcal{O}^{\times}$ and $s\in \mathbb{Z}$. Then
\begin{alignat*}{3}
&W_p\left(a(y)
\begin{pmatrix}
1&0\\
\varpi^{\gamma}&1 \\
\end{pmatrix}\right)
&&=\psi(\varpi^{-\gamma}(1+\varpi^n)y)W_p(g_{-2\gamma+s,\gamma,w^{-1}})\quad&&\text{for }0\leq\gamma\leq n,\\
&W_p\left( a(y)
\begin{pmatrix}
0&1\\
1&\varpi^{\gamma} \\
\end{pmatrix}\right)&&=\psi(-\varpi^n y)W_p(g_{s,0,1})&&\text{for }1 \leq \gamma \leq \infty.
\end{alignat*}
\end{lemma}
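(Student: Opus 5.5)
The plan is to read Lemma \ref{transform} off directly from the two matrix identities displayed just before it, using three properties of $W_p$: it transforms on the left by $\psi$ under $N$, it is invariant under the centre $Z$ (trivial central character), and it is right-invariant under $K_1(n)$.

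\textbf{First identity.} Write the decomposition as $a(y)\kappa = z\, n(x)\, g_{-2\gamma+s,\gamma,w^{-1}}\, k$, where:
\begin{itemize}
\item $\kappa = \left(\begin{smallmatrix}1&0\\ \varpi^\gamma&1\end{smallmatrix}\right)$, so that $a(y)\kappa = \left(\begin{smallmatrix}y&0\\ \varpi^{\gamma}&1\end{smallmatrix}\right)$;
\item $z = -\varpi^\gamma I$ is central;
\item $x = \varpi^{-\gamma}(1+\varpi^n)y$;
\item $k$ is the last matrix in the display.
\end{itemize}

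The central factor drops out by triviality of the central character. Then $W_p(n(x)g) = \psi(x)W_p(g)$ gives the factor $\psi(\varpi^{-\gamma}(1+\varpi^n)y)$.

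It remains to check $k \in K_1(n)$ for $0 \le \gamma \le n$:
\begin{itemize}
\item the upper left entry $1-\varpi^n$ lies in $1+\mathfrak p^n$;
\item the upper right entry $-\varpi^{n-\gamma}$ lies in $\mathcal O$ since $\gamma \le n$;
\item the lower left entry $w\varpi^{n+\gamma}$ lies in $\mathfrak p^n$;
\item the lower right entry $w(1+\varpi^n)$ is a unit;
\item $\det k = w(1-\varpi^{2n}) + w\varpi^{2n} = w$ is a unit, so $k \in \mathrm{GL}_2(\mathcal O)$.
\end{itemize}
Right-invariance then removes $k$, giving the first formula.

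The identity itself is verified by direct multiplication. First compute $g_{t,l,v}k$ with $t=-2\gamma+s$, $l=\gamma$, $v=w^{-1}$. Then multiply on the left by $n(x)$ and by the scalar, and compare with $\left(\begin{smallmatrix}y&0\\ \varpi^{\gamma}&1\end{smallmatrix}\right)$. This is routine bookkeeping in which the relation $y = w\varpi^s$ is used to cancel the $w$'s.

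\textbf{Second identity.} The argument is identical with $z=-I$, $x=-\varpi^n y$, $g_{s,0,1}$, and the last matrix
\[
k' = \begin{pmatrix}1+w\varpi^n & \varpi^\gamma + w(\varpi^{n+\gamma}+1)\\ -w\varpi^n & -w(\varpi^{n+\gamma}+1)\end{pmatrix}.
\]
Its entries behave as required for $\gamma \ge 1$ (including $\gamma=\infty$, where $\varpi^\gamma=0$):
\begin{itemize}
\item the upper left entry lies in $1+\mathfrak p^n$;
\item the lower left entry lies in $\mathfrak p^n$;
\item the remaining entries are integral.
\end{itemize}
Its determinant is $-w(1+w\varpi^n)(\varpi^{n+\gamma}+1) + w\varpi^n(\varpi^\gamma + w\varpi^{n+\gamma}+w)$. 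Expanding, this should collapse to a unit (namely $-w$). Hence $k' \in K_1(n)$, and the same three invariance properties give $W_p(a(y)\kappa) = \psi(-\varpi^n y)W_p(g_{s,0,1})$.

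The only real obstacle is the explicit matrix multiplication verifying the two displayed identities together with the determinant computations. Everything else is formal.
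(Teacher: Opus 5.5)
Your proposal is correct and follows the paper's own argument: the paper reads the lemma off the same two displayed matrix identities, using trivial central character, left $\psi$-equivariance under $N$, and right $K_1(n)$-invariance (noting that the last factor lies in $K_1(n)$ because $\gamma \le n$). Your entry and determinant checks are right (the determinants are $w$ and $-w$ respectively), and they supply exactly the verification the paper leaves implicit.
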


\subsection{Decomposing $PGL_2$}
Because of Equation \eqref{adelic_reformulation}, we are faced with an integral over $\mathrm{PGL}_2(\mathcal{O})$ for which the integrand contains expressions of the form $W_b(a(m)k)$ where $m \in \mathbb{Z}_{>0}$ and $k \in \mathrm{PGL}_2(\mathbb{Z}_p)$. To use the coordinates from the previous subsection, we must understand how the coordinates look when restricted to $\mathrm{PGL}_2(\mathcal{O})$. As $\kappa$ traverses the matrices from Proposition \ref{representatives}
\begin{equation*}
\begin{pmatrix}
1&0\\
\varpi^\gamma&1 \\
\end{pmatrix}\quad 0 \leq \gamma \leq \infty \quad \text{and} \quad
\begin{pmatrix}
0&1\\
1&\varpi^\gamma \\
\end{pmatrix} \quad 1 \leq \gamma \leq \infty
\end{equation*}
the subsets
\begin{equation*}
K_\kappa := (NA \kappa K_A\cap K)/(Z \cap K)
\end{equation*}
partition $\mathrm{PGL}_2 (\mathcal{O})$. The following lemma gives coordinates on each $K_\kappa$ and describes the probability Haar measure on $\mathrm{PGL}_2(\mathcal{O})$ in these coordinates.

\begin{lemma}\label{decomposing_pgl2}
For $\kappa$ as in Proposition \ref{representatives} there is a homeomorphism
\begin{equation*}
\mathcal{O} \times \mathcal{O}^\times \times \mathcal{O}^\times \rightarrow K_\kappa,\quad (x,y,u)\mapsto n(x)a(y)\kappa a(u).
\end{equation*}
In these coordinates, the restriction of the probability Haar measure on $\mathrm{PGL}_2(\mathcal{O})$ to $K_\kappa$ is $\frac{q-1}{q^\gamma (q+1)}d x\, d^\times y\, d^\times u$. 
\end{lemma}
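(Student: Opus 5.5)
The plan is to identify $K_\kappa$ with the set of elements of $\mathrm{PGL}_2(\mathcal{O})$ whose bottom row has a prescribed shape. The coordinates $(x,y,u)$ will then be read off from the matrix entries. The measure will be computed by disintegrating Haar measure along the fibration $K \to B(\mathcal{O})\backslash K \cong \mathbb{P}^1(\mathcal{O})$, $g \mapsto [c:d]$, where $(c,d)$ is the bottom row of $g$.

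\emph{Step 1 (explicit matrices).} For $\kappa = \left(\begin{smallmatrix}1&0\\ \varpi^\gamma&1\end{smallmatrix}\right)$ one computes
\begin{equation*}
n(x)a(y)\kappa a(u) = \begin{pmatrix} yu + x\varpi^\gamma u & x\\ \varpi^\gamma u & 1\end{pmatrix},
\end{equation*}
whose determinant is $yu$. For $\kappa = \left(\begin{smallmatrix}0&1\\ 1&\varpi^\gamma\end{smallmatrix}\right)$ one similarly gets bottom row $(u,\varpi^\gamma)$ and determinant $-yu$, with top row $(xu,\, y+x\varpi^\gamma)$.

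\emph{Step 2 (homeomorphism).} Take the first case, and use the invariant $v(c)-v(d)$ from the proof of Proposition \ref{representatives}.
\begin{itemize}
\item A class in $\mathrm{PGL}_2(\mathcal{O})$ lies in $K_\kappa$ iff some (equivalently any) representative in $\mathrm{GL}_2(\mathcal{O})$ has $v(c)-v(d)=\gamma$. Since the bottom row is primitive, this forces $d\in\mathcal{O}^\times$.
\item Rescaling by the central element $d^{-1}$ we may take $d=1$. Then $c=\varpi^\gamma u$ determines $u\in\mathcal{O}^\times$, the determinant determines $y=\det/u\in\mathcal{O}^\times$, and the upper right entry gives $x\in\mathcal{O}$.
\item Conversely, for $(x,y,u)\in\mathcal{O}\times\mathcal{O}^\times\times\mathcal{O}^\times$ the matrix in Step 1 is integral with unit determinant, so it lies in $K_\kappa$.
\end{itemize}
This gives a continuous bijection from a compact space onto a Hausdorff space, hence a homeomorphism. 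The second family is handled in the same way, normalising $c=1$; this forces $v(d)=\gamma\ge 1$ and $d=\varpi^\gamma u$.

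\emph{Step 3 (measure).} Haar measure $\mu$ on $\mathrm{PGL}_2(\mathcal{O})$ is left-invariant under $B(\mathcal{O})/Z(\mathcal{O})\cong\{n(x)a(y):x\in\mathcal{O},y\in\mathcal{O}^\times\}$. Its pushforward under $g\mapsto[c:d]$ is the $K$-invariant probability measure on $\mathbb{P}^1(F)=\mathbb{P}^1(\mathcal{O})$.
\begin{itemize}
\item That measure gives each of the $q+1$ residue classes mass $1/(q+1)$. In the chart $[c:1]$, $c\in\mathcal{O}$, it therefore equals $\frac{q}{q+1}\,dc$, because $\mathcal{O}$ covers $q$ residue classes and has $dc$-volume $1$. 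The same holds in the chart $[1:d]$.
\item On $\{v(c)=\gamma\}$, substituting $c=\varpi^\gamma u$ gives $dc = q^{-\gamma}(1-q^{-1})\,d^\times u$. So the pushforward restricted to this set is $\frac{q-1}{q^\gamma(q+1)}\,d^\times u$. The second chart, with $d=\varpi^\gamma u$, gives the same result.
\item By left-invariance under $B(\mathcal{O})$ and uniqueness of Haar measure on the compact group $N(\mathcal{O})A(\mathcal{O})/Z(\mathcal{O})$, the conditional measures on the fibres are the probability Haar measure $dx\,d^\times y$ on the $(x,y)$-coordinates.
\end{itemize}
Combining these gives the density $\frac{q-1}{q^\gamma(q+1)}\,dx\,d^\times y\,d^\times u$.

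The main point needing care is this fibre argument. One must check that the coordinate $u$ is constant along $B(\mathcal{O})$-orbits, which holds because left multiplication by $n(x')a(y')$ does not change the bottom row $(\varpi^\gamma u,1)$. One must also check that $B(\mathcal{O})$ acts simply transitively on each fibre through $(x,y)\mapsto(x',y')$ in an affine way that preserves $dx\,d^\times y$, so that the disintegration is legitimate. As a consistency check, summing the volumes over all $\kappa$ gives
\begin{equation*}
2\sum_{\gamma\ge1}\frac{q-1}{q^\gamma(q+1)}+\frac{q-1}{q+1}=1.
\end{equation*}
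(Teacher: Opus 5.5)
Your proof is correct. The homeomorphism step is essentially the paper's: you characterise $K_\kappa$ via the invariant $v(c)-v(d)$ and read the coordinates off the representative with $d=1$, whereas the paper starts from a general $n(x)\,\mathrm{diag}(y_1,y_2)\,\kappa\,\mathrm{diag}(u_1,u_2)\in K$ and forces integrality of the entries.

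The measure computation, however, takes a genuinely different route. The paper first reduces the second family to the first by right translation by $\left(\begin{smallmatrix}0&1\\1&0\end{smallmatrix}\right)$. It then computes $\mathrm{vol}(K_\kappa)$ by counting stabilisers in $\mathbb{P}^1(\mathcal{O}/\mathfrak{p}^\gamma)$. Finally it verifies the density on the $\pi$-system of cosets $kK(m)$, $m>\gamma$, by explicitly decomposing each coset into $q^\gamma$ boxes in the $(x,y,u)$-coordinates. This is elementary, needing only counting and $\pi$--$\lambda$ uniqueness, but it is computational.

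Your disintegration over $B(\mathcal{O})\backslash K\cong\mathbb{P}^1(\mathcal{O})$ instead explains why the density is a product. The factor $dx\,d^\times y$ is Haar measure on the compact group $B(\mathcal{O})/Z(\mathcal{O})$, and $\frac{q-1}{q^\gamma(q+1)}\,d^\times u$ is the $K$-invariant measure on $\mathbb{P}^1$ read in a chart. It also avoids the congruence-subgroup bookkeeping entirely.

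Two steps deserve one more line each.
\begin{enumerate}
\item Knowing that each residue class has mass $1/(q+1)$ does not by itself give $\frac{q}{q+1}\,dc$ on the chart. You also need that $\left(\begin{smallmatrix}1&0\\t&1\end{smallmatrix}\right)$, $t\in\mathcal{O}$, acts on $[c:1]$ by $c\mapsto c+t$, so the restricted measure is translation-invariant on $\mathcal{O}$.
\item The fibre step is cleanest without conditional measures. Pull $\mu|_{K_\kappa}$ back to the product $(\mathcal{O}\times\mathcal{O}^\times)\times\mathcal{O}^\times$. It is invariant under $(x,y)\mapsto(x'+y'x,\,y'y)$, because $n(x')a(y')n(x)a(y)=n(x'+y'x)a(y'y)$. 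Averaging over this compact group and applying Fubini gives $(dx\,d^\times y)\otimes(\text{$u$-marginal})$ directly.
\end{enumerate}

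Two harmless slips should also be fixed. In the second family, normalising $c=1$ gives $d=\varpi^\gamma u^{-1}$ rather than $\varpi^\gamma u$; this does not matter since $u\mapsto u^{-1}$ preserves $d^\times u$. For $\gamma=\infty$ the map is not injective, because $c=0$ does not determine $u$; this degeneracy is shared with the paper's statement and is irrelevant since that piece is a null set.
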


\begin{proof}
We only need to consider the case $\kappa = \left(
\begin{smallmatrix}
1&0\\
\varpi^\gamma&1 \\
\end{smallmatrix}
\right)$ because if $\kappa' = \left(
\begin{smallmatrix}
0&1\\
1&\varpi^\gamma \\
\end{smallmatrix}
\right)$, then $K_{\kappa '}=K_{\kappa}\left(
\begin{smallmatrix}
0&1\\
1&0 \\
\end{smallmatrix}
\right)$ and $n(x)a(y)\kappa' a(u)=n(x)a(y)\kappa a(u)\left(
\begin{smallmatrix}
0&1\\
1&0 \\
\end{smallmatrix}
\right)$ (and $\mathrm{PGL}_2(\mathcal{O} )$ is unimodular). To verify that the maps in the statement are homeomorphisms, we start with an element $g \in K \cap N A \kappa K_A$, say
\begin{equation*}
g =
\begin{pmatrix}
1&x\\
0&1 \\
\end{pmatrix}
\begin{pmatrix}
y_1&0\\
0 & y_2 \\
\end{pmatrix}
\begin{pmatrix}
 1  & 0\\
\varpi^\gamma&1 \\
\end{pmatrix}
\begin{pmatrix}
u_1&0\\
0& u_2 \\
\end{pmatrix}=
\begin{pmatrix}
u_1 (y_1 + x y_2 \varpi^\gamma)&x u_2 y_2\\
u_1 y_2 \varpi^\gamma&u_2 y_2 \\
\end{pmatrix}
\end{equation*}
where $x \in F$, $y_1, y_2 \in F^\times$ and $u_1 , u_2 \in \mathcal{O}^\times$. The lower right entry $u_2 y_2$ is an element of $\mathcal{O}$ so $y_2 \in \mathcal{O}$ as $u_2 \in \mathcal{O}^\times$. If $v(y_2)>0$, the last row is a multiple of $\varpi$ contradicting $\det g \in \mathcal{O}^\times $ so in fact $y_2 \in \mathcal{O}^\times$. Since $\det g = y_1 y_2 u_1 u_2 \in \mathcal{O}^\times$, it also follows that $y_1 \in \mathcal{O}^\times$.  Looking at the upper right entry, we find that $x \in \mathcal{O}$. Scaling by $\left(
\begin{smallmatrix}
y_2 u_2&0\\
0&y_2 u_2 \\
\end{smallmatrix}
\right)$, we see that $g$ defines the same element in $\mathrm{PGL}_2(\mathcal{O})$ as
\begin{equation}\label{coordinates}
n(x)a(y) \kappa a(u)=
\begin{pmatrix}
u(y + x \varpi^\gamma)&x\\
u \varpi^\gamma&1 \\
\end{pmatrix}
\end{equation}
where $y = y_1 y_2^{-1}$ and $u = u_1 u_2^{-1} \in \mathcal{O}^\times$. This shows that the maps are surjective, and injectivity is clear from \eqref{coordinates}. Continuity is also clear, and continuity of the inverse follows from the source being compact and the target Hausdorff. It is also clear from \eqref{coordinates} that when $\kappa = \left(
\begin{smallmatrix}
1&0\\
\varpi^\gamma&1 \\
\end{smallmatrix}
\right)$, we have
\begin{equation}\label{description_of_K_kappa}
K_\kappa=\left\{ \left(
\begin{smallmatrix}
a&b\\
c&d \\
\end{smallmatrix}
\right) \in \mathrm{PGL}_2(\mathcal{O})\,:\, v(c)=\gamma \right\}.
\end{equation}

\noindent
Before describing the Haar measure restricted to $K_\kappa$, we compute the volume of $K_\kappa$. If $\gamma>0$, we observe that $\sqcup_{m \geq \gamma} K_\kappa$,  where $\kappa = \left(
\begin{smallmatrix}
1&0\\
\varpi^m&1 \\
\end{smallmatrix}
\right)$, is the stabiliser of $(0:1)$ under the natural right-action of $\mathrm{PGL}_2(\mathcal{O})$ on $\mathbb{P}^1 \left( \mathcal{O} / \mathfrak{p}^\gamma \right)$ so 
\begin{equation*}
\mathrm{vol}(K_\kappa)=\frac{1}{\# \mathbb{P}^1 (\mathcal{O} / \mathfrak{p}^{\gamma})}-\frac{1}{\# \mathbb{P}^1 (\mathcal{O}/ \mathfrak{p}^{\gamma+1})}=\frac{q-1}{q^\gamma (q+1)}.
\end{equation*}
By translation invariance of the Haar measure, this is also the volume of $K_{\kappa '}=K_\kappa \left(
\begin{smallmatrix}
0&1\\
1&0 \\
\end{smallmatrix}
\right)$ where $\kappa '=\left(
\begin{smallmatrix}
0&1\\
1&\varpi^{\gamma} \\
\end{smallmatrix}
\right)$. If $\gamma = 0$, we now compute that the volume of $K_\kappa$ is $(q-1)/(q+1)$, by subtracting the volumes of all $K_{\kappa '}$ for $\kappa ' \neq \kappa$ from $\mathrm{vol}(\mathrm{PGL}_2(\mathcal{O}))=1$.

To show that $\frac{q-1}{q^\gamma (q+1)} d x\, d^\times y\, d^\times u$ is the restriction of the Haar measure to $K_\kappa$, it is enough to verify that it agrees with the Haar measure on a $\pi$-system generating the Borel $\sigma$-algebra on $K_\kappa$. We choose the $\pi$-system consisting of cosets $k K(m)$ (and the empty set) where $k \in K_\kappa$, $m> \gamma$, and  $K(m)$ is the principal congruence subgroup $K(m):=\ker \left( \mathrm{PGL}_2 (\mathcal{O})\twoheadrightarrow \mathrm{PGL}_2 (\mathcal{O} / \mathfrak{p}^m) \right)$. It is clear from the description of $K_\kappa$ in \eqref{description_of_K_kappa} that these are actual subsets of $K_\kappa$.  Under the Haar measure $\mathrm{vol}(kK(m))=\mathrm{vol}(K(m))=1/\#\mathrm{PGL}_2 (\mathcal{O}/\mathfrak{p}^m) =1/q^{3m-2}(q^2-1)$. On the other hand, if $k=\left(
\begin{smallmatrix}
u_0 (y_0+x_0\varpi^\gamma) & x_0\\
u_0 \varpi^\gamma&1 \\
\end{smallmatrix}
\right)$ for $(x_0 , y_0 , u_0) \in \mathcal{O} \times \mathcal{O}^\times \times \mathcal{O}^ \times$, we see that
\begin{multline*}
\begin{pmatrix}
u_0 \left( y_0 + x_0 \varpi^\gamma \right)&x_0\\
u_0 \varpi^\gamma&1 \\
\end{pmatrix} \equiv
\begin{pmatrix}
u \left( y + x \varpi^\gamma \right)&x \\
u \varpi^\gamma&1 \\
\end{pmatrix}\pmod{\mathfrak{p}^m} \\ \Leftrightarrow \quad
\begin{cases}
  x \equiv x_0 &  \pmod{\mathfrak{p}^{m}} \\
  u \equiv u_0 &  \pmod{\mathfrak{p}^{m - \gamma}} \\
  yu \equiv y_0 u_0 & \pmod{\mathfrak{p}^{m}}
\end{cases}
\end{multline*}
from which it follows that $k K(m)$ decomposes as
\begin{equation*}
\bigsqcup_{s \in (1+ \mathfrak{p}^{m - \gamma})/(1+\mathfrak{p}^m)}\left\{ \left(
\begin{smallmatrix}
u(y+x \varpi^\gamma)&x\\
u \varpi^\gamma&1 \\
\end{smallmatrix}
\right)\,:\,x \in x_0 + \mathfrak{p}^m,\, u \in s u_0 + \mathfrak{p}^{m},\, y \in s^{-1} y_0 + \mathfrak{p}^m \right\} 
\end{equation*}
so our claimed Haar measure assigns the volume
\begin{equation*}
\frac{q-1}{q^\gamma(q+1)}q^\gamma\mathrm{vol}(x_0 + \mathfrak{p}^m)\mathrm{vol}^\times
(1+\mathfrak{p}^m)\mathrm{vol}^\times(1+\mathfrak{p}^m)=\frac{1}{q^{3m-2}(q^2 -1)}
\end{equation*}
as desired.
\end{proof}

\section{Unfolding}\label{unfolding_section}
The purpose this section is open up the integral on the right hand side of \eqref{adelic_reformulation}. Given measurable functions $W_1: \mathbb{R}_{>0} \rightarrow \mathbb{C}$ and $W_2 : F^\times \rightarrow \mathbb{C}$, we define 
\begin{multline}\label{definition_of_N}
\mathcal{N}(W_1 , W_2):=\int_{x \in \mathbb{A} / \mathbb{Q} } \int_{y \geq y_0} \int_{t \in \mathcal{O}^\times }\\ \Bigg|\sum_{m \neq 0} \psi(p^{- n_2}m x) \mathrm{sgn}(m)^a W _1(|m| y)\frac{\lambda(\lvert m \rvert')}{\sqrt{ \lvert m \rvert'}} W_2(mt)\Bigg|^4 d^\times t\, \frac{d y}{y^2}\, d x.
\end{multline}
The flexibility in choosing $W_1$ and $W_2$ allows us to dyadically decompose the Whittaker functions at $\infty$ and at $p$. If $W_1 = W _\infty(a(p^{-n_2}\bullet))$, and $W: F^\times \rightarrow \mathbb{C}$ is any measureable function, we set $\mathcal{N}(W):= \mathcal{N}(W_1 ,W)$, that is
\begin{multline*}
  \mathcal{N}(W):= \int_{x \in \mathbb{A} /\mathbb{Q}} \int_{y \geq y_0} \int_{t \in \mathcal{O}^\times}\\
  \Bigg| \sum_{m\neq 0} \psi(p^{- n_2}m x) \mathrm{sgn}(m)^a W _\infty( a(p^{-n_2}|m| y)) \frac{\lambda(\lvert m \rvert')}{\sqrt{\lvert m \rvert'}} W(m t)\Bigg|^4 d^\times t \, \frac{d y}{y^2}\, d x. 
\end{multline*}
By Equation (\ref{adelic_reformulation}) and Lemma \ref{decomposing_pgl2} it follows that $\lVert f \rVert_4^4 \asymp \sum_{\kappa} p^{-\gamma} \mathcal{N}(\kappa W_b)$ where $\kappa$ traverses the matrices in Proposition \ref{representatives}, and $\kappa W_b(t):= W_b(a(t) \kappa)$. 
\begin{lemma}\label{unfolding}
Let $W_1 : \mathbb{R}_{>0} \rightarrow \mathbb{C}$ and $W_2: F^\times \rightarrow \mathbb{C}$ be measurable. Then
\begin{equation*}
\mathcal{N}(W_1 , W_2) \ll \sum_{\substack{
    m_1, m_2, m_3, m_4 \geq 1\\
    m_1 + m_2 = m_3 + m_4}} \frac{\lambda(m_1 ')^4 m_1}{(m_1 ')^2} \lvert I _\infty(\mathbf{m}) \rvert \, \lvert I_p(\mathbf{m}) \rvert
\end{equation*}
where, for $\mathbf{m} =(m_1 , m_2 , m_3 , m_4)$ a tuple of positive integers, we define
\begin{equation*}
  I _\infty(\mathbf{m}):= \int_{y \geq m_1 y_0} W_1(y) W_1(y \tfrac{m_2}{m_1})\overline{ W_1(y \tfrac{m_3}{m_1}) W_1(y \tfrac{m_4}{m_1})}\, \frac{d y}{y^2},
\end{equation*}
\begin{equation*}
  I_p(\mathbf{m}):= \int_{t \in  \mathcal{O}^\times} W_2(m_1 t) W_2(m_2 t) \overline{W_2(m_3 t) W_2(m_4 t) }\, d^\times t.
\end{equation*}  
\end{lemma}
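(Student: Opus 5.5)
Expand the fourth power and integrate over $x\in\mathbb{A}/\mathbb{Q}$ first; this enforces the linear relation $m_1+m_2=m_3+m_4$. Write the inner sum as $S(x,y,t)=\sum_{m\neq 0}\psi(p^{-n_2}mx)c_m(y,t)$ with
\[
c_m(y,t)=\mathrm{sgn}(m)^aW_1(|m|y)\lambda(|m|')|m|'^{-1/2}W_2(mt).
\]
Then $|S|^4=S\,S\,\overline{S}\,\overline{S}$ is a fourfold sum over $(m_1,m_2,m_3,m_4)\in(\mathbb{Z}\setminus\{0\})^4$ with phase $\psi(p^{-n_2}(m_1+m_2-m_3-m_4)x)$. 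Integrating over $\mathbb{A}/\mathbb{Q}$ against the probability measure kills every term with $m_1+m_2\neq m_3+m_4$, since $k\mapsto\psi(p^{-n_2}kx)$ is a nontrivial character of $\mathbb{A}/\mathbb{Q}$ for $k\neq 0$. Exchanging sum and integral is justified by absolute convergence, or else by first truncating the $m$-sum and passing to the limit; in applications $W_1$ decays rapidly by \eqref{pointwise_whittaker_bound}. Since we only want an upper bound with absolute values, we may put absolute values inside after the $x$-integration.

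Next we handle signs and reduce to positive integers. The surviving tuples have signs, and $W_2(mt)$ with $t\in\mathcal{O}^\times$ depends on $m$ through $-1\in\mathcal{O}^\times$. Substituting $t\mapsto -t$ in the $t$-integral swaps the sign of all four $m_i$ simultaneously. To avoid bookkeeping of mixed signs, first apply Hölder: split $S=S_++S_-$ according to the sign of $m$, so that $|S|^4\ll|S_+|^4+|S_-|^4$. In $S_-$, the substitution $t\mapsto -t$ together with $x\mapsto -x$ (which preserves the measure on $\mathbb{A}/\mathbb{Q}$) turns it into a sum over $m>0$ of the same shape, with $W_1(|m|y)$ unchanged and the factor $\mathrm{sgn}(m)^a$ becoming a constant of modulus one. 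So it suffices to treat positive $m_i$ with $m_1+m_2=m_3+m_4$. For each such tuple the $t$-integral is exactly $I_p(\mathbf{m})$, up to conjugation and reindexing. What remains is
\[
\int_{y\ge y_0}W_1(m_1y)W_1(m_2y)\overline{W_1(m_3y)W_1(m_4y)}\,\frac{dy}{y^2},
\]
times the arithmetic weights $\prod_i\lambda(m_i')/\sqrt{m_i'}$. Substituting $y\mapsto y/m_1$ turns this into $m_1 I_\infty(\mathbf{m})$, since $dy/y^2$ picks up the factor $m_1$ and the lower limit becomes $m_1y_0$.

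Finally, replace the product of four Hecke weights by the fourth power of one. Apply the elementary inequality $|a_1a_2a_3a_4|\le\frac14\sum_i|a_i|^4$ with
\[
a_i=\frac{\lambda(m_i')}{\sqrt{m_i'}}\cdot\bigl(m_1|I_\infty(\mathbf{m})|\,|I_p(\mathbf{m})|\bigr)^{1/4}.
\]
The result is four sums, the $i$-th carrying the weight $\lambda(m_i')^4m_1/(m_i')^2$. The relation $m_1+m_2=m_3+m_4$ and the quantities $|I_p|$ and $|I_\infty|$ are symmetric under the permutations of indices that preserve this relation: swapping $m_1\leftrightarrow m_2$, swapping $m_3\leftrightarrow m_4$, and swapping the pair $(m_1,m_2)$ with $(m_3,m_4)$, which conjugates the integrand. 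The absolute values are invariant under these swaps, except that the prefactor $m_1$ in $I_\infty$'s normalisation changes. So the reindexing has to be done on the unnormalised $y$-integral, before substituting. The main obstacle is this bookkeeping: relabel so that the distinguished index is $m_1$ and only then rescale $y\mapsto y/m_1$, which yields exactly the weight $\lambda(m_1')^4m_1/(m_1')^2$ and $I_\infty(\mathbf{m})$ as defined. Each of the four sums then becomes the right-hand side of the lemma, which gives the claimed bound with an absolute implied constant.
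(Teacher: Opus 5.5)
Your proposal is correct and follows the paper's route. You reduce to $m>0$ via $t\mapsto -t$ and $x\mapsto -x$. You integrate over $\mathbb{A}/\mathbb{Q}$ to impose $m_1+m_2=m_3+m_4$ (the paper calls this Parseval). You bound the four Hecke weights by fourth powers, and you rescale $y\mapsto y/m_1$.

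Your pointwise inequality $|a_1a_2a_3a_4|\le\frac14\sum_i|a_i|^4$ replaces the paper's ``Cauchy--Schwarz twice''. Your remark that the reindexing must be done on the unnormalised integral $m_1 I_\infty(\mathbf{m})$ makes explicit a symmetry step the paper leaves implicit.
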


\begin{proof}
We start by observing that $\mathcal{N}(W_1, W_2) \ll \mathcal{N}_{+}(W_1 , W_2)$ where $\mathcal{N}_{+}$ is defined in the same way as $\mathcal{N}$ but with the summation over $m$ restricted to $m>0$, and we use that the $t$-integral and $x$-integral are invariant under scaling by $-1$. The characters $\left\{ \psi(\alpha x) \right\}_{\alpha  \in \mathbb{Q} }$ form a Hilbert space basis for $L^2(\mathbb{A} / \mathbb{Q} )$ so we can first integrate over $x$ and apply Parseval's identity to see that $\mathcal{N}_+(W_1 , W_2)$ equals
\begin{equation*}
\sum_{\substack{
    m_1 , m_2 , m_3 , m_4 \geq 1\\
    m_1 + m_2 = m_3 + m_4 }} \frac{\lambda(m_1 ') \cdots \lambda(m_4')}{(m_1 ' \cdots m_4 ')^{1/2}}
 \Bigg|\int_{y \geq y_0} W_1(m_1 y) W_1(m_2 y)\overline{W_1(m_3y) W_i( m_4 y)} \frac{d y}{y^2}\Bigg|  \lvert I_p(\mathbf{m}) \rvert.
\end{equation*}
Applying the Cauchy-Schwarz inequality twice, we find that $\mathcal{N}_+(W_1 , W_2)$ is bounded by 
\begin{equation*}
\sum_{\substack{
    m_1 , m_2 , m_3 , m_4 \geq 1\\
    m_1 + m_2 = m_3 + m_4
}} \frac{\lambda(m_1 ')^4}{(m_1 ')^2} \Bigg| \int_{y \geq y_0} W_1(m_1 y) W_1(m_2 y) \overline{W_1(m_3 y) W_1(m_4 y)} \frac{d y}{y^2} \Bigg| \lvert I_p(\mathbf{m}) \rvert.
\end{equation*}
Substituting $ y \mapsto m_1^{-1} y$ completes the proof.  
\end{proof}

\noindent
We now estimate at the archimedean place under the assumption that $W_2$ is polynomially bounded in $p^n$. This assumption applies when $W_2$ is the $p$-adic Whittaker newvector because Theorem \ref{whittaker_formulas} shows that $W_2$ is bounded by $p^{\frac{n}{12}}$. 

\begin{lemma}\label{archimedean_lemma}
Let $W: F^\times  \rightarrow \mathbb{C}$ be measureable, and suppose $\lVert W \rVert _\infty \ll(p^n)^A$ for some constant $A>0$. Then 
\begin{equation*}
\mathcal{N}(W) \ll_{\varepsilon, A}(p^n)^\varepsilon \left[ 1 + p^{-n_2} \sum_{\substack{
      m_1 \leq 2 p^{(1+\varepsilon)n_2}\\ m_j \in [ 2^{-1} m_1 , 2 m_1 ]\\
      m_1 + m_2 = m_3 + m_4
}} \frac{\lambda(m_1 ')^4 m_1}{(m_1 ')^2} \lvert I_p(\mathbf{m}) \rvert \right].
\end{equation*}
\end{lemma}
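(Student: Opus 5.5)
We start from Lemma \ref{unfolding} with $W_1 = W_\infty(a(p^{-n_2}\bullet))$ and $W_2 = W$. Using the pointwise bound \eqref{pointwise_whittaker_bound}, $W_1(y) \ll (p^{-n_2}y)^\beta$ for $y \leq p^{n_2}$ and $W_1(y)\ll_N (p^{-n_2}y)^{-N}$ for $y \geq p^{n_2}$. The plan is to split the sum over tuples $\mathbf{m}$ into two parts. The first consists of the \emph{balanced} tuples with $m_1 \leq 2p^{(1+\varepsilon)n_2}$ and all $m_j \in [2^{-1}m_1, 2m_1]$. The second consists of the remaining tuples, which we will show contribute $\ll(p^n)^\varepsilon$ in total, producing the ``$1+$'' term.

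\textbf{Balanced tuples.} Here we bound $I_\infty(\mathbf{m})$ trivially. Since all ratios $m_j/m_1$ lie in $[1/2,2]$, each factor $W_1(y m_j/m_1)$ is $\ll \min(1,(p^{-n_2}y)^{-N})$ up to constants. The bound $\lvert W_1\rvert \ll 1$ already follows from \eqref{pointwise_whittaker_bound}, since $(p^{-n_2}y)^\beta\leq 1$ on $y \leq p^{n_2}$. Hence
\[
\lvert I_\infty(\mathbf{m})\rvert \ll \int_{y \geq m_1y_0} \min\bigl(1,(p^{-n_2}y)^{-4N}\bigr)\frac{dy}{y^2}\ll \frac{1}{m_1}.
\]
This alone loses the factor $p^{-n_2}$ compared with the claim. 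To recover it, we use the small-$y$ decay $(p^{-n_2}y)^{4\beta}$ with $4\beta>1$: for $m_1 y_0\leq p^{n_2}$,
\[
\int_{m_1y_0}^{p^{n_2}} (p^{-n_2}y)^{4\beta}\frac{dy}{y^2} \ll p^{-n_2}\max\bigl(1,\log\bigr),
\]
because the integrand is $p^{-4\beta n_2}y^{4\beta-2}$ and $4\beta-2>-1$. The contribution from $y\geq p^{n_2}$ is $\ll p^{-n_2}$ as well. Thus $\lvert I_\infty(\mathbf{m})\rvert \ll p^{-n_2}$ uniformly, which gives exactly the displayed main term. This is where $\beta>1/4$ is used.

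\textbf{Unbalanced or large tuples.} We bound $\lvert I_p(\mathbf{m})\rvert \leq \lVert W\rVert_\infty^4 \ll (p^n)^{4A}$ and must beat this power through archimedean decay.
\begin{itemize}
\item[(a)] If $m_1 > 2p^{(1+\varepsilon)n_2}$, the integral over $y\geq m_1y_0$ sees $W_1(y)\ll_N (p^{-n_2}y)^{-N}\leq (p^{\varepsilon n_2}m_1/p^{(1+\varepsilon)n_2})^{-N}$, roughly. Choosing $N$ large depending on $\varepsilon$ and $A$, this gives $(p^n)^{-10A}m_1^{-10}$ times bounded factors.
\item[(b)] If some $m_j$ lies outside $[m_1/2,2m_1]$, then because $m_1+m_2=m_3+m_4$, the largest $m_j$, say $M$, satisfies $M\geq 2\min_j m_j$ roughly. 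We then shift focus to the factor with the largest argument. When $yM/m_1 \geq p^{(1+\varepsilon)n_2}$ it decays rapidly, and in the complementary range $y\leq p^{(1+\varepsilon)n_2}m_1/M$ the smallest factor carries decay $(p^{-n_2}y m_{\min}/m_1)^\beta$.
\end{itemize}
The hard part is organising these cases so that the sum over tuples converges with a gain beating $(p^n)^{4A}$. In case (b) the decay at small arguments is only the weak power $\beta$, so a tuple with one tiny $m_j$ and moderate others can have $I_\infty$ of size only $p^{-n_2}$-ish rather than rapidly decaying.

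To handle this, the first attempt is to relabel via symmetry of the sum and the four-fold structure. This does require care, because Lemma \ref{unfolding} singles out $m_1$ through $\lambda(m_1')^4 m_1/(m_1')^2$, though relabelling is harmless since the Cauchy--Schwarz step can be redone for any index. We then put the large tuples with $\max m_j > p^{(1+\varepsilon)n_2}$ into the rapidly decaying bin. Any tuple with all $m_j\leq p^{(1+\varepsilon)n_2}$ but unbalanced is reabsorbed by enlarging the balanced window to a dyadic-type range $m_j \in [2^{-k}m_1,2^k m_1]$ and splitting into $O(\log p^n)$ pieces, using the same $p^{-n_2}$ bound for $I_\infty$.

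If the claim genuinely requires the stated $[1/2,2]$ window, we will instead use $\lvert I_p(\mathbf{m})\rvert\ll \lVert W\rVert_\infty^4$ only in case (a). In case (b) we bound the tuple sum, weighted by $\lambda^4$ via \eqref{fourth_moment}, against $I_\infty\ll p^{-n_2}\prod_j \min(1,\cdot)$, and control it by the same quantity as the balanced sum up to $(p^n)^\varepsilon$. We expect this bookkeeping in (b) to be the main obstacle.
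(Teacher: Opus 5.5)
Your estimate $I_\infty(\mathbf{m})\ll p^{-n_2}$ for tuples with all $m_j\in[m_1/2,2m_1]$, which is where $4\beta>1$ enters, is correct and is the bound the paper uses. However, your treatment of the remaining tuples has a genuine gap, and it is not a matter of bookkeeping.

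Once Lemma~\ref{unfolding} has been applied to the undecomposed $W_1=W_\infty(a(p^{-n_2}\bullet))$, the Cauchy--Schwarz step has already replaced $\prod_j\lambda(m_j')/\sqrt{m_j'}$ by $\lambda(m_1')^4/(m_1')^2$. This heavily overweights unbalanced tuples. Moreover, unbalanced tuples with all $m_j\le p^{(1+\varepsilon)n_2}$ have no archimedean decay. Their integrals $I_p(\mathbf{m})$ do not occur on the right-hand side of the claim, so the only option is to bound them by $\lVert W\rVert_\infty^4$. This loses a power of $p^n$ even when $W$ is bounded.

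Concretely, take $W=\mathbf{1}_{\mathcal{O}^\times}$. There are $\gg p^{2n_2}$ tuples with $m_1=1$, $m_2,m_3,m_4\asymp p^{n_2}$ and $p\nmid m_2m_3m_4$. Each has weight $1$ and $I_p(\mathbf{m})=1$. Via \eqref{pointwise_whittaker_bound}, $I_\infty(\mathbf{m})$ is bounded only by $p^{-\beta n_2}$: the integral is dominated by $y\asymp 1$, where $W_1(y)\ll(p^{-n_2}y)^\beta$ and the other three factors are $O(1)$. So your route bounds these terms by $p^{(2-\beta)n_2}\ge p^{3n_2/2}$. By contrast, the right-hand side of the lemma is at most $(p^n)^{O(\varepsilon)}p^{n_2}$ for this $W$, by \eqref{fourth_moment}.

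Neither of your fallback options repairs this. Relabelling the distinguished index does not undo the Cauchy--Schwarz loss. Widening the window to $[2^{-k}m_1,2^km_1]$ proves a different, weaker statement. In that wider window $I_\infty$ is no longer $\ll p^{-n_2}$, since it grows with $m_{\max}/m_{\min}$, and the weaker statement would also break the counting in Lemma~\ref{first_estimates}.

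The missing idea is to decompose the archimedean factor dyadically \emph{inside} the fourth power, before any Cauchy--Schwarz. Write $W_\infty(a(p^{-n_2}y))=\sum_i W_i(y)$ with $W_i(y):=\mathbf{1}_{2^{i-1}<y\le 2^i}W_\infty(a(p^{-n_2}y))$.
\begin{itemize}
\item Pieces with $i<0$ contribute nothing, because $|m|y\ge y_0>1/2$ for a suitable choice of $y_0$.
\item The tail $2^i>p^{100An_2}$ is $\ll 1$, directly from \eqref{pointwise_whittaker_bound} and the triangle inequality.
\item H\"older over the remaining $O(\log p^n)$ pieces costs only $(\log p^n)^3$.
\end{itemize}
Now apply Lemma~\ref{unfolding} to each $\mathcal{N}(W_i,W)$ separately. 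The integrand of $I_\infty(\mathbf{m})$ vanishes unless $y$ and every $ym_j/m_1$ lie in the same interval $(2^{i-1},2^i]$. This forces $m_j\in[m_1/2,2m_1]$ and $m_1y_0<2^i$, so unbalanced tuples never occur at all.

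The rest then goes through with your estimates:
\begin{itemize}
\item For $2^i\le p^{n_2}$, your computation gives $I_\infty(\mathbf{m})\ll p^{-n_2}$.
\item For $2^i>p^{n_2}$, the bound $I_\infty(\mathbf{m})\ll 2^{-i}(p^{n_2}/2^i)^{4N}$ shows that pieces with $2^i\ge p^{(1+\varepsilon)n_2}$ are $\ll 1$, even with the trivial bound $|I_p(\mathbf{m})|\le\lVert W\rVert_\infty^4\ll(p^n)^{4A}$.
\item The $O(\log p^n)$ pieces with $2^i<p^{(1+\varepsilon)n_2}$ then give exactly the stated sum.
\end{itemize}
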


\begin{proof}
We decompose the function $y \mapsto W _\infty(a(p^{-n_2}y))$ as
\begin{equation*}
\sum_{i \in \mathbb{Z}} W_i(y) \quad \textrm{where} \quad W_i(y):=\mathbf{1}_{2^{i-1} < y \leq 2^i} W_{\infty}(a( p^{- n_2}y))\textrm{ for } y>0.
\end{equation*}
We observe that $\mathcal{N}(W_i, W)=0$ if $i < 0$ because $y_0 > \frac{1}{2}$ so $W_i(\lvert m \rvert y)$ is not supported when $m \in \mathbb{Z} \setminus \left\{ 0 \right\}$, and $y >y_0$.  Moreover, one can verify directly from definitions using Equation \eqref{pointwise_whittaker_bound} and the triangle inequality, that $\mathcal{N}(\sum_{i>M}W_i,W) \ll 1$ when $2^M >p ^{100A n_2}$ (say). It follows that
\begin{equation*}
\mathcal{N}(W) \ll_{\varepsilon, A}(p^{n})^\varepsilon \left[1 +\sum_{0 \leq i \leq M} \mathcal{N}(W_i,W)\right]. 
\end{equation*}
When $i \geq 0$, we observe that the integrand in $I _\infty(\mathbf{m})$ vanishes identically unless $m_1 y_0 < 2^i$ and $m_j \in [ 2^{-1} m_1 , 2 m_1]$ for $j=2,3,4$. By Equation \eqref{pointwise_whittaker_bound}, $W_i(y) \ll (p^{-n_2}y)^ \beta$ for $y \leq p^{n_2}$, and $W_i(y) \ll _N(p^{-n_2}y)^{-N}$ for $y> p^{n_2}$. We deduce that the following bounds hold
\begin{equation}\label{archimedean_estimate}
I_\infty (\mathbf{m}) \ll_N  \mathbf{1}_{m_1 \leq 2 \cdot  2^i,\,m_j \in  [ 2^{-1} m_1 , 2 m_1 ]}
\begin{cases}
  p^{-n_2} & \textrm{if }2^i \leq p^{n_2}\\
 2^{-i} p^{4N n_2} 2^{-4Ni} & \textrm{if }2^i  > p^{n_2}
\end{cases}
\end{equation}
when $I _\infty(\mathbf{m})$ has $W_i$ in the integrand (here we use that $\beta>1/4$). At the finite place, we have $I_p(\mathbf{m}) \ll p^{8A n_2}$. Now fix $\varepsilon >0$. By choosing $N$ large enough (only depending on $\varepsilon$ and $A$, and not on $p$ or $n$) and using Lemma \ref{unfolding}, we find that $\mathcal{N}(W_i,W) \ll 1$ when $2^i  \geq p^{(1+ \varepsilon) n_2}$. It follows that
\begin{equation*}
\mathcal{N}(W) \ll_{\varepsilon,A}(p^n)^{\varepsilon}\left[1+ \sum_{1 \leq  2^i \leq p^{(1+\varepsilon) n_2}} \mathcal{N}(W_{i},W)\right]. 
\end{equation*}
Using Lemma \ref{unfolding} and the estimate in Equation \eqref{archimedean_estimate} on $\mathcal{N}(W_i, W)$ for $1 \leq 2^i \leq  p ^{(1+\varepsilon) n_2}$ completes the proof.
 \end{proof}

\section{Oscillatory integrals}\label{oscillatory_integrals}
In the archimedean case, asymptotic formulas for the Whittaker function are derived from integral representations using the method of stationary phase. Similarly, there are integral representations for the $p$-adic Whittaker new vector which can be analysed using $p$-adic stationary phase. In this non-archimedean setting, one usually obtains explicit formulas instead asymptotics expansions. These formulas can often be expressed in terms of Gau{\ss} sums, but when one encounters degenerate critical points, special functions appear. In our case, when the representation $\pi_p $ has trivial central character and is sufficiently ramified, we meet the $p$-adic Airy function and more general oscillatory integrals with cubic phase, see Subsection \ref{cubic_phase}. This phenomemon is analogous the archimedean case where the Airy function appears in the transition range. The purpose of this section is to collect some properties of the integrals appearing in the integral representations of the $p$-adic Whittaker functions. A reference for much of the material is Assing's thesis \cite{assing_thesis}. We remark that \cite{assing_thesis} is not the standard references for the material on Gau{\ss} sums in the next subsection, but it will be convenient to use the same notation as in \cite{assing_thesis} because we derive our formulas for the Whittaker function from this reference.\\

\noindent
We retain the setup from Section \ref{geometric_considerations}, but additionally assume that the residue characteristic $p$ is odd. Let $\psi$ denote fixed unramified additive character $F$. Earlier in the paper, $\psi$ denoted an additive character of $\mathbb{A} / \mathbb{Q}$, but the rest of our calculations are local so this should not cause confusion. With this choice of $\psi$ and our normalisation of the Haar measure $d x$ on $F$, the Fourier transform on $F$ is self-dual.\\

\noindent
If $\chi$ is a continuous character of $F^\times$, we write $a(\chi)$ for the log-conductor of $\chi$, i.e. the smallest non-negative integer $m$ such that $\chi$ is trivial on $1+ \mathfrak{p}^m$. The $\varepsilon$-factor associated to $\chi$ is denoted $\varepsilon(s, \chi)$, and though this depends on $\psi$, we omit it from the notation since we consider $\psi$ to be fixed. Often, we would like to translate multiplicative oscillation in $\chi$ into additive oscillation in $\psi$. We then use the following result: Let $\kappa_F = \lfloor \frac{e(F / \mathbb{Q}_p)}{p-1} \rfloor$ where $e(F/\mathbb{Q}_p)=v(p)$ is the absolute ramification index of $F$. If $a(\chi) \geq \kappa_F$, then there exists $b_\chi \in \mathcal{O}^\times$, unique modulo $\mathfrak{p}^{a(\chi)-\kappa_F}$, such that 
\begin{equation}\label{logarithm}
\chi(1 + z \varpi^{\kappa_F})= \psi \left( \frac{b_\chi}{\varpi^{a(\chi)}} \log(1 + z \varpi^{\kappa_F}) \right)
\end{equation}
for all $z \in \mathcal{O}$ where $\log_F(1+x):=\sum_{i \geq 1}\frac{(-1)^{i-1}}{i} x^i $ denotes the $p$-adic logarithm which converges for $x \in \mathfrak{p}^{\kappa_F}$, see \cite[Lemma 3.1.3]{assing_thesis}.\\

\noindent
We write $\chi_F$ for the unramified quadratic character $\mathcal{O}_F^\times \rightarrow \left\{ \pm 1 \right\}$, and set
\begin{equation}\label{gammaF}
\gamma_F(A, \rho) =
\begin{cases}
  \chi_F(A) \varepsilon \left( \frac{1}{2} , \chi_F  \right) & \text{if $\rho>0$, and $\rho$ is odd}\\
  1 & \text{otherwise}
\end{cases}
\end{equation}
for $A \in \mathcal{O}^\times$ and $\rho \in \mathbb{Z}$.

\subsection{Gau{\ss} sums}
When applying $p$-adic stationary phase, the result usually involves a multidimensional quadratic Gau{\ss} sum (see \cite[Lemma 3.1.6]{assing_thesis}), and evaluating these are essential for describing the $p$-adic Whittaker newvector. Let $m$ be a positive integer. Given $A \in \mathrm{Mat}_{m \times m }(\mathcal{O})$, $\rho \in \mathbb{Z}$ and $B \in F^m$, we define the $m$-dimensional quadratic Gau{\ss} sum as
\begin{equation*}
G(A \varpi^{-\rho},B):= \int_{x \in \mathcal{O}^m} \psi( \varpi^{-\rho}x^t A x +  x^t B) d x
\end{equation*}
where $(\,\cdot \,)^t$ denotes matrix transpose, and all vectors are written as column vectors. The measure $d x$ is the product measure on $F^m$ normalised such that $\mathcal{O}^m$ has volume $1$. We will only be interested in the cases $m=1$ and $m=2$. In the one-dimensional case, we have the following result:

\begin{lemma}\label{one_dimensional_gauss_sum}
Suppose $A \in \mathcal{O}^\times $, $\rho \in \mathbb{Z} $ and $B \in F$. Then the one-dimensional Gau{\ss} sum $G(A \varpi^{-\rho},B)$ evaluates as
  \begin{equation*}
G(A \varpi^{-\rho},B)=
\begin{cases}
  \min \left\{ q^{-\frac{\rho}{2}},1 \right\} \gamma_F(A, \rho) \psi \left( - \frac{\varpi^{-\rho} B^2}{4A} \right) & \textrm{if }B \in \mathfrak{p}^{\min \left\{ - \rho ,0 \right\}}\\
  0 & \textrm{otherwise}
\end{cases}.
\end{equation*}
\end{lemma}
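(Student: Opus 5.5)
I would prove Lemma \ref{one_dimensional_gauss_sum} by splitting into the cases $\rho\le 0$, $\rho>0$ even, and $\rho>0$ odd. The workhorse is completing the square, which is legitimate because $p$ is odd, so $2$ and $A$ are units.

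\emph{Case $\rho\le 0$.} The quadratic term $\varpi^{-\rho}Ax^2$ lies in $\mathcal{O}$ for $x\in\mathcal{O}$, and $\psi$ is unramified, so that term contributes nothing. Then $G=\int_{\mathcal{O}}\psi(Bx)\,dx$, which is $1$ if $B\in\mathcal{O}$ and $0$ otherwise. Since $\min\{-\rho,0\}=0$ here, this matches the claimed support. When $B\in\mathcal{O}$ we also have $\varpi^{-\rho}B^2/4A\in\mathcal{O}$, so the phase factor $\psi(-\varpi^{-\rho}B^2/4A)$ equals $1$. Moreover $\gamma_F=1$ and $\min\{q^{-\rho/2},1\}=1$, so the formula holds.

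\emph{Case $\rho>0$.} First I would establish the vanishing claim. Suppose $v(B)<-\rho$. Write $x=x_0+\varpi^{k}z$ with $k=\rho+v(B)\cdot(-1)-\rho$; concretely, choose $k=-v(B)-1\ge\rho$. The cross term $2\varpi^{-\rho}Ax_0\varpi^kz$ and the square term $\varpi^{-\rho}A\varpi^{2k}z^2$ are both integral, because $k\ge\rho$ and $2k\ge\rho$. The linear term $B\varpi^kz$, however, has valuation $-1$, so it is a nontrivial character in $z$, and integrating over $z$ gives $0$.

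If instead $v(B)\ge -\rho$, set $x_1=-B/(2A)$, which lies in $\mathfrak{p}^{-\rho}$. Completing the square gives
\[
\varpi^{-\rho}Ax^2+Bx=\varpi^{-\rho}A(x+\varpi^{\rho}B/2A)^2-\varpi^{-\rho}B^2/4A.
\]
The shift $\varpi^\rho B/2A$ lies in $\mathcal{O}$, so translation invariance of $dx$ on $\mathcal{O}$ reduces the problem to $\psi(-\varpi^{-\rho}B^2/4A)\,G(A\varpi^{-\rho},0)$.

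It remains to evaluate the pure sum $G(A\varpi^{-\rho},0)$. I would use the standard recursion obtained by splitting $\mathcal{O}=\mathcal{O}^\times\sqcup\mathfrak{p}$. Substituting $x=\varpi y$ on $\mathfrak{p}$ gives
\[
G(A\varpi^{-\rho},0)=q^{-1}G(A\varpi^{2-\rho},0)+\int_{\mathcal{O}^\times}\psi(\varpi^{-\rho}Ax^2)\,dx.
\]
For $\rho\ge2$, the unit part vanishes. To see this, decompose $x=x_0(1+\varpi^{\lceil\rho/2\rceil}z)$. The phase in $z$ is then $2\varpi^{\lceil\rho/2\rceil-\rho}Ax_0^2z$ up to integral terms. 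This is a nontrivial additive character, since $\lceil\rho/2\rceil<\rho$, so the integral over $z$ is $0$. Iterating the recursion gives $G(A\varpi^{-\rho},0)=q^{-\rho/2}$ for $\rho$ even, reducing down to $\rho=0$. For $\rho$ odd it gives $q^{-(\rho-1)/2}G(A\varpi^{-1},0)$.

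\emph{The main obstacle} is the base case
\[
G(A\varpi^{-1},0)=q^{-1}\sum_{x\in\mathcal{O}/\mathfrak{p}}\psi(\varpi^{-1}Ax^2).
\]
This is a classical quadratic Gauss sum over the residue field. Its value is $\chi_F(A)$ times the Gauss sum for $A=1$, which has modulus $q^{1/2}$. I would identify $q^{-1/2}\sum_{x\in\mathcal{O}/\mathfrak{p}}\psi(\varpi^{-1}x^2)$ with $\varepsilon(\tfrac12,\chi_F)$. To do this, write the sum over $x$ as a sum over $u=x^2$, i.e. as $\sum_{u}(1+\chi_F(u))\psi(\varpi^{-1}u)$. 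Then compare with the Tate local constant $\varepsilon(\tfrac12,\chi_F,\psi)$, which is given by the Gauss-sum integral of $\chi_F^{-1}(u)\psi(\varpi^{-1}u)$ over units, normalised by $q^{1/2}$ for a conductor-one character with unramified $\psi$.

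The delicate point is matching this normalisation convention for $\varepsilon$, and the reading of $\chi_F$ as the quadratic character of the residue field, as in the cited \cite[Lemma 3.1.6]{assing_thesis}. Once that is matched, this yields $q^{-\rho/2}\chi_F(A)\varepsilon(\tfrac12,\chi_F)=q^{-\rho/2}\gamma_F(A,\rho)$, completing the proof.
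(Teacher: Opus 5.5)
\textbf{Approach.} Your route is a self-contained direct proof: vanishing by translating by $\mathfrak{p}^{k}$, then completing the square, then the recursion $G(A\varpi^{-\rho},0)=q^{-1}G(A\varpi^{2-\rho},0)$ for $\rho\geq 2$, down to a residue-field Gau{\ss} sum. The paper gives no argument of its own and simply cites \cite[Lemma 3.1.1]{assing_thesis}. The following parts of your proof are correct: the case $\rho\le 0$; the vanishing for $v(B)<-\rho$ with $k=-v(B)-1$ (your first formula for $k$ is garbled); the recursion; and the vanishing of the unit part for $\rho\geq 2$.

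\textbf{The error: the completing-the-square identity.} Expanding the square gives
\[
\varpi^{-\rho}A\Bigl(x+\frac{\varpi^{\rho}B}{2A}\Bigr)^{2}=\varpi^{-\rho}Ax^{2}+Bx+\frac{\varpi^{\rho}B^{2}}{4A}.
\]
So for $\rho>0$ your argument actually yields $G(A\varpi^{-\rho},B)=\psi(-\varpi^{\rho}B^{2}/4A)\,G(A\varpi^{-\rho},0)$, with $\varpi^{+\rho}$, not the factor $\psi(-\varpi^{-\rho}B^{2}/4A)$ you wrote.

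This is not cosmetic, because the phase printed in the statement is itself wrong for $\rho>0$. Take $\rho=1$ and $A=B=1$. Since $Bx\in\mathcal{O}$ and $\psi$ is unramified, $G(\varpi^{-1},1)=G(\varpi^{-1},0)\neq 0$. The printed formula multiplies this by $\psi(-\varpi^{-1}/4)$, which for $F=\mathbb{Q}_p$ is a nontrivial $p$-th root of unity.

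The correct phase is $\psi(-\varpi^{\rho}B^{2}/4A)$ when $\rho>0$, and $1$ when $\rho\le 0$, as your first case shows. This matches the factor $\psi(-\varpi B_1^2/2a)$ in Lemma \ref{two_dimensional_gauss_sum}, which carries a positive power of $\varpi$. The misprint is harmless for the paper's estimates, which use this lemma with $B=0$ or only through $\lvert G\rvert$.

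So you wrote a false intermediate identity that happens to land on a misprinted target. A quick expansion would have caught it; with the constant corrected, your argument proves the corrected lemma. Relatedly, the stationary point is $-\varpi^{\rho}B/2A\in\mathcal{O}$, not your $x_1=-B/2A\in\mathfrak{p}^{-\rho}$.

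\textbf{Base case.} The normalisation issue you flag is real, and Lemma \ref{general_gauss_sum} makes it precise. Taking $\chi=\chi_F$ and $x=\varpi^{-1}$ there gives
\[
q^{-1/2}\sum_{u\in(\mathcal{O}/\mathfrak{p})^\times}\chi_F(u)\psi(\varpi^{-1}u)=\chi_F(\varpi)\,\varepsilon(\tfrac12,\chi_F).
\]
So your identification with $\varepsilon(\tfrac12,\chi_F)$ holds exactly when $\chi_F$ is extended to $F^\times$ with $\chi_F(\varpi)=1$. The left side, and hence the product $\chi_F(\varpi)\varepsilon(\tfrac12,\chi_F)$, does not depend on the choice of extension.
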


\begin{proof}
This is Lemma 3.1.1 in \cite{assing_thesis}.
\end{proof}

\noindent
We also encounter more general one-dimensional Gau{\ss} sums. If $\chi$ is a character of $F^\times$, let
\begin{equation*}
G(x, \chi):= \int_{\mathcal{O}^\times} \psi(xy) \chi(y) d^\times y
\end{equation*}
for $x \in F^\times$. When $\chi$ is non-trivial, this Gau{\ss} sum can be expressed in terms of the $\varepsilon$-factor $\varepsilon \left( \frac{1}{2}, \chi^{-1}\right)$:

\begin{lemma}\label{general_gauss_sum}
  Let $\chi$ be a non-trivial character of $F^\times$. Then
  \begin{equation*}
G(x , \chi)=
\begin{cases}
  q^{-\frac{a(\chi)}{2}} \zeta_F(1) \varepsilon \left( \frac{1}{2} , \chi^{-1} \right)\chi^{-1}(x) & \textrm{if }v(x)=-a(\chi)\\
  0 & \textrm{otherwise}
\end{cases}
\end{equation*}
\end{lemma}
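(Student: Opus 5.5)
We prove the formula for $G(x,\chi)$ by reducing to the local functional equation (Tate's local theory) for $\chi$, or alternatively by a direct computation with the substitution $y \mapsto y(1+z)$. The plan is to treat the cases $v(x) \neq -a(\chi)$ and $v(x) = -a(\chi)$ separately. Write $x = x_0\varpi^{v(x)}$ with $x_0 \in \mathcal{O}^\times$. Since $d^\times y$ is invariant under $y \mapsto x_0^{-1}y$, we first get $G(x,\chi) = \chi^{-1}(x_0)\, G(\varpi^{v(x)},\chi)$. This already produces the factor $\chi^{-1}$, up to the value of $\chi$ at a power of $\varpi$, which we account for at the end.

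\emph{Vanishing.} Set $a = a(\chi) \geq 1$ (recall $\chi$ is non-trivial). Suppose $v(x) \geq -a+1$. If $a \geq 1$ we may pick $u \in 1+\mathfrak{p}^{a-1}$ with $\chi(u)\neq 1$; when $a=1$ this means $u\in\mathcal{O}^\times$ with $\chi(u)\ne1$. If instead $\chi$ is unramified but non-trivial on $\mathcal{O}^\times$ — which is impossible, so in fact $a \ge 1$ automatically whenever $\chi|_{\mathcal{O}^\times}\ne 1$ — the same reasoning applies. One caveat: if $\chi|_{\mathcal O^\times}$ is trivial, the lemma must be read with $a(\chi)=0$, and then the standard unramified computation applies instead; I would note this case separately. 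For $u\in 1+\mathfrak p^{a-1}$ we have $\psi(xyu)=\psi(xy)$, because $xy(u-1)\in\mathcal O$ and $\psi$ is unramified. Substituting $y\mapsto yu$ then gives $G=\chi(u)G$, so $G=0$.

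Now suppose $v(x) \leq -a-1$. Decompose $\mathcal{O}^\times$ into cosets $y_0(1+\mathfrak{p}^{a})$. On each coset $\chi$ is constant, so the inner integral is $\int_{\mathfrak{p}^a}\psi(xy_0 z)\,dz$ up to constants. This vanishes, since $xy_0\mathfrak{p}^a \not\subset \mathcal{O}$ and we are integrating a non-trivial character over a compact group.

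\emph{The main case} $v(x)=-a$. Here I would compare with the defining property of the $\varepsilon$-factor: for ramified $\chi$, Tate's local functional equation gives
\[
\varepsilon(s,\chi^{-1}) = \int_{\varpi^{-a}\mathcal{O}^\times}\chi(y)|y|^{-s}\psi(y)\,dy,
\]
possibly with a conjugated character depending on convention, and I would check which matches the paper's normalisation against \cite{assing_thesis}. Changing variables $y \mapsto xy$ with $dy = \zeta_F(1)^{-1}|y|\,d^\times y$ turns this into $q^{a(1-s)}\chi(x)\,\zeta_F(1)^{-1}\int_{\mathcal O^\times}\chi(y)\psi(xy)\,d^\times y$ up to normalisation. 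Evaluating at $s=1/2$ yields $G(x,\chi) = q^{-a/2}\zeta_F(1)\varepsilon(\tfrac12,\chi^{-1})\chi^{-1}(x)$.

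The main obstacle is purely bookkeeping: matching the convention for $\varepsilon$ (whether the relevant factor is $\chi$ or $\chi^{-1}$, and the sign of $\psi$) and the measure constants $\zeta_F(1)$. I would settle these by citing the corresponding lemma in \cite{assing_thesis}, which uses the same notation, and verifying the absolute value $|G|=q^{-a/2}\zeta_F(1)$ via Parseval as a sanity check.
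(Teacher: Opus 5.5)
Your proof is correct, and it is a self-contained argument where the paper gives none: the paper's proof only cites Equation (1.3.1) of Assing's thesis. Both vanishing steps are right as written. For $v(x)\geq 1-a$ you translate by $u\in 1+\mathfrak{p}^{a-1}$ with $\chi(u)\neq 1$. For $v(x)\leq -a-1$ you use additive orthogonality on the cosets $y_0+\mathfrak{p}^{a}$. The change of variables $y\mapsto xy$ in the main case is also correct.

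The hedging about conventions is unnecessary, and your own first vanishing argument lets you remove it. Use $\hat\Phi(x)=\int_F\Phi(y)\psi(xy)\,dy$, the self-dual measure, and the standard functional equation $Z(1-s,\chi^{-1},\hat\Phi)=\varepsilon(s,\chi)Z(s,\chi,\Phi)$ for ramified $\chi$, and take $\Phi=\mathbf{1}_{1+\mathfrak{p}^{a}}$. Then $Z(s,\chi,\Phi)=\zeta_F(1)q^{-a}$ and $\hat\Phi=q^{-a}\psi\cdot\mathbf{1}_{\mathfrak{p}^{-a}}$. The vanishing argument applied to $\chi^{-1}$ kills every shell $v(y)>-a$ in $Z(1-s,\chi^{-1},\hat\Phi)$. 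This leaves $\varepsilon(s,\chi)=\int_{\varpi^{-a}\mathcal{O}^\times}\chi^{-1}(y)\lvert y\rvert^{-s}\psi(y)\,dy$. With $\chi^{-1}$ in place of $\chi$ this is exactly the formula you wrote, with no conjugation and no stray factor $\chi(-1)$, so the constant $q^{-a/2}\zeta_F(1)$ comes out as claimed.

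Your route makes all normalisations visible: $\varepsilon$, $\psi$, and $d^\times y=\zeta_F(1)\,dy/\lvert y\rvert$. The paper's citation buys brevity and automatic compatibility with the conventions of the source from which Theorem \ref{whittaker_formulas} is extracted.

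One point needs tightening: your remark on the case $\chi|_{\mathcal{O}^\times}=1$. For $\chi$ non-trivial but unramified the lemma is false, not merely handled by a different computation. In that case $G(x,\chi)=\int_{\mathcal{O}^\times}\psi(xy)\,d^\times y$ equals $1$ for $v(x)\geq 0$ and $-1/(q-1)$ for $v(x)=-1$. The formula instead predicts $\zeta_F(1)$ at $v(x)=0$ and $0$ otherwise.

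So ``non-trivial'' must be read as $a(\chi)\geq 1$. This is exactly what your argument uses: it needs the element $u$ to exist, and it needs $y_0(1+\mathfrak{p}^{a})=y_0+\mathfrak{p}^{a}$. State that hypothesis explicitly rather than appealing to ``the standard unramified computation''. Nothing is lost for the paper, which only applies the lemma to highly ramified characters (conductor exponent $n/2$).
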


\begin{proof}
See Equation (1.3.1) in \cite{assing_thesis}.
\end{proof}

\noindent
In the two-dimensional case, we will only need to consider $\rho = 1$, and the result is:

\begin{lemma}\label{two_dimensional_gauss_sum}
Let $A = \left(
\begin{smallmatrix}
a&b\\
b&c \\
\end{smallmatrix}
\right) \in \mathrm{Mat}_{2 \times 2}(\mathcal{O} )$ be a symmetric matrix, and $B= \left(
\begin{smallmatrix}
B_1 \\
B_2 \\
\end{smallmatrix}
\right) \in F^2 $. Write $A_{\mathfrak{p}}$ for the image of $A$ in $\mathrm{Mat}_{2 \times 2}(\mathcal{O} / \mathfrak{p})$, and let $\operatorname{rk}(A_{\mathfrak{p}})$ denote the rank of $A_{\mathfrak{p}}$. Assume that $\operatorname{rk}(A_{\mathfrak{p}}) >0$. If $a \notin \mathfrak{p}$, then $G(\frac{\varpi^{-1}}{2}A,B)$ is given by
\begin{equation*}
\begin{cases}
  q^{-\frac{1}{2}}\gamma(A_{\mathfrak{p}})\psi \left( - \frac{\varpi B_1^2}{2a} \right)  & \operatorname{rk}(A_{\mathfrak{p}})=1,B_1 , B_2 \in \mathfrak{p}^{-1},B_2 - \tfrac{b}{a} B_1 \in \mathcal{O} \\
  q^{-1} \gamma(A_{\mathfrak{p}}) \psi \left( \frac{- \varpi}{ 2 \det(A_{\mathfrak{p}})}(a B_1^2 - 2b B_1 B_2 + c B_2^2) \right) & \operatorname{rk}(A_{\mathfrak{p}})=2,B_1 , B_2 \in \mathfrak{p}^{-1}\\
  0 & \textrm{otherwise}
\end{cases}.
\end{equation*}
If $a \in \mathfrak{p}$ and $c\notin  \mathfrak{p} $, the result is the same but with $a$ and $c$ exchanged and with $B_1$ and $B_2$ exchanged. If $a,c \in \mathfrak{p}$, then $b\notin \mathfrak{p}$, and we have
\begin{equation*}
G \left( \frac{\varpi^{-1}}{2}A,B \right)= q^{-1}\gamma(A_{\mathfrak{p}})\psi \left( \frac{- \varpi}{2b}(B_1^2 + B_2^2) \right).
\end{equation*}
The number $\gamma(A_{\mathfrak{p}})$ is defined as
\begin{equation*}
\gamma(A_{\mathfrak{p}}) =
\begin{cases}
  \chi_F(\frac{a}{2})\varepsilon \left( \frac{1}{2} , \chi_F  \right) & \text{if }\operatorname{rk}(A_{\mathfrak{p}})=1\\
  \chi_F(\det(A_{\mathfrak{p}})) \varepsilon \left( \frac{1}{2} , \chi_F  \right)^2 & \text{if }\operatorname{rk}(A_{\mathfrak{p}})=2                                                            
\end{cases}.
\end{equation*}
\end{lemma}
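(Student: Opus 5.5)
The plan is to reduce the two-dimensional sum to a product of one-dimensional Gau{\ss} sums by a unimodular change of variables, and then invoke Lemma \ref{one_dimensional_gauss_sum} with $\rho=1$.

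\textbf{Step 1 (support in $B$).} First I show that $G(\frac{\varpi^{-1}}{2}A,B)=0$ unless $B_1,B_2\in\mathfrak{p}^{-1}$. Substitute $x\mapsto x+\varpi z$ with $z\in\mathcal{O}^2$; this preserves $\mathcal{O}^2$ and $dx$. The quadratic phase changes by $x^tAz+\tfrac{\varpi}{2}z^tAz\in\mathcal{O}$, which is invisible to the unramified $\psi$. The linear phase changes by $\varpi z^tB$. Averaging over $z$ therefore kills the integral unless $\varpi B\in\mathcal{O}^2$.

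For the rest of the argument I assume $B\in(\mathfrak{p}^{-1})^2$. Then all the exponents that appear, such as $\varpi B_1^2/a$, depend on the entries of $A$ only modulo $\mathfrak{p}$. This is why only $A_{\mathfrak{p}}$ appears in the statement.

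\textbf{Step 2 (case $a\notin\mathfrak{p}$).} Make the unimodular, measure-preserving substitution $x_1=x_1'-\tfrac{b}{a}x_2$ on $\mathcal{O}^2$. Completing the square gives
\[
x^tAx=a x_1'^2+\tfrac{\det A}{a}x_2^2,
\qquad
x^tB=x_1'B_1+x_2\bigl(B_2-\tfrac{b}{a}B_1\bigr).
\]
So the sum factors as
\[
G\bigl(\tfrac{a}{2}\varpi^{-1},B_1\bigr)\cdot G\bigl(\tfrac{\det A}{2a}\varpi^{-1},\,B_2-\tfrac{b}{a}B_1\bigr).
\]
The first factor is given directly by Lemma \ref{one_dimensional_gauss_sum}: it equals $q^{-1/2}\chi_F(\tfrac a2)\varepsilon(\tfrac12,\chi_F)\psi(-\varpi B_1^2/2a)$.

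If $\operatorname{rk}A_{\mathfrak{p}}=2$, then $\det A$ is a unit, and Lemma \ref{one_dimensional_gauss_sum} evaluates the second factor as well. Multiplying the two factors, the characters combine as $\chi_F(\tfrac a2)\chi_F(\tfrac{\det A}{2a})=\chi_F(\det A)$, since $\chi_F(4)=1$. The phases combine, by the usual completing-the-square identity, into $\psi(-\tfrac{\varpi}{2\det A}B^t\operatorname{adj}(A)B)$. I will check carefully whether this agrees with the quadratic form written in the statement; it may be that the roles of $a$ and $c$ are interchanged there, which is harmless for the applications.

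If $\operatorname{rk}A_{\mathfrak{p}}=1$, then $\det A/a\in\mathfrak{p}$. The quadratic term in the second factor then lies in $\mathcal{O}$ and is killed by $\psi$, so the second factor is $\int_{\mathcal{O}}\psi(x_2B_2')\,dx_2=\mathbf{1}_{B_2'\in\mathcal{O}}$ with $B_2'=B_2-\tfrac{b}{a}B_1$. This gives exactly the stated rank-one formula and its support condition.

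\textbf{Step 3 (remaining cases).} The case $c\notin\mathfrak{p}$ follows by symmetry, swapping $x_1$ and $x_2$.

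If $a,c\in\mathfrak{p}$, then $b$ must be a unit, since $\operatorname{rk}A_{\mathfrak{p}}>0$. Since $p$ is odd, the substitution $x_1=u+v$, $x_2=u-v$ is unimodular. It transforms $A$ into a matrix congruent modulo $\mathfrak{p}$ to $\operatorname{diag}(2b,-2b)$ and $B$ into $(B_1+B_2,\,B_1-B_2)$. Applying Step 2 to the new data produces $q^{-1}\chi_F(-4b^2)\varepsilon(\tfrac12,\chi_F)^2$ times a phase. The phase simplifies to $\psi(-\tfrac{\varpi}{8b}\cdot 4B_1B_2)$ or, after the analogous bookkeeping, to the form stated in the lemma.

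I expect the main obstacle to be this last step: tracking signs, factors of $2$ and the quadratic characters through the change of variables so that the exact shape of $\gamma(A_{\mathfrak{p}})$ and of the phase matches the statement. Everything else is routine completing the square.
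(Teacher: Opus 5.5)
\textbf{Verdict.} Your method is the standard one and Steps~1 and~2 are correct, but Step~3 cannot be completed as you claim, because that case of the statement is misprinted. For comparison, the paper gives no argument of its own; it only points to the proof of Lemma~3.1.2 in Assing's thesis.

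\textbf{Steps 1 and 2.} You translate by $\varpi\mathcal{O}^2$ to force $B\in(\mathfrak{p}^{-1})^2$, diagonalise by a unimodular change of variables, and multiply two one-dimensional Gau{\ss} sums. Both steps work.

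In Step~2 you used the correct phase $\psi(-\varpi B_1^2/2a)$. Note, however, that Lemma~\ref{one_dimensional_gauss_sum} as printed has $\varpi^{-\rho}$ in the phase, whereas completing the square gives $\varpi^{\rho}$ for $\rho>0$. So complete the square yourself rather than cite that lemma verbatim.

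Your suspicion about the rank-two phase is right, and you should state it as a fact rather than leave it open. The product of your two factors has phase
\[
-\tfrac{\varpi}{2}B^tA^{-1}B=-\frac{\varpi}{2\det A}\bigl(cB_1^2-2bB_1B_2+aB_2^2\bigr),
\]
so $a$ and $c$ are interchanged in the statement.

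\textbf{The gap in Step~3.} First, there is an arithmetic slip. With $A'\equiv\operatorname{diag}(2b,-2b)$ and $B'=(B_1+B_2,B_1-B_2)$, your Step~2 formula gives the phase
\[
-\tfrac{\varpi}{2}\cdot\tfrac{1}{2b}\bigl[(B_1+B_2)^2-(B_1-B_2)^2\bigr]=-\tfrac{\varpi}{b}B_1B_2,
\]
not $-\frac{\varpi}{2b}B_1B_2$.

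More seriously, no bookkeeping turns this into the printed phase $-\frac{\varpi}{2b}(B_1^2+B_2^2)$. The two differ by $-\frac{\varpi}{2b}(B_1-B_2)^2$, whose image under $\psi$ is not identically $1$ on $(\mathfrak{p}^{-1})^2$.

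Your own Step~1 also shows the sum vanishes unless $B_1,B_2\in\mathfrak{p}^{-1}$. That support condition is missing from this case of the statement.

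So this case is false as written. For instance, take $\mathbb{Q}_p$, $A=\left(\begin{smallmatrix}0&1\\1&0\end{smallmatrix}\right)$ and $B=(p^{-1},0)$. The sum equals $p^{-1}$. The printed formula gives $p^{-1}\psi(-\tfrac{1}{2p})$ (here $\gamma(A_{\mathfrak p})=1$, see below), and $\psi(-\tfrac{1}{2p})\neq 1$ because $\psi$ is unramified.

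\textbf{What you should prove instead.} The true statement is
\[
G\bigl(\tfrac{\varpi^{-1}}{2}A,B\bigr)=q^{-1}\gamma(A_{\mathfrak{p}})\,\psi\bigl(-\tfrac{\varpi}{b}B_1B_2\bigr)\mathbf{1}_{B_1,B_2\in\mathfrak{p}^{-1}}.
\]
Here $\gamma(A_{\mathfrak{p}})=\chi_F(-b^2)\varepsilon(\tfrac12,\chi_F)^2=1$, because $\varepsilon(\tfrac12,\chi_F)^2=\chi_F(-1)$. This is just the rank-two formula $-\frac{\varpi}{2}B^tA^{-1}B$ again.

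It is quickest to verify directly:
\begin{enumerate}
\item Since $a,c\in\mathfrak{p}$, the diagonal terms $\frac{\varpi^{-1}}{2}(ax_1^2+cx_2^2)$ lie in $\mathcal{O}$ and are killed by $\psi$.
\item Integrating over $x_1$ forces $x_2\in-\varpi B_1/b+\mathfrak{p}$. This coset misses $\mathcal{O}$ unless $B_1\in\mathfrak{p}^{-1}$.
\item The remaining $x_2$-integral equals $q^{-1}\psi(-\varpi B_1B_2/b)\mathbf{1}_{B_2\in\mathfrak{p}^{-1}}$.
\end{enumerate}
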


\begin{proof}
This follows by reading the proof of Lemma 3.1.2 in \cite{assing_thesis}.
\end{proof}

\subsection{Oscillatory integrals with cubic phase}\label{cubic_phase}
We now consider oscillatory integrals of the form
\begin{equation}\label{integral_with_cubic_phase}
\int_{t \in \mathcal{O} } \psi(a t^3 + b t^2 + c t)d t
\end{equation}
where $a,b,c \in F$. These show up when one encounters degenerate critical points in the integral representations of the Whittaker function. A special case is the \emph{$p$-adic Airy function}
\begin{equation*}
\mathrm{Ai}(a;b) := q^{- \frac{v(a)}{3}} \int_{t \in \mathcal{O}} \psi(a t^3 + b t) d t
\end{equation*}
for $a,b \in F$. Before analysing the Airy function, we consider a simpler example which also appears in our description the Whittaker newvector. Here, an explicit evaluation is possible.  
\begin{lemma}\label{easy_cubic_integral}
Suppose $v(3a)> v(b)$, and write $b = \varpi^{v(b)}b_0$ where $b_0 \in \mathcal{O}^\times$. Then
  \begin{equation*}
\int_{t \in \mathcal{O}}\psi(a t^3 + b t^2)\, d t= \min \left\{ q^{\frac{v(b)}{2}},1 \right\} \gamma_F(b_0,-v(b)).
\end{equation*}
\end{lemma}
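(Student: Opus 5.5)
The plan is to run a $p$-adic stationary phase argument around the unique critical point $t=0$ of the phase $\phi(t)=at^3+bt^2$, and then reduce to the one-dimensional Gau{\ss} sum of Lemma \ref{one_dimensional_gauss_sum}. Write $\rho=-v(b)$. If $\rho\le 0$, the hypothesis $v(3a)>v(b)$ should make the whole phase integral on $\mathcal{O}$, so the integral is $1=\gamma_F(b_0,\rho)$; the only subtlety is the factor $3$ when $p=3$, which I address below. So assume $\rho>0$.

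\textbf{Localising near $t=0$.} Since $\phi'(t)=t(3at+2b)$ and $v(3at)>v(b)=v(2b)$ for $t\in\mathcal{O}$ ($p$ odd), we get $v(\phi'(t_0))=v(b)+v(t_0)$; the second critical point $-2b/3a$ lies outside $\mathcal{O}$. Put $m=\lceil\rho/2\rceil$ and split $\mathcal{O}$ into cosets $t_0+\mathfrak{p}^m$. On such a coset, for $x\in\mathfrak{p}^m$,
\begin{equation*}
\phi(t_0+x)=\phi(t_0)+\phi'(t_0)x+(3at_0+b)x^2+ax^3 .
\end{equation*}
The quadratic term is integral because $v(b)+2m\ge0$ and $v(3at_0)>v(b)$. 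If the cubic term is also integral, the coset integral is $\psi(\phi(t_0))$ times the integral of the linear character $x\mapsto\psi(\phi'(t_0)x)$ over $\mathfrak{p}^m$. That integral vanishes unless $v(b)+v(t_0)+m\ge0$, that is, unless $v(t_0)\ge\lfloor\rho/2\rfloor$. Hence only $t\in\mathfrak{p}^{k}$ with $k=\lfloor\rho/2\rfloor$ contributes.

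\textbf{Rescaling and evaluating.} Substituting $t=\varpi^k s$ turns the integral into
\begin{equation*}
q^{-k}\int_{\mathcal{O}}\psi\bigl(a\varpi^{3k}s^3+b_0\varpi^{-\delta}s^2\bigr)\,ds,\qquad \delta=\rho-2k\in\{0,1\}.
\end{equation*}
If $\delta=0$, the cubic coefficient $a\varpi^{3k}$ should be integral, so the integral is $1$, giving $q^{-\rho/2}\gamma_F(b_0,\rho)$ because $\rho$ is even.

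If $\delta=1$, I remove the cubic term by writing the phase as $b_0\varpi^{-1}s^2(1+cs)$ with $c=a\varpi^{3k+1}/b_0$, where $v(c)\ge1$. The map $s\mapsto u=s(1+cs)^{1/2}$ is then a measure-preserving bijection of $\mathcal{O}$: the binomial square root converges for $p$ odd, and the map is an isometry on each residue disc because its derivative is a unit. This reduces the integral to $G(b_0\varpi^{-1},0)=q^{-1/2}\gamma_F(b_0,1)$ by Lemma \ref{one_dimensional_gauss_sum}. Since $\gamma_F(b_0,1)=\gamma_F(b_0,\rho)$ for odd $\rho$, the total is $q^{-k-1/2}\gamma_F(b_0,\rho)=q^{-\rho/2}\gamma_F(b_0,\rho)$, as required.

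\textbf{Main obstacle: $p=3$.} The step I expect to be hardest is checking that the cubic term is harmless, namely that $ax^3$ is integral on $\mathfrak{p}^m$ and that $v(c)\ge1$. When $p\ne3$ the hypothesis gives $v(a)>v(b)$, and both checks are immediate. When $p=3$ the hypothesis only gives $v(a)>v(b)-e(F/\mathbb{Q}_p)$, so the cubic term can be large. My first attempt here is the same completing-the-square substitution $t\mapsto t(1+(a/b)t)^{1/2}$ applied directly to $\phi(t)=bt^2(1+(a/b)t)$ on $\mathcal{O}$. If $v(a/b)$ is too small for that square root to converge, I would instead localise with smaller cosets $\mathfrak{p}^{m'}$, choosing $m'$ so that $v(a)+3m'\ge0$. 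The linear-term analysis is unchanged, because $v(\phi'(t_0))=v(b)+v(t_0)$ still holds; one then iterates until the cubic term becomes integral.
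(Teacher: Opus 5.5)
For $p\neq 3$ your argument is correct and is essentially the paper's; the genuine gap is the $p=3$ paragraph, and it cannot be closed. More precisely, your argument works whenever $v(a)>v(b)$, which is automatic for $p\neq 3$. In that case it matches the paper step for step. The paper averages over translates $t\mapsto t+s\varpi^{k+l}$ with $-v(b)=2k+l$, which is your decomposition into cosets of $\mathfrak{p}^{m}$ with $m=k+l=\lceil\rho/2\rceil$. The linear term kills everything outside $\mathfrak{p}^{k}$, and on $\mathfrak{p}^{k}$ the cubic term is integral, leaving $q^{-k}G(b\varpi^{2k},0)$. The paper treats $\rho=1$ separately; that is your case $k=0$, $\delta=1$. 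Your substitution $u=s(1+cs)^{1/2}$ is harmless but unnecessary, since $v(a)>-(2k+1)$ already gives $v(a\varpi^{3k})\geq k\geq 0$.

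The $p=3$ gap cannot be closed because the statement is false there. Take $F=\mathbb{Q}_3$ and $a=b=1/3$, so $v(3a)=0>-1=v(b)$. Let $\omega=\psi(1/3)$, a primitive cube root of unity. The phase modulo $\mathbb{Z}_3$ depends only on $t \bmod 3$, and
\[
\int_{\mathbb{Z}_3}\psi\Bigl(\frac{t^3+t^2}{3}\Bigr)\,dt=\frac{2+\omega^{2}}{3}=\frac{1-\omega}{3}\neq\frac{1+2\omega}{3}=G\bigl(\tfrac{1}{3},0\bigr).
\]
By Lemma \ref{one_dimensional_gauss_sum}, the right-hand side is exactly the lemma's prediction $3^{-1/2}\gamma_F(1,1)$.

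In this example every coset $t_0+3\mathbb{Z}_3$ already has integral linear, quadratic and cubic terms. So your localisation is vacuous ($k=0$) and there is nothing to iterate on. The obstruction is global: $c=a\varpi/b_0=1$ is a unit and $1+cs$ vanishes at $s=-1$, so the square-root substitution does not exist.

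Your claim for $\rho\leq 0$ also fails when $e(F/\mathbb{Q}_3)\geq 2$. Take $v(b)=0$ and $v(a)=-1$. Then $t\mapsto at^3 \bmod \mathcal{O}$ is an additive bijection $\mathcal{O}/\mathfrak{p}\to\mathfrak{p}^{-1}/\mathcal{O}$, so the integral is $0$, not $1$.

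The paper's proof tacitly uses $v(a)>v(b)$ as well: ``if $v(b)=-1$, then $v(a)\geq 0$'' does not follow from $v(3a)>v(b)$ when $p=3$. The correct repair is to strengthen the hypothesis to $v(a)>v(b)$, under which your argument is complete, rather than to attempt a $p=3$ iteration.
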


\begin{proof}
If $v(b) \geq 0$, the integral is $1$. If $v(b)=-1$, then $v(a) \geq 0$ so it equals $G(b,0)$ which by Lemma \ref{one_dimensional_gauss_sum} evaluates as $q^{-\frac{1}{2}}\gamma_F(b_0 ,1)$. Assume now that $v(b) \leq -2$, and write $-v(b)=2k+l$ where $k \geq 1$ and $l \in \left\{ 0,1 \right\}$. Then
\begin{equation*}
\begin{split}
  \int_{t \in \mathcal{O}} \psi(a t^3 + b t^2)\, d t & = \int_{s \in \mathcal{O}} \int_{t \in \mathcal{O} } \psi(a(t + s \varpi^{k+l})^3 + b(t + s \varpi^{k+l})^2)\, d t\, d s\\
  &= \int_{t \in \mathcal{O}} \psi(a t^3 + b t^2) \int_{s \in \mathcal{O}} \psi((3a t^2 +2b t)\varpi^{k+l}s)\,d s\, d t.
\end{split}
\end{equation*}
The inner-most integral vanishes unless $v((3 a t^2 + 2 b t)\varpi^{k+l})  \geq 0$ in which case it equals $1$. Since $v(3a)>v(b)$, one sees that this happens if and only if $v(t) \geq k$. When $t \in \mathfrak{p}^k$, $a t^3 \in \mathcal{O}$ so we simply arrive at $q^{-k} G(b \varpi^{2k},0)= q^{\frac{v(b)}{2}} \gamma_F(b_0 , -v(b))$ by Lemma \ref{one_dimensional_gauss_sum}.
\end{proof}

We now move towards understanding the $p$-adic Airy function. Rather than giving explicit evaluations, we focus on bounding $\mathrm{Ai}(a;b)$. The $p$-adic Airy function is of course only interesting when $v(a)<0$. Assuming this, we prove

\begin{proposition}\label{airy}
Let $a,b \in F$ with $v(a)<0$. Then we have the following bounds on $\mathrm{Ai}(a;b)$:
\begin{enumerate}
\item If $v(b)< v(a)$, then $\mathrm{Ai}(a;b)=0$.
\item If $v(a) \leq v(b)< \frac{1}{3} v(a)$, then $\lvert \mathrm{Ai}(a;b)\rvert \leq 2q^{2+v(3)}q^{-\frac{1}{12}v(a)+\frac{1}{4}v(b)}$.
\item If $\frac{1}{3} v(a) \leq v(b) $, then $\lvert \mathrm{Ai}(a;b) \rvert \leq q^{1+ v(3)} $.
\end{enumerate}
\end{proposition}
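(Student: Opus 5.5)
We prove the three parts of Proposition \ref{airy} with the same $p$-adic stationary phase device used in Lemma \ref{easy_cubic_integral}. Write $\alpha=-v(a)>0$ and $\beta=-v(b)$, and let $\phi(t)=at^3+bt$. For $k\geq 1$ we average over translates $t\mapsto t+s\varpi^k$ with $s\in\mathcal{O}$ and expand:
\begin{equation*}
\phi(t+s\varpi^k)=\phi(t)+(3at^2+b)s\varpi^k+3at s^2\varpi^{2k}+as^3\varpi^{3k}.
\end{equation*}
Take $k$ minimal such that $v(3a)+2k\geq 0$ and $v(a)+3k\geq 0$, that is, $k=\lceil (\alpha-v(3))/2\rceil$ up to the cubic constraint. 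Then the higher-order terms are integral for every $t\in\mathcal{O}$. The $s$-integral is therefore the indicator of $v(3at^2+b)\geq -k$. This gives
\begin{equation*}
\int_{\mathcal{O}}\psi(\phi(t))\,dt=\int_{S}\psi(\phi(t))\,dt,\qquad S=\{t\in\mathcal{O}:\ v(3at^2+b)\geq -k\}.
\end{equation*}
The factor $q^{v(3)}$ in the bounds absorbs the discrepancy caused by $3$ when $p=3$ or when $F$ is ramified over $\mathbb{Q}_3$.

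\emph{Part (1).} If $v(b)<v(a)$, then for every $t\in\mathcal{O}$ we have $v(3at^2+b)=v(b)$, and $v(b)<v(a)\leq -k$ once $k$ is chosen above. So $S=\emptyset$ and the integral vanishes. For this part I would instead take $k$ so that $-k>v(b)$ while the quadratic and cubic terms stay integral. Since $v(b)\leq v(a)-1$, a direct one-step argument with $k=-v(b)-\dots$ should work. Alternatively one can check directly that $\psi(bt)$ oscillates on cosets of $\mathfrak{p}^{\beta-1}$, on which $at^3$ is nearly constant.

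\emph{Part (3).} Here $\beta\leq\alpha/3$. The set $S$ consists of $t$ with $3at^2\equiv -b$ to precision $\varpi^{-k}$. It is contained in at most two residue discs around $\pm\sqrt{-b/3a}$, and each disc has radius $q^{-r}$ with $2r\approx\alpha-k\approx\alpha/2$ when $v(t)$ is forced. If instead $t$ is small, then $v(t)\geq(\alpha-k)/2$. In either case the measure of $S$ is at most about $q^{1+v(3)}q^{-\alpha/3}$, where the condition $\beta\leq\alpha/3$ decides which regime dominates. Multiplying by the normalisation $q^{\alpha/3}$ gives the bound $q^{1+v(3)}$.

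\emph{Part (2).} Here $\alpha/3<\beta\leq\alpha$, so there are genuine nondegenerate critical points $t_0$ with $t_0^2=-b/3a$ and $v(t_0)=(\beta-\alpha)/2$. If $-b/3a$ is not a square, then $S$ is empty up to the precision loss. Otherwise, on each disc $t_0+\mathfrak{p}^r$ we substitute $t=t_0+u$, so that $\phi=\phi(t_0)+3at_0u^2+au^3$. We then apply Lemma \ref{easy_cubic_integral}, whose hypothesis $v(3a)>v(3at_0)$ holds precisely because $v(t_0)<0$ is required; that is the point where $\beta>\alpha/3$ is used. Rescaling $u$ to the disc gives a quadratic Gauss sum of size $|3at_0|^{-1/2}=q^{-(\alpha+\beta)/4}$, up to $q^{v(3)}$ factors. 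Multiplying by $q^{\alpha/3}$ and summing over at most two critical points yields $2q^{2+v(3)}q^{\alpha/12-\beta/4}$, which matches $-\tfrac{1}{12}v(a)+\tfrac14 v(b)$.

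The main obstacle is the bookkeeping in Part (2). We must check that the localisation radius from $S$ is compatible with the hypotheses of Lemma \ref{easy_cubic_integral} after rescaling. We must also handle the parity of $\alpha$, $\beta$ and the boundary case $\beta=\alpha$, where $v(t_0)=0$ and the critical point may sit at the edge of $\mathcal{O}$. The crude constants $q^{2}$ and $q^{1}$ are chosen generously, so I would allow losing a factor $q$ at each rounding step rather than computing the sums exactly.
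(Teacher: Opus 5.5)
\textbf{Verdict: there is a genuine gap.} Part (1) is fine as you first wrote it. Parts (3) and (2) fail because a single, uniform translation step does not localise the integral to the right scale.

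\textbf{Part (1).} Since $k\le\alpha$, for every $t\in\mathcal{O}$ you have $v(3at^2+b)=v(b)<v(a)\le -k$. So $S=\emptyset$ and the integral vanishes.

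\textbf{The gap in Parts (2) and (3).} You require $3ats^2\varpi^{2k}$ to be integral for \emph{all} $t\in\mathcal{O}$, including units. This forces $k\approx(\alpha-v(3))/2$, and the cubic constraint gives $k\ge\alpha/3$. Hence whenever $v(b)\ge -k$ the set $S$ is exactly the disc
\[
\{t:\ v(t)\ge\lceil(\alpha-v(3)-k)/2\rceil\}.
\]
This covers all of Part (3), and the subrange $\alpha/3<\beta\le k$ of Part (2). The disc has measure roughly $q^{-(\alpha-v(3))/4}$, not $q^{1+v(3)}q^{-\alpha/3}$; your own estimate $2r\approx\alpha-k\approx\alpha/2$ already shows this. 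Bounding trivially on $S$ then gives only $|\mathrm{Ai}(a;b)|$ of order $q^{\alpha/12}$.

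Concretely, take $p\ge 5$, $b=0$ and $v(a)=-12N$. You get $k=6N$, $S=\mathfrak{p}^{3N}$ and the bound $q^{N}$. Part (3) asserts $|\mathrm{Ai}(a;0)|\le q$, and in fact $\mathrm{Ai}(a;0)=1$.

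In the subrange of Part (2), $S$ is a single disc around $0$ containing both critical points. So the claim that $S$ is empty when $-b/3a$ is a non-square is false there, and there are no separate discs $\pm t_0+\mathfrak{p}^r$ to which Lemma \ref{easy_cubic_integral} can be applied. There is also a sign slip: $v(t_0)=(\alpha-\beta-v(3))/2\ge -v(3)/2$, not $(\beta-\alpha)/2$. After rescaling $t_0+\mathfrak{p}^r$ to $\mathcal{O}$, the hypothesis of Lemma \ref{easy_cubic_integral} reads $r>v(t_0)$. That holds exactly when $\beta>k$, i.e.\ when your step has actually separated $\pm t_0$. So your Gau{\ss}-sum computation, which does give $q^{-(\alpha+\beta)/4}$ per disc, only covers $\beta>k\approx(\alpha-v(3))/2$.

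\textbf{The missing idea.} What you are missing is the content of Lemma \ref{airy_lemma}: the translation step must depend on the shell. The paper first substitutes $t\mapsto\varpi^m t$, with $m=\lfloor\alpha/3\rfloor$ in (3) and $m=\lfloor(\alpha-\beta)/2\rfloor$ in (2). The new coefficients then satisfy $v(a')\le\min\{0,v(b')\}$, and $t$ ranges over $\mathfrak{p}^{-m}$.
\begin{itemize}
\item On a shell $\{v(t)=j\}$ with $j<-v(3)$, it translates by $\mathfrak{p}^{k_j}$ with $k_j=-v(a')-v(3)-j$.
\item Because $v(t)=j$ exactly, the quadratic and cubic Taylor terms are integral while $\varpi^{k_j}(3a't^2+b')$ is not. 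So every such shell contributes zero.
\item This localises the integral to $\mathfrak{p}^{-v(3)}$ in the new variable, i.e.\ to the correct scale ($|t|\le q^{v(3)-\lfloor\alpha/3\rfloor}$ in (3), about $|t_0|$ in (2)), losing only $q^{v(3)}$.
\end{itemize}
Part (3) is then a trivial bound. In Part (2) the critical points are essentially units in the new variable and $v(a')\in\{v(b'),v(b')-1\}$, so a single stationary-phase step of the kind you propose finishes the argument.

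\textbf{Alternative repair.} You could keep your uniform step and iterate it. When $S=\mathfrak{p}^r$ one has exactly $\mathrm{Ai}(a;b)=\mathrm{Ai}(a\varpi^{3r};b\varpi^{r})$. This substitution preserves $v(b)-\tfrac13 v(a)$ and $-\tfrac1{12}v(a)+\tfrac14 v(b)$ while decreasing $\alpha$. But the iteration, its termination and the $v(3)$ bookkeeping are not in your proposal, and the measure bound you assert in their place is false.
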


We start with a lemma.

\begin{lemma} \label{airy_lemma}
Let $a,b \in F$ with $v(a) \leq \min \left\{ 0,v(b) \right\}$. Then for all $m<- v(3)$, we have
\begin{equation*}
\int_{t \in \mathfrak{p}^m \setminus \mathfrak{p}^{m+1}} \psi(a t^3 + b t) d t =0.
\end{equation*}  
\end{lemma}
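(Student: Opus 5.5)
The strategy is the same shift-averaging trick as in the proof of Lemma \ref{easy_cubic_integral}. The shell $\mathfrak{p}^m\setminus\mathfrak{p}^{m+1}$ is invariant under $t\mapsto t+s\varpi^{k}$ for any $k\geq m+1$ and $s\in\mathcal{O}$. We will choose such a $k$ so that, after the shift, the phase changes only by a linear term in $s$ whose coefficient is large. Averaging over $s$ then kills the integral.

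Write $I$ for the integral, fix $k\geq m+1$ to be chosen later, and use translation invariance of $dt$ on the shell. Averaging over $s\in\mathcal{O}$ gives
\begin{equation*}
I=\int_{t\in\mathfrak{p}^m\setminus\mathfrak{p}^{m+1}}\psi(at^3+bt)\int_{s\in\mathcal{O}}\psi\big((3at^2+b)\varpi^k s+3at\varpi^{2k}s^2+a\varpi^{3k}s^3\big)\,ds\,dt.
\end{equation*}
The aim is to choose $k$ so that the quadratic and cubic terms in $s$ are integral, while the linear coefficient $(3at^2+b)\varpi^k$ is not. Then the inner integral vanishes by orthogonality, since $\psi$ is unramified.

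For fixed $t$ with $v(t)=m$, the valuations are as follows.
\begin{itemize}
\item The quadratic coefficient has valuation $v(3a)+m+2k$.
\item The cubic coefficient has valuation $v(a)+3k$.
\item Since $m<-v(3)\leq 0$ and $v(b)\geq v(a)$, we have $v(3at^2)=v(3)+v(a)+2m<v(a)+m\leq v(b)$. Hence the linear coefficient has valuation exactly $v(3a)+2m+k$.
\end{itemize}
We therefore need a $k$ with
\begin{equation*}
v(3a)+2m+k<0,\qquad v(3a)+m+2k\geq 0,\qquad v(a)+3k\geq 0,\qquad k\geq m+1.
\end{equation*}

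Consider the natural candidate $k=-v(3a)-2m-1$. Then:
\begin{itemize}
\item The first inequality holds by construction.
\item The condition $k\geq m+1$ becomes $-v(3a)-3m\geq 2$, which follows from $m\leq -v(3)-1$ and $v(a)\leq 0$ once we account for $-3m\geq 3v(3)+3$.
\item The quadratic condition reads $-v(3a)-3m-2\geq 0$, which is the same inequality.
\item The cubic condition reads $-3v(3)-2v(a)-6m-3\geq 0$. Using $-6m\geq 6v(3)+6$, this is implied by $3v(3)-2v(a)+3\geq 0$, which holds since $v(a)\leq 0$.
\end{itemize}

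The main obstacle is this bookkeeping with $v(3)$ when $p=3$ and the field is ramified. I would carefully recheck the quadratic inequality in the extreme case $m=-v(3)-1$, $v(a)=0$. There it gives $-v(3)+3v(3)+3-2=2v(3)+1\geq 0$, which is fine. If some case fails, I would instead take $k$ slightly smaller, keeping the linear coefficient non-integral, and verify that the inequalities still hold.
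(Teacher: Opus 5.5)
Your proof is correct and is essentially the paper's argument. The paper splits the shell into cosets $t_0+\mathfrak{p}^k$ with $k=-v(a)-v(3)-m$ and shows the phase is non-constant linear on each coset, which is exactly what your averaging over translates by $\varpi^k\mathcal{O}$ does. Your different choice $k=-v(3a)-2m-1$ also satisfies all four inequalities with room to spare (e.g.\ $-v(3a)-3m\geq 2v(3)-v(a)+3\geq 3$), so the fallback in your final paragraph is unnecessary.
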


\begin{proof}
We decompose the range of integration into cosets: $\mathfrak{p}^m\setminus \mathfrak{p}^{m+1}=\sqcup(t_0 + \mathfrak{p}^k) $ where $k \geq 0$ is to be determined, and $t_0$ ranges over a set representatives for the cosets of $\mathfrak{p}^k$ in $\mathfrak{p}^{m}$ that do not intersect $\mathfrak{p}^{m+1}$. Have
\begin{equation*}
\int_{t \in t_0 + \mathfrak{p}^k} \psi(a t^3 + b t) d t =  \int_{s \in \mathfrak{p}^k} \psi(a(t_0+s)^3 + b(t_0 +s)) d s.
\end{equation*}
Expanding the phase into a polynomial in $s$ gives:
\begin{equation*}
a(t_0 + s)^3 + b(t_0 +s)=a s^3+3at_0 s^2 + (3a t_0^2 +b)s + (a t_0^3 + b t_0).
\end{equation*}
If we can choose $k$ such that
\begin{equation*}
\textrm{(1) }v(a) +3k \geq 0;\quad \textrm{(2) }v(3 a t_0)+2 k \geq 0; \quad \textrm{(3) }v(3 a t_0^2 +b)+k<0.
\end{equation*}
then the integral vanishes because it reduces to an oscillatory integral with non-constant linear phase. Since $v(3t_0)<0$, (2) implies (1). Moreover, $v(3at_0^2)=v(a)+ v(3)+2m<v(b)$ so $v(3at_0^2 +b)=v(a)+v(3)+2m$. Hence we need
\begin{equation*}
\textrm{(i) }v(a)+v(3)+m+2k \geq 0;\quad \textrm{(ii) }v(a)+ v(3)+2m+k<0.
\end{equation*}
This is achieved by $k=-v(a)-v(3)-m>0$.
\end{proof}

We now prove the proposition.

\begin{proof}[Proof of Proposition \ref{airy}]
\hphantom{}  
\begin{enumerate}
\item Let $m = -v(b) \geq 2$. Then, for any $u \in \mathcal{O}^\times$, we have
\begin{equation*}
\textrm{Ai}(a;b) = q^{-\frac{v(a)}{3}}\int_{t \in \mathcal{O}} \psi(a(t + u \varpi^{m-1})^3 + b(t + u \varpi^{m-1}) )\, d t = \psi(b u \varpi^{m-1}) \textrm{Ai}(a;b)
\end{equation*}
where we have used that $3 a t^2 u\varpi^{m-1} + 3 a t u^2 \varpi^{2(m-1)} + a u^3 \varpi^{3(m-1)} \in \mathcal{O}$ so this lies in the kernel of $\psi$. Since $\psi$ is unramified, we can choose $u$ such that $\psi(b u \varpi^{m-1})\neq 1$, and hence $\textrm{Ai}(a;b)=0$.

\item In the integral defining $\mathrm{Ai}(a;b)$, we make the substitution $t\mapsto \varpi^m t$ where $m \geq 0$ is to be determined. Then
  \begin{equation*}
\int_{t \in \mathcal{O}} \psi(a t^3 + b t) \, d t = q^{-m} \int_{t \in \mathfrak{p}^{-m}} \psi(a \varpi^{3m} t^3 + b \varpi^m t)\, d t .
\end{equation*}
We choose $m$ as large as possible subject to $v(a \varpi^{3m}) \leq 0 $ and $v(a \varpi^{3m}) \leq v(b \varpi^{m})$ so that Lemma \ref{airy_lemma} applies. This amounts to $m \leq -\frac{1}{3} v(a)$ and $m \leq \frac{1}{2} (-v(a)+v(b))$. Since
\begin{equation*}
\frac{1}{2}(-v(a)+v(b)) \leq -\frac{1}{3} v(a) \Leftrightarrow v(b) \leq \frac{1}{3} v(a),
\end{equation*}
the largest $m$, we can choose, is $m= \lfloor -\frac{1}{2} v(a)+\frac{1}{2} v(b) \rfloor$. Letting $\alpha := a \varpi^{3m}$ and $\beta:= b \varpi^m$, and using Lemma \ref{airy_lemma}, the above integral equals
\begin{equation}\label{first_integral}
q^{-m} \int_{t \in \mathfrak{p}^{- v(3)}}\psi(\alpha t^3 + \beta t)\, d t.
\end{equation}
We can write $m = -\frac{1}{2} v(a)+\frac{1}{2} v(b)-\delta$ where $\delta \in \left\{ 0,\frac{1}{2} \right\}$. Then
\begin{equation*}
\begin{split}
  v(\alpha)&= v(a)+3\left(-\frac{1}{2} v(a)+\frac{1}{2} v(b)-\delta\right)\\
           &=-\frac{1}{2} v(a)+\frac{3}{2} v(b) - 3 \delta\\
           &= v(\beta)-2 \delta 
\end{split}
\end{equation*}
so $v(\alpha) \in \left\{ v(\beta), v(\beta)-1 \right\}$. We observe that when $v(b)$ is closer to $v(a)$ rather than $\frac{1}{3} v(a)$, then the integral in \eqref{first_integral} still oscillates a lot, and instead of bounding it trivially, we can use stationary phase to find further cancellation.\\

\noindent
Assume first that $v(b)<\frac{1}{3} v(a)-1$. Then $v(\alpha) \leq -2$ so we can write $v(\alpha)=-2k-l$ where $k \geq 1$ and $l \in \left\{ 0,1 \right\}$. Letting $f(t):=\alpha t^3 + \beta t \in F \left[ t \right]$, the integral in \eqref{first_integral} equals:
\begin{multline}\label{second_integral}
  q^{-m+k+l} \int_{t \in \mathfrak{p}^{- v(3)}}\psi(f(t))\int_{s \in \mathfrak{p}^{k+l}} \psi(f'(t)s)\, d s \\ \cdot \int_{z \in \mathcal{O} } \psi \left(f'(t) \varpi^k z+ \frac{1}{2} f''(t) \varpi^{2k} z^2\right)\, d z\, d t.
\end{multline}
The $s$-integral vanishes unless $v(f'(t)) \geq -k-l$ in which case it equals $q^{-k-l}$, and the $z$-integral is a Gau{\ss} sum of size $q^{-\frac{1}{2}l}$. The condition $v(f'(t)) \geq -k-l$ is equivalent to
\begin{equation*}
t^2 \in -\frac{\beta}{3 \alpha}+ \mathfrak{p}^{k-v(3)}.
\end{equation*}
When $t \in \mathfrak{p}^{-v(3)}$, we can write $t=t'/3$ for some $t' \in \mathcal{O}$, and the above is equivalent to
\begin{equation*}
t'^2 \in -\frac{3 \beta}{\alpha}+ \mathfrak{p}^{k+ v(3)}.
\end{equation*}
By \cite[Lemma 3.1.4]{assing_thesis}, the solution set to this congruence is the union of at most $ 2q^{\frac{1}{2} +\frac{1}{2} v(3)}$ cosets of $\mathfrak{p}^{k+ v(3)}$ in $\mathcal{O}$ (here we use that $v(\beta/ \alpha) \in \left\{ 0,1 \right\}$). Hence the solution set to the original congruence is the union of at most $2 q^{v(3)}$ cosets of $\mathfrak{p}^{k}$ in $\mathfrak{p}^{- v(3)}$. It follows that the integral in Equation (\ref{second_integral}) is bounded in absolute value by  $2q^{-m}q^{ \frac{1}{2} + \frac{1}{2} v(3)}q^{\frac{1}{2} v(\alpha)}$. Hence
\begin{equation*}
\begin{split}
  \lvert \mathrm{Ai}(a;b) \rvert& \leq 2 q^{ \frac{1}{2} + \frac{1}{2} v(3)}q^{\frac{1}{2} v(\alpha)}q^{-m}q^{-\frac{1}{3} v(a)}\\
                  & = 2 q^{\frac{1}{2} +\frac{1}{2}v(3)}q^{\frac{1}{2}(v(a) + 3m)-m-\frac{1}{3} v(a)}\\
                                & = 2q^{ \frac{1}{2} + \frac{1}{2} v(3)}q^{\frac{1}{6} v(a)+\frac{1}{2} m}\\
                                & \leq 2 q^{\frac{1}{2} +\frac{1}{2}v(3)}q^{\frac{1}{6} v(a) +\frac{1}{2}(-\frac{1}{2} v(a)+\frac{1}{2}v(b))}\\
                                & = 2 q^{\frac{1}{2} + \frac{1}{2}v(3)}q^{-\frac{1}{12} v(a) +\frac{1}{4} v(b)}.
\end{split}
\end{equation*}
\noindent
Finally, we must consider the edge case $\frac{1}{3} v(a)-1 \leq v(b) < \frac{1}{3} v(a)$. Then one sees that $-\frac{1}{3} v(a)-1 \leq m \leq -\frac{1}{3} v(a)$ so bounding trivially in Equation \eqref{first_integral} gives $|\textrm{Ai}(a;b)| \leq q^{1+v(3)} q^{\frac{1}{3} v(a)}q^{-\frac{1}{3} v(a)}=q^{1+ v(3)}$. Since $-\frac{1}{12} v(a)+\frac{1}{4} v(b) \geq -\frac{1}{4} $, it follows that $\lvert \mathrm{Ai}(a;b) \rvert \leq q^{\frac{5}{4}+v(3)} q^{-\frac{1}{12}v(a) +\frac{1}{4} v(b) }$. In all cases, we have, say $\lvert \mathrm{Ai}(a;b) \rvert \leq 2 q^{2+v(3)} q^{-\frac{1}{12} v(a)+\frac{1}{4} v(b)}$.

\item We make the same substitution $t \mapsto \varpi^m t$ as in (2), but this time we can take $m= \lfloor -\frac{1}{3} v(a) \rfloor$ and use Lemma \ref{airy_lemma} to obtain
  \begin{equation*}
\int_{t \in \mathcal{O}} \psi(a t^3 + b t)\, d t= q^{-m} \int_{t \in \mathfrak{p}^{- v(3)}}\psi (\alpha t^3 + \beta t)\, d t
\end{equation*}
for some $\alpha, \beta \in F$. It follows that $\lvert \mathrm{Ai}(a;b) \rvert \leq  q^{-\frac{1}{3} v(a)}q^{v(3)}q^{\frac{1}{3} v(a)+1} \leq q^{v(3)+1}$.
\end{enumerate} 
\end{proof}

\begin{remark}\label{airy_remark}
  In the range $v(a) \leq v(b) < \frac{1}{3} v(a)-3$, it is possible to evaluate $\mathrm{Ai}(a;b)$ in terms of Gau{\ss} sums as we now explain. Assume for simplicity that $v(3)=0$, and $v(a)$ is sufficiently negative. As in the proof of Case (2) in  Proposition \ref{airy}, we set $m := \lfloor -\frac{1}{2} v(a)+\frac{1}{2} v(b) \rfloor$, $\alpha := a \varpi^{3m}$ and $\beta := \beta \varpi^{m}$. Due to our assumptions on $v(a)$ and $v(b)$, we have $v(\alpha)=-2k-l$ where $k \geq 2$ and $l \in \left\{ 0,1 \right\}$. We further simplify and assume that $l=0$ because then the $z$-integral in \eqref{second_integral} is identically $1$. If $l=1$, we can evaluate the $z$-integral using Lemma \ref{one_dimensional_gauss_sum}. If $f(t) := \alpha t^3 + \beta t$, the $s$-integral in \eqref{second_integral} vanishes unless $t^2 \in -\beta / 3 \alpha + \mathfrak{p}^k$. As we saw in the proof $v(\beta/\alpha) \in \left\{ 0,1 \right\}$ so since $k \geq 2$, this congruence does not have any solutions unless $-\beta / 3 \alpha$ is a square of valuation $0$, say $-\beta/ 3 \alpha=\gamma^2$ for some $\gamma \in \mathcal{O}^\times$. Note that $\pm\gamma $ are the roots of $f'(t)$, and $t^2 \in -\beta/3 \alpha + \mathfrak{p}^k$ if and only if $t \in \pm \gamma + \mathfrak{p}^k$. If $t \in \pm \gamma + \mathfrak{p}^k$, say $t = \pm \gamma + x$ where $x \in \mathfrak{p}^k$, we have
 \begin{equation*}
f(t) = f(\pm \gamma)+ x^2 f''(\pm \gamma) + \O(\varpi^{3k})
\end{equation*}
so the integral in \eqref{second_integral} reduces to a linear combination of the two integrals
\begin{equation*}
\int_{x \in \mathfrak{p}^k} \psi(x^2 f''(\pm \gamma))\, d x,
\end{equation*}
and these are Gau{\ss} sums that we can evaluate using Lemma \ref{one_dimensional_gauss_sum}.
\end{remark}

\section{Formulas for the Whittaker function}\label{formulas_for_the_whittaker_function}

Let $F$ be as in Section \ref{geometric_considerations}, and assume that the residue characteristic $p$ is odd. Suppose $\pi$ is an infinite dimensional, irreducible, admissibile representation of $\mathrm{GL}_2(F)$. When $\pi$ is unitary, has trivial central characer and sufficiently large conductor, we give formulas for the Whittaker newvector $W_\pi$ of $\pi$. These assumptions apply when $\pi$ is the $p$\textsuperscript{th} component in the automorphic representation generated by a newform for $\Gamma_0(p^n)$ with trivial central character, and $n \geq 3$.\\

\noindent
When $\pi$ is as above, and the conductor exponent is at least $3$, there are two possibilities for $\pi$. By \cite[Theorem 12]{Godement1970}, $\pi$ must be a principal series representation or a supercuspidal representation, and by \cite[Theorem 20.2]{bushnell_henniart}, if $\pi$ is supercuspidal it must be dihedral since $p$ is odd. In summary, the two possibilities are:
\begin{itemize}
\item $\pi = \chi \boxplus \chi^{-1}$ is the principal series representation induced from a unitary character $\chi: F^\times \rightarrow S^1$ and its inverse;
\item $\pi$ is a dihedral supercuspidal representation with trivial central character attached to some quadratic extension $E/F$ and unitary character $\xi$ of $E^\times$.
\end{itemize}
In both cases, $\pi$ can be constructed from a quadratic space $E$ over $F$ and a unitary character $\xi$ of $E^\times$ by means of the Weil representation. This classification allows us to give a uniform description of the Whittaker function in terms of $E$ and $\xi$. 

\subsection{Quadratic spaces}\label{quadratic_spaces}
Let $E$ be a two dimensional \'etale algebra over $F$. In practice, this means that either $E=F \times F$, or $E$ is a quadratic extension of $F$, and we refer to these two cases as $E/F$ being split or non-split respectively. $E/F$ becomes a quadratic space over $F$ by equipping it with the trace pairing, but we will not need this fact in our calculations.\\

\noindent
If $E/F$ is a quadratic extension, we write $e(E/F)$ and $f(E/F)$ (or just $e$ and $f$) for the ramification index and inertia degree of $E/F$ respectively. The ring of integers of $E$ is denoted $\mathcal{O}_E$. We fix a representative $\zeta$ for the non-trivial square class in $\mathcal{O}_F^\times /(\mathcal{O}_F^\times)^2$, so $E = F(\sqrt{\zeta})$ if $E/F$ is unramified, and $E =F(\sqrt{\varpi})$ or $E = F(\sqrt{\zeta \varpi})$ if $E/F$ is ramified. We let $d = d(E/F)$ denote the valuation of the discriminant of $E/F$, which can be calculated as $d=e-1$, and normalise the Haar measure on $E$ such that $\mathrm{vol}(\mathcal{O}_E)=q^{-\frac{d}{2}}$. Let $\mathfrak{p}_E$ denote the maximal ideal of $\mathcal{O}_E$ and $\Omega$ a generator of $\mathfrak{p}_E$. If $E/F$ is unramified, we choose $\Omega = \varpi$, and if $E/F$ is ramified, we can assume that $\Omega^2 = \varpi$ by replacing $\varpi$ by $\zeta \varpi$ if necessary. The trace map $E \rightarrow F$ is denoted $\mathrm{Tr}_{E/F}$. Define an additive character $\psi_E$ on $E$ by $\psi_E := \psi\circ \mathrm{Tr}_{E/F}$. The log-conductor of $\psi_E$ can then be calculated as $n(\psi_E)=-d/f$ \cite[Lemma 2.3.1]{schmidt}. Our normalisation of the Haar measure on $E$ ensures that it is self-dual with respect to $\psi_E$.\\

\noindent
If $E=F \times F$ is split, we write $\mathcal{O}_E $ for $\mathcal{O}_F \times \mathcal{O}_F$, $\mathfrak{p}_E$ for $\mathfrak{p}_F \times \mathfrak{p}_F$ and $\Omega :=(\varpi, \varpi)$ for a generator of $\mathfrak{p}_E$. For convenience, we also set $f=f(E/F)=2$, $e=e(E/F)=1$ and $d=d(E/F)=0$. We define the norm and the trace map by the formulas $\mathrm{Tr}_{E/F}(x_1 , x_2)=x_1 + x_2$, and $N_{E/F}(x_1 , x_2)= x_1 x_2$. In analogy with the non-split case, we set $\psi_E := \psi \circ \mathrm{Tr}_{E/F}$, and the Haar measure is again self-dual with respect to $\psi_E$.\\

\noindent
When $E/F$ is as above, and $\xi: E^\times \rightarrow S^1$ is a unitary character that does not factor through the norm map $N_{E/F}: E^\times \rightarrow F^\times$, one can use the Weil representation to construct an irreducible admissible representation $\omega_\xi$ of $\mathrm{GL}_2(F)$ attached to $(E/F, \xi)$ \cite[Section 4.8]{Bump_1997}. If $E=F \times F$, and $\xi(x_1 , x_2)= \chi_1(x_1) \chi_2( x_2)$ for some unitary characters $\chi_1 , \chi_2 : E^\times \rightarrow S^1$, we recover the principal series representation $\chi_1 \boxplus \chi_2$. If $E/F$ is a quadratic extension, $\omega_\xi$ is a supercuspidal representation of $\mathrm{GL}_2(F)$ and such representations are called dihedral. Because the residue characteristic of $F$ is odd, all supercuspidal representations of $\mathrm{GL}_2(F)$ are dihedral \cite[Theorem 20.2]{bushnell_henniart}.

In both cases, the invariants attached to $\omega_\xi$ can be expressed in terms of $\xi$. If $E/F$ is split, and $\xi = \chi_1 \otimes \chi_2$, we have
\begin{equation*}
c(\omega_\xi)= a(\chi_1)+ a(\chi_2), \quad L(\omega_\xi ,s)=L (s, \chi_1) L(s,\chi_2)\textrm{ and }\varepsilon(s, \omega_\xi)= \varepsilon(s, \chi_1) \varepsilon(s, \chi_2). 
\end{equation*}
When $E/F$ is non-split, the corresponding relations read
\begin{equation*}
c(\omega_\xi)= f a(\xi)+d, \quad L(\omega_\xi,s )=1 \textrm{ and } \varepsilon(s,\omega_\xi)= \gamma_0 \varepsilon(s,\xi)
\end{equation*}
for some $\gamma_0 \in S^1$ only depending on $E$ \cite{schmidt}. The additive character used to define the espilon factors are the unramified character $\psi$ of $F$ that we fixed in Section \ref{geometric_considerations}, and the character $\psi_E$ of $E$ defined above. \\

\noindent
In our case, we assume that $\omega_\xi$ has trivial central character. When $E/F$ is split, this simply means that $\xi = \chi \otimes \chi^{-1}$ for some unitary character $\chi: F^\times \rightarrow S^1$. When $E/F$ is non-split, $\omega_\xi$ is a supercuspidal representation with trivial central character which implies that $\omega_\xi$ is \emph{twist-minimal} i.e. $c(\omega_\xi) \leq c(\mu \otimes \omega_\xi)$ for all characters $\mu$ of $F^\times$ \cite[Lemma 2.1]{hu_nelson_saha}. Moreover, when $E/F$ is ramified, and $\omega_\xi$ is twist-minimal, the conductor exponenent of $\omega_\xi$ must be odd \cite[Proposition 3.5]{Tunnell1978}. Equivalently, $a(\xi)$ must be even.\\

\noindent
As the last preparation before stating the formulas for the Whittaker function, we describe how multiplicative oscillation of the character $\xi$ translates into additive oscillation of $\psi_E$. Recall that if $L$ is a finite extension of $\mathbb{Q}_p$, we define $\kappa_L := \lceil \frac{e(L/\mathbb{Q}_p)}{p-1}\rceil$. By \eqref{logarithm}, there is $b_\xi \in \mathcal{O}_E^\times$ uniquely determined modulo $\mathfrak{p}_E^{a(\xi)- \kappa_E}$ such that 
\begin{equation*}
\xi(z)= \psi_E \left( \frac{b_\xi}{\Omega^{a(\xi)-n(\psi_E)}} \log_E(z) \right)
\end{equation*}
for all $z \in 1+ \mathfrak{p}_E^{\kappa_E}$. When $E = F \times F$ is the split extension, and $\xi = \chi \otimes \chi^{-1}$, this formula should be interpreted in the following way: $\log_E(x_1 , x_2):= (\log_F(x_1),\log_F(x_2))$ where $\log_F$ denotes the $p$-adic logarithm on $F$, $a(\xi)= a(\chi)$, $n(\psi_E )=0$ and $\kappa_E = \kappa_F$. The purpose of the following lemma is to show that $b_\xi$ takes a particularly simple form when $\omega_\xi$ has trivial central character. This observation simplifies the formulas for the Whittaker function from \cite{assing_thesis} considerably.

\begin{lemma}\label{logarithm_lemma}
Suppose $\omega_\xi$ has trivial central character, and $a(\xi) \geq \kappa_E$. Then there is $b_0 \in \mathcal{O}_F^\times$ such that we can take
\begin{equation*}
b_\xi =
\begin{cases}
  (b_0,-b_0) & \textrm{if }E = F \times F\\
  b_0 \sqrt{\zeta} & \textrm{if $E/F$ is an unramified quadratic extension}\\
  b_0 & \textrm{if $E/F$ is a ramified quadratic extension}
\end{cases}.
\end{equation*}
In particular, $b_\xi^2 =(-1)^{n-1} N_{E/F }(b_\xi) \in \mathcal{O}_F^\times$.
\end{lemma}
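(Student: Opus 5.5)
The plan is to translate triviality of the central character into a condition on $\xi$ restricted to $F^\times$, and then read off a constraint on $b_\xi$ by comparing the logarithmic parametrisations \eqref{logarithm} on $F$ and on $E$. The central character of $\omega_\xi$ is $\xi|_{F^\times}$ in the split case, and $\xi|_{F^\times}\cdot\chi_{E/F}$ in the non-split case, where $\chi_{E/F}$ is the quadratic character attached to $E/F$. In either case $\xi$ is trivial on $1+\mathfrak{p}_F^{\kappa_F}$, because $\chi_{E/F}$ has conductor at most $1$. For $z\in\mathfrak{p}_F^{\kappa_F}$ we have $\log_E(1+z)=\log_F(1+z)\in F$, so
\begin{equation*}
1=\xi(1+z)=\psi\Big(\mathrm{Tr}_{E/F}\Big(\frac{b_\xi}{\Omega^{a(\xi)-n(\psi_E)}}\Big)\log_F(1+z)\Big).
\end{equation*}
As $z$ ranges over this set, $\log_F(1+z)$ fills a lattice $\mathfrak{p}_F^{\kappa_F}$. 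Since $\psi$ is unramified, this forces $\mathrm{Tr}_{E/F}(b_\xi\Omega^{-(a(\xi)-n(\psi_E))})\in\mathfrak{p}_F^{-\kappa_F}$. In other words, the element $\beta:=b_\xi\Omega^{-(a(\xi)-n(\psi_E))}$ has trace that is small compared with its size. Since $b_\xi$ is only determined modulo $\mathfrak{p}_E^{a(\xi)-\kappa_E}$, we are free to adjust it so that this trace is exactly $0$.

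In the split case we have $\Omega=(\varpi,\varpi)$ and $n(\psi_E)=0$, and $\xi=\chi\otimes\chi^{-1}$ directly gives $b_{\chi^{-1}}\equiv -b_\chi$. We therefore take $b_\xi=(b_0,-b_0)$ with $b_0=b_\chi$. In the unramified case $\Omega=\varpi$, so $\beta=b_\xi\varpi^{-a(\xi)}$. Writing $b_\xi=x+y\sqrt\zeta$, the trace condition says $x\in\mathfrak{p}^{a(\xi)-\kappa_F}$. Since $\kappa_E=\kappa_F$ here, we may drop $x$, and $b_\xi=y\sqrt\zeta$ with $y=b_0$ a unit.

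In the ramified case we have $\Omega^2=\varpi$, $d=1$, $n(\psi_E)=-1$, and $a(\xi)$ is even by twist-minimality, so $a(\xi)+1$ is odd. Then $\beta=b_\xi\Omega^{-(a(\xi)+1)}=b_\xi\,\Omega\,\varpi^{-(a(\xi)+2)/2}$. Writing $b_\xi=x+y\Omega$, the trace of $b_\xi\Omega$ equals $2y\varpi$, so the trace condition makes $y$ small and we can discard it. This leaves $b_\xi=b_0\in\mathcal{O}_F^\times$.

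The final identity is then a direct check. In the split case $b_\xi^2=(b_0^2,b_0^2)$ and $N(b_\xi)=-b_0^2$. In the unramified case $b_\xi^2=b_0^2\zeta$ and $N=-b_0^2\zeta$. In the ramified case $b_\xi^2=N=b_0^2$. The parity of $n$ matches each case: $n=2a(\chi)$ is even when split, $n=2a(\xi)$ is even when unramified, and $n=2a(\xi)+1$ is odd when ramified.

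The main obstacle is the bookkeeping of precisions. One has to verify that the residual ambiguity allowed by the trace condition is genuinely absorbed by the freedom modulo $\mathfrak{p}_E^{a(\xi)-\kappa_E}$, handling $\kappa_E$ versus $\kappa_F$ and $e(E/\mathbb{Q}_p)$ carefully. One must also confirm that $b_0$ is a unit, which follows because $b_\xi\in\mathcal{O}_E^\times$.
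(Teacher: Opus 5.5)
Your proposal is correct and is essentially the paper's argument: you use triviality of $\xi$ on $1+\mathfrak{p}_F$ (within the range where the logarithmic formula for $\xi$ is valid) to force $\mathrm{Tr}_{E/F}(b_\xi\Omega^{-(a(\xi)-n(\psi_E))})$ to be small, and then absorb the offending coordinate of $b_\xi$ into its ambiguity modulo $\mathfrak{p}_E^{a(\xi)-\kappa_E}$; in the ramified case this is the inclusion $\mathfrak{p}_E^{a(\xi)+1-2\kappa}\subset\mathfrak{p}_E^{a(\xi)-\kappa_E}$, where the paper takes $\kappa=\lceil\kappa_E/2\rceil$. One slip: for ramified $E/F$ one has $f=1$, so $n=a(\xi)+1$, not $2a(\xi)+1$; $n$ is still odd because $a(\xi)$ is even, so your final check of $b_\xi^2=(-1)^{n-1}N_{E/F}(b_\xi)$ stands.
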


\begin{proof}
If $E= F \times F$ is split, and $\omega_\xi$ has trivial central character, $\xi = \chi \otimes \chi^{-1}$. We can then take $b_\xi =(b_0,-b_0)$ where $b_0 \in \mathcal{O}_F^\times$ satisfies $\chi(z)=\psi(b \varpi^{-a(\chi)} \log_F(z))$.

  Suppose now that $E/F$ is non-split. Then $\omega_\xi$ having trivial central character is equivalent to $\xi \mid_{F^\times}= \chi_{E/F}$ where $\chi_{E/F}$ is the quadratic character of $F^\times$ with kernel $N_{E/F}(E^\times)$. Since $p$ is odd, $\chi_{E/F}(z)=1$ for all $z \in 1 + \mathfrak{p}_F$. Hence
\begin{equation}\label{logarithm_expression}
1 = \xi(z)= \log_E \left( \frac{b_\xi}{\Omega^{a(\xi)-n(\psi_E)}} \log_E(z) \right)=\psi \left( \mathrm{Tr}\left( \frac{b_\xi}{\Omega^{a(\xi)-n(\psi_E)}} \right) \log_F(z) \right)
\end{equation}
for all $z \in (1+\mathfrak{p}_F )\cap(1+ \mathfrak{p}_E^{\kappa_E})$. We now consider two cases. In the first case, we assume that $E/F$ is unramified. Then $\Omega = \varpi$, $n(\psi_E)=0$ and $\kappa_E = \kappa_F$. Writing $b_\xi = b_1 + b_2 \sqrt{\zeta}$ for some $b_1, b_2 \in \mathcal{O}_F$, we have
\begin{equation*}
\psi \left( \frac{2 b_1}{\varpi^{a(\xi)}} \log_F(z)\right)=1
\end{equation*}
for all $z \in 1 + \mathfrak{p}_F^{\kappa_F}$. The logarithm surjects onto $\mathfrak{p}_F^{\kappa_F}$ so, since $\psi$ has conductor $\mathcal{O}_F$, it follows that $b_1 \equiv 0 \pmod{\mathfrak{p}_F^{a(\xi)-\kappa_F}}$. Since $b_\xi$ is only determined modulo $\mathfrak{p}_E^{a(\xi)-\kappa_E}$, we can indeed assume that $b_1 =0$.

If $E/F$ is ramified, $E = F(\Omega)$, and we can write $b= b_1 + b_2 \Omega $ for some $b_1 , b_2 \in \mathcal{O}_F$. We have $n(\psi_E)=-1$, and $a(\xi)$ is even because $\omega_\xi$ is twist-minimal. Moreover, we can assume that $\Omega^2$ is a uniformiser in $\mathcal{O}_F$. The last expression in \eqref{logarithm_expression} equals
\begin{equation*}
\psi \left(\frac{2 b_2 }{\Omega^{a(\xi)}}  \log_F (z)\right).
\end{equation*}
This time, the domain of $z$ is $1+ \mathfrak{p}_E^{\lceil \frac{\kappa_E}{2} \rceil}$ so we conclude that $b_2 \in \mathfrak{p}_F^{a(\xi)/2-\lceil \frac{\kappa_E}{2}\rceil}$. In $\mathcal{O}_E$, this means that $b_2 \Omega \in \mathfrak{p}_E^{a(\xi)+1-2 \lceil \frac{\kappa_E}{2}\rceil}\subset \mathfrak{p}_E^{a(\xi)-\kappa_E}$ so we can assume $b_2 =0$.
\end{proof}

\subsection{Overview and statements}
In this section we state the formulas for the Whittaker newvector $W_\pi$ evaluated at the matrices $a(y) \kappa$ where $\kappa $ traverses the matrices in Proposition \ref{representatives}, and we retain the setup from the previous section. The qualitative behaviour of $W_\pi$ is summarised in Table \ref{heuristic} in the introduction. The only place where the description of $W_\pi$ depends on the isomorphism class of $\pi$ is in the transition range where $\kappa = \left(
\begin{smallmatrix}
1&0\\
\varpi^{n/2}&1 \\
\end{smallmatrix}
\right)$. If this is the case, and $\pi$ is a principal series representation, $W_\pi(a(y)\kappa)$
is supported on all of $0 < \lvert y \rvert \leq 1$ whereas if $\pi$ is supercuspidal, $W_\pi(a(y) \kappa)$ is only supported on $\lvert y \rvert=1$. This is not suprising because the Kirillov model of $\pi$ consists of compactly supported functions on $F^\times$ exactly when $\pi$ is supercuspidal. Another feature is that when $\pi$ is a supercuspidal representation attached to a ramified quadratic extension, the conductor exponent $n$ of $\pi$ is odd, so we cannot have $\kappa =
(\begin{smallmatrix}
1&0\\
\varpi^{n/2}&1 \\
\end{smallmatrix})$, and there is no transition range. We now state the formulas for $W_\pi(a(y)\kappa)$.

\begin{theorem}\label{whittaker_formulas}
Let $\xi$ be a unitary character of $E^\times$ that does not factor through the norm map $N_{E/F}: E^\times \rightarrow F^\times$, and let $b:= b_\xi $ for $b_{\xi}$ as in Lemma \ref{logarithm_lemma}. Let $\pi = \omega_\xi$ be the representation attached to $(E/F,\xi)$. Let $n:= c(\pi)$ denote the conductor exponent of $\pi$, and suppose $n \geq 16 \kappa_F$ \footnote{If $F= \mathbb{Q}_p$, it is enough to assume $n \geq 4$, see \cite{assing_thesis} in particular \cite[Lemma 3.4.15]{assing_thesis}.}. If we set $(n_1 , n_2)=(\lfloor \frac{n}{2} \rfloor, \lceil \frac{n}{2}\rceil)$, then $W(a(y) \kappa)$ is given by the following formulas.\\

\noindent
\textbf{Case 1:} $\kappa = \left(
\begin{smallmatrix}
1&0\\
1&1 \\
\end{smallmatrix}
\right)$. Then
\begin{equation}\label{case1}
W_p(a(y) \kappa)= \mathbf{1}_{\lvert y \rvert=q^n} \varepsilon(\tfrac{1}{2},\pi)\psi(y)
\end{equation}
\noindent
\textbf{Case 2:} $\kappa = \left(
\begin{smallmatrix}
1&0\\
\varpi^\gamma&1 \\
\end{smallmatrix}
\right)$ for $0 < \gamma < \frac{n}{2}$. Then
\begin{equation}\label{case2}
W_\pi(a(y) \kappa) = \mathbf{1}_{\lvert y \rvert = q^{n-2 \gamma}} C_{\pi , \kappa} \xi^{-1}(\Omega^{d} x_0+b ) \psi(y \varpi^{- \gamma}+x_0 \varpi^{-n_1})
\end{equation}
where $C_{\pi , \kappa} \in S^1$, and $x_0$ denotes the unique solution in $\mathcal{O}_F$ to the equation
\begin{equation*}
\varpi^{ n_2-\gamma} x^2 +(-1)^ny \varpi^{n - 2 \gamma}x-b^2 \varpi^{n_1 - \gamma}=0.
\end{equation*}

\noindent
\textbf{Case 3:} $\kappa = \left(
\begin{smallmatrix}
1&0\\
\varpi^{n/2}&1 \\
\end{smallmatrix}
\right)$. Then $E/F$ is either split or an unramified quadratic extension. If $E/F $ is split, $W_\pi(a(y) \kappa)$ is supported on $0<\lvert y \rvert \leq 1$, and if $E/F$ is an unramified quadratic extension, $W_\pi(a(y) \kappa)$  is only supported on $\lvert y \rvert=1$. When $E/F$ is split, and $\xi= \chi \otimes \chi^{-1}$, we have
\begin{equation}\label{case3.1}
W_\pi(a(y) \kappa)= \lvert y \rvert^{\frac{1}{2}} \sum_{\pm} C_{\lvert y \rvert, \pm} \chi^{\pm}(y) \quad \textrm{for} \quad 0<\lvert y \rvert \leq q^{-\frac{n}{2}}, 
\end{equation}
and for $q^{-\frac{n}{2}}< \lvert y \rvert<1$, we have
\begin{equation}\label{case3.2}
W_{\pi}(a(y) \kappa) = \lvert y \rvert^{\frac{1}{2}} \sum_{\pm} C_{\lvert y \rvert, \pm} \xi^{-1}(x_{\pm}+ b) \psi((x_{\pm}+y) \varpi^{-\frac{n}{2}})
\end{equation}
for some $C_{\lvert y \rvert,\pm} \in S^1$ only depending on $\lvert y \rvert$ (in fact, $C_{\lvert y \rvert,\pm}$ only depends on $\pi$ for $\lvert y \rvert<q^{-n}$), and where $x_{\pm} \in \mathcal{O}_F$ denotes the solution to the equation $x^2 + y x -b^2=0$ satsifying $x_{\pm}\equiv \mp b_0 \pmod{\mathfrak{p}_F}$ where $b=(b_0 , - b_0)$ as in Lemma \ref{logarithm_lemma}.\\

\noindent
When $\lvert y \rvert=1$ and $E/F$ is split or an umramified quadratic extension, we define $\Delta = \Delta(y):= 1+ b^2y^{-2}$. Then $W_\pi(a(y) \kappa)$ vanishes unless $\Delta$ is a square in $\mathcal{O}_F$, and, for $q^{-\lfloor \frac{n}{4} \rfloor}<  \lvert \Delta \rvert \leq 1$, we have
\begin{equation}\label{case3.3}
W_\pi(a(y) \kappa)= \mathbf{1}_{\Delta \in \mathcal{O}_F^2} \lvert \Delta \rvert^{-\frac{1}{4}} \sum_{\pm} T_{\Delta , \pm}(y) \xi^{-1}(x_{\pm}+ b) \psi((x_{\pm}+y) \varpi^{-\frac{n}{2}})
\end{equation}
where $T_{\Delta, \pm} : \mathcal{O}_F^\times \rightarrow S^1$ depends on $\lvert \Delta \rvert$ and at most on $y$ modulo $\mathfrak{p}_F$ (in fact $T_{\Delta, \pm}$ is independent of $y$ for $\lvert \Delta \rvert<1$), and $x_{\pm} \in \mathcal{O}_F$ denote the two solutions to the equation $x^2  + y x - b^2=0$.\\

\noindent
When $0< \lvert \Delta \rvert \leq q^{-\lfloor \frac{n}{4} \rfloor}$, we have
\begin{equation}\label{case3.4}
W_\pi(a(y) \kappa)= \mathbf{1}_{\Delta \in \mathcal{O}_F^2} q^{\frac{n}{12}} C_{\Delta}\xi^{-1}(-\tfrac{1}{2} y+ b) \psi((-\tfrac{3}{2} y - b^2 y^{-1}) \varpi^{-\frac{n}{2}}) \mathrm{Ai}(U,W).  
\end{equation}
where $C_\Delta \ll 1$ is non-zero and depends only on $\lvert \Delta \rvert$ and the ramification of $3$ in $F$. $\mathrm{Ai}$ denotes the $p$-adic Airy function defined in Section \ref{cubic_phase}. $U$ and $W$ are the rational functions in $y$ given by 
\begin{equation*}
U(y) := \frac{b^2(3 y^2 + 4 b^2 )}{384(y^2 -4 b^2)}\varpi^{r+2 \rho - 3 \delta} \textrm{ and }W(y):= \frac{y^2 + 4 b^2}{y^2 - 4 b^2}\varpi^{-r-\delta}
\end{equation*}
where $r$ and $\rho$ are defined by $\frac{n}{2}=2r+\rho$ and $\rho \in \left\{ 0,1 \right\}$, and $\delta := \lfloor \frac{r}{2} \rfloor$. When $\lvert \Delta  \rvert \gg q^{-\frac{n}{3}}$, $W_\pi(a(y) \kappa)$ is of size $\O(\lvert \Delta \rvert^{-\frac{1}{4}})$\footnote{In fact, we can evaluate $W_{\pi}(a(y) \kappa)$ explicitly in terms of Gau{\ss} sums, see Subcase 3.2.2 in the proof below and Remark \ref{airy_remark}. This observation justifies that we in Table \ref{heuristic} can write that $W_{\mathfrak{p}}(a(y) \kappa)$ oscillates for $1 \leq \lvert \Delta \rvert \ll q^{-\frac{n}{3}}$.}, and when $\lvert \Delta \rvert \ll q^{-\frac{n}{3}}$, $W_\pi(a(y)\kappa)$ is of size $\O(q^{\frac{n}{12}})$.\\

\noindent
\textbf{Case 4:} $\kappa = \left(
\begin{smallmatrix}
1&0\\
\varpi^\gamma&1 \\
\end{smallmatrix}
\right)$ for $\frac{n}{2}< \gamma < n$. Then
\begin{equation}\label{case4}
W_\pi( a(y) \kappa)= \mathbf{1}_{\lvert y \rvert=1} T_{\pi , \kappa}(y) \xi^{-1}(x_0 + \Omega^d \varpi^{ \gamma -n_2}b) \psi((y + x_0)\varpi^{- \gamma})
\end{equation}
where  $T_{\pi , \kappa}: \mathcal{O}_F^\times \rightarrow S^1$ is either constant or only depends on the square class of $y$ in $\mathcal{O}_F^\times $, and $x_0$ denotes the unique solution in $\mathcal{O}_F^\times$ to the equation
\begin{equation*}
x^2 + y x - b^2 \varpi^{2 \gamma -n}=0.
\end{equation*}

\noindent
\textbf{Case 5:} $\kappa = \left(
\begin{smallmatrix}
1&0\\
\varpi^\gamma&1 \\
\end{smallmatrix}
\right)$ for $\gamma \geq n$. Then
\begin{equation}\label{case5}
W_\pi(a(y) \kappa)=\mathbf{1}_{\lvert y \rvert=1}.
\end{equation}
\noindent
\textbf{Case 6:} $\kappa = \left(
\begin{smallmatrix}
0&1\\
1&\varpi^\gamma \\
\end{smallmatrix}
\right)$ for $1 \leq \gamma \leq \infty$. Then
\begin{equation}\label{case6}
W_\pi(a(y)\kappa)= \mathbf{1}_{\lvert y \rvert=q^n }\varepsilon(\frac{1}{2},\pi).
\end{equation}
\end{theorem}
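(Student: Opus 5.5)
The plan is to reduce every case, via Lemma \ref{transform}, to evaluating $W_\pi(g_{t,l,v})$ for the representatives of the decomposition \eqref{decomposition}, and then to import Assing's integral representations from \cite{assing_thesis}. Writing $y=w\varpi^s$, Lemma \ref{transform} gives $W_\pi(a(y)\kappa)=\psi(\varpi^{-\gamma}(1+\varpi^n)y)W_\pi(g_{s-2\gamma,\gamma,w^{-1}})$ for $\kappa=\left(\begin{smallmatrix}1&0\\ \varpi^\gamma&1\end{smallmatrix}\right)$ with $\gamma\le n$, and $W_\pi(a(y)\kappa)=\psi(-\varpi^n y)W_\pi(g_{s,0,1})$ in Case 6. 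Cases 5 and 6 are then immediate. In Case 5, $\kappa\in K_1(n)$, and the Kirillov newvector is $\mathbf{1}_{\mathcal{O}^\times}$ in both the supercuspidal case and the ramified principal series case with $a(\chi)=n/2\ge 1$. In Case 6, $g_{s,0,1}$ is an Atkin--Lehner type element, so $W_\pi(g_{s,0,1})$ equals $\varepsilon(\tfrac12,\pi)$ times the newvector evaluated at $a(\cdot)$, with the support shifted to $|y|=q^n$. The factor $\psi(-\varpi^n y)$ is trivial on that support. Case 1 is the case $\gamma=0$ of the same identity, and the extra phase $\psi(y)$ is exactly the prefactor $\psi(\varpi^{0}(1+\varpi^n)y)$.

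For $0<\gamma<n$ we use Assing's formula. It expresses $W_\pi(g_{t,l,v})$ as an integral over $\mathcal{O}_E^\times$ (or over a twisted torus) of $\xi^{-1}$ against an additive character, for instance $\int \xi^{-1}(x)\psi_E(\dots)\,dx$. With Lemma \ref{logarithm_lemma} and \eqref{logarithm}, we linearise $\xi$ on a small neighbourhood, split the integration into cosets of $\mathfrak{p}^{k}$ with $k\approx l_n/2$, and apply $p$-adic stationary phase. The stationarity condition becomes the quadratic equations in the statement, namely $x^2+yx-b^2\varpi^{2\gamma-n}=0$ and its rescaled variants. The point of Lemma \ref{logarithm_lemma} is that $b^2\in\mathcal{O}_F^\times$, so these are genuine quadratics over $F$. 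The critical points are then evaluated with Lemmas \ref{one_dimensional_gauss_sum} and \ref{two_dimensional_gauss_sum}, which produce unimodular constants and the value $\xi^{-1}(\text{critical point})$ times the phase.

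The support conditions come out as follows. For $\gamma<n/2$ the critical point exists (and is unique, by Hensel) only when $|y|=q^{n-2\gamma}$. For $\gamma>n/2$ it requires $|y|=1$, and $y$ enters only through its square class, which accounts for $T_{\pi,\kappa}$. In Case 3 with $|y|<1$ in the split case, we instead use the Kirillov/Jacquet formula directly. For $|y|\le q^{-n/2}$ this yields the sum of the two characters $\chi^{\pm}$.

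The main obstacle is Case 3 with $|y|=1$. Here the discriminant of the quadratic is $y^2+4b^2$, which is proportional to $\Delta$. When $\Delta$ is not a square there are no critical points, and the integral vanishes. When $|\Delta|$ is large, the two critical points $x_\pm$ are separated by a distance $|\Delta|^{1/2}$, and Gauss-sum evaluation gives size $|\Delta|^{-1/4}$ once we take the second derivative, which is of order $\Delta^{1/2}$, into account. When $|\Delta|\le q^{-\lfloor n/4\rfloor}$ the critical points merge. In that range we expand the phase to third order around the midpoint $-\tfrac12 y$ and rescale by $\varpi^{r+\delta}$, so that the remaining integral is $\int\psi(Ut^3+Wt)\,dt$, up to a quadratic term that can be removed by completing the cube (this is where the constant $384$ and the shape of $U,W$ come from). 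The bounds then follow from Proposition \ref{airy}, using $v(U)\approx -n/4$ and $v(W)=v(\Delta)-r-\delta$. Careful bookkeeping of the error terms in the Taylor expansion of $\log$ is needed, and this is where $n\ge 16\kappa_F$ enters.
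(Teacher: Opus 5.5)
Your reduction via Lemma~\ref{transform} to $W_\pi(g_{t,l,v})$, followed by Assing's integral representations, stationary phase and Proposition~\ref{airy}, is the paper's route. Your Atkin--Lehner argument for Cases 1 and 6 is a valid and more self-contained substitute for citing Assing's Lemmas 3.3.1/3.3.9. Indeed $g_{s,0,v}=a(\varpi^{s})\left(\begin{smallmatrix}0&1\\-1&0\end{smallmatrix}\right)n(v)$ with $n(v)\in K_1(n)$, which also shows that $v=w^{-1}$ versus $v=1$ is irrelevant in Case 1.

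\textbf{The range $q^{-n/2}<|y|<1$ is missing.} The proposal does not cover formula \eqref{case3.2}, the split case of Case 3 with $q^{-n/2}<|y|<1$. You only say what the ``Kirillov/Jacquet formula'' gives for $|y|\le q^{-n/2}$. For $0<s<n/2$ that formula (Assing's Lemma 3.3.9) gives $\sum_{l_2}\chi(\varpi)^{-n+s+2l_2}K_{l_2}$ with two-dimensional sums $K_{l_2}$. The shape $\xi^{-1}(x_\pm+b)\psi(x_\pm\varpi^{-n/2})$ only appears after two further steps:
\begin{itemize}
\item showing that only $l_2=\frac n2-s$ and $l_2=\frac n2$ survive;
\item evaluating $K_{n/2-s}$ by two-variable stationary phase (Assing's Lemma 3.1.7 with Lemma~\ref{two_dimensional_gauss_sum}) at the unique critical point $(x_0-b_0,x_0+b_0)$, where $x_0^2+yx_0-b^2=0$ and $x_0\equiv -b_0$.
\end{itemize}
This is where the paper departs from Assing to obtain one formula for all $0<s<n/2$, since his formulas split at $s=n/4$. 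Similarly, the claim that for unramified $E$ the Case 3 function lives only on $|y|=1$ needs an argument; the paper reads it off Case II of Assing's Lemma 3.4.1. Your support discussion only treats $\gamma\neq n/2$.

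\textbf{The degenerate-range bookkeeping is off.} As written, it would not produce the stated bounds.
\begin{itemize}
\item The exponents in $W_\pm,V_\pm,U_\pm$ are $-\frac n2+jk$ for $j=1,2,3$ with $k=r+\rho-\delta$. So the natural rescaling is by $\varpi^{r+\rho-\delta}$, not $\varpi^{r+\delta}$, and $v(U)=r+2\rho-3\delta-v(3)\approx-\frac n8$, not $-\frac n4$.
\item Once $v(\Delta)\ge r$ you have $v(V_\pm)=2v(\Delta)+\rho-2\delta\ge 0$. The quadratic term is then killed by $\psi$ on $\mathcal{O}$, so no completing the cube is needed.
\item $U$ and $W$ are simply $-\frac13\mathrm{Tr}_{E/F}(b/A^3)$ and $1-\mathrm{Tr}_{E/F}(b/A)$ at $A=-\frac12 y+b$, times powers of $\varpi$.
\item The size $q^{n/12}$ is the prefactor $q^{\delta-\rho/2}$ times the factor $q^{v(U)/3}$ from the Airy normalisation. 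The transition $v(W)=\frac13 v(U)$ sits at $v(\Delta)\approx\frac43 r\approx\frac n3$. With your $v(U)\approx-\frac n4$ it would land near $\frac{7n}{24}$, and the bounds $O(|\Delta|^{-1/4})$ and $O(q^{n/12})$ would not come out.
\end{itemize}

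\textbf{The intermediate range needs its justifying condition.} For $0<v(\Delta)<r$, your separated-critical-point heuristic is made rigorous by two facts. First, $W_\pm=0$ exactly. Second, $v(3U_\pm)>v(V_\pm)$ if and only if $v(\Delta)<r+\rho$, which makes the cubic term negligible and lets Lemma~\ref{easy_cubic_integral} evaluate the integral exactly. You need to state this condition, since it is what fixes the boundary at $v(\Delta)\approx r=\lfloor n/4\rfloor$.
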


\subsection{Proofs}

In this section, we prove Theorem \ref{whittaker_formulas} case by case but in a slightly different order so that we postpone the hardest case (Case 3) to the end. Following Lemma \ref{transform}, we write $y = w \varpi^s$ where $w \in \mathcal{O}_F^\times$, and $s \in \mathbb{Z}$.\\

\noindent
\textbf{Case 1:} $\kappa = \left(
\begin{smallmatrix}
1&0\\
1&1 \\
\end{smallmatrix}
\right)$. By Lemma \ref{transform}, $W_\pi(a(y)\kappa)= \psi((1+\varpi^n)y)W_\pi(g_{s,0,w^{-1}})$. The result now follows from \cite[Lemma 3.3.1 and Lemma 3.3.9]{assing_thesis}.\\

\noindent
\textbf{Case 2:} $\kappa = \left(
\begin{smallmatrix}
1&0\\
\varpi^\gamma&1 \\
\end{smallmatrix}
\right)$ for $0< \gamma < n/2$. We again use Lemma \ref{transform} to write $W_\pi( a(y) \kappa)=\psi(\varpi^{- \gamma}(1+ \varpi^n)y)W_\pi(g_{-2 \gamma +s,\gamma, w^{-1}})$. By \cite[Lemma 3.3.1 and Lemma 3.3.9]{assing_thesis} the second factor vanishes unless $-2 \gamma +s=-n$, i.e. $s = 2 \gamma -n$. Thus $W_\pi(a(y)\kappa)= \mathbf{1}_{\lvert y \rvert=q^{n-2 \gamma}}\psi(\varpi^{-\gamma}y) W_\pi(g_{-n,\gamma , w^{-1}})$. Using \cite[Lemma 3.4.1, Lemma 3.4.4 and Lemma 3.4.16]{assing_thesis}, we find that
\begin{equation*}
W_\pi(g_{-n,\gamma,w^{-1}})= C_{\pi , \kappa} \xi^{-1}(\Omega^dx_0 + b)\psi(x_0 \varpi^{-n_1})
\end{equation*}
where
\begin{equation*}
C_{\pi , \kappa} :=
\begin{cases}
  \gamma_F(-1,\tfrac{n}{2})\gamma_F(1, \tfrac{n}{2}) & E=F \times F\\
  \gamma_0 \gamma_F(-1,\tfrac{n}{2}) & E/F \textrm{ unramified}\\
  \gamma_0                           & E/F \textrm{ ramified}
\end{cases},
\end{equation*}
and $x_0$ is the unique solution in $\mathcal{O}_F$ to the equation
\begin{equation*}
(-1)^nw^{-1} \varpi^{n_2 - \gamma } x^2 +x+ w^{-1}(-1)^{n-1}b^2 \varpi^{n_1 - \gamma}=0.
\end{equation*}
Since $y = w \varpi^{2 \gamma -n}$, the left-hand side rewrites as $(-1)^n \varpi^{n_2 -\gamma}x^2 +(-1)^ny \varpi^{n - 2 \gamma}x-b^2\varpi^{n_1 - \gamma}$. We remark that in the split case $E=F \times F$, we have translated the solution to the quadratic equation in \cite[Lemma 3.4.16]{assing_thesis} by $b$ in order to get an expression similar to the non-split case. Moreover, if $E/F$ is a ramified quadratic extension, \cite[Lemma 3.4.4]{assing_thesis} gives a different expression for $W_\pi$ when $\gamma < \frac{n}{4}$. If we read Case I in the proof of the lemma, we see that we can instead choose $\Omega x_0 + b$ as the representative for the unique stationary point modulo $\mathfrak{p}_E^{n_2}$ and recover the above expression.\\

\noindent
\textbf{Case 4}: $\kappa = \left(
\begin{smallmatrix}
1&0\\
\varpi^\gamma&1 \\
\end{smallmatrix}
\right)$ for $n/2< \gamma <n$. By Lemma \ref{transform}, we have $W_\pi(a(y) \kappa)= \psi(\varpi^{- \gamma}(1+ \varpi^n)y) W_\pi(g_{-2 \gamma +s, \gamma, w^{-1}})$. By \cite[Lemma 3.3.1 and Lemma 3.3.9]{assing_thesis} the second factor now vanishes unless $-2 \gamma + s =-2 \gamma$ i.e. unless $\lvert y \rvert=1$. Hence $W_\pi(a(y) \kappa )= \mathbf{1}_{\lvert y \rvert=1} \psi(\varpi^{-\gamma}y)W_\pi(g_{-n,\gamma,w^{-1}})$. We now use \cite[Lemma 3.4.1, Lemma 3.4.4 and Lemma 3.4.16]{assing_thesis} to see that
\begin{equation*}
W_\pi(g_{-2 \gamma,\gamma, w^{-1}})= T_{\pi , \kappa}(y) \xi^{-1}(x_0 + \Omega^d b \varpi^{\gamma - n_2}) \psi(x_0 \varpi^{- \gamma})
\end{equation*}
where
\begin{equation*}
T_{\pi , \kappa}(y):=
\begin{cases}
  \gamma_F(-1,\tfrac{n}{2})\gamma_F(1, \tfrac{n}{2}) & E = F \times F\\
  (-1)^\gamma \gamma_0 & E/F \textrm{ unramified}\\
  \gamma_0 \gamma_F(2y,1) & E/F \textrm{ ramified}
\end{cases},
\end{equation*}
and $x_0$ is the unique solution in $\mathcal{O}_F^\times$ to $x^2 +y x - b^2 \varpi^{2 \gamma -n}$. In the split case $E =F \times F$, we have again translated the solution from \cite[Lemma 3.4.16]{assing_thesis} by $b \varpi^{\gamma -\frac{n}{2}}$ in order to arrive at this equation.\\

\noindent
\textbf{Case 5:} $\kappa = \left(
\begin{smallmatrix}
1&0\\
\varpi^\gamma&1 \\
\end{smallmatrix}
\right)$ for $\gamma \geq n$. Then $\kappa \in K_1(n)$ so $W_\pi(a(y)\kappa)=W_\pi(a(y)) = \mathbf{1}_{\lvert y \rvert=1}$ by \cite[Equation (1.3.7)]{assing_thesis}.\\

\noindent
\textbf{Case 6:} $\kappa = \left(
\begin{smallmatrix}
0&1\\
1&\varpi^\gamma \\
\end{smallmatrix}
\right)$ for $1 \leq \gamma \leq \infty$. By Lemma \label{transform}, $W_\pi(a(y)\kappa)= \psi(- \varpi^n y) W_\pi(g_{s,0,1})$, and by \cite[Lemma 3.3.1 and Lemma 3.3.9]{assing_thesis}, $W_\pi(g_{s,0,1})= \mathbf{1}_{\lvert y \rvert=q^{n}} \varepsilon(\frac{1}{2},\tilde{\pi})=\mathbf{1}_{\lvert y \rvert=q^n} \varepsilon(\frac{1}{2} , \pi)$. Here we use that $\pi$ is self-dual (because it has trivial central character).\\

\noindent
\textbf{Case 3:} $\kappa = \left(
\begin{smallmatrix}
1&0\\
\varpi^{n/2}&1 \\
\end{smallmatrix}
\right)$. Then $E/F$ cannot be a ramified quadratic extension because $n$ must be even. By Lemma \ref{transform},
\begin{equation*}
W_\pi(a(y) \kappa)=\psi(\varpi^{-n/2}(1+\varpi^n)y) W_\pi(g_{-n+s,\frac{n}{2},w^{-1}}).
\end{equation*}
If $E/F$ is an unramified quadratic extension, $W_\pi(g_{-n+s,\frac{n}{2},w^{-1}})$ is only supported for $s=0$. Indeed, if we read Case II in the proof of \cite[Lemma 3.4.1]{assing_thesis}, we see that for $W_\pi(g_{-n+s,\frac{n}{2},w^{-1}})$ to be supported, there must exist $x \in \mathcal{O}_F$ such that $x \varpi^{\frac{s}{2}} \in b + \mathfrak{p}_F^{\lfloor \frac{n}{4} \rfloor}$ but this is impossible for $s\neq 0$ since $b \in \mathcal{O}_F^\times$. However, if $E = F \times F$ is split, $W_\pi(g_{-n+s,\frac{n}{2},w^{-1}})$ is supported for all $s \geq 0$ \cite[Lemma 3.3.9]{assing_thesis}. As in \cite[Lemma 3.4.15]{assing_thesis}, we will assume that $n \geq 8 \kappa_F$.\\

\noindent
\textbf{Subcase 3.1:} $E = F \times F$, and $s>0$. Recall that $b =(b_0 , - b_0)$ for some $b_0 \in \mathcal{O}_F^\times$. By \cite[Lemma 3.3.9]{assing_thesis}, we have 
\begin{equation*}
\begin{split}
  W_\pi(g_{-n+s, \frac{n}{2},w^{-1}})&=q^{-\frac{s}{2}} \sum_{\pm} \varepsilon(\tfrac{1}{2},\chi^{\pm 1}) \varepsilon(\tfrac{1}{2}, \chi^{\mp 2}) \chi^{\pm 1}(w \varpi^{-\frac{n}{2}+s})\\
  &= \lvert y \rvert^{\frac{1}{2}} \sum_{\pm} C_{s,\pm} \chi^{\pm}(y)
\end{split}
\end{equation*}
for $s>n-2$ where $C_{s,\pm }:=\varepsilon(\tfrac{1}{2},\chi^{\pm}) \varepsilon(\tfrac{1}{2}, \chi^{\mp 2}) \chi^{\pm 1}(\varpi^{-\frac{n}{2}})$. Here we use that $y=w \varpi^s$ to get the last expression. When $\frac{n}{2} \leq s \leq n-2$, it follows by \cite[Lemma 3.4.16]{assing_thesis} that $W_\pi(g_{-n+s,\frac{n}{2},w^{-1}})$ is given by
\begin{equation*}
\begin{gathered}
 \zeta_F(1)^{-2} q^{\frac{n}{2}} q^{-\frac{s}{2}} \sum_{\pm} G(w^{-1}\varpi^{-\frac{n}{2}}, \chi^{\pm 1}) G(\varpi^{- \frac{n}{2}},\chi^{\mp 2}) \chi^{\pm 1}(\varpi^{-n+s})\\
  =\lvert y \rvert^{\frac{1}{2}} \sum_{\pm} C_{s,\pm} \chi^{\pm}(y)
\end{gathered}
\end{equation*}
where $C_{s, \pm}:= \varepsilon(\frac{1}{2},\chi^{\mp 1}) \varepsilon(\frac{1}{2}, \chi^{\mp 2} ) \chi^{\pm}(\varpi^{-\frac{3n}{2}})$, and we have used Lemma \ref{general_gauss_sum} to evaluate the Gau{\ss} sums.

We now assume that $0<s < \frac{n}{2}$. Then it follows by \cite[Lemma 3.3.9]{assing_thesis} that
\begin{equation*}
W_\pi(g_{-n+s,\frac{n}{2},w^{-1}})= \zeta_F(1)^{-2} q^{\frac{n}{2}}q^{-\frac{s}{2}} \sum_{l_2 =1}^{n/2} \chi(\varpi)^{-n+s+2 l_{2}} K_{l_2} 
\end{equation*}
where $ K_{ l_2}$ is the two-dimensional Gau{\ss} sum
\begin{equation*}
K(\chi \otimes \chi^{-1},(\varpi^{-n+s+ l_2},\varpi^{- l_2}),w^{-1} \varpi^{-\frac{n}{2}})
\end{equation*}
whose definition can be found in \cite[p. 57]{assing_thesis}. By \cite[Lemma 3.4.15]{assing_thesis}, $K_{l_2}$ is non-zero only for $l_2 = \frac{n}{2}-s$ and $l_2 = \frac{n}{2}$, thus we have to evaluate $K_{\frac{n}{2}-s}$ and $K_{\frac{n}{2}}$. From the definition of $K$, one sees that $K_{\frac{n}{2}}$ is obtained from $K_{\frac{n}{2}-s}$ by replacing $\chi$ by $\chi^{-1}$ so only have to evaluate $K_{\frac{n}{2}-s}$. Formulas for $K_{\frac{n}{2}-s}$ can be found in \cite[Lemma 3.4.15]{assing_thesis}, but they lead to different expressions in the cases $0<s<\frac{n}{4}$ and $\frac{n}{4} \leq s < \frac{n}{2}$. We use \cite[Lemma 3.1.7]{assing_thesis} to evaluate $K_{\frac{n}{2}-s}$ and give an expression that holds for all $0<s<\frac{n}{2}$. For $0<s<\frac{n}{4}$, it will agree with the expression in \cite{assing_thesis}, and for $\frac{n}{4} \leq s< \frac{n}{2}$, it will simplify to the same expression as in \cite{assing_thesis}. To evaluate $K_{\frac{n}{2}-s}$ using \cite[Lemma 3.1.7]{assing_thesis}, we write $\frac{n}{2}=2 r + \rho$ where $r \geq 1$ and $\rho \in \left\{ 0,1 \right\}$. The set of critical points then consists pairs $( x_1 , x_2) \in((\mathcal{O}_F / \mathfrak{p}_F)^\times )^2$ such that
\begin{equation*}
\begin{pmatrix}
b_0+x_1 + w^{-1} x_1 x_2 \\
-b_0+x_2 \varpi^{s}+w^{-1} x_1 x_2& \\
\end{pmatrix} \in(\mathfrak{p}_F^r)^2.
\end{equation*}
By eliminating $w^{-1} x_1 x_2$, this is equivalent to
\begin{equation*}
\begin{pmatrix}
x_1 -x_2 \varpi^s +2b_0\\
w^{-1} \varpi^s x_2^2 +(\varpi^s -2b_0 w^{-1}) x_2 -b_0 \\
\end{pmatrix} \in(\mathfrak{p}_F^r )^2.
\end{equation*}
The latter congruence has a unique solution $\tilde{x_0} \in (\mathcal{O}_F / \mathfrak{p}_F^r )^\times$, in fact this quadratic equation equation has a unique solution $\tilde{x_0} \in \mathcal{O}_F^\times$ so the unique critical point is $(\tilde{x_0} \varpi^s -2b_0,\tilde{x_0})$. If we let $x_0:=\tilde{x_0} \varpi^s -b_0$, the critical point is $(x_0 -b_0, x_0 +b_0)$, and $x_0$ is the unique solution in $\mathcal{O}_F$ to the equation $x^2+y x - b^2 =0$ satisfying $x_0 \equiv -b_0 \pmod{\mathfrak{p}}$. By \cite[Lemma 3.1.7]{assing_thesis}, we find that 
\begin{equation*}
K_{\frac{n}{2}-s}= \zeta_F(1)^2 q^{-\frac{n}{2}}\gamma_F(-2, \tfrac{n}{2}) \gamma_F(1,\tfrac{n}{2}) \chi(\varpi^s) \xi^{-1}(x_0  + b)  \psi(x_0\varpi^{-\frac{n}{2}}).
\end{equation*}
where we have used Lemma \ref{two_dimensional_gauss_sum} to evaluate the Gau{\ss} sum that appears. Hence, we conclude that
\begin{equation*}
W_\pi(g_{-n, \frac{n}{2}, w^{-1}})= \lvert y \rvert^{\frac{1}{2}} \sum_{\pm} C_{s, \pm} \xi^{-1}(x_{\pm}+b) \psi(x_{\pm}\varpi^{-\frac{n}{2}}) 
\end{equation*}
where $x_{\pm}$ is the solution in $\mathcal{O}_F$ to the equation $x^2 + y x -b^2=0$ satisfying $x_{\pm} \equiv \mp b_0\pmod{\mathfrak{p}}$, and $C_{s, \pm}:= \gamma_F(-2,\frac{n}{2}) \gamma_F(1, \frac{n}{2}) \chi^{\pm 1}(\varpi^s)$.\\

\noindent
\textbf{Subcase 3.2:} $E = F \times F$ or $E/F$ unramified, and $\lvert y \rvert=1$. Recall $y = w \varpi^s$ where $w \in \mathcal{O}_F$ so we assume that $s=0$, and $y=w$. If we define $\Delta= \Delta(y):= 1+4 b^2 y^{-2}$, there will be several subcases depending on the size of $\Delta$.\\

\noindent
\textbf{Subcase 3.2.1:} $\lvert  \Delta\rvert=1$. By \cite[Lemma 3.4.1 and Lemma 3.4.15]{assing_thesis}, $W_\pi(g_{-n,\frac{n}{2},y^{-1}})$ vanishes unless $\Delta$ is a square in $\mathcal{O}_F^\times$, and
\begin{equation*}
W_\pi(g_{-n,\frac{n}{2},y^{-1}}) = \mathbf{1}_{\lvert y \rvert, \Delta \in(\mathcal{O}_F^\times)^2} \sum_{\pm} T_{\Delta , \pm}(y) \xi^{-1}(x_{\pm} + b) \psi(x_{\pm} \varpi^{-\frac{n}{2}})
\end{equation*}
where $x_{\pm}$ denote the two solutions in $\mathcal{O}_F^\times$ to the equation $x^2 +yx - b^2=0$, and
\begin{equation*}
T_{\Delta,\pm}(y):=
\begin{cases}
  \gamma_F(- b^2-x_{\pm}^2, \frac{n}{2})\gamma_F(1,\frac{n}{2}) & E = F \times F \\
 \gamma_0 \gamma_F(\zeta(-b^2-N_{E/F}(x_{\pm}+ b) y^{-2}, \frac{n}{2}) & E/F \textrm{ unramified}
\end{cases}\footnote{We were not able to recover the exact same expression as in \cite[Lemma 3.4.15]{assing_thesis} and arrived at this expression using \cite[Lemma 3.1.7]{assing_thesis} to evaluate $K_{\frac{n}{2}}$.}.
\end{equation*}
Here we note that $\left\{ T_{\Delta, +}(y), T_{\Delta, -}(y) \right\}$ only depends on $y$ modulo $\mathfrak{p}$.\\

\noindent
\textbf{Subcase 3.2.2:} $\lvert \Delta \rvert<1$. We extract our formulas from Case IV in the proof of \cite[Lemma 3.4.15]{assing_thesis} when $E/F$ is split and Case II.1 in the proof of \cite[Lemma 3.4.1]{assing_thesis} when $E/F$ is non-split and unramified. In both cases, $W_\pi(g_{-n, \frac{n}{2},y^{-1}})$ vanishes unless $\Delta$ is a square in $\mathcal{O}_F$. Write $\frac{n}{2}=2r + \rho$ where $r \geq 1$ and $\rho \in \left\{ 0,1 \right\}$. We define
\begin{equation*}
Y:=
\begin{cases}
  0 & \textrm{if }v(\Delta) \geq r\\
  Y_0 & \textrm{if }\Delta = Y_0^2 \varpi^{2 \delta_0}\textrm{ and }v(\Delta)<r
\end{cases},\,\,
\delta :=
\begin{cases}
  \lfloor \frac{r}{2} \rfloor &  \textrm{if }v(\Delta) \geq r\\
  \delta_0 & \textrm{if }v(\Delta)=2 \delta_0 <r
\end{cases}
\end{equation*}
and
\begin{equation*}
A_{\pm} := -\frac{1}{2} y \pm \frac{1}{2} Y \varpi^{\delta}y + b.
\end{equation*}
By \cite[p. 93 and p. 135]{assing_thesis}, $A_{\pm} \in \mathcal{O}_E^\times$. When $v(\Delta)<r$, we note that $A_{\pm}= x_{\pm}+ b$ where, as in the previous subcase, $x_{\pm}$ are the solutions to $x^2 + y x - b^2=0$. From \cite[p. 94 and p. 135]{assing_thesis} we see that $W_\pi (g_{-n, \frac{n}{2},y^{-1}})$ is given by
\begin{equation*}
\begin{gathered}
     q^{\delta- \frac{\rho}{2}} \sum_{\pm} \tilde{T}_{\Delta , \pm}(y) \xi^{-1}(A_{\pm}) \psi((\mathrm{Tr}_{E/F}(A_{\pm})+y^{-1} N_{E/F}(A_{\pm}))\varpi^{-\frac{n}{2}})\\
 \cdot  \int_{\mathcal{O}} \psi( U_{\pm} t^3 + V_{\pm} t^2  + W_{\pm} t) d t
\end{gathered}
\end{equation*}
 where
\begin{equation*}
\begin{split}
\tilde{T}_{\Delta, \pm} & :=
\begin{cases}
  \gamma_F(-2b_0, \tfrac{n}{2}) & E=F \times F\\
  \gamma_0 \gamma_F(-y, \frac{n}{2}) & E/F \textrm{ unramified}
\end{cases};\\
  U_{\pm}& := -\frac{1}{3} \mathrm{Tr}_{E/F} \left( \frac{b}{A_{\pm}^3} \right) \varpi^{r+2 \rho - 3 \delta};\\
  V_{\pm}& := \left( y^{-1} + \frac{1}{2} \mathrm{Tr}_{E/F} \left( \frac{b}{A_{\pm}^2} \right) \right) \varpi^{\rho - 2 \delta};\\
  W_{\pm}& := \left( 1\pm Y \varpi^\delta - \mathrm{Tr}_{E/F} \left( \frac{b}{A_{\pm}} \right)\right) \varpi^{-r-\delta}.
\end{split}
\end{equation*}
Note that since $1+ 4 b^2 y^{-2}=\Delta \equiv 0 \pmod{\mathfrak{p}_F}$, there are at most two possibilities for $y$ modulo $\mathfrak{p}_F$ so the set $\{ \tilde{T}_{\Delta , +},\tilde{T}_{\Delta,-}\}$ does not depend upon $y$. The task is now to simplify the cubic integral, but first we determine the sizes of $U_{\pm}$, $V_{\pm}$ and $W_{\pm}$.

\begin{lemma}
We have
\begin{equation*}
\begin{split}
  v(U_{\pm}) &= r+ 2 \rho -3 \delta - v(3);\\
  v(V_{\pm}) &=
\begin{cases}
  2 v(\Delta) +\rho- 2 \delta& \textrm{if }v(\Delta) \geq r\\
  \rho - \delta & \textrm{if }v(\Delta)<r
\end{cases};\\
  v(W_{\pm}) &=
               \begin{cases}
                 v(\Delta)-r-\delta & \textrm{if }v(\Delta) \geq r\\
                 \infty & \textrm{if }v(\Delta)< r
\end{cases}.
\end{split}
\end{equation*}
\end{lemma}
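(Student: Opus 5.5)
The plan is a direct computation: write each $A_{\pm}$ as a sum of an element of $F$ and $b$, which turns the traces $\mathrm{Tr}_{E/F}(b/A_{\pm}^k)$ into explicit rational functions of $y$. The structural input is Lemma \ref{logarithm_lemma}. Here $n$ is even, and in both the split and the unramified case we have $\mathrm{Tr}_{E/F}(b)=0$ and $b^2\in\mathcal{O}_F^\times$. Set
\[
c_{\pm}:=-\tfrac12 y\pm\tfrac12 Y\varpi^{\delta}y\in\mathcal{O}_F ,
\]
so that $A_{\pm}=c_{\pm}+b$ and $\bar A_{\pm}=c_{\pm}-b$. Then $N_{E/F}(A_{\pm})=c_{\pm}^2-b^2$, and expanding $b\bigl((c-b)^k-(c+b)^k\bigr)$ gives
\[
\mathrm{Tr}\Bigl(\tfrac{b}{A}\Bigr)=\frac{-2b^2}{N},\qquad
\mathrm{Tr}\Bigl(\tfrac{b}{A^2}\Bigr)=\frac{-4cb^2}{N^2},\qquad
\mathrm{Tr}\Bigl(\tfrac{b}{A^3}\Bigr)=\frac{-2b^2(3c^2+b^2)}{N^3},
\]
with $c=c_{\pm}$ and $N=N_{E/F}(A_{\pm})$. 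In the split case these identities are read componentwise. Throughout I will eliminate $b^2$ using the definition of $\Delta$, namely $b^2=\tfrac14(\Delta-1)y^2$, and I will use $|y|=1$.

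\emph{Valuation of $U_{\pm}$.} Since $\Delta\equiv 0$ modulo $\mathfrak{p}$, we have $b^2\equiv -y^2/4$ and $c\equiv -y/2$ modulo $\mathfrak{p}$. Hence $N\equiv y^2/2$ and $3c^2+b^2\equiv y^2/2$ modulo $\mathfrak{p}$, and both are units because $p$ is odd. So $\mathrm{Tr}(b/A^3)$ is a unit. The factor $-\tfrac13$ then gives
\[
v(U_{\pm})=r+2\rho-3\delta-v(3)
\]
in both cases. As a consistency check, for $Y=0$ the resulting expression should coincide with $U(y)$ in \eqref{case3.4}, up to the normalisation used there.

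\emph{Case $v(\Delta)<r$.} Here $\Delta=Y^2\varpi^{2\delta}$, so $c_{\pm}=\tfrac{y}{2}(-1\pm\sqrt\Delta)$ is a root of $x^2+yx-b^2$. This gives $N=-yc$, and hence
\[
\frac{2b^2}{yc}=\frac{(\Delta-1)y}{2c}=1\pm\sqrt\Delta .
\]
Substituting into the formula for $W_{\pm}$ shows that the bracket vanishes identically, so $W_{\pm}=0$ and $v(W_{\pm})=\infty$. Likewise
\[
y^{-1}-\frac{2cb^2}{y^2c^2}=y^{-1}\Bigl(1-\frac{2b^2}{yc}\Bigr)=\mp\frac{\sqrt\Delta}{y},
\]
which has valuation $\delta$. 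Multiplying by $\varpi^{\rho-2\delta}$ gives $v(V_{\pm})=\rho-\delta$.

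\emph{Case $v(\Delta)\ge r$.} Here $Y=0$, so $c=-y/2$ and $N=\tfrac{y^2}{4}(2-\Delta)$ is a unit. For the linear coefficient,
\[
1+\frac{2b^2}{N}=\frac{y^2/4+b^2}{N}=\frac{y^2\Delta}{4N},
\]
so the bracket in $W_{\pm}$ has valuation $v(\Delta)$ and $v(W_{\pm})=v(\Delta)-r-\delta$. For the quadratic coefficient,
\[
y^{-1}+\frac{yb^2}{N^2}=\frac{N^2+y^2b^2}{yN^2},
\]
and the key identity is
\[
N^2+y^2b^2=\Bigl(\frac{y^2}{4}+b^2\Bigr)^2=\frac{y^4\Delta^2}{16}.
\]
This gives valuation $2v(\Delta)$ for the bracket, and therefore $v(V_{\pm})=2v(\Delta)+\rho-2\delta$.

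\emph{Main obstacle.} The only delicate point is the cancellation in the coefficients of $t$ and $t^2$. A naive unit count would wrongly give $v(W_{\pm})\ge -r-\delta$ and $v(V_{\pm})\ge\rho-2\delta$. The exact vanishing, respectively the factor $\Delta$ or $\Delta^2$, appears only after using that $c_{\pm}$ is (approximately) a root of $x^2+yx-b^2$. This is why I rewrite everything through $c_{\pm}$ and $\Delta$ rather than expanding $A_{\pm}$ modulo powers of $\mathfrak{p}$. I would also check carefully that in the split case $b^2$ really is the scalar $b_0^2$, so that the componentwise computation reduces to the same formulas.
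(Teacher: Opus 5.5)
Your proof is correct and follows essentially the same route as the paper. Both arguments write the traces through the conjugate $\sigma(A_\pm)=c_\pm-b$ and check that $\mathrm{Tr}_{E/F}(b/A_\pm^3)$ is a unit. Both then get the exact vanishing of the $W_\pm$ bracket, or the factors $\Delta$ and $\Delta^2$, from $b^2=\tfrac14(\Delta-1)y^2$, together with the fact that for $v(\Delta)<r$ one has $Y\varpi^\delta=\sqrt{\Delta}$, i.e. $c_\pm$ is a root of $x^2+yx-b^2$. Your closed forms for the traces and the identity $N^2+y^2b^2=(y^2/4+b^2)^2$ are a cleaner packaging of the paper's full polynomial expansion, and they avoid the minor typos in the paper's displayed expansions.
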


\begin{proof}
Let $\sigma$ denote the non-trivial element of $\mathrm{Gal}(E/F)$ if $E/F$ is non-split, and set $\sigma(x_1 , x_2)=(x_2 , x_1)$ if $E/F$ is split. Hence $\mathrm{Tr}_{E/F}(x)=x+ \sigma(x)$, and $N_{E/F }(x)= x \sigma(x)$ for all $x \in E$. Let $B_{\pm }:= \sigma(A_{\pm})=-\frac{1}{2} y \pm Y \varpi^\delta y - b $. We start by computing $v(U_{\pm})$. The claimed expression $v(U_{\pm})$ is equivalent to $\mathrm{Tr}_{E/F}(b_\xi/ A_{\pm}^3) \in \mathcal{O}_F^\times$ which in turns is equivalent to $B_{\pm}^3 - A_{\pm}^3 \in \mathcal{O}_E^\times$ since $A_{\pm},B_{\pm} \in \mathcal{O}_E^\times$. By a direct computation, $B_{\pm}^3 - A_{\pm}^3 =\frac{1}{4} y^2 b(2+\Delta) \equiv \frac{1}{2} y^2 b\not\equiv 0 \pmod{\mathfrak{p}_E}$ because $\lvert \Delta \rvert<1$.
  
To determine valuations of $V_{\pm}$ and $W_{\pm}$, we are forced to fully expand their expressions. Since $A_{\pm }, B_{\pm} \in \mathcal{O}_E^\times$, the valuation of $V_{\pm}$ equals $\rho - 2 \delta$ plus that of
\begin{equation*}
\begin{gathered}
  y^{-1}(A_{\pm} B_{\pm})^2 + \frac{1}{2}b(B_{\pm}^2 -A_{\pm}^2)^2\\
  =\frac{1}{16}y^3 \left[ Y^4 \varpi^{4 \delta}\mp 4 Y^3 \varpi^{3 \delta} + 3 Y^2 \varpi^{2 \delta}\mp 4 Y \varpi^\delta -8 Y^2 \varpi^{2 \delta} b^2 y^{-2}+ \Delta^2\right].
\end{gathered}
\end{equation*}
If $v(\Delta) \geq r$, we have $Y=0$, and $v(V_{\pm})=v(\Delta^2)+ \rho -2 \delta = 2 v(\Delta)+ \rho - 2 \delta$ as desired. If $v(\Delta)<r$, $Y \varpi^{\delta}= \Delta^{\frac{1}{2}}$ is some square root of $\Delta$. Observing that $8 Y^2 \varpi^{ 2 \delta}b^2 y^{-2}=2 \Delta(\Delta -1)$, the above simplifies to $\frac{1}{4} y^3 (\Delta^2 \mp \Delta^{\frac{3}{2}}+\Delta \mp \Delta^{\frac{1}{2}})$ which has valuation $\frac{1}{2} v(\Delta)= \delta$. Hence $v(V_{\pm})= \rho - \delta$.

A similar, calculation shows that the valuation of $W_{\pm}$ equals $-r-\delta$ plus that of
\begin{equation*}
\begin{gathered}
  (1 \pm Y \varpi^{\delta}) A_{\pm} B_{\pm}+b(B_{\pm}-A_{\pm})\\
  = \mp Y \varpi^\delta b^2 \mp \frac{1}{4} y^2(Y^3 \varpi^{3 \delta}\mp Y^2 \varpi^{2 \delta}-Y \varpi^{\delta})\pm \frac{1}{4} y^2 \Delta.
\end{gathered}
\end{equation*}
When $v(\Delta) \geq r$, we have $Y=0$ so $v(W_{\pm})=v(\Delta)-r-\delta$. If $v(\Delta)=0$, we find that the above equals $0$ by using that $\Delta = 1 + 4 b^{2}y^{-2}$.
\end{proof}

\noindent
We now consider the two cases $0 <v(\Delta)<r$ and $v(\Delta) \geq r$ separately. Assume $0< v(\Delta)<r$. Then $W_{\pm}=0$, and $A_{\pm}=x_{\pm}+ b$ where $x_{\pm}$ denote the two solutions to $x^2 + y x - b^2=0$. We also have $v(3 U_{\pm})>v(V_{\pm})$ since this reduces to $r+\rho>v(\Delta)$. By Lemma \ref{easy_cubic_integral}, the cubic integral equals $q^{\frac{\rho}{2}-\frac{\delta}{2}} \gamma_F((V_{\pm})_0 ,\frac{1}{2} v(\Delta)-\rho)$ where $(V_{\pm})_0$ denotes the unit part of $V_{\pm}$. By definition of $\gamma_F$, it only depends on $(V_{\pm})_0$ via its class in $\mathcal{O}_F^\times /(\mathcal{O}_F^\times)^2$. By reading the proof of the above lemma, we find that $(V_{\pm})_0 \equiv \mp y (\Delta^{\frac{1}{2}})_0\pmod{(\mathcal{O}_F^\times)^2}$. Again, $v(\Delta)>0$ leaves at most two possibilites for $y$ modulo $\mathfrak{p}_F$ so if we set $C_{\Delta,\pm}:= \tilde{T}_{\Delta,\pm } \cdot \gamma_F(\mp y(\Delta^{\frac{1}{2}})_0,\frac{1}{2} v(\Delta) -\rho)$, the set $\{C_{\Delta,+},C_{\Delta,-}\}$ does not depend on $y$. We now have
\begin{equation*}
W_{\pi}(g_{-n,\frac{n}{2},y^{-1}})= \mathbf{1}_{\Delta \in(\mathcal{O}_F)^2 } \lvert \Delta  \rvert^{-\frac{1}{4}} \sum_{\pm} C_{\Delta, \pm} \xi^{-1}(x_{\pm}+ b) \psi(x_{\pm} \varpi^{-\frac{n}{2}}).
\end{equation*}
Suppose now that $v(\Delta) \geq r$. Then $v(V_{\pm}) \geq 0$, $U_{+}= U_{-}=: U$ and $W_{+}= W_{-}=: W$. The cubic integral equals $q^{\frac{v(U)}{3}} \mathrm{Ai}(U,W)$, and $v(U)=r+ 2\rho - 3 \delta - v(3)$ so we find that
\begin{equation*}
W_{\pi}(g_{-n,\frac{n}{2},y^{-1}})= \mathbf{1}_{\Delta \in(\mathcal{O}_F)^2}q^{\frac{n}{12}} C_{\Delta}\xi^{-1}(-\tfrac{1}{2} y+ b) \psi((-\tfrac{3}{2} y - b^2 y^{-1}) \varpi^{-\frac{n}{2}}) \mathrm{Ai}(U,W).  
\end{equation*}
where $C_{\Delta }:=2q^{-v(3)/3} \tilde{C}_{\Delta, \pm} \ll 1$. A direct calculation shows that
\begin{equation*}
U(y)= \frac{b^2(3 y^2 + 4 b^2)}{348(y^2 -4b^2)} \varpi^{r+2 \rho - 3 \delta} \textrm{ and } W(y) = \frac{y^2 +4 b^2}{y^2 +4 b^2 } \varpi^{- \rho -\delta}.
\end{equation*}
We have $v(W)<\frac{1}{3} v(U)$ if and only if $v(\Delta)<\frac{1}{3}(4 r- v(3))\approx \frac{n}{3}$ so for $\Delta \gg q^{-\frac{n}{3}}$, we can use Proposition \ref{airy} to get
\begin{equation*}
W_\pi(a(y) \kappa) \ll q^{\frac{n}{12}-\frac{v(U)}{12}+\frac{v(W)}{3}} \ll q^{\frac{n}{12}-\frac{1}{12}(-\frac{n}{8})+\frac{1}{4}(v(\Delta)-\frac{3n}{8})}=\lvert \Delta \rvert^{-\frac{1}{4}}.
\end{equation*}
Moreover, one can take the proof of Case (ii) in Proposition \ref{airy} even further and evaluate the Airy function in terms for Gau{\ss} sums, see Remark \ref{airy_remark}. Hence $W_\pi(a(y)\kappa)$ is oscillatory when $\lvert \Delta \rvert \gg q^{-\frac{n}{3}}$. When $v(W) \geq \frac{1}{3} v(U)$, i.e. $\lvert \Delta \rvert \ll q^{-\frac{n}{3}}$, we bound the Airy function by $1$ and get $W_\pi(a(y) \kappa) \ll q^{\frac{n}{12}}$.   

\section{Dyadic decomposition}\label{dyadic_decomposition}
Recall that $f$ is a newform for $\Gamma_0(p^n)$, and that $f$ generates an automorphic representation $\pi \cong \otimes_v ' \pi_v$. We write $W_p$ for Whittaker newvector of $\pi_p$, and $W_b$ for the balanced variant, $W_b(g):= W_p(a(\varpi^{-n_2})g a(\varpi^{n_2}))$. Here we recall that $(n_1, n_2):=(\lfloor n/2 \rfloor,\lceil n/2\rceil)$. Given the formulas from the previous section, we dyadically decompose $W_b$. To translate the formulas for $W_p$ into formulas for $W_b$, we use the following relations which follow from the transformation properties of $W_p$. For $0 \leq \gamma \leq \infty$, we have
\begin{equation}\label{balanced_to_unbalanced1}
W_b \left(a(t)\begin{pmatrix}
    1&0\\
    \varpi^\gamma&1 \\
  \end{pmatrix}
    \right) = W_p \left( a(t)
               \begin{pmatrix}
                 1&0\\
                 \varpi^{n_2 + \gamma}&1 \\
               \end{pmatrix}
               \right).
\end{equation}
For $\gamma > n_2$, we have
\begin{equation}\label{balanced_to_unbalanced2}
 W_b \left( a(t)
    \begin{pmatrix}
      0&1\\
      1&\varpi^\gamma \\
    \end{pmatrix}
    \right) = W_p \left( a(\varpi^{-2n_2}t)
               \begin{pmatrix}
                 0&1\\
                 1&\varpi ^{\gamma -n_2} \\
               \end{pmatrix}
               \right),
\end{equation}
and for $1 \leq \gamma \leq n_2$, we have
\begin{equation} \label{balanced_to_unbalanced3}
W_b \left( a(t)
    \begin{pmatrix}
      0&1\\
      1&\varpi^\gamma \\
    \end{pmatrix} \right) = \psi_p(\varpi^{-n_2-\gamma}t)W_p \left( a(- \varpi^{-2 \gamma}t)
                             \begin{pmatrix}
                               1&0\\
                               \varpi^{n_2-\gamma}&1 \\
                             \end{pmatrix}\right)
\end{equation}
As in Table \ref{heuristic}, we can tabulate the behaviour of the balanced newvector:


\begin{table}[H]
\centering 
\renewcommand{\arraystretch}{2}
\begin{tabular}{|c|c|c|c|c|c|}
\hline
$\kappa$                        & $\gamma$                             & \multicolumn{2}{|c|}{Support}                       & Type          & Max        \\
\hline             
\multirow{2}{*}{$\left(
		\begin{smallmatrix}
			0&1\\
			1&\varpi^\gamma \\
		\end{smallmatrix}
  \right)$}  & $1\leq \gamma \leq n_2$                      & \multicolumn{2}{|c|}{$\lvert t \rvert = q^{n_1 - n_2}$}                        & Osc.   & $\O\left(1\right)$       \\ \cline{2-6}
                         & $n_2 < \gamma \leq \infty$                     & \multicolumn{2}{|c|}{$\lvert t \rvert=q^{n_1 - n_2}$}                        & Const.     & $\O(1)$       \\               
\hline
\multirow{4}{*}{$\left(
			\begin{smallmatrix}
				1&0\\
				\varpi^\gamma&1 \\
			\end{smallmatrix}\right)$}  
		
		& $\gamma=0$, $n$ odd & \multicolumn{2}{|c|}{$\lvert t \rvert=1$}                                              & Osc.   & $\O(1)$       \\ \cline{2-6}
		
		& \multirow{3}{*}{ $\gamma= 0$, $n$ even}   &\multicolumn{2}{|c|}{$\pi_p$ principal series, $0<|t|<1$} & Osc.   & $\O(|t|^{\frac{1}{2}})$     \\ \cline{3-6}
		
                         &                                &\multirow{2}{*}{$\lvert t\rvert=1$, $\Delta \in (\mathcal{O})^2$} & $q^{-\frac{n}{3}} \ll \lvert \Delta \rvert \leq 1$     & Osc.   & $\O(\lvert \Delta \rvert^{-\frac{1}{4}})$    \\ \cline{4-6}
  
                         &                                &                                 & $ \lvert \Delta \rvert \ll q^{-\frac{n}{3}}$         & Airy          & $\O\left(q^{\frac{n}{12}}\right)$     \\ \cline{2-6}
  
                         & $0<\gamma < n_1$                      & \multicolumn{2}{|c|}{$\lvert t \rvert=1$}                        & Osc.    & $\O\left(1\right)$      \\ \cline{2-6}
  
                         & $n_1 \leq \gamma \leq \infty$                     & \multicolumn{2}{|c|}{$|t|=1$}                        & Const.      & $\O\left(1\right)$       \\  
\hline
\end{tabular}
\caption{The behaviour of the balanced newvector on the matrices $a(t) \kappa$.}\label{heuristic_balanced}
\end{table}


\noindent
For $\kappa$ as in the table above, we write $\kappa W_b$ for the function $t\mapsto W_b(a(t) \kappa)$. Observe that when $\kappa \neq \left(
\begin{smallmatrix}
1&0\\
1&1 \\
\end{smallmatrix}
\right)$, $\kappa W_b$ is only supported on $\lvert t \rvert=1$ or $\lvert t \rvert = q^{n_1 - n_2}$. When $\kappa = \left(
\begin{smallmatrix}
1&0\\
1&1 \\
\end{smallmatrix}
\right)$, $\kappa W_b$ is supported on multiple dyadic ranges if $\pi_p$ is a principal series representation, and we dyadically decompose
\begin{equation*}
\kappa W_b = \sum_{s=0}^{n -1} \kappa W_b^s + \kappa W_b^{\infty}
\end{equation*}
where
\begin{equation*}
\kappa W_b^s := \mathbf{1}_{\lvert t \rvert=q^{-s}} \kappa W_b, \quad \textrm{and} \quad \kappa W_b^{\infty} := \mathbf{1}_{\lvert t \rvert \leq q^{-n}} \kappa W_b.
\end{equation*}
When $s=0$, $\pi_p$ can also be a dihedral supercuspidal representation attached to an unramified quadratic extension, and there are more cases to consider. Recalling that $\Delta := 1+ 4 b^2 t^{-2}$, we set $u:= v(\Delta)$ and decompose $\kappa W_b^0$ as follows:
\begin{equation*}
  \kappa W_b^0 =  \sum_{u=0}^{n_2-1} \kappa W_b^{0,u}+\kappa W_b^{0,\infty} \end{equation*}
where
\begin{equation*} \kappa W_b^{0,u}:= \mathbf{1}_{\lvert \Delta \rvert= q^{-u}} \kappa W_b^{0},\quad \textrm{and} \quad \kappa W_b^{0,\infty}:= \mathbf{1}_{\lvert \Delta \rvert \leq q^{-n_2}} \kappa W_b^0.
\end{equation*} 

\section{First estimates}\label{first_estimates_section}
Recall the estimate $
\lVert f \rVert_4^4\asymp \sum_{\kappa} p^{-\gamma}\mathcal{N}(\kappa W_b)
$ from the beginning of Section \ref{unfolding_section} where $\kappa$ runs through the matrices in Proposition \ref{representatives}. The purpose of this section is to estimate $\mathcal{N}(\kappa W_b)$. We start with some observations. If $\kappa = \left(
\begin{smallmatrix}
0&1\\
1&\varpi^\gamma \\
\end{smallmatrix}
\right)$ and the conductor exponent $n$ is odd, then we see from Table \ref{heuristic_balanced} that $\kappa W_b$ is supported only on $\lvert t \rvert = q^{-1}$. But from the definition of $\mathcal{N}(\kappa W_b)$ in Section \ref{unfolding_section}, we only integrate $\kappa W_b$ over elements of $\mathcal{O}_F$ so $\mathcal{N}(\kappa W_b)$ vanishes identically. Hence when $\kappa = \left(
\begin{smallmatrix}
0&1\\
1&\varpi^\gamma \\
\end{smallmatrix}
\right)$, we can assume that $n$ is even, and $\kappa W_b$ is supported on $\lvert t \rvert=1$.\\

\noindent
When $\kappa = \left(
\begin{smallmatrix}
1&0\\
1&1 \\
\end{smallmatrix}
\right)$, we have $\mathcal{N}(\kappa W_b) \prec \sum_{s=0}^{n_2-1} \mathcal{N}(\kappa W_b^s)+ \kappa W_b^{\infty}$ by the power mean inequality, and thus we only need to estimate $\mathcal{N}(W)$ when $W(t)$ is a function supported on a single dyadic range. To this end, it will be necessary to introduce the following variant of $\mathcal{N}$. Suppose $W: F^\times \rightarrow \mathbb{C}$ has support contained in $\lvert t \rvert = q^{-s}$ for some non-negative integer $s$. For $a \in \mathbb{F}_p^\times$, we define $\mathcal{N}_a(W)$ by the same expression as $\mathcal{N}$ but with summation over $m$ restricted to $m \equiv a p^s \pmod{p^{s+1}}$. By the power mean inequality, $\mathcal{N}(W) \ll  \sum_{a \in \mathbb{F}_p^\times} \mathcal{N}_a(W)$, so it is sufficient to estimate each $\mathcal{N}_a(W)$ individually, and it is easy to see that the unfolding arguments in Section \ref{unfolding_section} hold with $\mathcal{N}_a$ in place of $\mathcal{N}$. The point of working with $\mathcal{N}_a(W)$ instead of $\mathcal{N}(W)$ is that, after unfolding, we have now fixed the valuation of $m_1 + m_2$ to be $s$ when $W$ has supported contained in $\lvert t \rvert=q^{-s}$. This makes it easier to handle the integrals $I_p(\mathbf{m})$, and, in some cases, it even seems essential for getting the right estimates.

Returning to the fourth moment of $f$, we have the following bound on $\lVert f \rVert_4^{4}$

\begin{multline}\label{first_bound}
(p^{n})^\varepsilon \sum_{a \in \mathbb{F}_p^\times} \Bigg[ \sum_{\kappa \neq \left(
\begin{smallmatrix}
1&0\\
1&1 \\
\end{smallmatrix}
\right)}p^{-\gamma} \mathcal{N}_a(\kappa W_b) + \sum_{s=1}^{n-1} \mathcal{N}_a\left( \left(
\begin{smallmatrix}
1&0\\
1&1 \\
\end{smallmatrix}
\right) W_b^s  \right)+ \mathcal{N}_a \left( \left(
\begin{smallmatrix}
1&0\\
1&1 \\
\end{smallmatrix}
\right) W_b^{\infty} \right) \\+ \sum_{u < n_2} \mathcal{N}_a(\left(
\begin{smallmatrix}
1&0\\
1&1 \\
\end{smallmatrix}
\right) W_b^{0,u}) + \mathcal{N}_a(\kappa W_b^{0,\infty}) \Bigg]
\end{multline}
for any $\varepsilon >0$. The purpose of this section is to estimate $\mathcal{N}_a(W)$ given estimates for the integrals $I_p(\mathbf{m})$, and the starting point is Lemma \ref{archimedean_lemma}. We recall that for a measureable function $W: F^\times \rightarrow \mathbb{C}$ and a tuple of positive integers $\mathbf{m} =(m_1 , m_2 , m_3 , m_4)$ satisfying $m_1 + m_2 = m_3 + m_4$, we set
\begin{equation}\label{definition_of_I_p}
I_p(\mathbf{m}):= \int_{t \in \mathcal{O}^\times} W(m_1 t) W(m_2 t) \overline{W(m_3 t) W(m_4 t)} d^\times t.
\end{equation}
The following lemma explains how to estimate $\mathcal{N}_a(W)$, if one knows that $I_p(\mathbf{m})$ vanishes for sufficiently many tuples $\mathbf{m}$. To make this more precise, we observe that if $m_1 + m_2 = m_3 + m_4$, then $(m_3 - m_1)(m_3 - m_2)= m_1 m_2 - m_3 m_4$, and we define
\begin{equation*}
v_{\mathbf{m}}:= v(m_3 - m_1)+v(m_3 - m_2)= v(m_1 m_2 - m_3 m_4).
\end{equation*}
Since $m_1 + m_2 = m_3 + m_4$ and $m_1 m_2 = m_3 m_4$ implies $\left\{ m_1, m_2 \right\}= \left\{ m_3 , m_4 \right\}$, the number $v_{\mathbf{m}}$ measures how far $\mathbf{m}$ is from the ``diagonal'' $\left\{ m_1 , m_2 \right\}= \left\{ m_3 , m_4 \right\}$ where there is no hope of finding cancellation in \eqref{definition_of_I_p}. The lemma below shows how $\mathcal{N}_a(W)$ can be estimated if $I_p(\mathbf{m})$ vanishes for all $\mathbf{m}$ with $v_{\mathbf{m}}<v_0$ for some fixed non-negative integer $v_0$, i.e. if we are sufficiently far away from the diagonal.

\begin{lemma}\label{first_estimates}
Let $W: F^\times \rightarrow  \mathbb{C}$ be measurable, and suppose $\lVert W \rVert _\infty \ll(p^n)^A$ for some constant $A>0$. Assume that $v_0 \leq n$ is a non-negative integer such that $I_p(\mathbf{m})$ vanishes when $v_{\mathbf{m}}< v_0$. Then the following estimates hold:
\begin{enumerate}
\item If $W = \kappa W_b$ where $\kappa = \left(
\begin{smallmatrix}
1&0\\
\varpi^{\gamma}&1 \\
\end{smallmatrix}
\right)$ or $\kappa = \left(
\begin{smallmatrix}
0&1\\
1&\varpi^\gamma \\
\end{smallmatrix}
\right)$ with $1 \leq \gamma \leq \infty$, then
\begin{equation*}
\mathcal{N}_a(\kappa W_b) \prec 1+ p^{n_2-v_0} \lVert \kappa W_b \rVert _\infty^4.
\end{equation*}
\item If $W = \kappa W_b^s$ where $\kappa = \left(
\begin{smallmatrix}
1&0\\
1&1 \\
\end{smallmatrix}
\right)$ and $1 \leq s < n$, then
\begin{equation*}
\mathcal{N}_a(\kappa W_b^s) \prec 1+p^{n_2 +s- v_0} \lVert \kappa W_b^s \rVert _\infty^4.
\end{equation*}
\item If $W= \kappa W_b^{\infty}$ where $\kappa = \left(
\begin{smallmatrix}
1&0\\
1&1 \\
\end{smallmatrix}
\right)$, then
\begin{equation*}
\mathcal{N}(\kappa W_b^{\infty}) \prec 1.
\end{equation*}
\item If $W = \kappa W_b^{0,u}$ where $\kappa = \left(
\begin{smallmatrix}
1&0\\
1&1 \\
\end{smallmatrix}
\right)$ and $0 \leq u <n_2$, then
\begin{equation*}
\mathcal{N}_a(\kappa W_b^{0,u}) \prec 1+p^{n_2 - u - \max \left\{ v_0, 2u \right\}} \lVert \kappa W_b^{0,u} \rVert _\infty^4.
\end{equation*}
\item If $W = \kappa W_b^{0,\infty}$ where $\kappa = \left(
\begin{smallmatrix}
1&0\\
1&1 \\
\end{smallmatrix}
\right)$, then
\begin{equation*}
\mathcal{N}_a(\kappa W_b^{0,\infty})\prec 1+ p^{- n_2} \lVert \kappa W_b^{0,\infty} \rVert _\infty^4.
\end{equation*}
\end{enumerate}
\end{lemma}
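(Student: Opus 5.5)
The plan is to start from Lemma \ref{archimedean_lemma}, in its $\mathcal{N}_a$ version. This reduces $\mathcal{N}_a(W)$, up to $(p^n)^\varepsilon$ and an additive $1$, to
\begin{equation*}
p^{-n_2}\sum_{\mathbf{m}} \frac{\lambda(m_1')^4 m_1}{(m_1')^2}\lvert I_p(\mathbf{m})\rvert ,
\end{equation*}
where $m_1\le 2p^{(1+\varepsilon)n_2}$, $m_j\in[m_1/2,2m_1]$ and $m_1+m_2=m_3+m_4$. In each case we bound $\lvert I_p(\mathbf{m})\rvert$ trivially by $\lVert W\rVert_\infty^4$ times the measure of the set of $t\in\mathcal{O}^\times$ where all four factors are supported. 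We also use that $I_p(\mathbf{m})=0$ unless every $v(m_j)$ matches the support of $W$. For example, if $W$ is supported on $\lvert t\rvert=q^{-s}$, then $v(m_j)=s$ for all $j$, and $\mathcal{N}_a$ additionally fixes $m_1+m_2\equiv 2ap^s$ modulo $p^{s+1}$. The hypothesis lets us restrict to $v_{\mathbf{m}}\ge v_0$, that is $p^{v_0}\mid (m_3-m_1)(m_3-m_2)$.

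The counting proceeds by dyadic $m_1\sim M$. Write $m_1=p^{s}m_1'$, which gives $\lambda(m_1')^4m_1/(m_1')^2 = p^{2s}\lambda(m_1')^4/m_1$. By \eqref{fourth_moment}, the sum of $\lambda(m_1')^4$ over $m_1'\sim M/p^s$ is $\prec M p^{-s}$. Once $m_1$ is fixed, we count pairs $(m_2,m_3)$ in $[M/2,2M]$, each divisible by $p^s$, with $p^{v_0}\mid(m_3-m_1)(m_3-m_2)$. Split $v_0=v_1+v_2$ as $v(m_3-m_1)=v_1\ge s$ and $v(m_3-m_2)=v_2\ge s$, which leaves $O(n)$ choices of split. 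The number of $(m_2,m_3)$ is then $\ll (M p^{-v_1}+1)(Mp^{-v_2}+1)$, which is roughly $M^2p^{-v_0}$ plus boundary terms $Mp^{-s}$ coming from the near-diagonal. Assembling, the total is
\begin{equation*}
\prec p^{-n_2}\,p^{2s}\, M^{-1}\cdot Mp^{-s}\cdot\bigl(M^2p^{-v_0}+Mp^{-s}+1\bigr)\lVert W\rVert_\infty^4 .
\end{equation*}
With $M\le p^{(1+\varepsilon)n_2}$ the main term gives $p^{2n_2+s-n_2-v_0}$, up to adjustment in the bookkeeping. I expect the exponent to come out as $p^{n_2+s-v_0}$ after one correctly tracks that the $t$-integral has measure $1$ and that the archimedean factor carries the $p^{-n_2}$. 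The diagonal terms give at most $p^{-n_2}\cdot M\lVert W\rVert_\infty^4$, which is absorbed when $v_0\le n$. Setting $s=0$ and $\lVert W\rVert_\infty$ as in Table \ref{heuristic_balanced} yields (1) and (2). Part (3) follows from the trivial bound $W\ll |t|^{1/2}\le p^{-n/2}$ on $\lvert t\rvert\le q^{-n}$: this gain $p^{-2n}$ beats all the counting losses, so no vanishing hypothesis is needed.

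For (4) and (5) the support of $W$ in $t$ is the set where $\lvert\Delta(m_jt)\rvert=q^{-u}$. Here $\Delta(m t)\equiv0$ modulo $p^u$ forces $m t$ to lie in one of at most two classes $\pm 2ib$ modulo $p^u$ (roots of $-4b^2$). So the $t$-measure of joint support is $\ll p^{-u}$, and a nonzero integrand requires $m_it\equiv\pm m_jt$, i.e.\ $p^u\mid m_i\mp m_j$ for all pairs. The condition $m_1\equiv\pm m_2$ is incompatible with $m_1+m_2\not\equiv0$ fixed by $\mathcal{N}_a$ unless the sign is $+$ (when $u\ge1$). Hence $p^u\mid m_3-m_1$ and $p^u\mid m_3-m_2$, so $v_{\mathbf{m}}\ge 2u$ automatically. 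Combined with the hypothesis this gives $v_{\mathbf{m}}\ge\max\{v_0,2u\}$, together with the extra factor $p^{-u}$ from the $t$-measure, and the same counting yields (4). For (5) the $t$-measure is $\ll p^{-n_2}$ and the counting gives at most $p^{n_2}\cdot p^{-n_2}$ before the $t$-saving, hence $p^{-n_2}\lVert W\rVert_\infty^4$.

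The main obstacle is the lattice-point count. It must be uniform in $p$ and $n$, handle the constraint $m_j\equiv0\pmod{p^s}$ without double-losing $p^s$, and, in (4), correctly exclude the anti-diagonal sign so that the $2u$ divisibility is genuinely gained. I would first do the count carefully for $s=0$, $u=0$, then verify that the congruence classes modulo $p^s$ and $p^u$ factor out cleanly.
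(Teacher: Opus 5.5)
Your argument is correct and follows the paper's outline. You start from Lemma \ref{archimedean_lemma} and bound $\lvert I_p(\mathbf{m})\rvert$ by $\lVert W\rVert_\infty^4$ times the $t$-measure of the joint support. You use the support and the $\mathcal{N}_a$ congruence to force $v_{\mathbf m}\ge 2s$, resp.\ $m_1\equiv\cdots\equiv m_4\pmod{p^u}$ and hence $v_{\mathbf m}\ge 2u$. You finish with \eqref{fourth_moment}.

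The difference is in the lattice count. The paper fixes $(m_1,m_2)$ and bounds the number of $m_3$ by $\sum_{c+d=v_0,\ \min\{c,d\}\le v(m_1-m_2)}p^{-\max\{c,d\}}m_1$, treating $m_1=m_2$ separately. It then sums over $m_2$ stratified by $v(m_1-m_2)$. For (2) it divides out $p^s$ and reruns (1) with $v_0-2s$. You instead fix $m_1$, choose $m_3$ modulo $p^{v_1}$ and then $m_2$ modulo $p^{v_2}$, with $v_1+v_2=\max\{v_0,2s\}$ (resp.\ $\max\{v_0,2u\}$) and $v_1,v_2\ge s$ (resp.\ $\ge u$). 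This is more direct, treats (1), (2) and (4) uniformly, and keeps track of the $+1$ boundary terms, which the paper's bound $\ll p^{-\max\{c,d\}}m_1$ does not display. For (3), the paper simply notes the sum is empty, since $p^n\mid m_j$ is impossible for $m_j\ll p^{(1+\varepsilon)n_2}$. Your size bound $\lvert W\rvert\le 2\lvert t\rvert^{1/2}\le 2p^{-n/2}$ also works.

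Two corrections to your bookkeeping. First, no adjustment is needed: with $M\le 2p^{(1+\varepsilon)n_2}$, the product $p^{-n_2}\cdot p^{2s}M^{-1}\cdot Mp^{-s}\cdot M^2p^{-v_0}$ is already $\prec p^{n_2+s-v_0}$.

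Second, the diagonal contribution is not absorbed ``because $v_0\le n$''. Your bound for it is $p^{-n_2}M\lVert W\rVert_\infty^4\prec \lVert W\rVert_\infty^4$. When $n_2+s<v_0\le n$, this does not fit under $p^{n_2+s-v_0}\lVert W\rVert_\infty^4$. It is absorbed into the term $1$ for a different reason: by Table \ref{heuristic_balanced}, $\lVert \kappa W_b\rVert_\infty\ll 1$ in (1) and $\lVert \kappa W_b^s\rVert_\infty\ll p^{-s/2}$ in (2). Likewise, in (4) your count gives a diagonal term $\prec p^{-2u}\lVert \kappa W_b^{0,u}\rVert_\infty^4$, and this is $\ll 1$ because $\lVert \kappa W_b^{0,u}\rVert_\infty^4\ll p^{u}$. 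So the lemma really uses these sup-norm sizes, not just the hypothesis $\lVert W\rVert_\infty\ll (p^n)^A$.
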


\begin{proof}
\phantom{}
\begin{enumerate}
\item As explained in the beginning of this section, we can assume that when $\kappa = \left(
\begin{smallmatrix}
0&1\\
1&\varpi^\gamma \\
\end{smallmatrix}
\right)$ then $n$ is even, and $\kappa W_b$ has support contained in $\lvert t \rvert=1$. Let $\varepsilon >0$ be given. By Lemma \ref{archimedean_lemma}, we have
  \begin{equation*}
\mathcal{N}_a(\kappa W_b) \ll_\varepsilon(p^n)^\varepsilon \left[ 1+ p^{- n_2} \sum_{\substack{
      m_1 \leq 2 p^{(1+\varepsilon)n_2}\\ m_j \in [ 2^{-1} m_1 , 2 m_1 ]\\
      p \nmid m_1 , m_2 , m_3 , m_4\\
      m_1 + m_2 = m_3 + m_4
}} \frac{\lambda(m_1)^4}{m_1} \lvert I_p(\mathbf{m}) \rvert \right].
\end{equation*}
Here we have used that $\kappa W_b(t)$ is only supported on $\lvert t \rvert = 1$, so if $\mathbf{m} =(m_1 , m_2 , m_3 , m_4)$ and $p \mid m_j$ for some $j$, then $I_p(\mathbf{m})=0$. Hence the sum can be restricted to $m_j$ not divisble by $p$. We omit the condition that $m_j \equiv a \pmod{p}$ for all $j=1,2,3,4$ since this will only give a saving of size $p$. We can bound $I_p(\mathbf{m})$ by $\lVert \kappa W_b \rVert _\infty^4 \mathbf{1}_{v_{\mathbf{m}} \geq v_0}$ with the result that the sum above is bounded by
\begin{equation*}
p^{-n_2} \lVert \kappa W_b \rVert _\infty^4  \sum_{\substack{
    m_1 \leq 2 p^{(1 + \varepsilon) n_2}\\
    p \nmid m_1
}} \frac{\lambda(m_1)^4}{m_1} \sum_{m_2 \in  [2^{-1} m_1 , 2 m_1]} N(m_1 , m_2)
\end{equation*}
where $N(m_1 , m_2) := \#\left\{ m_3 \in [2^{-1}  m_1, 2 m_1 ]\,:\, v(m_3 - m_1)+v(m_3 - m_2) \geq v_0 \right\}$. We estimate $N(m_1 , m_2)$ using a dyadic decomposition. Since $v(m_1 - m_2)$ is at least the minimum of $v(m_3 - m_1)$ and $v(m_3 - m_2)$, $N(m_1 , m_2)$ is bounded by 
\begin{equation*}
\sum_{\substack{
  c+d = v_0\\
  \min \left\{ c,d \right\} \leq v(m_1 - m_2)
}} \# \left\{ m_3 \in [2^{-1}  m_1, 2 m_1 ] \,:\, v(m_3 - m_1) \geq c \textrm{ and } v(m_3 - m_2) \geq d \right\}
\end{equation*}
\begin{equation*}
  \ll \sum_{\substack{
    c+d=v_0 \\
  \min \left\{ c,d \right\} \leq v(m_1 - m_2)
}} p^{- \max \left\{ c,d \right\}} m_1.
\end{equation*}
We can write $\max \left\{ c,d \right\}= c+d - \min \left\{ c,d \right\}$. If $m_1 \neq m_2$, this is bounded below by $v_0-v(m_1 - m_2)$, and since $v_0 \prec 1$, this leads to the estimate $N(m_1 , m_2) \ll_\varepsilon(p^n)^\varepsilon m_1 p^{-v_0 + v(m_1 - m_2)}$. If $m_1 = m_2$ (so $v(m_1 - m_2)=\infty$), we instead have $\max  \left\{ c,d \right\} \geq \frac{1}{2} v_0$, leading to $N(m_1 , m_2) \ll_\varepsilon(p^n)^\varepsilon m_1 p^{-\frac{1}{2} v_0}$. We conclude that the double-sum  above is bounded by 
\begin{equation*}
(p^n)^\varepsilon \sum_{\substack{
    m_1 \leq 2 p^{(1+\varepsilon)n_2}\\
    p \nmid m_1
}} \lambda(m_1)^4 \left[ p^{-\frac{1}{2} v_0}+p^{-v_0} \sum_{0 \leq v \leq (1+ \varepsilon ) n_2} p^v \sum_{\substack{
    m_2 \in [2^{-1}  m_1, 2m_1 ] \\
    v(m_1 - m_2)=v
}}1 \right]
\end{equation*}
for any $\varepsilon >0$. The number of $m_2 \in [2^{-1}m_1, 2 m_1 ]$ satisfying $v(m_1 - m_2) = v$ is $\ll m_1 p^{-v} \ll p^{n_2(1+\varepsilon)} p^{-v}$ so the the double sum in the square brackets above is $\ll_\varepsilon(p^n)^\varepsilon p^{n_2-v_0}$, and since $-\frac{1}{2} v_0 \leq n_2 - v_0$, we conclude that the above is bounded by 
\begin{equation*}
 (p^n)^\varepsilon p^{n_2 - v_0} \sum_{\substack{
      m_1 \leq 2 p^{(1+\varepsilon) n_2}\\
      p \nmid m_1
}} \lambda(m_1)^4. 
\end{equation*}
for all $\varepsilon >0$. By the fourth moment bound \eqref{fourth_moment} on the Hecke eigenvalues, the proof is complete.

\item Because $\kappa W_b^s(t)$ is only supported on $v(t)=s$, $I_p(\mathbf{m})$ vanishes unless $v(m_j)=s$ for all $j=1,2,3,4$. If we write $m_j = p^s m_j '$ where $p \nmid m_j$ then $v_{\mathbf{m}}=2s+v_{\mathbf{m} '}$ where $\mathbf{m} ' =(m_1', m_2 ' , m_3 ' , m_4 ')$. The sum on the right hand side in Lemma \ref{archimedean_lemma} becomes
\begin{equation*}
p^{-n_2 +s} \sum_{\substack{
    m_1' \leq 2 p^{(1+ \varepsilon )n_2 -s}\\ m_j' \in [2^{-1}  m_1', 2 m_1' ]\\
    p \nmid m_1' , m_2' , m_3' , m_4' \\
    m_1' + m_2' =  m_3' + m_4'
}} \frac{\lambda(m_1')^4}{ m_1' } \lvert I_p(\mathbf{m}') \rvert.
\end{equation*}
The proof is now identical to case (1) with $v_0 -2s$ in place of $v_0$.

\item If follows by the definition of $\kappa W_b^{\infty}$ that $I_p(\mathbf{m})$ vanishes unless $p^n$ divides all of $m_1$, $m_2$, $m_3$ and $m_4$. But in Lemma \ref{archimedean_lemma}, we only sum over $m_j \ll p^{(1+ \varepsilon) n_2}$ for $j=1,2,3,4$ so the sum is in fact empty, and we get desired the bound $\mathcal{N}_a(\kappa W_b^\infty) \prec 1$.   
\item This time, $\kappa W_b^{0,u}(t)$ is only supported on $\left\{ t \in \mathcal{O}^\times\,:\, v(t^2 + 4 b^2 )=u\right\}$. For the integrand in $I_p(\mathbf{m})$ to be supported we therefore need $m_1 , m_2 , m_3 , m_4 \in \mathcal{O}^\times$, and $m_1^2 \equiv m_2^2 \equiv m_3^2 \equiv m_4^2 \pmod{p^u}$. Since we work with $\mathcal{N}_a$ instead of $\mathcal{N}$, we have $m_j \equiv a \pmod{p}$ for $j=1,2,3,4$ so this forces $m_1 \equiv m_2 \equiv m_3 \equiv m_4 \pmod{p^u}$. Now the support of the integrand is contained in $\left\{ t \in \mathcal{O}^\times \,:\, v((m_1 t)^2 + 4b^2) = u \right\}$ which has volume $\ll p^{-u}$ with respect to $d^\times t$. Hence $\mathcal{N}_a(\kappa W_b^{0,u})$ is bounded by 
\begin{equation}\label{first_estimates_case_(3)}
(p^n)^\varepsilon \left[ 1 + p^{-n_2 - u} \lVert \kappa W_b^{0,u} \rVert^4 \sum_{\substack{
      m_1 \leq 2 p^{(1+\varepsilon) n_2}\\ m_j \in [2^{-1}  m_1, 2m_1 ]\\
      m_j \equiv a \pmod{p}\\
      m_j \equiv m_1 \pmod{p^u}\\
      m_1 + m_2 = m_3 + m_4
}} \frac{\lambda(m_1)^4}{m_1}\right]
\end{equation}
for all $\varepsilon >0$. Since the $m_j$ are pairwise congruent modulo $p^{u}$, it follows that $v_{\mathbf{m}} \geq 2u$. We now bound the sum inside the square brackets by
\begin{equation*}
\sum_{\substack{
    m_1 \leq 2 p^{(1+\varepsilon) n_2}\\
    p \nmid m_1
  }} \frac{\lambda(m_1)^4}{m_1} \sum_{\substack{m_2 \in [2^{-1} m_1, 2 m_1 ]\\ v(m_1 - m_2) \geq u}} N(m_1 , m_2)
\end{equation*}
where, this time, $N(m_1 , m_2)$ is defined as the cardinality of the set
\begin{equation*}
 \left\{ m_3 \in [2^{-1} m_1, 2 m_1 ]\,:\, v(m_3 - m_1)+v(m_3 - m_2)\geq \max \left\{ v_0 , 2u \right\} \right\}.
\end{equation*}
Arguing as in case (1) with $\max \left\{ 2u, v_0 \right\}$ in place of $v_0$, we find that the above is
\begin{equation*}
\begin{split}
& \ll_\varepsilon(p^n)^\varepsilon \sum_{\substack{
    m_1 \leq 2 p^{(1+ \varepsilon)n_2}\\
    p \mid m_1
}} \lambda(m_1)^4 \left[ p^{-\frac{1}{2} \max \left\{ v_0 , 2u \right\}}+p^{n_2 - \max \left\{v_0 , 2u \right\}} \right] \\ & \ll(p^n)^\varepsilon p^{n_2 - \max \left\{ v_0 ,2u \right\}} \sum_{\substack{
  m_1 \leq 2 p^{(1+\varepsilon)n_2}\\
  p \nmid m_1
}} \lambda(m_1)^4
\end{split}
\end{equation*}
since the conditions $u < n_2$ and $v_0 \leq n$ imply that $n_2 - \max \left\{ v_0 , 2u \right\} \geq - \frac{1}{2} \max \left\{ v_0 , 2u \right\}$. Using the fourth moment bound on the Hecke eigenvalues completes the proof in this case.

\item In this case, we get a same estimate for $\mathcal{N}_a(\kappa W_b^{0,\infty})$ as in \eqref{first_estimates_case_(3)}, but with $n_2$ in place of $u$, and $\lVert \kappa W_b^{0,\infty} \rVert _\infty $ in place of $\lVert \kappa W_b^{0,u} \rVert _\infty$. The conditions $m_1 \ll p^{(1+\varepsilon) n_2}$, $m_j \asymp m_1$ and  $m_j \equiv m_1\pmod{p^{n_2}}$ imply that there are only $ \ll p^{\varepsilon n_2 } $ possibilities for each of $m_1$, $m_2$ and $m_3$ given $m_1$. Since $\lambda(m_1)^4 \ll  m_1$, the sum inside the square brackets is $\ll_\varepsilon(p^{n})^\varepsilon p^{ n_2}$ so the desired estimate follows.
\end{enumerate}
\end{proof}

\section{Estimating via size and support}\label{estimating_via_size_and_support}
In section we estimate using Lemma \ref{first_estimates} only taking into account the size and support properties of the $p$-adic Whittaker function, and not the cancellation that might occur in $I_p(\mathbf{m})$. We will only make reference to Table \ref{heuristic_balanced}, and not the explicit formulas for the Whittaker function in Theorem \ref{whittaker_formulas}. 

\begin{lemma}
We have the following estimates:
\begin{enumerate}
\item For $\kappa = \left(
\begin{smallmatrix}
1&0\\
\varpi^\gamma&1 \\
\end{smallmatrix}
\right)$ or $\kappa=\left(
\begin{smallmatrix}
0&1\\
1&\varpi^\gamma \\
\end{smallmatrix}
\right)$ where $1 \leq \gamma \leq \infty$, we have $\mathcal{N}_a (\kappa W_b) \prec p^{n_2}$.
\item For $\kappa = \left(
\begin{smallmatrix}
1&0\\
1&1 \\
\end{smallmatrix}
\right)$ and, $\frac{n}{6} \leq s <n$ or $s=\infty$, we have $\mathcal{N}_a (\kappa W_b^s) \prec 1$.
\item For $\kappa = \left(
\begin{smallmatrix}
1&0\\
1&1 \\
\end{smallmatrix}
\right)$ and $u \geq \frac{n}{4}$, we have $\mathcal{N}_a(\kappa W_b^{0,u}) \prec 1$.
\item For $\kappa = \left(
\begin{smallmatrix}
1&0\\
1&1 \\
\end{smallmatrix}
\right)$, we have $\mathcal{N}_a (\kappa W_b^{0,\infty}) \prec 1$. 
\end{enumerate}
\end{lemma}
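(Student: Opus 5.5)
The plan is to apply Lemma \ref{first_estimates} with the trivial choice $v_0 = 0$ (which is always admissible, since the hypothesis ``$I_p(\mathbf{m})=0$ whenever $v_{\mathbf{m}}<0$'' is vacuous). The only input beyond that is the sup-norm column of Table \ref{heuristic_balanced}. The hypothesis $\lVert W\rVert_\infty \ll (p^n)^A$ holds with $A = 1/12$ in every case, by Theorem \ref{whittaker_formulas}.

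For (1), Table \ref{heuristic_balanced} gives $\lVert \kappa W_b\rVert_\infty \ll 1$ for every $\kappa$ with $1\le\gamma\le\infty$. Lemma \ref{first_estimates}(1) with $v_0=0$ then yields $\mathcal{N}_a(\kappa W_b)\prec 1+p^{n_2}\prec p^{n_2}$.

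For (2), take $\kappa=\left(\begin{smallmatrix}1&0\\1&1\end{smallmatrix}\right)$ and $1\le s<n$. By the table, $\kappa W_b^s$ is supported on $|t|=q^{-s}$ with size $O(|t|^{1/2})=O(p^{-s/2})$, so $\lVert \kappa W_b^s\rVert_\infty^4\ll p^{-2s}$. Lemma \ref{first_estimates}(2) with $v_0=0$ gives $\mathcal{N}_a\prec 1+p^{n_2+s-2s}=1+p^{n_2-s}$. This settles only $s\ge n_2$, so the trivial bound is not sufficient in the middle range; this is the main obstacle.

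To handle $n/6\le s<n_2$, I would use the support structure more carefully. In the principal series range $0<|t|<1$, the formulas \eqref{case3.1}--\eqref{case3.2} show that $\kappa W_b^s$ is $|t|^{1/2}$ times a sum of two terms with phases $\chi^{\pm}$, or, for $s<n/2$, phases $\xi^{-1}(x_\pm+b)\psi(\cdots)$. I expect $I_p(\mathbf{m})$ to vanish unless the $m_j'$ agree modulo a power of $p$, which gives an effective $v_0$ of size roughly $2s$ plus a gain from the phase. Concretely, I would aim to show that $I_p(\mathbf{m})=0$ whenever $v_{\mathbf{m}}<v_0$ with $v_0 \geq n_2+s-2s+\cdots$, i.e. that $n_2+s-v_0-2s\le 0$ once $s\ge n/6$. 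If this amounts to $v_0\ge n_2-s$, it should follow from the fact that the phase $\xi^{-1}(x_\pm+b)$ has conductor about $n/2$ in $t$ after rescaling, so that orthogonality of characters on $\mathcal{O}^\times$ kills $I_p(\mathbf{m})$ unless $m_1m_2/(m_3m_4)\equiv 1$ to the appropriate depth. The precise threshold $n/6$ suggests that the honest bound is $p^{n_2-3s}$ or similar, using the vanishing of $I_p$ only to depth about $s$ combined with the size $p^{-2s}$. I would try to verify this first.

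For $s=\infty$, Lemma \ref{first_estimates}(3) gives the bound directly.

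For (3), use Lemma \ref{first_estimates}(4) with $v_0=0$. The table gives $\lVert\kappa W_b^{0,u}\rVert_\infty^4\ll |\Delta|^{-1}=p^{u}$ in the oscillatory range $u\lesssim n/3$, and $\ll p^{n/3}$ in the Airy range. In the first case we get $p^{n_2-u-2u+u}=p^{n_2-2u}$, which is $\prec 1$ for $u\ge n/4$. In the Airy range $u\gtrsim n/3$ we get $p^{n_2-3u+n/3}\le p^{n/2-n+n/3}\prec1$.

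For (4), Lemma \ref{first_estimates}(5) together with $\lVert\kappa W_b^{0,\infty}\rVert_\infty^4\ll p^{n/3}$ gives $p^{-n_2+n/3}\prec 1$.
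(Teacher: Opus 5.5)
Your treatment of (1), (3), (4), of $s=\infty$, and of $s\ge n_2$ in (2) coincides with the paper's: Lemma \ref{first_estimates} with $v_0=0$, combined with the sup-norm column of Table \ref{heuristic_balanced}.

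The gap is in (2) for $n/6\le s<n_2$. You leave this range open. You sketch a phase/orthogonality argument (conductor of $\xi$, $m_1m_2/(m_3m_4)\equiv 1$), but you do not carry it out, and it is not needed.

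The missing observation concerns only the support. Since $\kappa W_b^s$ is supported on $v(t)=s$ and $t$ ranges over $\mathcal{O}^\times$, the integrand of $I_p(\mathbf{m})$ vanishes unless $v(m_j)=s$ for all $j=1,\dots,4$. In that case $v(m_3-m_1)\ge s$ and $v(m_3-m_2)\ge s$, so $v_{\mathbf{m}}\ge 2s$ automatically. This $2s$ comes from the common valuation of the $m_j$, not from any congruence among the $m_j'$.

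Hence the hypothesis of Lemma \ref{first_estimates} holds with $v_0=2s$ at no cost; note $2s\le n$ because $s<n_2$. Part (2) of that lemma then gives
\[
\mathcal{N}_a(\kappa W_b^s)\prec 1+p^{n_2+s-2s}\bigl(p^{-s/2}\bigr)^4=1+p^{n_2-3s}\ll 1 \qquad\text{for } s\ge \tfrac{n}{6}.
\]
This is exactly the paper's argument, and it is the $p^{n_2-3s}$ you guessed. Your sufficient condition $v_0\ge n_2-s$ reads $2s\ge n_2-s$, which is precisely the threshold $n/6$.

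By choosing $v_0=0$ you discarded the fact that the $m_j$ already agree modulo $p^s$. The genuine cancellation in $I_p(\mathbf{m})$, via the power-series substitution of Section \ref{the_proof_of_proposition_11.2}, is only required for $s<n/6$, which is Proposition \ref{final_estimates}(2).
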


\begin{proof}
\phantom{}
\begin{enumerate}
\item From Table \ref{heuristic_balanced}, we see that $\lVert \kappa W_b \rVert _\infty \ll 1$ so the claim follows from Lemma \ref{first_estimates} with $v_0 = 0$.
\item In Lemma \ref{first_estimates}, we have already proved that $\mathcal{N}_a(\kappa W_b^\infty)\prec 1$ so assume $\frac{n}{4}\leq s<n$. Consulting Table \ref{heuristic_balanced} again, we see that $\lVert \kappa W_b^s \rVert _\infty \ll p^{-\frac{s}{2}}$. Since $\kappa W_b^{s}(t)$  is only supported on $v(t) =s$, it follows that $I_p(\mathbf{m})$ vanishes unless $v(m_j)=s$ for $j=1,2,3,4$. In this case $v_{\mathbf{m}} \geq 2s$ so we can apply Lemma \ref{first_estimates} with $v_0 =2s$ to obtain
\begin{equation*}
\mathcal{N}_a(\kappa W_b^{s}) \prec 1 + p^{n_2 +s - 2s}(p^{-\frac{s}{2}})^4 \ll 1
\end{equation*}
for $s \geq \frac{n}{6}$ as desired (recall that $n_2 :=\lceil \frac{n}{2} \rceil$).
\item Here we have two subcases depending on $u= v(\Delta)$ where $\Delta = 1+4 b^2 t^{-2}$. If $p^{-\frac{n}{3}} \ll \lvert \Delta \rvert \ll p^{-\frac{n}{4}}$, we see from Table \ref{heuristic_balanced} that $\lVert \kappa W_b^{0,u} \rVert _\infty \ll p^{\frac{u}{4}}$. Applying Lemma \ref{first_estimates} with $v_0 =0$ gives
  \begin{equation*}
\mathcal{N}_a(k W_b^{0,u}) \prec 1 + p^{n_2 -3u}(p^{\frac{u}{4}})^4 \ll 1
\end{equation*}
when $u \geq \frac{n}{4}$. The second case is $p^{-n_2} \ll \lvert \Delta  \rvert \ll p^{-\frac{n}{3}}$ where $\lVert \kappa W_b^{0,u} \rVert _\infty \ll p^{\frac{n}{12}}$. Then Lemma \ref{first_estimates} with $v_0 =0$ gives
\begin{equation*}
\mathcal{N}_a(\kappa W_b^{0,u}) \prec 1 +  p^{n_2 - 3u}(p^{\frac{n}{12}})^4 \ll 1+p^{\frac{5n}{6}-3u} \ll 1
\end{equation*}
since $\frac{5n}{6}-3u <0$ when $u \geq \frac{n}{3}$.
\item We again have $\lVert \kappa W_b^{0,\infty} \rVert _\infty  \ll p^{\frac{n}{12}}$ so applying Lemma \ref{first_estimates} with $v_0 =0$ gives
\begin{equation*}
\mathcal{N}_a(\kappa W_b^{0,\infty}) \prec 1 + p^{-n_2}(p^{\frac{n}{12}})^4 \ll 1
\end{equation*}
which completes the proof. 
\end{enumerate}
\end{proof}

\noindent
From \eqref{first_bound}, it follows that we have reduced the $L^4$-norm bound to the estimates below. In these cases, it turns out the Whittaker function takes a very specific form which allows us prove vanishing of $I_p(\mathbf{m})$ for sufficiently many tuples $\mathbf{m}$. 

\begin{proposition}\label{final_estimates}
We have the following estimates
\begin{enumerate}
\item For $\kappa = \left(
\begin{smallmatrix}
1&0\\
\varpi^\gamma&1 \\
\end{smallmatrix}
\right)$ or $\kappa = \left(
\begin{smallmatrix}
0&1\\
1&\varpi^\gamma \\
\end{smallmatrix}
\right)$, and $\gamma < n_2$, we have $\mathcal{N}_a(\kappa W_b) \prec p^\gamma$.
\item For $\kappa = \left(
\begin{smallmatrix}
1&0\\
1&1 \\
\end{smallmatrix}
\right)$ and $0<s<\frac{n}{6}$, we have $\mathcal{N}_a(\kappa W_b^s) \prec 1$.
\item For $\kappa = \left(
\begin{smallmatrix}
1&0\\
1&1 \\
\end{smallmatrix}
\right)$ and $0 \leq u < \frac{n}{4}$, we have $\mathcal{N}_a(\kappa W_b^{0,u}) \prec 1$. 
\end{enumerate} 
\end{proposition}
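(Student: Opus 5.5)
The plan is to deduce all three estimates from Lemma \ref{first_estimates}. In each case the sup-norm of $W$ is already known from Table \ref{heuristic_balanced}, so the only work is to exhibit a threshold $v_0$ with $I_p(\mathbf{m})=0$ whenever $v_{\mathbf{m}}<v_0$. Reading off what is required:
\begin{itemize}
\item In case (1) we have $\lVert \kappa W_b\rVert_\infty\ll 1$, so we need $v_0\geq n_2-\gamma$.
\item In case (2) we have $\lVert\kappa W_b^s\rVert_\infty\ll p^{-s/2}$, so $p^{n_2+s-v_0}p^{-2s}\ll 1$ needs $v_0\geq n_2-s$.
\item In case (3) we have $\lVert \kappa W_b^{0,u}\rVert_\infty\ll p^{u/4}$ (since $u<n/4<n/3$), so we need $\max\{v_0,2u\}\geq n_2$, i.e.\ $v_0\geq n_2$.
\end{itemize}

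\textbf{Step 1: the phase.} First I would translate $\kappa W_b$ into $W_p$ using \eqref{balanced_to_unbalanced1} and \eqref{balanced_to_unbalanced3}. This lands in Case 4, Case 2, or Case 3 (via \eqref{case3.2} and \eqref{case3.3}) of Theorem \ref{whittaker_formulas}. In every case the function has the shape $T(t)\,\xi^{-1}(x(t)+c)\,\psi(\ell(t)\varpi^{-N})$, where $x(t)$ is a root of an explicit quadratic equation and $T$ depends at most on $t$ modulo $\mathfrak{p}$ or on $\lvert\Delta\rvert$. Because we work with $\mathcal{N}_a$, all $m_j\equiv a\pmod p$, so the factors $T(m_jt)$ are independent of $j$ and factor out of $I_p(\mathbf{m})$.

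Next I would use \eqref{logarithm} together with Lemma \ref{logarithm_lemma} to convert the character value: writing $\xi^{-1}(x(t)+c)$ relative to a fixed base point turns it into $\psi(\varpi^{-n}\,\mathrm{Tr}(b\log(\cdot)))$. Since $b^2\in\mathcal{O}_F^\times$, the whole phase becomes $\psi(\Phi(t))$ for an explicit power series $\Phi(t)=\sum_k c_kt^k$ with $F$-coefficients, whose valuations are controlled by $\gamma$, $s$ or $u$.

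\textbf{Step 2: structure of $I_p(\mathbf{m})$.} The integral becomes
\[
I_p(\mathbf{m})=C\int_{\mathcal{O}^\times}\psi\Bigl(\sum_k c_k\,(m_1^k+m_2^k-m_3^k-m_4^k)\,t^k\Bigr)\,d^\times t .
\]
The constraint $m_1+m_2=m_3+m_4$ kills the $k=1$ term. Moreover, for every $k$ the quantity $m_1^k+m_2^k-m_3^k-m_4^k$ is divisible by $m_1m_2-m_3m_4$: it is a polynomial in $e_1$ and $e_2$, and $e_1$ takes the same value for $\{m_1,m_2\}$ and $\{m_3,m_4\}$. Hence the phase equals $(m_1m_2-m_3m_4)\,G_{\mathbf{m}}(t)$, where the leading coefficient of $G_{\mathbf{m}}$ is $-k c_k$ times a unit.

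I would then look for a substitution $t\mapsto \tau(t)$, of logarithmic type coming from the explicit roots $x(t)$, that linearises $G_{\mathbf{m}}$ up to a unit. After the substitution, the integral is an additive character integrated over a coset. It vanishes as soon as $v(m_1m_2-m_3m_4)$ plus the valuation of the linear coefficient is negative, i.e.\ as soon as $v_{\mathbf{m}}<v_0$ for a $v_0$ read off from the size of $c_2$. For Case 4 ($\gamma'=n_2+\gamma$) and for Case 2 of Theorem \ref{whittaker_formulas}, I expect $v(c_2)=-(n_2-\gamma)$, which gives exactly $v_0=n_2-\gamma$. For \eqref{case3.2} I expect $v_0=n_2-s$, and for \eqref{case3.3} with $\Delta\in\mathcal{O}_F^2$ I expect $v_0=n_2$.

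\textbf{Step 3: cases (2) and (3), and the main obstacle.} Here $W$ is a sum of two terms indexed by $\pm$, so $I_p(\mathbf{m})$ splits into $16$ integrals with phases $\pm\Phi_\pm(m_1t)\pm\cdots$. The diagonal sign patterns are handled exactly as in Step 2. The mixed patterns are the main obstacle, because the $k=1$ cancellation no longer applies automatically. For these I would first try to show that $\Phi_+-\Phi_-$ has a linear term of valuation about $-n/2$ (coming from $x_+-x_-=\pm\sqrt{\Delta}\,y$ and the $\psi(x_\pm\varpi^{-n/2})$ factor). If that holds, any mixed pattern carries a nonvanishing linear phase of that size, and the integral over $\mathcal{O}^\times$ vanishes by a direct $p$-adic stationary phase argument: shift $t\mapsto t(1+\varpi^k z)$ as in Lemma \ref{airy_lemma}.

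Where the mixed linear term degenerates, namely when $\lvert\Delta\rvert$ is small, I would use that the congruence $m_j\equiv m_1\pmod{p^u}$ already forces $v_{\mathbf{m}}\geq 2u$, and combine this with the residual oscillation to reach $v_0\geq n_2$. Making the linearising substitution work uniformly for these cross terms is where I expect most of the effort to go.
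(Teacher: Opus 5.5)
\paragraph{Cases (1) and (2).} Your outline of these cases matches the paper's argument: reduce to Lemma~\ref{first_estimates}, rewrite $\xi^{-1}$ through the logarithm, note that the phase coefficients carry $\sum_j\eta_jm_j^k$, which is divisible by $m_1m_2-m_3m_4$ (the paper uses Lemma~\ref{elementary} for this), and linearise with Lemma~\ref{non_linear_substitution}. This covers case (1) and the same-sign terms of case (2).

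One correction in case (1). The phase is a series in $t^{1-2i}$, $i\ge1$, obtained by expanding $\sqrt{t^2+4B^2}$ in powers of $B^2t^{-2}$. So the decisive coefficient is $\sum_j\eta_jm_j^{-1}$, not a $t^2$-coefficient. It has valuation $v_{\mathbf m}$ only because $v(m_1+m_2)=0$, and you still have to check that the higher coefficients do not dominate.

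More seriously, your cross-term argument in case (2) fails. Take a sign pattern $(\epsilon_1,\dots,\epsilon_4)$ with $\epsilon_1+\epsilon_2=\epsilon_3+\epsilon_4$, such as $(+,-,+,-)$. There the $\chi^{\pm1}$-type logarithmic parts of the phase cancel, so no linear term of size $\varpi^{-n/2}$ survives. The paper never meets cross terms in this case. The two roots are globally separated by $x_\pm\equiv\mp b_0\pmod{\mathfrak p}$, so it writes $\kappa W_b^s=\kappa W_b^{s,+}+\kappa W_b^{s,-}$ and uses $\mathcal N_a(W^++W^-)\ll\mathcal N_a(W^+)+\mathcal N_a(W^-)$ \emph{before} unfolding.

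\paragraph{Case (3).} This is the genuine gap. For $u\ge1$ the square roots $\sqrt{(m_jt)^2+4b^2}$ have valuation $u/2$. Their binomial expansions converge only on cosets $t_0+\mathfrak p^{u+1}$, while the four points $m_jt_0$ agree only modulo $p^u$. Two consequences follow:
\begin{enumerate}
\item The phase is not of the form $\sum_kc_kt^k\sum_j\eta_jm_j^k$, so your Step 2 fails even for ``diagonal'' terms.
\item There is no canonical diagonal/mixed dichotomy, since nothing matches the roots at $m_jt$ with those at $m_1t$.
\end{enumerate}

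The missing idea is to expand each factor about its own centre. Write $(m_jt_0)^2+4b^2=\varpi^u\alpha_j^2$, with the sign of $\alpha_j$ encoding the choice of root. Then $\Phi'(z)=\varpi^{3u/2}\sum_ka_kz^k$, where each $a_k$ is a combination of the sums $\sum_j\eta_j\alpha_j^{1-2h}$ and $a_0$ is a unit times $\alpha_1+\alpha_2-\alpha_3-\alpha_4$. One then needs two inequalities:
\begin{itemize}
\item $v(\sum_j\eta_j\alpha_j)\le v(\sum_j\eta_j\alpha_j^{-1})$, so that Lemma~\ref{elementary} makes $a_0$ dominant;
\item $v(\sum_j\eta_j\alpha_j)\le v_{\mathbf m}-2u$.
\end{itemize}
The paper obtains these by repeatedly squaring $\alpha_1+\alpha_2\equiv\alpha_3+\alpha_4$ and eliminating the $\alpha_j$ via $\varpi^u\alpha_j^2=(m_jt_0)^2+4b^2$, until only $(m_1m_2-m_3m_4)^2(m_1+m_2)^2$ remains. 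Since only the $\alpha_j^2$ enter, all sixteen sign patterns are handled at once. The $-2u$ more than offsets the factor $\varpi^{3u/2}$, giving vanishing for $v_{\mathbf m}<n/2+u/2-1$.

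Your suggestion to combine $v_{\mathbf m}\ge 2u$ with ``residual oscillation'' produces no such threshold, so case (3) is left unproved.
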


\section{The strategy for proving Proposition \ref{final_estimates}}\label{the_strategy_for_proving_proposition_11.2}

We briefly explain how we obtain the estimates in Proposition \ref{final_estimates}. We use Lemma \ref{first_estimates}, but this time taking into account when $I_p(\mathbf{m})$ vanishes. In each case, we can exhibit vanishing of $I_p(\mathbf{m})$ when $\mathbf{m}$ is sufficiently far away from the diagonal $\left\{ m_1 , m_2 \right\}= \left\{ m_3 , m_4 \right\}$. The proof essentially starts by using the explicit formulas for the Whittaker function to show that $I_p(\mathbf{m})$ is a finite linear combination of integrals of the form
\begin{equation*}
\int_{z \in \mathfrak{p}} \psi(\varpi^{-n_2}\Phi(z)) d z
\end{equation*}
where $\Phi(z) \in F [[z]]$ is a power series whose coefficients can be expressed in terms of $\mathbf{m}$. Then some very careful calculations reveals that $\Phi(z)= \sum_{k \geq 0} \frac{1}{k}a_k z^k$ where $v(a_1) \leq v(a_k)$ for all $k$, and $v(a_1)$ can be bounded above in terms of $v_{\mathbf{m}}$. By the lemma below, we can simply substitute away $\Phi(z)$ to obtain an integral of the form
\begin{equation*}
\int_{z \in \mathfrak{p}} \psi(\varpi^{- n_2 + v(a_1)} z) d z
\end{equation*}
which vanishes when $-n_2 + v(a_1) <-1$. This condition will translate into $v_{\mathbf{m}} < v_0$ for some $v_0$ depending only on $n_2$ and the parameters $\gamma$, $s$ and $u$ in the statement of Proposition \ref{final_estimates}. We can then apply Lemma \ref{first_estimates} to this value $v_0$ and deduce the desired estimates.

\begin{lemma}\label{non_linear_substitution}
Let $\Phi(z)= \sum_{k \geq 0} \frac{a_k}{k !} z^k \in F [[z]]$ be a power series with coefficients satisfying $v(a_k) \geq v(a_1)$ for all $k \geq 1$. Then
  \begin{equation*}
\int_{z \in \mathfrak{p}} \psi(\Phi(z)) d z =
\begin{cases}
  0 & \textrm{if }v(a_1) < -1\\
  q^{-1} \psi (\Phi(0)) & \textrm{otherwise}
\end{cases}.
\end{equation*}
\end{lemma}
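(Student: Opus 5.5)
The plan is to linearise the phase by a measure-preserving change of variables on $\mathfrak{p}$, which reduces the integral to the integral of an additive character over $\mathfrak{p}$.

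\textbf{Step 1 (normalise the phase).} Write
\begin{equation*}
\Phi(z)-\Phi(0)=a_1\, g(z), \qquad g(z):=z+\sum_{k\geq 2}\frac{c_k}{k!}z^k, \qquad c_k:=\frac{a_k}{a_1}.
\end{equation*}
If $a_1=0$, the hypothesis $v(a_k)\geq v(a_1)=\infty$ forces $a_k=0$ for all $k\ge1$. Then $\Phi$ is constant, the integral is $q^{-1}\psi(\Phi(0))$, and we are in the second case of the statement. So assume $a_1\neq 0$; the hypothesis then says exactly that $c_k\in\mathcal{O}$ for all $k\ge2$.

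\textbf{Step 2 ($g$ is an isometric bijection of $\mathfrak{p}$).} The key input is a Legendre-type bound
\begin{equation*}
v(k!)\leq \frac{e(F/\mathbb{Q}_p)(k-1)}{p-1}.
\end{equation*}
For $z,w\in\mathfrak{p}$ we factor
\begin{equation*}
\frac{z^k-w^k}{k!}=(z-w)\,\frac{\sum_{i=0}^{k-1} z^i w^{k-1-i}}{k!},
\end{equation*}
and the second factor has valuation at least $(k-1)-v(k!)$. This is strictly positive for $k\ge2$ as soon as $e(F/\mathbb{Q}_p)<p-1$, for instance when $F=\mathbb{Q}_p$ with $p$ odd, which is the case relevant to the paper. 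It follows that
\begin{equation*}
g(z)-g(w)=(z-w)(1+\epsilon), \qquad \epsilon\in\mathfrak{p},
\end{equation*}
so $g$ is an isometry. The same bound shows that the series converges on $\mathfrak{p}$ and that $g(\mathfrak{p})\subseteq\mathfrak{p}$.

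An isometry maps each ball $x+\mathfrak{p}^m$ into $g(x)+\mathfrak{p}^m$. Since $g$ is injective, this gives an injection of the finite set $\mathfrak{p}/\mathfrak{p}^m$ into itself, which is therefore a bijection. Letting $m\to\infty$ and using compactness, $g$ is surjective and maps each ball $x+\mathfrak{p}^m$ onto $g(x)+\mathfrak{p}^m$. Hence $g$ preserves the Haar measure on $\mathfrak{p}$. Alternatively, surjectivity follows from Hensel/Newton iteration.

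\textbf{Step 3 (evaluate).} Substituting $u=g(z)$ gives
\begin{equation*}
\int_{\mathfrak{p}}\psi(\Phi(z))\,dz=\psi(\Phi(0))\int_{\mathfrak{p}}\psi(a_1u)\,du.
\end{equation*}
Since $\psi$ is unramified, $u\mapsto\psi(a_1u)$ is trivial on $\mathfrak{p}$ exactly when $a_1\mathfrak{p}\subseteq\mathcal{O}$, i.e.\ when $v(a_1)\geq-1$. In that case the integral is $\mathrm{vol}(\mathfrak{p})=q^{-1}$. Otherwise the character is non-trivial on the compact group $\mathfrak{p}$ and the integral vanishes. This is exactly the claimed dichotomy.

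\textbf{Main obstacle.} The only delicate point is Step 2, namely controlling the factorials $k!$ in the denominators uniformly in $k$. This requires the ramification of $F/\mathbb{Q}_p$ to be small relative to $p$. For $\mathbb{Q}_p$ with $p$ odd we get $(k-1)-v(k!)\geq (k-1)/2>0$, which suffices. For a general $F$ I would either add this hypothesis or shrink the domain to $\mathfrak{p}^{\kappa}$ for a suitable $\kappa$, as is done with $\kappa_F$ for the logarithm.
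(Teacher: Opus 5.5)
Your proof is correct and follows the paper's route: normalise by the linear coefficient, change variables by the resulting power series (a measure-preserving bijection of $\mathfrak{p}$), and evaluate $\int_{\mathfrak{p}}\psi(a_1u)\,du$. The justification of the substitution differs: the paper cites Silverman's compositional-inverse and convergence lemmas, while your isometry-plus-ball-counting argument proves it directly. Your caveat is also well taken: without $e(F/\mathbb{Q}_p)<p-1$ the series need not even converge on $\mathfrak{p}$, so the lemma as stated implicitly needs this hypothesis. It does hold for $F=\mathbb{Q}_p$ with $p$ odd, which is the setting in which the paper applies the lemma.
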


\begin{proof}
Write $\Phi(z)= \Phi(0) + \varpi^{v(a_1)} \sum_{k \geq 1}\frac{a_k '}{k!} z^k$ where $a_k ' = \varpi^{- v(a_k)} a_k$. Then $a_1 ' \in \mathcal{O}^\times$, and $a_k \in \mathcal{O}$ for all $k \geq 1$, and it is well know that there is a power series which is both a left- and right-inverse to $\sum_{n \geq 1} \frac{a_k'}{k!} z^k$ with respect to composition, see \cite[Ch. IV, Lemma 2.4 and Lemma 5.4]{silverman}. Hence the map $k\mapsto \sum_{k \geq 1} \frac{a_k'}{k!} z^k$ is a measure preserving homeomorphism $\mathfrak{p} \rightarrow \mathfrak{p}$ (convergence follows from \cite[Ch. IV, Lemma 6.3]{silverman} and measure-preserving from $a_k ' \in \mathcal{O}^\times$). Hence we can substitute $\sum_{k \geq 1} \frac{a_k '}{k !}z^k \mapsto z$ to obtain
\begin{equation*}
\int_{z \in \mathfrak{p}} \psi(\Phi(z)) d z = \psi(\Phi(0)) \int_{z \in \mathfrak{p}} \psi(\varpi^{v(a_n)}z) d z,
\end{equation*}
and the result is now clear.
\end{proof}

Proving the inequalities $v( a_1 ) \leq v(a_k)$ in each of the three cases will require a detailed analysis of the explicit formulas for the coefficients $a_k$. They turn out to be linear combinations of expressions of the type $\alpha_1^l+ \alpha_2^l-\alpha_3^l - \alpha_4^l$ where $\alpha_1, \alpha_2 , \alpha_3 , \alpha_4$ can be described in terms of $m_1 , m_2 , m_3 , m_4 $. In all three cases, we will need the following elementary lemma to handle valuations of such expressions.

\begin{lemma}\label{elementary}
Let $\alpha_1, \alpha _2, \alpha _3 , \alpha _4 \in \mathcal{O}^\times$. Then
\begin{equation*}
\min \left\{ v(\alpha_1 + \alpha_2 - \alpha_3 - \alpha_4), v(\alpha_1^{-1} + \alpha_2^{-1} - \alpha_3^{-1} - \alpha_4^{-1}) \right\} \leq v(\alpha_1^l + \alpha_2^l - \alpha_3^l - \alpha_4^l)
\end{equation*}
for all odd integers $l$. If additionally $\alpha_1 + \alpha_2 \in \mathcal{O}^\times$, then this holds for all $l$. 
\end{lemma}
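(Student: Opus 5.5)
The idea is to pass from the $\alpha_i$ to elementary symmetric functions and use that power sums are integral polynomials in them. Set $e_1=\alpha_1+\alpha_2$, $e_2=\alpha_1\alpha_2$, $f_1=\alpha_3+\alpha_4$, $f_2=\alpha_3\alpha_4$, and
\begin{equation*}
s_l:=\alpha_1^l+\alpha_2^l-\alpha_3^l-\alpha_4^l, \qquad m:=\min\{v(s_1),v(s_{-1})\}.
\end{equation*}
Note that $e_2,f_2\in\mathcal{O}^\times$ and $e_1,f_1\in\mathcal{O}$.

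First I would translate the hypotheses into congruences. Since $s_1=e_1-f_1$, we get $e_1-f_1\in\mathfrak{p}^m$. Since
\begin{equation*}
s_{-1}=\frac{e_1}{e_2}-\frac{f_1}{f_2}=\frac{e_1f_2-f_1e_2}{e_2f_2}
\end{equation*}
and $e_2f_2$ is a unit, we also get $e_1f_2-f_1e_2\in\mathfrak{p}^m$. Writing
\begin{equation*}
e_1(f_2-e_2)=(e_1f_2-f_1e_2)+(f_1-e_1)e_2
\end{equation*}
shows that $e_1(e_2-f_2)\in\mathfrak{p}^m$. In the same way, $f_1(e_2-f_2)=e_1(e_2-f_2)-(e_1-f_1)(e_2-f_2)\in\mathfrak{p}^m$.

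Next, for $l\ge 0$, the recursion $p_l=e_1p_{l-1}-e_2p_{l-2}$ shows that $x^l+y^l=P_l(x+y,xy)$ for some $P_l\in\mathbb{Z}[X,Y]$. For odd $l\ge1$, $x+y$ divides $x^l+y^l$, and the quotient is symmetric, so $P_l(X,Y)=X\,Q_l(X,Y)$ with $Q_l\in\mathbb{Z}[X,Y]$. Hence for odd $l\geq 1$,
\begin{equation*}
s_l=(e_1-f_1)Q_l(e_1,e_2)+f_1\bigl(Q_l(e_1,e_2)-Q_l(f_1,f_2)\bigr).
\end{equation*}
The bracket is $(e_1-f_1)A+(e_2-f_2)B$ with $A,B\in\mathcal{O}$, because $Q_l$ has integral coefficients and all arguments are integral. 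Thus
\begin{equation*}
s_l\in (e_1-f_1)\mathcal{O}+f_1(e_2-f_2)\mathcal{O}\subset\mathfrak{p}^m
\end{equation*}
by the congruences above.

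For odd $l\le -1$, I would apply the same argument to the units $\alpha_i^{-1}$. The quantity $m$ is symmetric under $l\mapsto -l$, since the roles of $s_1$ and $s_{-1}$ simply swap, so the conclusion carries over. The case $l=0$ is trivial because $s_0=0$.

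For the second claim, assume $e_1\in\mathcal{O}^\times$. Then $e_1(e_2-f_2)\in\mathfrak{p}^m$ gives $e_2\equiv f_2\pmod{\mathfrak{p}^m}$, and combined with $e_1\equiv f_1\pmod{\mathfrak{p}^m}$ we get $s_l=P_l(e_1,e_2)-P_l(f_1,f_2)\in\mathfrak{p}^m$ for every $l\ge0$. For $l<0$, the inverses have symmetric functions $e_1/e_2$ and $1/e_2$. These are congruent modulo $\mathfrak{p}^m$ to $f_1/f_2$ and $1/f_2$, because $e_2,f_2$ are units. Applying $P_{-l}$ to them gives the claim.

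The main point to get right is the odd case without the unit assumption. There $e_2\equiv f_2$ may fail, and the argument relies on the factor $x+y$ in $x^l+y^l$ to supply the extra factor of $f_1$ multiplying $e_2-f_2$.
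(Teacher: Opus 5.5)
Your proof is correct and uses the same mechanism as the paper. In both arguments $s_1$ and $s_{-1}$ are combined to give $v(\alpha_1+\alpha_2)+v(\alpha_1\alpha_2-\alpha_3\alpha_4)\ge m$, and this is fed through the factor $\alpha_1+\alpha_2$ of odd power sums; the paper packages it as an induction on $l$ via $x^l+y^l=(x^{l-2}+y^{l-2})\bigl((x+y)^2-2xy\bigr)-(x^{l-4}+y^{l-4})x^2y^2$, where you use the closed form $P_l=XQ_l$. For the second claim, your observation that $e_2\equiv f_2 \pmod{\mathfrak{p}^m}$ once $e_1$ is a unit handles every $l$ at once, and is more direct than the paper's reduction to $l=0$ and $l=2^k$.
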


\begin{proof}
Assume that $l$ is odd. The left-hand side is invariant under $(\alpha_1 , \alpha_2 ,  \alpha_3 , \alpha_4 )\mapsto(\alpha_1^{-1}, \alpha_2^{-1} , \alpha_3^{-1} ,  \alpha_4^{-1}) $ so it is enough to prove the inequality for $l \geq -1$. We now use induction on $l$, with the base case $l= \pm 1$ being obvious. For the induction step, we use the identity
\begin{equation*}
  \alpha_1^l + \alpha_2^l =(\alpha_1^{l-2}+\alpha_2^{l-2})((\alpha_1 + \alpha_2)^{2}-2 \alpha_1 \alpha_2)-(\alpha_1^{l -4}+ \alpha_2^{l-4})\alpha_1^{2}\alpha_2^{2}.
\end{equation*}
If $v$ denotes the minimum on the left-hand side of the desired inequality, the induction hypothesis implies
\begin{equation*}
\alpha_1^l + \alpha_2^l - \alpha_3^l - \alpha_4^l \equiv -2(\alpha_1^{l-2}+\alpha_2^{l-2})(\alpha_1 \alpha_2 - \alpha_3 \alpha_4) -(\alpha_1^{l-4}+\alpha_2^{l-4})(\alpha_1^{2} \alpha_2^2 - \alpha_3^2 \alpha_4^2) \pmod{\mathfrak{p}^v}.
\end{equation*}
Since $v(\alpha_1^{l'}+\alpha_2^{l'}) \geq v(\alpha_1 + \alpha_2)$ for any odd integer $l'$, we would be done if can show that $v(\alpha_1 + \alpha_2)+ v(\alpha_1 \alpha_2 - \alpha_3 \alpha_4) \geq v$. This follows from the congruence $\alpha_1^{-1} + \alpha_2^{-1} \equiv \alpha_3^{-1} + \alpha_4^{-1} \pmod{\mathfrak{p}^v}$ by putting both sides on a common denominator and using $\alpha_1 + \alpha_2 \equiv \alpha_3 + \alpha_4 \pmod{\mathfrak{p}^v}$.

For the last part, assume $v(\alpha_1 + \alpha_2)=0$. By what we have already done, it is enough to consider $l=0$ or $l=2^{k}$ for some $k \geq 0$. It is clearly clearly true for $l=0$, and for $l=2^k$, we induct on $k$ with the base case $k=0$ being obvious. For the induction step, we write
\begin{equation*}
\alpha_1^{l}+\alpha_2^l =(\alpha_1^{l/2} + \alpha_2^{l/2})^2 - 2 (\alpha_1 \alpha_2)^{l/2}
\end{equation*}
so by induction hypothesis it is enough to show that $v \leq v((\alpha_1 \alpha_2)^{l/2}-(\alpha_3 \alpha_4)^{l/2})$. The right-hand side is at least $v(u_1 u_2 - u_3 u_4)$. As in the case when $l$ is odd, we have $v(\alpha_1 + \alpha_2)+v(\alpha_1  \alpha_2 - \alpha_3 \alpha_4) \geq v$, but we assume $v(\alpha_1 + \alpha_2)=0$ so this suffices.
\end{proof}

\section{The proof of Proposition \ref{final_estimates}}\label{the_proof_of_proposition_11.2}

\subsection{Case (1)}
In this case $0<\gamma<n_2$, and $\kappa = \left(
\begin{smallmatrix}
1&0\\
\varpi^\gamma&1 \\
\end{smallmatrix}
\right)$ or $\kappa = \left(
\begin{smallmatrix}
0&1\\
1&\varpi^\gamma \\
\end{smallmatrix}
\right)$. As we explained in Section \ref{first_estimates_section}, we can assume that $n$ is even if $\kappa = \left(
\begin{smallmatrix}
0&1\\
1&\varpi^\gamma \\
\end{smallmatrix}
\right)$. For a fixed tuple $\mathbf{m} =(m_1 , m_2 , m_3 , m_4)$ with $m_1 + m_2 = m_3 + m_4$, we consider the integral
\begin{equation*}
I_p(\mathbf{m})= \int_{t \in \mathcal{O}_F^\times}W(m_1 t) W(m_2 t) \overline{W(m_3 t) W(m_4 t)}\, d^\times t,
\end{equation*}
and show that it vanishes when $v(m_1 m_2 - m_3 m_4)<v_0$ where $v_0$ is some integer only depending on $n$ and $\gamma$. Here we define $W(t)$ as $W_b(a(t) \kappa)$ if $\kappa = \left(
\begin{smallmatrix}
1&0\\
\varpi^\gamma&1 \\
\end{smallmatrix}
\right)$. When $\kappa = \left(
\begin{smallmatrix}
0&1\\
1&\varpi^\gamma \\
\end{smallmatrix}
\right)$, it will convenient to work with a slightly different expression for $W(t)$, and we set $W(t) := \overline{W_b(a(-t) \kappa)}$. It is clear that $I_p(\mathbf{m})$ vanishes when $W(t) = W_b(a(t) \kappa)$ if and only if $I_p(\mathbf{m})$ vanishes when $W(t) = \overline{W_b(a(-t)\kappa)}$. We work with this definition of $W(t)$ because it allows us to simultaneously treat the two cases $\kappa = \left(
\begin{smallmatrix}
1&0\\
\varpi^\gamma&1 \\
\end{smallmatrix}
\right)$ and $\kappa = \left(
\begin{smallmatrix}
0&1\\
1&\varpi^\gamma \\
\end{smallmatrix}
\right)$.

The first step is to write down an expression for $W(t)$. To make the integral $I_p(\mathbf{m})$ easier to handle, we would like an formula only involving the additive character $\psi$, and thus we use the logarithm to rewrite the multiplicative character $\xi$ in terms of $\psi$. For this approach to work, we must partition the range of integration $\mathcal{O}^\times$ into residue classes modulo $\mathfrak{p}_F$. Fix $t_0 \in \mathcal{O}^\times /(1+ \mathfrak{p}_F) $. Recall from Section \ref{quadratic_spaces} that we define $d$ as $0$ if $n$ is even and $1$ if $n$ is odd. Moreover, $b \in \mathcal{O}_E^\times$ is the constant associated to the $\xi$ via Lemma \ref{logarithm_lemma}.

\begin{lemma}
Let $B:= \Omega^d \varpi^\gamma b$. For $t \in t_0 + \mathfrak{p}_F$, we have
\begin{equation*}
W(t)= \mathbf{1}_{\lvert t \rvert=1} C \psi \left( \left(-B \log_E \left( \frac{R(t) + B}{ R(t) -B} \right)+t+R(t) \right) \varpi^{-\gamma -n_2}\right)
\end{equation*}
where $C$ is a constant of size $1$ (depending on $\kappa$ and $t_0$), and $R(t)$ is the unique solution in $\mathcal{O}_F^\times$ to the equation $R^2 +t R- B^2=0$.
\end{lemma}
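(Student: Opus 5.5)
We reduce to the unbalanced formulas of Theorem \ref{whittaker_formulas}, Cases 2 and 4, and then convert the multiplicative character into an additive one. For $\kappa = \left(\begin{smallmatrix}1&0\\ \varpi^\gamma&1\end{smallmatrix}\right)$ with $0<\gamma<n_1$, relation \eqref{balanced_to_unbalanced1} gives $W(t)=W_p\bigl(a(t)\left(\begin{smallmatrix}1&0\\ \varpi^{n_2+\gamma}&1\end{smallmatrix}\right)\bigr)$. Since $n/2<n_2+\gamma<n$, this is Case 4 with $\gamma' := n_2+\gamma$:
\begin{equation*}
W(t)=\mathbf{1}_{|t|=1}\,T(t)\,\xi^{-1}(x_0+\Omega^d\varpi^{\gamma'-n_2}b)\,\psi((t+x_0)\varpi^{-\gamma'}),
\end{equation*}
where $x_0^2+tx_0-b^2\varpi^{2\gamma'-n}=0$. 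The exponent is $2\gamma'-n=2\gamma+2n_2-n=2\gamma+d$, and $\Omega^{2d}=\varpi^d$ in the ramified case, so $b^2\varpi^{2\gamma'-n}=B^2$ and $\Omega^d\varpi^{\gamma}b=B$. Hence $x_0=R(t)$, and the argument of $\xi^{-1}$ is $R(t)+B$.

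For $\kappa=\left(\begin{smallmatrix}0&1\\1&\varpi^\gamma\end{smallmatrix}\right)$ with $n$ even, we use \eqref{balanced_to_unbalanced3}. This reduces to $W_p(a(-\varpi^{-2\gamma}t)\kappa')$ with $\kappa'$ having lower-left entry $\varpi^{n_2-\gamma}$, which is Case 2 with parameter $n_2-\gamma<n/2$. Conjugating and replacing $t$ by $-t$, as in the definition of $W$, should turn the quadratic equation in Case 2 into $R^2+tR-B^2=0$ after rescaling $x_0$ by $\varpi^{\gamma}$. It should also turn the linear term $\psi(\varpi^{-n_2-\gamma}t)\psi(y\varpi^{-(n_2-\gamma)})$ together with $\psi(x_0\varpi^{-n_1})$ into $\psi((t+R)\varpi^{-\gamma-n_2})$. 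This bookkeeping, namely matching signs, powers of $\varpi$ and the complex conjugation, is routine but must be checked carefully.

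Next we rewrite $\xi^{-1}(R+B)$ additively. Since $R(t)R'(t)=-B^2$ for the two roots $R,R'=-t-R$ of the quadratic, the product $(R+B)(R-B)=R^2-B^2=-tR$ lies in $F^\times$. Because $\xi|_{F^\times}$ is $\chi_{E/F}$ (or trivial in the split case), it is unramified on $1+\mathfrak{p}_F$. Therefore $\xi^{-1}(R+B)$ equals $\xi^{-1/2}\bigl((R+B)/(R-B)\bigr)$ times $\xi^{-1/2}(-tR)$; more cleanly, $\xi^{-1}(R+B)=\xi^{-1}(R-B)^{-1}\cdot\xi^{-1}(-tR)$.

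To avoid square roots we proceed differently. For $t\in t_0+\mathfrak{p}_F$, write $t=t_0(1+z)$ and $R(t)=R(t_0)u$ with $u\in 1+\mathfrak{p}_F$. Because $v(B)\ge\gamma\ge 1$, we have $(R+B)/(R-B)\in 1+\mathfrak{p}_E$, and
\begin{equation*}
\frac{R+B}{R(t_0)+B}\cdot\frac{R(t_0)-B}{R-B}\in 1+\mathfrak{p}_E^{\kappa_E}
\end{equation*}
for $n$ large relative to $\kappa_E$. The formula $\xi(z)=\psi_E(b\,\Omega^{-(a(\xi)-n(\psi_E))}\log_E z)$ then applies. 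Taking $\xi^{-1}$ of $R+B$ equal to $\xi^{-1}$ of $\sqrt{(R+B)/(R-B)}\cdot\sqrt{-tR}$, the factor $\sqrt{-tR}$ lies in $F$ up to a constant on the residue class. We then need $a(\xi)-n(\psi_E)$, expressed via $n=fa(\xi)+d$ or $n=2a(\chi)$, together with the trace computation $\mathrm{Tr}_{E/F}(b\log_E w)=2b\log_E w$, which uses Lemma \ref{logarithm_lemma}: $b$ is "purely imaginary", so $\sigma(\log w)=-\log w$ when $w\sigma(w)=1$. These give an additive phase of the form $-\tfrac12\cdot 2b\,\varpi^{-a}\log_E\frac{R+B}{R-B}$.

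The remaining step is to match $b\varpi^{-a(\xi)}$ with $B\varpi^{-\gamma-n_2}$. Since $B=\Omega^d\varpi^\gamma b$, this amounts to the identity $a(\xi)-n(\psi_E)$ in terms of $n_2$, with the factor $\Omega^d$ accounting for the ramified case. All leftover factors, namely $T(t)$ (constant on square classes, hence on $t_0+\mathfrak{p}_F$), the values at $t_0$, and the contribution of $\xi$ on $F^\times$, are absorbed into $C$. The main obstacle is this normalisation bookkeeping: checking in the split, unramified and ramified cases that the exponent, the trace factor $2$ and the power $\Omega^d$ combine exactly to $-B\varpi^{-\gamma-n_2}\log_E$. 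The other delicate point is justifying that $(R+B)/(R-B)$ is a norm-one element in the convergence range of $\log_E$ uniformly over the residue class.
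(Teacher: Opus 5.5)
Your treatment of $\kappa=\left(\begin{smallmatrix}1&0\\ \varpi^\gamma&1\end{smallmatrix}\right)$ is correct and follows the paper: Case 4 with $\gamma'=n_2+\gamma$, $x_0=R(t)$, then the logarithm with $\Omega^{a(\xi)-n(\psi_E)}=\Omega^{-d}\varpi^{n_2}$. Your square-root factorisation $R+B=s\,w$ is valid but not needed. The paper simply multiplies $R+B$ by the constant $(-t_0)^{-1}\in F$, which puts it in $1+\mathfrak{p}_E$, and then uses $\mathrm{Tr}_{E/F}(c\log_E w)=c\log_E(w/\sigma(w))$ whenever $\sigma(c)=-c$. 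One slip: the element with $\sigma(c)=-c$ is $c=b\,\Omega^{-(a(\xi)-n(\psi_E))}=B\varpi^{-\gamma-n_2}$, not $b$. For ramified $E/F$ one has $b=b_0\in F$, and then $\mathrm{Tr}_{E/F}(b\log_E w)=0$.

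The genuine gap is the case $\kappa=\left(\begin{smallmatrix}0&1\\1&\varpi^\gamma\end{smallmatrix}\right)$, which you leave as routine bookkeeping.

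\textbf{What Case 2 actually gives.} By \eqref{balanced_to_unbalanced3} and Case 2, $\overline{W(t)}=\mathbf{1}_{|t|=1}C_{\pi,\kappa}\,\xi^{-1}(R_0+b)\,\psi(R_0\varpi^{-n_2})$, where $R_0\in\mathfrak{p}_F$ solves $\varpi^\gamma x^2+tx-b^2\varpi^\gamma=0$. The two explicit linear factors $\psi(-\varpi^{-n_2-\gamma}t)$ and $\psi(y\varpi^{-(n_2-\gamma)})=\psi(t\varpi^{-\gamma-n_2})$ cancel.

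\textbf{Why it is not bookkeeping.} The rescaled root $R':=\varpi^\gamma R_0$ is the \emph{other} root $R'=-t-R=-B^2/R\in\mathfrak{p}_F^{2\gamma}$ of $R^2+tR-B^2=0$, not the unit root $R$ of the lemma. If you identify $\varpi^\gamma R_0$ with $R$, you get the phase $\bigl(B\log_E\frac{R+B}{R-B}-R\bigr)\varpi^{-\gamma-n_2}$, which has the wrong sign on the logarithm and no $t$ term. Moreover $\frac{R'+B}{R'-B}\in -1+\mathfrak{p}_E$, so your normalisation from the first case does not apply as stated.

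\textbf{The missing step} is the passage from $R'$ to $R$:
\begin{itemize}
\item $-R'=t+R$ produces the required linear term.
\item $\frac{B+R'}{B-R'}=\frac{R-B}{R+B}$. Equivalently, $R'+B=B\,\sigma(R+B)/R$, combined with $\xi\circ\sigma=\xi^{-1}$ (valid because $\xi|_{F^\times}=\chi_{E/F}$, resp.\ $\xi=\chi\otimes\chi^{-1}$). This turns $\xi(R'+B)$ into a constant times $\xi^{-1}(R+B)$, and the inversion exactly compensates the complex conjugation in the definition of $W$.
\end{itemize}
The paper does this by normalising with $b$, writing $\xi^{-1}(R_0+b)=\xi^{-1}(b)\,\psi\bigl(-B\varpi^{-\gamma-n_2}\log_E\frac{1+b^{-1}R_0}{1-b^{-1}R_0}\bigr)$, and then checking $\frac{B-t-R}{B+t+R}\cdot\frac{R+B}{R-B}=1$. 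Without this step your proposal does not prove the stated formula for this $\kappa$.
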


\begin{proof}
We start with the case $\kappa = \left(
\begin{smallmatrix}
1&0\\
\varpi^\gamma&1 \\
\end{smallmatrix}
\right)$ where $0<\gamma < n_2$. By Equation \eqref{balanced_to_unbalanced1}, $W(t)=W_p(a(t)
(\begin{smallmatrix}
1&0\\
\varpi^{n_2 + \gamma }&1 \\
\end{smallmatrix}))$ so using Equation \eqref{case4}, we have
\begin{equation*}
W(t) = \mathbf{1}_{\lvert t \rvert=1} T_{\pi , \kappa}(t) \xi^{-1}(R(t) +B) \psi((t + R(t))\varpi^{-\gamma - n_2})
\end{equation*}
where $T_{\pi ,\kappa}(t) \in S^1$ depends at most on the square clase of $t$ in $\mathcal{O}_F^\times$, and $R(t)$ is as in the statement of the lemma. Since $t$ lies in fixed class modulo $\mathfrak{p}_F$, the square class of $t$ is fixed, and $T_{\pi , \kappa}(t) $ is in fact constant. From the equation $R^2 +t R- B^2=0$, and the fact that $R(t) \in \mathcal{O}_F^\times$, we see that $R(t) \equiv -t_0$ for all $t \in t_0 + \mathfrak{p}_F$. Since $F = \mathbb{Q}_p$, and $p \geq 3$, it follows that $\kappa_E =1$ (even if $E/F$ is ramified) so $\log_E(1+z)$ converges for $z \in \mathfrak{p}_E$, and we have
\begin{equation*}
\begin{split}
  \xi^{-1}(R(t) +B) &= \xi(- t_0) \xi^{-1}((-t_0)(R(t)+B))\\
                   &= \xi(- t_0) \psi_E \left(- \frac{b}{\Omega^{a(\xi)- n(\psi_E)}} \log_E((-t_0)(R(t)+B)) \right).
\end{split}
\end{equation*}
We have $\Omega^{a(\xi)-n(\psi_E)}= \Omega^{-d}\varpi^{n_2}$ so $-b/ \Omega^{a(\xi)- n(\psi_E)}= -B \varpi^{-\gamma -n_2}$. Recall that $\psi_E := \psi \circ \mathrm{Tr}_{E/F}$, and if $\sigma$ denotes the non-trivial automorphism of $E/F$, we have $\mathrm{Tr}_{E/F}(x)=x + \sigma(x)$. Since $\sigma(B)=-B$, and $\log_E$ is a homomorphism, we find that
\begin{equation*}
\mathrm{Tr}_{E/F}\left( -B \log_E((-t_0)(R(t) +B)) \right)= -B \log_E \left( \frac{R(t)+B}{ R(t) -B} \right), 
\end{equation*}
and this gives us the desired expression for $W(t)$ when $\kappa = \left(
\begin{smallmatrix}
1&0\\
\varpi^\gamma&1 \\
\end{smallmatrix}
\right)$.

We now consider the case $\kappa = \left(
\begin{smallmatrix}
0&1\\
1&\varpi^\gamma \\
\end{smallmatrix}
\right)$ where $W(t):= \overline{W_b(a(-t)\kappa )}$. Moreover, we can assume that $n$ is even so in particular, $d=0$, and $B = b \varpi^\gamma$. By Equation \eqref{balanced_to_unbalanced3} and Equation \eqref{case2}, we have
\begin{equation*}
\begin{split}
  \overline{W(t)}&= \psi(-t \varpi^{-\gamma - n_2})W_p(a(\varpi^{-2 \gamma}t) \left(
\begin{smallmatrix}
1&0\\
\varpi^{n_2 - \gamma}&1 \\
\end{smallmatrix}
        \right))\\
  &= \mathbf{1}_{\lvert t \rvert=1} C_{\pi , \kappa} \xi^{-1}(R_0(t)+b) \psi(R_0(t) \varpi^{-n_2})
\end{split}
\end{equation*}
where $R_0(t)$ is the unique solution in $\mathcal{O}_F$ to the equation $\varpi^{\gamma}R^2 +tR-b^2 \varpi^{\gamma}=0$. Since $\gamma >0$, we must have $R(t) \in \mathfrak{p}_F$ for all $t \in \mathcal{O}_F^\times$. Hence $R_0(t) +b \equiv b \pmod{\mathfrak{p}_E}$ for all $t$, and
\begin{equation*}
\begin{split}
  \xi^{-1}(R_0(t)+b )&=\xi^{-1}(b) \xi^{-1}(1+ b^{-1} R_0(t))\\
                    &= \xi^{-1}(b) \psi_E \left( - B \varpi^{-\gamma -n_2} \log_E(1+b^{-1} R_0(t)) \right).
\end{split}
\end{equation*}
Using $\psi_E := \psi \circ \mathrm{Tr}_{E/F}$, and $\sigma(b)=-b$, we find that
\begin{equation*}
\psi_E(-B \varpi^{-\gamma-n_2} \log_E(1+ b^{-1} R_0(t))) = \psi \left( -B \varpi^{-\gamma - n_2} \log_E \left(\frac{1+b^{-1} R_0 (t)}{1-b^{-1} R_0(t)}\right) \right).
\end{equation*}
If $R(t)$ denotes the unique solution in $\mathcal{O}_F^\times$ to $R^2 +t R-B^2=0$ as in the previous case, we have $R_0(t) = \varpi^{-\gamma}(-t- R(t))$ so
\begin{equation*}
\frac{1+b^{-1} R_0(t)}{1-b^{-1} R_0(t)}=\frac{1+ b^{-1}\varpi^{-\gamma}(-t-R(t))}{1-b^{-1} \varpi^{- \gamma}(-t-R(t))}=\frac{B-t-R(t)}{B+t+ R(t)}
\end{equation*}
recalling that $B := b \varpi^{\gamma}$. Since
\begin{equation*}
\frac{B-t-R(t)}{B+t+R(t)} \cdot \frac{R(t) +B}{R(t) -B} = \frac{-(R(t)^2 +t R(t) - B^2)-t B}{(R(t)^2 + t R(t) - B^2)- tB}=1 
\end{equation*}
we have
\begin{equation*}
\log_E \left( \frac{1+b^{-1} R_0(t)}{1-b^{-1} R_0(t)} \right) = - \log_E \left( \frac{R(t) + B }{R(t)-B} \right).
\end{equation*}
This gives the desired expression for $W(t)$. 
\end{proof}

\noindent
Let $(\eta_1 , \eta_2 , \eta_3 , \eta_4)=(1,1,-1,-1)$. Then we deduce that $I_p(\mathbf{m})$ is a finite linear combination of integrals of the form
\begin{equation*}
\int_{t \in t_0 + \mathfrak{p}_F} \psi(\varpi^{-\gamma - n_2} \Phi(t)) \, d^\times t
\end{equation*}
where $t_0 \in \mathcal{O}^\times$, 
\begin{equation*}
\Phi(t):= \sum_{j=1}^4 \eta_j H(m_j t) \quad \textrm{and} \quad H(t):= -B \log_E \left( \frac{R(t)+B}{R(t)-B} \right)+ R(t). 
\end{equation*}
Here we note that $\mathbf{m}$ is fixed so the value of $m_j t$ modulo $\mathfrak{p}_F$ is also fixed for $j=1,2,3,4$. The power series $\Phi(t)$ depends on $\kappa$, $\mathbf{m}$ and $t_0$, but nevertheless, we will find a condition for the above integral to vanish only in terms of $v(m_1 m_2 - m_3 m_4)$. Note that since $B \in \mathfrak{p}_E$, one can, by solving the equation for $R(t)$ and Taylor-expanding the square root that appears, give an expression for $R(t)$ as power series that converges for all $t \in \mathcal{O}_F^\times$. Hence it makes sence to differentiate $H(t)$, and the result is the following lemma:

\begin{lemma}
We have
\begin{equation*}
H '(t)= -\frac{1}{2t} \sqrt{t^2 + 4 B^2}-\frac{1}{2}
\end{equation*}
where the square root is chosen to be congruent to $t$ modulo $\mathfrak{p}_E$. 
\end{lemma}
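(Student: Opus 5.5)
We differentiate $H(t) = -B\log_E\bigl(\tfrac{R+B}{R-B}\bigr) + R$ directly, treating $R=R(t)$ as the power series in $t$ obtained from the quadratic formula, and then simplify using the defining equation $R^2+tR-B^2=0$.

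First we record the two facts about $R$ that we need. Solving $R^2+tR-B^2=0$ gives
\[
R(t)=\tfrac{1}{2}\bigl(-t-\sqrt{t^2+4B^2}\bigr),
\]
where the square root is the branch congruent to $t$ modulo $\mathfrak{p}_E$. This is the unique choice for which $R\in\mathcal{O}_F^\times$ with $R\equiv -t$, as observed in the proof of the previous lemma. Since $B\in\mathfrak{p}_E$, the root $\sqrt{t^2+4B^2}=t\sqrt{1+4B^2t^{-2}}$ is given by a convergent binomial series. It can therefore be differentiated termwise, which justifies the formal manipulations below. Implicit differentiation of $R^2+tR-B^2=0$ gives $(2R+t)R'=-R$. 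Here $2R+t=-\sqrt{t^2+4B^2}$, so
\[
R'=\frac{R}{\sqrt{t^2+4B^2}}.
\]

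Next we differentiate the logarithmic term. Because $\log_E$ is a homomorphism given by a power series, the derivative of $\log_E(u(t))$ is $u'/u$. Here $u=(R+B)/(R-B)$ lies in $1+\mathfrak{p}_E$ up to a constant unit; in any case we may write $\log_E(R+B)-\log_E(R-B)$ after factoring out the constant $-t_0$ as in the previous lemma. This gives
\[
\frac{d}{dt}\log_E\frac{R+B}{R-B}=R'\Bigl(\frac{1}{R+B}-\frac{1}{R-B}\Bigr)=\frac{-2BR'}{R^2-B^2}.
\]
Using $R^2-B^2=-tR$, this equals $\frac{2BR'}{tR}$. Hence
\[
H'(t)=-\frac{2B^2R'}{tR}+R'=\frac{R'}{tR}\bigl(tR-2B^2\bigr)=\frac{tR-2B^2}{t\sqrt{t^2+4B^2}}.
\]

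It remains to show that this equals $-\frac{1}{2t}\sqrt{t^2+4B^2}-\frac12$. Put $S=\sqrt{t^2+4B^2}$, so that $R=\tfrac12(-t-S)$. Then
\[
tR-2B^2=-\tfrac12\bigl(t^2+tS+4B^2\bigr)=-\tfrac12\bigl(S^2+tS\bigr)=-\tfrac12 S(S+t).
\]
Dividing by $tS$ gives $-\frac{S+t}{2t}=-\frac{S}{2t}-\frac12$, as claimed.

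The only delicate point is legitimising the differentiation. The identity is between convergent power series on $t_0+\mathfrak{p}_F$, and chain and product rules hold for composition of convergent $p$-adic power series. The branch of the square root is forced by $R\in\mathcal{O}_F^\times$. Everything else is the algebra above.
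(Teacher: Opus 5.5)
Your argument is correct and is essentially the paper's proof. Both write $R=-\frac{1}{2}(t+\sqrt{t^2+4B^2})$, obtain $R'$ by implicit differentiation, reduce via the logarithmic derivative to $H'(t)=R'(t)\left[\frac{2B^2}{R^2-B^2}+1\right]$, and simplify using $R^2-B^2=-tR$; incidentally, since $B\in\mathfrak{p}_E$ and $R\in\mathcal{O}_F^\times$, the ratio $(R+B)/(R-B)$ already lies in $1+\mathfrak{p}_E$, so no factoring out of $-t_0$ is needed.
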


\begin{proof}
By basic differentiation, we have
\begin{equation*}
\begin{split}
  H '(t) &= -B \cdot \frac{R(t) -B}{R(t) +B} \cdot \frac{R '(t)(R(t) -B)-(R(t) +B) R'(t)}{(R(t)-B)^2}+R'(t)\\
         & = R '(t) \left[ \frac{2 B^2}{R(t)^2 -B^2}+1 \right].
\end{split}
\end{equation*}
Since $R(t)^2 +t R(t) - B^2=0$, and $R(t) \in \mathcal{O}_F^\times$, we find that $R(t) =-\frac{1}{2}(t+\sqrt{t^2 + 4 B^2})$ where the square root is chosen to be $t$ modulo $\mathfrak{p}_E$. Hence $R(t)^2 - B^2 = -t R(t) =\frac{1}{2} t(t + \sqrt{t^2 + 4 B^2})$, and by implicit differentiation,
\begin{equation*}
R'(t) = -\frac{R(t)}{2 R(t) +t}=-\frac{1}{2}\frac{t + \sqrt{t^2 + 4 B^2}}{\sqrt{t^2 + 4 B^2}}
\end{equation*}
so
\begin{equation*}
H'(t) = -\frac{1}{2} \frac{t + \sqrt{t^2 + 4 B^2}}{\sqrt{t^2 + 4 B^2}} \left[ \frac{4 B^2}{t(t + \sqrt{t^2 + 4 B^2})}+1 \right],
\end{equation*}
and this simplifies to the desired expression.
\end{proof}

\noindent
It is now possible to analyse the phase function $\Phi (t) $ and make Lemma \ref{non_linear_substitution} applicable. Recall that we defined $v_{\mathbf{m}}$ as $v(m_1 m_2 - m_3 m_4)$. 

\begin{lemma}\label{power_series_coefficients1}
Suppose $m_1 m_2 \neq m_3 m_4$. Then $v(\frac{1}{k!}\Phi^{(k)}(t))=d+2 \gamma +v_{\mathbf{m}}$ for all $k \geq 1$ and $t \in t_0 + \mathfrak{p}_F$.
\end{lemma}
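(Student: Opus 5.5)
The plan is to compute $\Phi'(t)$ in closed form from the preceding lemma on $H'$, expand it as a power series in $B^2t^{-2}$, and read off every normalised derivative $\frac{1}{k!}\Phi^{(k)}$ term by term. Throughout, $t\in t_0+\mathfrak{p}_F$ is a unit, and $m_1,\dots,m_4$ are units with $m_j\equiv a\pmod p$ (we are working with $\mathcal{N}_a$ and $\kappa W_b$ is supported on $\lvert t\rvert=1$). In particular $m_1+m_2\equiv 2a$ is a unit, since $p$ is odd.

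\textbf{Step 1: closed form for $\Phi'$.} By the chain rule and the previous lemma,
\begin{equation*}
\Phi'(t)=\sum_j\eta_j m_jH'(m_jt)=-\frac{1}{2t}\sum_j\eta_j\sqrt{m_j^2t^2+4B^2}-\frac12\sum_j\eta_jm_j .
\end{equation*}
The last sum vanishes because $m_1+m_2=m_3+m_4$. Each square root is the branch congruent to $m_jt$. Since $v(B^2)=d+2\gamma>0$ (here $\Omega^2=\varpi$), we may expand
\begin{equation*}
\sqrt{m_j^2t^2+4B^2}=m_jt\sum_{l\ge0}\tbinom{1/2}{l}4^lB^{2l}(m_jt)^{-2l}.
\end{equation*}
This gives
\begin{equation*}
\Phi'(t)=\sum_{l\ge1}c_l\,t^{-2l},\qquad c_l:=-\tfrac12\tbinom{1/2}{l}4^lB^{2l}\,S_l,\qquad S_l:=\sum_j\eta_jm_j^{1-2l}.
\end{equation*}
The $l=0$ term vanishes because $S_0=0$. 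For odd $p$, $\binom{1/2}{l}4^l\in\mathbb{Z}_p$, and $\binom{1/2}{1}4=2$ is a unit.

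\textbf{Step 2: valuations of $S_l$.} We have
\begin{equation*}
S_1=\frac{m_1+m_2}{m_1m_2}-\frac{m_3+m_4}{m_3m_4}=\frac{(m_1+m_2)(m_3m_4-m_1m_2)}{m_1m_2m_3m_4},
\end{equation*}
so $v(S_1)=v_{\mathbf{m}}$, which is finite by the hypothesis $m_1m_2\ne m_3m_4$. Apply Lemma \ref{elementary} with $\alpha_j=m_j$. The odd exponents $1-2l$ are covered, and the linear combination $\alpha_1+\alpha_2-\alpha_3-\alpha_4$ vanishes, so the minimum in that lemma equals $v(S_1)$. Hence $v(S_l)\ge v_{\mathbf{m}}$ for all $l\ge1$.

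\textbf{Step 3: higher derivatives, the key point.} For $k\ge1$ we have
\begin{equation*}
\frac{1}{k!}\Phi^{(k)}(t)=\sum_{l\ge1}c_l\,\frac{1}{k}\tbinom{-2l}{k-1}t^{-2l-k+1}.
\end{equation*}
The potential danger is the factor $\frac1k$, which could lower valuations when $p\mid k$.

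For $l=1$ there is no loss, since $\frac1k\binom{-2}{k-1}=(-1)^{k-1}$ exactly. The $l=1$ term therefore has valuation exactly $v(c_1)=d+2\gamma+v_{\mathbf{m}}$.

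For $l\ge2$, put $N=2l+k-2$. The identity $\binom{N}{k}=\binom{N}{k-1}\frac{2l-1}{k}$ shows that, up to sign, $\frac1k\binom{-2l}{k-1}=\binom{N}{k}/(2l-1)$. Its valuation is therefore at least $-v(2l-1)\ge-\log_p(2l)$. Consequently the $l$-th term has valuation at least
\begin{equation*}
l(d+2\gamma)+v_{\mathbf{m}}-\log_p(2l)\ge (d+2\gamma+v_{\mathbf{m}})+(l-1)(d+2\gamma)-\log_p(2l).
\end{equation*}
Since $\gamma\ge1$, we have $(l-1)(d+2\gamma)\ge 2(l-1)>\log_p(2l)$ for $l\ge2$ and $p\ge3$. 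So every term with $l\ge2$ is strictly smaller in absolute value than the $l=1$ term.

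By the ultrametric inequality, $v\bigl(\frac1{k!}\Phi^{(k)}(t)\bigr)=d+2\gamma+v_{\mathbf{m}}$ for every $k\ge1$ and every $t\in t_0+\mathfrak{p}_F$, which is the statement.

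\textbf{Main obstacle.} The delicate step is Step 3: controlling the $\frac1k$ loss uniformly in $k$. This is resolved by the exact cancellation $\frac1k\binom{-2}{k-1}=\pm1$ for the dominant $l=1$ term, together with the margin of at least $2(l-1)$ that the extra powers of $B^2$ provide for the terms with $l\ge2$.
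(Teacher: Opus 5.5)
Your proof is correct and follows the paper's argument essentially step for step. You expand $\Phi'(t)=\sum_{l\ge1}c_l t^{-2l}$ by the binomial series, obtain $v(c_1)=d+2\gamma+v_{\mathbf{m}}$ because $m_1+m_2$ is a unit, and bound $v(S_l)\ge v_{\mathbf{m}}$ by Lemma~\ref{elementary}; the $l\ge 2$ contributions to $\frac{1}{k!}\Phi^{(k)}$ are then strictly dominated, since their coefficients are $\pm\binom{2l+k-2}{k}/(2l-1)$ and $(l-1)(d+2\gamma)>v(2l-1)$, with your explicit identity for $S_1$ and the exact value $\frac{1}{k}\binom{-2}{k-1}=(-1)^{k-1}$ spelling out what the paper does ``by inspection''.
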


\begin{proof}
By the previous lemma, $\Phi'(t)=-\frac{1}{2} \sum_{j=1}^4 \eta_j \frac{1}{t}\sqrt{(m_j t)^2 + 4 B^2}$. The square root has a convergent Taylor-expansion
\begin{equation*}
\sqrt{(m_j t)^2 + 4 B^2} = m_j t \left[ 1+ \sum_{i \geq 1} \binom{1/2}{i}(4 B^2)^i m_j^{1-2i}t^{-2i} \right].
\end{equation*}
Convergence follows from $B \in \mathfrak{p}_E$, and the fact that $\binom{1/2}{i} \in \mathcal{O}_F$ for all $i$ becuase $p$ is odd. Hence
\begin{equation*}
\Phi'(t)= \sum_{i \geq 1} a_i t^{-2i} \quad \textrm{with} \quad a_i := -\frac{1}{2}\binom{1/2}{i}(4 B^2)^i \sum_{j=1}^4 \eta_j m_j^{-2i+1}.
\end{equation*}
Here we have used that $\sum_j \eta_j m_j =0$ to get rid of the constant term. Since $B = \Omega^d \varpi^{\gamma} b$, and $\Omega^2 = \varpi$, we see that $v(4 B^2)= d+2 \gamma$. Hence, by inspection, we have $v(a_1)= d+ 2 \gamma +v(m_1 + m_2)+ v_{\mathbf{m}}$. Since $W_b(a(t) \kappa)$ is only supported on $\lvert t \rvert=1$, we can assume that $v(m_1 + m_2)=0$, as explained in Section \ref{first_estimates_section}. Hence we have $v(a_1)=d+ 2 \gamma + v_{\mathbf{m}}$. By differentiating $\Phi '(t)$ $k$ times gives
\begin{equation*}
\begin{split}
  \frac{1}{(k+1)!} \Phi^{(k+1)}(t) & = \sum_{i \geq 1} a_i \frac{(-2i)(-2i-1)\cdots(-2i-k+1)}{(k+1)!}t^{-2i-k}\\
                                  & =(-1)^k a_1 t^{-2-k} + \sum_{i \geq 2} \frac{(-1)^k}{2i-1}\binom{2i+k-1}{k+1} a_i t^{-2i -k}
\end{split} 
\end{equation*}
so the proof is complete if we can show that
\begin{equation*}
v(a_1)< v \left( \frac{1}{2i-1} \binom{2i+k-1}{k+1} a_i \right)
\end{equation*}
for all $i \geq 2$. The binomial coefficient is an integer so the right-hand side is at least $v(a_i)-v(2 i-1)$. Since $p$ is odd, $\binom{1/2}{i}$ has non-negative valuation so we have $v(a_i) \geq (d+2 \gamma)i + v(\sum_{j=1}^4 \eta_j m_j^{-2i+1} ) $. Have
\begin{equation*}
(d + 2 \gamma )i- v(2i-1) = (2 \gamma(i-1) - v(2 i -1))+ 2 \gamma + d i > 2 \gamma +d 
\end{equation*}
since $v(2i-1)\leq \log_p(2i-1)< \log(2i-1) \leq 2(i-1)$ (using $p \geq 3$). Hence it is enough to show that
\begin{equation*}
v\left(\sum_{j=1}^4 \eta_j m_j^{-2i+1}\right) \geq v_{\mathbf{m}},
\end{equation*}
but since $v(m_1 + m_2)=0$, $v_{\mathbf{m}}$ equals $v(m_1^{-1} + m_2^{-1} - m_3^{-1} - m_4^{-1})$ so this follows from Lemma \ref{elementary}. 
\end{proof}

\noindent
We can now prove Case (1) of Proposition \ref{final_estimates}. Have
\begin{equation*}
\int_{t \in t_0 + \mathfrak{p}_F} \psi(\varpi^{-\gamma - n_2} \Phi(t))\, d^\times t = \psi(\varpi^{- \gamma - n_2} \Phi(0)) \int_{z \in \mathfrak{p}_F} \psi\left(\varpi^{- \gamma - n_2} \sum_{k \geq 1} \frac{1}{k}b_k z^ k\right)\, d z
\end{equation*}
where $b_k := \frac{1}{k!} \Phi^{(k)}(t_0)$. By the above Lemma $d+2 \gamma + v_{\mathbf{m}}= v(b_1) = v(b_k)$ for all $k \geq 1$ so by Lemma \ref{non_linear_substitution}, the $z$-integral vanishes when $- \gamma - n_2 +d + 2 \gamma +v_{\mathbf{m}}<-1$, i.e. for $v_{\mathbf{m}}<n_2 - \gamma -1-d$. By part (1) of Lemma \ref{first_estimates}, with $v_0 = n_2 - \gamma -1-d$, we deduce that $\mathcal{N}_a(\kappa W_b) \prec p^{\gamma}$ as desired.

\subsection{Case (2)}
In this case, $\kappa = \left(
\begin{smallmatrix}
1&0\\
1&1 \\
\end{smallmatrix}
\right)$, and the goal is to estimate $\mathcal{N}_a(\kappa W_b^s)$ for $0<s<\frac{n}{6}$. We recall that the only possibility for $\pi_p$ is $\chi \boxplus \chi^{-1}$ for some unitary character $\chi : F^\times \rightarrow S^1$. In other words, the quadratic space $E$ is split and equals $F \times F$. Moreover, the conductor exponent $n$ must be even. Using Equation \eqref{balanced_to_unbalanced1} and Equation \eqref{case3.2} in Theorem \ref{whittaker_formulas}, we find that
\begin{equation*}
\kappa W_b^s(t) = \mathbf{1}_{\lvert t \rvert=q^{-s}}q^{-\frac{s}{2}} \sum_{\pm}C_{\pm}\xi^{-1}(x_{\pm}+b) \psi((x_{\pm}+b) \varpi^{-n/2})
\end{equation*}
where $C_{\pm} \in S^1$ are constants, and $x_{\pm}$ denote the solution the equation $x^2 + t x - b^2=0$ satisfying $x_{\pm} \equiv \mp b_0 \pmod{\mathfrak{p}_F}$ where $b=(b_0 ,-b_0)$ for some $b_0 \in \mathcal{O}_F^\times$ c.f. Lemma \ref{logarithm_lemma}. Let $\kappa W_b^{s, \pm}$ denote the two terms in the above sum. Then $\mathcal{N}_a(\kappa W_b^s) \ll \mathcal{N}_a(\kappa W_b^{s,+})+\mathcal{N}_a(\kappa W_b^{s,-})$, and it is not hard to see that Case (2) of Lemma \ref{first_estimates} holds with $\kappa W_b^{s,\pm}$ in place of $\kappa W_b^s$. Hence we have to decide for which tuples $\mathbf{m} =(m_1 , m_2 , m_3 , m_4)$ with $m_1 + m_2 = m_3 + m_4$, does the integral
\begin{equation*}
I_p(\mathbf{m}) = \int_{t \in \mathcal{O}_F^\times} W_{\pm}(m_1 t) W_{\pm}(m_2 t) \overline{W_{\pm}(m_3 t) W_{\pm}(m_4 t)}\,d^\times t
\end{equation*}
vanish where $W(t)= \kappa W_b^{s,\pm}(t)$. Since we integrate over $t \in \mathcal{O}^\times$, the support property of $\kappa W_b^{s}$ forces $v(m_j)=s$ for all $j=1,2,3,4$. As explained in Section \ref{first_estimates_section}, we can further assume that $v(m_1 + m_2)=s$. As in the previous case, we start by writing down a formula for $W_{\pm}$ only in terms of the additive character $\psi$. $W_{\pm}(t)$ is only supported on $\mathfrak{p}_F^s \setminus \mathfrak{p}_F^{s+1}$, and we partition this range into cosets of $\mathfrak{p}_F^{s+1}$. Fix $t_0 \in \mathfrak{p}_F^{s}\setminus \mathfrak{p}_F^{s+1}$.

\begin{lemma}
For $t \in t_0 + \mathfrak{p}_F^{s+1}$, we have
\begin{equation*}
W_{\pm}(t) = \mathbf{1}_{\lvert t \rvert= q^{-s}} A_{\pm} \psi \left( \left(-b \log_E \left(d_{\pm} \frac{R_{\pm}(t)+b}{R_{\pm}(t)-b} \right)+ R_{\pm }(t)\right)\varpi^{-n/2} \right)
\end{equation*}
where $A_{\pm} \in \mathbb{C}$ is a non-zero constant that depends on $t_0$, $d_{\pm} \in E^\times$ is a constant, also depending on $t_0$, such that the input to the logarithm lies in $1+\mathfrak{p}_E$, and $R_{\pm }(t)$ is the solution to $R^2 +t R -b^2 =0$ satisfying $R_{\pm}(t)\equiv \mp b_0 \pmod{\mathfrak{p}_E}$. 
\end{lemma}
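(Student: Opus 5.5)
The plan is to mimic the proof of the analogous lemma in Case (1), starting from the explicit formula for $\kappa W_b^{s,\pm}$ obtained from \eqref{balanced_to_unbalanced1} and \eqref{case3.2}. For $\lvert t\rvert = q^{-s}$ that formula gives
\[
W_{\pm}(t)=q^{-\frac{s}{2}}C_{\pm}\,\xi^{-1}(x_{\pm}+b)\,\psi\bigl((x_{\pm}+b)\varpi^{-n/2}\bigr),
\]
where $x_{\pm}=R_{\pm}(t)$ is the root of $R^2+tR-b^2=0$ with $R_\pm(t)\equiv \mp b_0 \pmod{\mathfrak{p}_F}$. The only task is to convert the multiplicative factor $\xi^{-1}(R_\pm(t)+b)$ into an additive character via the logarithm (Lemma~\ref{logarithm_lemma} and \eqref{logarithm}). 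The additive constant $b\varpi^{-n/2}$ inside $\psi$ is absorbed into $A_\pm$; note also that $\psi$ of the $F$-valued quantity should be read via the trace where appropriate.

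The steps are as follows.

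\textbf{Step 1.} Check that $R_\pm(t)$ varies nicely on $t_0+\mathfrak{p}_F^{s+1}$. Since $E=F\times F$ and $b=(b_0,-b_0)$, we have $R_\pm(t)+b=(R_\pm(t)+b_0,\,R_\pm(t)-b_0)$. One coordinate is a unit. The other coordinate has valuation exactly $s$: the product of the two roots is $-b_0^2$, and the two roots differ by $\sqrt{t^2+4b_0^2}$, so $R_\pm(t)\pm b_0$ is essentially $\pm t$ times a unit. Hence $R_\pm(t)+b$ is not in $\mathcal{O}_E^\times$, and we cannot directly write it as a $1+\mathfrak{p}_E$ element. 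We therefore choose
\[
d_\pm:=\bigl(R_\pm(t_0)+b\bigr)^{-1}\sigma\bigl(R_\pm(t_0)+b\bigr)\cdots,
\]
or more simply $d_\pm := (R_\pm(t_0)-b)/(R_\pm(t_0)+b)$ componentwise, so that $d_\pm\frac{R_\pm(t)+b}{R_\pm(t)-b}\in 1+\mathfrak{p}_E$ for all $t\in t_0+\mathfrak{p}_F^{s+1}$. This needs $R_\pm(t)\equiv R_\pm(t_0)$ to relative precision $\mathfrak{p}$ in each coordinate. Concretely, $R_\pm(t)\pm b_0$, of valuation $s$, must be constant modulo $\mathfrak{p}^{s+1}$. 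This holds because $R_\pm$ is a convergent power series in $t$ with unit derivative factor, by implicit differentiation as in Case (1): $R'=-R/(2R+t)$.

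\textbf{Step 2.} Write $\xi^{-1}(R_\pm+b)=\xi^{-1}(c)\,\xi^{-1}(c^{-1}(R_\pm+b))$, with $c$ a fixed element making the argument lie in $1+\mathfrak{p}_E$. Apply \eqref{logarithm} with $\kappa_E=1$ (valid since $F=\mathbb{Q}_p$ with $p$ odd), using $a(\xi)=n/2$ and $n(\psi_E)=0$. Then take the trace with $\sigma(b)=-b$, exactly as in the proof in Case (1), to obtain
\[
\psi\bigl(-b\varpi^{-n/2}\log_E(\,\cdot\,/\sigma(\cdot))\bigr).
\]
Because $\sigma(R+b)=R-b$, the ratio is $d_\pm\frac{R_\pm(t)+b}{R_\pm(t)-b}$ for a suitable constant $d_\pm$. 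The constant $\xi^{-1}(c)$ and the phase from $q^{-s/2}C_\pm$ go into $A_\pm$, which is nonzero.

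The main obstacle is Step 1: verifying that the constant normalisation $d_\pm$ really puts the argument in $1+\mathfrak{p}_E$ uniformly on the whole coset, given that one coordinate has positive valuation $s$. I would handle this by explicit expansion of $R_\pm(t)=-\tfrac12\bigl(t\pm\sqrt{t^2+4b_0^2}\bigr)$ in the two coordinates.
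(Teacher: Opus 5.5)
Your proposal is correct and is essentially the paper's proof. Both arguments normalise $R_\pm(t)+b$ by a fixed element of $E^\times$ so that it lies in $1+\mathfrak{p}_E$ on the whole coset $t_0+\mathfrak{p}_F^{s+1}$, apply the logarithm formula with $a(\xi)=n/2$, and take the trace using $\sigma(R_\pm+b)=R_\pm-b$; your $d_\pm=(R_\pm(t_0)-b)/(R_\pm(t_0)+b)$ is exactly the paper's $c_\pm/\sigma(c_\pm)$ with $c_\pm=(R_\pm(t_0)+b)^{-1}$. The point you flag as the main obstacle, namely $R_\pm(t)\equiv R_\pm(t_0)\pmod{\mathfrak{p}_F^{s+1}}$ so that the valuation-$s$ coordinate is constant to relative precision $\mathfrak{p}_F$, is asserted in the paper with \emph{one sees}, and your integral power-series expansion of $R_\pm(t)$ settles it.
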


\begin{proof}
We have
\begin{equation*}
W(t) = \mathbf{1}_{\lvert t \rvert = q^{-s}} q^{-\frac{s}{2}}C_{\pm} \xi^{-1}(R_{\pm}(t)+b) \psi((R_{\pm}(t)+b)\varpi^{-n/2}) 
\end{equation*}
where $R_{\pm}(t)$ is the unique solution to $R^2 + t R - b^2 =0$ satisfying $R_{\pm}(t) \equiv \mp b_0 \pmod{\mathfrak{p}_F}$. In fact one sees that $t \in t_0 + \mathfrak{p}^{s+1}$ implies that $\varpi^{-s}(R_{\pm}\pm b_0)$ is an element of $\mathcal{O}_F^\times$ with a fixed value modulo $\mathfrak{p}_F$. It follows that, in $E = F \times F$, the elements $(\varpi^{-s},1)(R_+(t)+b)$ and $(1, \varpi^{-s})(R_{-}(t) +b)$ lie in $\mathcal{O}_E^\times$ and have a fixed value modulo $\mathfrak{p}_E$. Hence we can find $c_{\pm} \in E^\times$ such that $c_{\pm}(R_{\pm}(t)+b) \equiv 1 \pmod{\mathfrak{p}_E}$ for all $t \in t_0 + \mathfrak{p}^{s+1}$. Arguing as in the previous section, we find that
\begin{equation*}
\xi^{-1}(R_{\pm}(t)+b)= \xi(c_{\pm}) \psi \left(-b \varpi^{n/2} \log_E  \left(\frac{c_{\pm}}{\sigma(c_{\pm})} \frac{R_{\pm}(t)+b}{R_{\pm}(t)-b} \right)\right)
\end{equation*}
where $\sigma$ is the non-trivial element of $\mathrm{Aut}(E/F)$. By defining the constant $A_{\pm}$ as $q^{-\frac{s}{2}} C_{\pm } \xi(c_{\pm}) \psi(b \varpi^{-n/2})$ and the constant $d_{\pm}$ as $ c_{ \pm}/ \sigma(c_{\pm})$, we arrive at the desired expression. 
\end{proof}

\noindent
If we now fix a tuple $\mathbf{m} =(m_1 , m_2 , m_3 , m_4)$ with $m_1 + m_2 = m_3 + m_4$, and $v(m_j)=s$ for $j=1,2,3,4$, it follows that we can write $I_p(\mathbf{m})$ as a finite linear combination of integrals of the form
\begin{equation*}
\int_{z \in \mathfrak{p}_F} \psi(\varpi^{-n/2} \Phi(z))\, d z 
\end{equation*}
where 
\begin{equation*}
\Phi(z):= \sum_{j=1}^4 \eta_j H(m_j(t_0 +z)) \quad \textrm{and} \quad H(t):= -b \log_E \left( d_{\pm} \frac{R_{\pm}(t)+b}{R_{\pm}(t)-b} \right)+R_{\pm}(t)  
\end{equation*}
for $\lvert t \rvert=q^{-s}$. Here $t_0$ is a fixed coset representative for $1 + \mathfrak{p}_F$ in $\mathcal{O}_F^\times$, and we have written $t \in t_0 + \mathfrak{p}_F$ as $t_0 + z$ for $z \in \mathfrak{p}_F$. This will be convenient in the calculations to follow. The constant $d_{\pm}$ depends on $\mathbf{m}$ and $t_0$. By a calculation identical to one in the previous section, we find that
\begin{equation*}
\Phi'(z) = -\frac{1}{2} \sum_{j=1}^4 \eta_j\frac{1}{t_0 + z}\sqrt{(m_j(t_0 +z))^2 + 4 b^2}
\end{equation*}
where the square root is chosen to be congruent to $\mp b_0 $ modulo $\mathfrak{p}_F$. We now prove:

\begin{lemma}
We have $\Phi'(z) = \pm \sum_{ k \geq 0} a_k z^k $ where $v(a_0) = v_{\mathbf{m}}$, and $v(a_k ) \geq v(a_0)$ for all $k \geq 1$. 
\end{lemma}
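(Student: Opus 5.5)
We expand each summand of $\Phi'(z)$ as a power series in $z$ by pulling out the dominant term. The $m_j$ have valuation $s\geq 1$, so $(m_j(t_0+z))^2\in\mathfrak{p}_F^{2s}$ while $4b^2$ is a unit. Hence the relevant square root is a perturbation of $\pm 2b_0$. Here $\sqrt{b^2}$ in $E=F\times F$ has components $\pm b_0$, and we take the branch congruent to $\mp b_0$ as in the statement. Concretely, put $x_j := m_j(t_0+z)$ and write
\begin{equation*}
\sqrt{x_j^2+4b^2} = \mp 2b_0\Bigl[1+\sum_{i\geq 1}\binom{1/2}{i}(4b_0^2)^{-i}x_j^{2i}\Bigr].
\end{equation*}
Since $p$ is odd, $\binom{1/2}{i}\in\mathcal{O}_F$, and convergence is clear. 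Dividing by $t_0+z$ and summing with the signs $\eta_j$, the constant term cancels because $\sum_j\eta_j=0$. This leaves
\begin{equation*}
\Phi'(z)=\pm b_0\sum_{i\geq 1}\binom{1/2}{i}(4b_0^2)^{-i}\Bigl(\sum_{j}\eta_j m_j^{2i}\Bigr)(t_0+z)^{2i-1}.
\end{equation*}
This is a polynomial-coefficient series in $z$. Expanding $(t_0+z)^{2i-1}$ binomially gives the coefficients $a_k$ as sums over $i$ of integer multiples of $\binom{1/2}{i}(4b_0^2)^{-i}S_{2i}\,t_0^{2i-1-k}$, where $S_l:=\sum_j\eta_j m_j^l$. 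Everything except $S_{2i}$ and $t_0^{2i-1-k}$ is a $p$-adic integer.

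Next we compute valuations. Write $m_j=p^s\mu_j$ with $\mu_j\in\mathbb{Z}_p^\times$; by the reduction to $\mathcal{N}_a$ we also have $\mu_1+\mu_2\in\mathcal{O}^\times$. For $i=1$, $S_2=m_1^2+m_2^2-m_3^2-m_4^2=-2(m_1m_2-m_3m_4)$, using $m_1+m_2=m_3+m_4$. So $v(S_2)=v_{\mathbf{m}}$. For $i\geq 2$, factor out $S_{2i}=p^{2is}\sum_j\eta_j\mu_j^{2i}$. By the second part of Lemma \ref{elementary}, applicable because $\mu_1+\mu_2$ is a unit, $v(\sum_j\eta_j\mu_j^{2i})\geq \min\{v(\sum\eta_j\mu_j),v(\sum\eta_j\mu_j^{-1})\}$. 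The first term is $\infty$, and the second equals $v(\mu_1\mu_2-\mu_3\mu_4)=v_{\mathbf{m}}-2s$. Hence $v(S_{2i})\geq 2is+v_{\mathbf{m}}-2s$.

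For $a_0$, the $i=1$ contribution is $\tfrac12 b_0(4b_0^2)^{-1}S_2t_0$ up to sign. Its valuation is $v_{\mathbf{m}}+s$, because $v(t_0)=s$ in the present setting, where $t=t_0+z$ ranges over $|t|=q^{-s}$. The terms with $i\geq 2$ involve $t_0^{2i-1}$ and $S_{2i}$, with total valuation at least $(2i-1)s+2is+v_{\mathbf{m}}-2s$. This is strictly bigger than $v_{\mathbf{m}}+s$ for $i\geq2$, $s\geq1$. Thus $v(a_0)=v_{\mathbf{m}}+s$ exactly. I expect the intended normalisation absorbs the factor $t_0$ or the shift by $s$, for instance by substituting $z\mapsto p^s z$ or recentring so that the variable runs over units. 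I would adjust bookkeeping so that the claimed statement $v(a_0)=v_{\mathbf{m}}$ holds, e.g.\ by rescaling $\Phi'$ by $t_0^{-1}$.

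The main obstacle is the bound $v(a_k)\geq v(a_0)$ for $k\geq1$. The $i=1$ term contributes only to $k\leq 1$, with $a_1\ni S_2\cdot\text{unit}$ of valuation $v_{\mathbf{m}}$. Here the power of $t_0$ lost compared to $a_0$ must be compensated by the fact that $z\in\mathfrak{p}_F$, or by the rescaling chosen above. The terms with $i\geq2$ have huge excess valuation $2is+(2i-1-k)s-2s$ from $S_{2i}$ and the remaining powers of $t_0$, which dominates the at most $\log$-size loss from binomial coefficients, as in the proof of Lemma \ref{power_series_coefficients1}. So the real work is to choose the normalisation of $z$ consistently, so that all powers of $t_0$ versus $z$ balance. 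After that, the inequality follows from Lemma \ref{elementary} and the same crude estimate $v(2i-1)<2(i-1)$ used earlier.
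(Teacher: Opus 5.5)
Your expansion of $\Phi'(z)$, the cancellation of the constant term, the identity $S_2=-2(m_1m_2-m_3m_4)$ and the bound $v(S_{2i})\ge v_{\mathbf{m}}+2(i-1)s$ via Lemma \ref{elementary} are exactly the paper's ingredients. The gap is your claim that $v(t_0)=s$.

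In this lemma $t_0$ represents a class of $\mathcal{O}_F^\times/(1+\mathfrak{p}_F)$, so it is a unit. In $I_p(\mathbf{m})$ the variable $t$ runs over $\mathcal{O}_F^\times$, and the valuation $s$ is carried entirely by the $m_j$. Under your reading, $m_j(t_0+z)$ would have valuation $2s$, outside the support of $W$, and the integrand would vanish.

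This is not a harmless normalisation issue. With $v(t_0)=s$ your own computation gives $v(a_0)=v_{\mathbf{m}}+s$ but $v(a_1)=v_{\mathbf{m}}$, coming from the $i=1$ term $\pm S_2/(8b_0)$. So the inequality $v(a_1)\ge v(a_0)$ you are asked to prove would be false. Neither repair you suggest fixes this. Multiplying $\Phi'$ by $t_0^{-1}$ shifts every coefficient by the same amount and leaves $v(a_1)<v(a_0)$. An unspecified rescaling of $z$ changes the integral to which the lemma is applied, and is not a proof of the stated lemma.

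Once you use $t_0\in\mathcal{O}_F^\times$, the argument is short and needs no rebalancing. Expanding $(t_0+z)^{2i-1}$ shows that each $a_k$ is a sum over $i\ge1$ of $S_{2i}$ times
\[
b_0\tbinom{1/2}{i}(4b_0^2)^{-i}\tbinom{2i-1}{k}t_0^{2i-1-k}\in\mathcal{O}_F .
\]
The $i=1$ contribution to $a_0$ is $\pm t_0S_2/(8b_0)$, which has valuation exactly $v_{\mathbf{m}}$. Every term with $i\ge 2$ has valuation at least $v(S_{2i})\ge v_{\mathbf{m}}+2(i-1)s>v_{\mathbf{m}}$, so $v(a_0)=v_{\mathbf{m}}$. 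The same integrality gives $v(a_k)\ge \min_i v(S_{2i})= v_{\mathbf{m}}$ for every $k\ge 1$.

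Because you are expanding $\Phi'$ rather than $\Phi$, there is no division by $2i-1$; the factors $\binom{2i-1}{k}$ are integers. So the estimate $v(2i-1)<2(i-1)$ from Lemma \ref{power_series_coefficients1}, which you anticipate needing, plays no role here.
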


\begin{proof}
We start from the expression for $\Phi'(z)$ and Taylor-expand the square root:
\begin{equation*}
\Phi'(z) = \pm \frac{1}{2}\sum_{j=1}^4 \eta_j \frac{1}{2(t+z)} \sum_{i \geq 0} \binom{1/2}{i}(m_j(t + z))^{2i}(2b)^{1-2i} = \pm \sum_{k \geq 0} a_k z^k
\end{equation*}
where have expanded $(t+z)^{2i-1}$ to obtain
\begin{equation*}
a_k = \frac{1}{2} \sum_{i \geq 1}(2b)^{1-2i} t^{2i-1-k} \binom{1/2}{i} \binom{2i-1}{k} \sum_{j=1}^4 \eta_j m_j^{2i}.
\end{equation*}
Note that this expression is convergent since $v(m_j)=s>0$. We now claim that $v(\sum_{j=1}^4 \eta_j m_j^2)  < v(\sum_{j=1}^4 \eta_j m_j^{2i})$ for all $i>1$. Write $m_j = p^s m_j '$ where $p \nmid m_j'$. Since $m_1 + m_2 = m_3 + m_4 $, we have
\begin{equation*}
m_1^2 + m_2^2 - m_3^2 - m_4^2 = -2p^{2s}(m_1' m_2 ' - m_3 ' m_4 ' ),
\end{equation*}
and since $v(m_1 ' + m_2 ')=0$, we have, by Lemma \ref{elementary},
\begin{equation*}
\begin{split}
  v(m_1 ' m_2 ' - m_3 ' m_4 ') &= v((m_1')^{-1} +(m_2 ')^{-1}-(m_3' )^{-1} -(m_4 ')^{-1})\\
  &\leq v((m_1 ')^{2i}+(m_2 ')^{2i}-(m_3 ')^{2i}-(m_4')^{2i})
\end{split}
\end{equation*}
for all $i$ so when $i>1$
\begin{equation*}
  v\left(\sum_{j=1}^4 \eta_j m_j^2\right)  \leq 2s + v \left(  \sum_{j=1}^4 \eta_j(m_j ')^{2i}\right)< 2si+v \left( \sum_{j=1}^4 \eta_j(m_j '
    )^{2i} \right)= v \left( \sum_{j=1}^4 \eta_jm_j^{2i} \right) 
\end{equation*}
as desired. The first term in the expansion of $a_0$ is $\frac{1}{8}b^{-1} t \sum_{j=1}^4 \eta_j m_j^{2}$, and, by the above, all subsequent terms have strictly larger valuation so $v(a_0)= v(\sum_j^4 \eta_j m_j^2 )=v_{\mathbf{m}}$. We also see that $v(a_k) \geq v_{\mathbf{m}}$ for all $k \geq 1$ so the proof is complete.    
\end{proof} 

\noindent
To complete the proof of Case (2) in Proposition \ref{final_estimates}, we use the above lemma to conclude that
\begin{equation*}
\Phi(z) = \Phi(0) +\sum_{k \geq 1} \frac{1}{k}a_k z^k
\end{equation*} 
with $v(a_1)= v_{\mathbf{m}}$ and $v(a_k)\geq v(a_1)$ for all $k$. Hence, by Lemma \ref{non_linear_substitution}, the integral
\begin{equation*}
\int_{z \in \mathfrak{p}_F} \psi(\varpi^{-n/2} \Phi(z)) d z
\end{equation*}
vanishes when $-n/2 + v_{\mathbf{m}}<-1$, i.e. $v_{\mathbf{m}} < n/2 -1$. By part (2) of  Lemma \ref{first_estimates} with $v_0 =n/2 -1$, we have $\mathcal{N}_a(\kappa W_b^{s, \pm}) \prec 1+p^{n/2 +s-(n/2-1)}(p^{-\frac{s}{2}})^4 \ll 1$ where we have used that $\lVert \kappa W_b^{s, \pm}\rVert _\infty \ll p^{-\frac{s}{2}}$. This is what we wanted.

\subsection{Case (3)}
In this case $\kappa = \left(
\begin{smallmatrix}
1&0\\
1&1 \\
\end{smallmatrix}
\right)$, and for $W(t) := \kappa W_b^{0,u}(t)$ with $0 \leq u < \frac{n}{4}$, and $\mathbf{m} =(m_1 , m_2 , m_3 , m_4)$ with $m_1 + m_2 = m_3 + m_4$, we consider the integral
\begin{equation*}
I_p(\mathbf{m}) = \int_{t \in \mathcal{O_F}^\times} W(m_1 t) W(m_2 t) \overline{W(m_3 t) W(m_4 t)}\, d^\times t.
\end{equation*}
By Theorm \ref{whittaker_formulas}, $E/F$ must be split or an unramified quadratic extension, and in particular $n$ is even. Moreover, we define $\Delta:=1+4 b^2 t^{-2}$ where $b = b_\xi$ is as in Lemma \ref{logarithm_lemma}. As in the previous two cases, we would like to express $W(t)$ only in terms of the additive character $\psi$. As before, we must fix the residue class of $t$, and, this time, we fix $t_0 \in \mathcal{O}_F^\times /(1+ \mathfrak{p}_F^{u+1})$. 

\begin{lemma}
Let $t_0 \in \mathcal{O}_F^\times /(1+ \mathfrak{p}_F^{u+1})$. Then $W(t)$ vanishes identically for $t \in t_0 + \mathfrak{p}_F^{u+1}$ unless $t_0^2 + 4 b^2$ is a square of valuation $u$ in which case
\begin{equation*}
W(t) = q^{\frac{u}{4}}\sum_{\pm}C_{\pm} \psi \left( \left( -b \log_E \left( d_{\pm}\frac{R_{\pm }(t)+b}{R_{\pm}(t)-b} \right)+R_{\pm}(t) +t \right)\varpi^{-n/2}  \right)
\end{equation*}
where $C_{\pm}$ and $d_{\pm}$ are constants that depends $t_0$ and $R_{+}(t)$ and $R_{-}(t)$ denote the two solutions to the equation $R^2 +t R-b^2 =0$. 
\end{lemma}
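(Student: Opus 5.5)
We derive this formula from Theorem \ref{whittaker_formulas} in the same way as the analogous lemmas for Cases (1) and (2): first reduce to the unbalanced newvector, then convert the multiplicative character into the additive character $\psi$ by a $p$-adic logarithm.

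By \eqref{balanced_to_unbalanced1} with $\gamma=0$, we have $W(t)=\mathbf{1}_{\lvert \Delta(t)\rvert=q^{-u}}W_p(a(t)\kappa')$ with $\kappa'=\left(\begin{smallmatrix}1&0\\ \varpi^{n/2}&1\end{smallmatrix}\right)$. This is Case 3 of Theorem \ref{whittaker_formulas} with $\lvert t\rvert=1$.

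\emph{Vanishing.} Since $u<\frac{n}{4}$, we have $\lvert\Delta\rvert=q^{-u}>q^{-\lfloor n/4\rfloor}$, so \eqref{case3.3} applies. It gives vanishing unless $\Delta$ is a square. The condition that $t^2+4b^2=t^2\Delta$ is a square of valuation $u$ depends only on $t$ modulo $\mathfrak{p}_F^{u+1}$. Indeed, changing $t$ by $\mathfrak{p}^{u+1}$ changes $t^2+4b^2$ by $\mathfrak{p}^{u+1}$, which preserves the unit part modulo $\mathfrak{p}$, and hence preserves the square class. So the vanishing statement follows.

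\emph{The formula.} When the square condition holds, \eqref{case3.3} gives
\[
W(t)=q^{u/4}\sum_\pm T_{\Delta,\pm}(t)\,\xi^{-1}(x_\pm+b)\,\psi((x_\pm+t)\varpi^{-n/2}).
\]
The factor $T_{\Delta,\pm}(t)$ depends only on $\lvert\Delta\rvert$ and on $t$ modulo $\mathfrak{p}$, so it is constant on $t_0+\mathfrak{p}_F^{u+1}$. We set $R_\pm(t)=x_\pm$. The sign labelling is consistent on the coset: the roots are $-\frac{t}{2}\pm\frac{t}{2}\Delta^{1/2}$, and a branch of $\Delta^{1/2}$ can be chosen continuously there. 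This is because $\Delta^{1/2}$ has valuation $u/2$ and its unit part is fixed modulo $\mathfrak{p}$.

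It remains to rewrite $\xi^{-1}(R_\pm(t)+b)$. The main point is to show that $R_\pm(t)+b$, divided by a suitable constant $c_\pm\in E^\times$, lies in $1+\mathfrak{p}_E$ for all $t$ in the coset. Since $R_\pm(t)+b\in\mathcal{O}_E^\times$ (as in Subcase 3.2.2), it suffices to check that $R_\pm(t)$ modulo $\mathfrak{p}$ is fixed on the coset. This holds because $R_\pm(t)\equiv-\frac{t_0}{2}\pm\frac{t_0}{2}\Delta(t_0)^{1/2}\pmod{\mathfrak{p}}$. We then use $\kappa_E=1$ (since $F=\mathbb{Q}_p$ and $p$ is odd) together with Lemma \ref{logarithm_lemma}, which gives $\sigma(b)=-b$. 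Exactly as in the previous lemma, this yields
\[
\xi^{-1}(R_\pm+b)=\xi(c_\pm)\,\psi\!\left(-b\varpi^{-n/2}\log_E\!\Big(\tfrac{c_\pm}{\sigma(c_\pm)}\cdot\tfrac{R_\pm+b}{R_\pm-b}\Big)\right).
\]
Here we also use $\sigma(R_\pm+b)=R_\pm-b$, since $R_\pm\in F$.

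Finally, we set $d_\pm=c_\pm/\sigma(c_\pm)$ and absorb $T_{\Delta,\pm}$ and $\xi(c_\pm)$ into $C_\pm$. This gives the stated formula.

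The only delicate point is the constancy modulo $\mathfrak{p}$ of $R_\pm$, and of the branch choice, on a coset of $\mathfrak{p}^{u+1}$ when $u>0$; for $u=0$ it is immediate.
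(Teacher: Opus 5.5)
Your proposal is correct and follows the paper's proof step by step: you read off \eqref{case3.3} via \eqref{balanced_to_unbalanced1}, note that $T_{\Delta,\pm}$ and the support condition are constant on $t_0+\mathfrak{p}_F^{u+1}$, show that $R_\pm(t)$ is constant modulo $\mathfrak{p}$, and convert $\xi^{-1}$ via $\log_E$ with $d_\pm=c_\pm/\sigma(c_\pm)$; the paper records the finer constancy of $R_\pm(t)$ modulo $\mathfrak{p}_F^{u/2+1}$, but it only needs that later to fix the square-root branches in $\Phi'$. One small caveat, shared with the paper's own proof: $u<\frac{n}{4}$ does not imply $q^{-u}>q^{-\lfloor n/4\rfloor}$ when $n\equiv 2\pmod 4$ and $u=\lfloor n/4\rfloor$, so in that edge case \eqref{case3.3} is not literally what Theorem \ref{whittaker_formulas} supplies (that $u$ can instead be handled by the size-and-support bound of Section \ref{estimating_via_size_and_support}, at the cost of a factor $p$).
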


\begin{proof}
By Equation \eqref{case3.3} and Equation \eqref{balanced_to_unbalanced1}, we have
\begin{equation*}
W(t) = \mathbf{1}_{\lvert t \rvert=1, \Delta \in \mathcal{O}_F^2, v(\Delta)=u } q^{\frac{u}{4}} \sum_{\pm}  T_{\pm}(t) \xi^{-1}(R_{\pm}(t)+b) \psi((R_{\pm}(t)+t)\varpi^{-n/2}) 
\end{equation*}
where $T_{\pm}(t) \in S^1$ depends at most on $t$ modulo $\mathfrak{p}_F$, and $R_{\pm}(t) \in \mathcal{O}_F$ denote the two solutions to the equation $R^2 + t R - b^2=0$. Since the residue class of $t$  modulo $\mathfrak{p}_F$ is fixed, $T_{\pm}(t)$ is constant. For $t \in \mathcal{O}_F^\times$, the condition that $1+4 b^2 t^{-2}$ is a square of valuation $u$ is determined by the residue class of $t$ modulo $\mathfrak{p}_F^{u+1}$ so this proves the first part of the lemma. Suppose now that $t_0^2 + 4 b^2 = \varpi^u \alpha_0^2$ for some $\alpha_0 \in \mathcal{O}_F^\times$. We can write $t \in t_0 + \mathfrak{p}_F^{u+1}$ as $t = t_0 + z \varpi^{u}$, and then $t^2 + 4 b^2 = \varpi^u(\alpha_0^2 + (2 t_0z +z \varpi^u z^2) )$. By solving the quadratic equation for $R_{\pm}$, we have
\begin{equation*}
R_{\pm}(t) = \frac{-t \pm \varpi^{\frac{u}{2}}\sqrt{\alpha_0^2 +(2 t_0 z + z \varpi^u z^2)}}{2},
\end{equation*}
and since $\alpha_0^2  +(2 t_0 z + z \varpi^u z^2) \equiv \alpha_0^2 \pmod{\mathfrak{p}_F}$, it follows that the values of $R_{\pm}(t)$ modulo $\mathfrak{p}_F^{u/2+1}$ only depend on $t_0$. Hence there are constants $c_{\pm} \in E^\times$ such that $c_{\pm}(R_{\pm}(t)+b) \equiv 1 \pmod{\mathfrak{p}_E}$. Arguing as in the previous cases, we have
\begin{equation*}
\xi^{-1}(R_{\pm}(t)+b)= \xi(c_{\pm}) \psi \left( -b \varpi^{-n/2} \log_E \left( \frac{c_{\pm}}{\sigma(c_{\pm})} \frac{R_{\pm}(t)+b}{R_{\pm}(t)-b} \right) \right)
\end{equation*}
where $\sigma$ is the non-trivial element of $\mathrm{Aut}(E/F)$. 
\end{proof}

\noindent
We now fix $\mathbf{m} =(m_1 , m_2 , m_3 , m_4)$ with $m_1 + m_2 = m_3 + m_4$ and consider the integral $I_p(\mathbf{m})$ defined above. For the integrand to be supported, we need that $(m_j t)^2 + 4 b^2$ is a square of valuation $u$ for all $j=1,2,3,4$ which implies that $m_1^2 \equiv m_2^2 \equiv m_3^2 \equiv m_4^2 \pmod{p^u}$. Since $W(t)$ is only supported on $\lvert t \rvert=1$, we can assume that $m_1 \equiv m_2 \equiv m_3 \equiv m_4 \equiv a \pmod{p}$ for some fixed $a \in \mathbb{F}_p^\times$. Hence $I_p(\mathbf{m})$ vanishes identically unless $m_1 \equiv m_2 \equiv m_3 \equiv m_4 \pmod{p^u}$.\\

\noindent
Motivated by the above lemma, we partition the range integration into cosets $t_0 + \mathfrak{p}_F^{u+1}$ of $1+\mathfrak{p}_F^{u+1} $ in $\mathcal{O}_F^\times$. We only need to consider cosets $t_0 + \mathfrak{p}_F^{u+1}$ such that $(m_j t_0)^2 + 4 b^2 = \varpi^u \alpha_j^2$ for all $j=1,2,3,4$ and some $\alpha_1 , \alpha_2 , \alpha_3 , \alpha_4 \in \mathcal{O}_F^\times$ (depending on $t_0$) because otherwise the integrand vanishes. Writing $W(m_jt)$ as in the lemma above, $W(m_j t)$ consists of two terms corresponding the two solutions $R_{\pm}(m_j t)$ to $R^2 + m_j t R - b^2 =0$. Since $\mathbf{m}$ is fixed, it follows by the proof of the above lemma, that if we also fix $t_0$, then the values of $R_{\pm}(m_j t)$ modulo $\mathfrak{p}_F^{u/2+1}$ only depends on $\mathbf{m}$ and $t_0$. If we expand the product $W(m_1 t)W(m_2 t) \overline{W(m_3 t)W(m_4 t)} $ into sixteen terms, it follows that $I_p(\mathbf{m})$ is a finite linear combination of integrals of the form
\begin{equation*}
\int_{z \in \mathfrak{p}} \psi(\varpi^{-n/2} \Phi(z))\, d z
\end{equation*}
where
\begin{equation*}
\Phi(z) = \sum_{j=1}^4 \eta_j \psi \left(\left(-b \log_E \left( d \frac{R_j(z)+b}{R_j(z)-b} \right)+ R_j(z)\right) \varpi^{-n/2} \right).
\end{equation*}
Here $R_j(z)$ denotes the solution to the equation $R^2 +(t_0 + z \varpi^{u}) m_j R- b^2 =0$ having a fixed value modulo $\mathfrak{p}_F^{u/2+1}$, and $t_0$ is some fixed element of $\mathcal{O}_F^\times /(1+ \mathfrak{p}_F^{u+1})$ such that $(m_j t_0)^2 + 4 b^2 = \varpi^u \alpha_j^2$ for some $\alpha_1 , \alpha_2, \alpha_3 , \alpha_4 \in \mathcal{O}_F^\times$. The number $d$ denotes a non-zero constant that depends on $\mathbf{m}$ and $t_0$. A familiar calculation gives
\begin{equation*}
\Phi '(z) = -\frac{1}{2} \varpi^u \sum_{j=1}^4 \eta_j \frac{1}{t_0 +z \varpi^u} \sqrt{((t_0 + z \varpi^u)m_j)^2 + 4b^2}
\end{equation*}
where the square roots are chosen to give $R_j(z)$ the right value modulo $\mathfrak{p}_F^{u/2+1}$.

We now write $\Phi(z)$ as a power series in $z$. In order to give a convergent expansion, we use that $(m_j t_0)^2 + 4 b^2 = \varpi^u \alpha_j^2$ so that $((t_0 + z \varpi^u) m_j)^2 + 4b^2$ equals $\varpi^u(\alpha_j^2 + m_j^2(2t_0z +\varpi^u z^2))$. Hence 
\begin{equation*}
\sqrt{((t_0 + z \varpi^u)m_j) + 4 b^2} = \varpi^{\frac{u}{2}} \sum_{l \geq 0} \binom{1/2}{l} m_j^{2l}(2t_0 z + \varpi^{u} z^2)^l \alpha_j^{1-2l}
\end{equation*}
where the sign of $\alpha_j$ is chosen according to the value of $R_j(z)$ modulo $\mathfrak{p}^{u/2+1}$. This series converges since $v(z)>0$. Therefore, $\Phi '(z)$ has the expansion
\begin{equation*}
\Phi '(z) = -\varpi^{\frac{3u}{2}} \sum_{l \geq 0} \binom{1/2}{l} \frac{(2t_0 z + \varpi^u z^2)^l}{2(t_0 + \varpi^{u}z)} \sum_{j=1}^4 \eta_j m_j^{2l} \alpha_j^{1-2l}.
\end{equation*}
We would now like to rewrite this in the form $\sum_{k \geq 0} a_k z^k$ for some coefficients $a_k$. Let
\begin{equation*}
-\binom{1/2}{l}\frac{(2t_0z+\varpi^u z^2)^l}{2(t_0 +\varpi^{u}z)}=\sum_{g \geq l} A_{l,g}z^g
\end{equation*}
for some $A_{l,g} \in \mathcal{O}$. We would also like to express $m_j^{2l}$ in terms of $\alpha_j$. Using $(m_j t_0)^2 + 4 b^2= \varpi^u \alpha_j^2$, we find that
\begin{equation*}
m_j^{2l} = t_0 ^{-2l}(\varpi^u \alpha_j^2 -4 b^2)^l = \sum_{h = 0}^l B_{l,h} \alpha_j^{2h}
\end{equation*}
where $B_{l,h}=t_0^{-2l} \binom{l}{h} \varpi^{uh}(-4 b^2)^h \in \mathcal{O} $. Inserting and rearranging gives
\begin{equation*}
\begin{split}
  \Phi'(z) &= \varpi^{\frac{3u}{2}} \sum_{l \geq 0} \left[ \sum_{g \geq l} A_{l,g} z^g \right] \left[ \sum_{j=1}^4 \eta_j \sum_{h=0}^l B_{l, h} \alpha_j^{1-2(l-h)} \right]\\
           &= \varpi^{\frac{3u}{2}} \sum_{k \geq 0} \left[ \sum_{l \geq k} A_{l,k} \sum_{j=1}^4 \eta_j \sum_{h=0}^l B_{l,h} \alpha_j^{1-2(l-h)} \right] z^k \\
  &= \varpi^{\frac{3u}{2}} \sum_{k \geq 0} \left[ \sum_{h=0}^k \left[\sum_{l=h}^k A_{l,k}B_{l,l-h}\right] \sum_{j=1}^4 \eta_j \alpha_j^{1-2h}\right] z^k. 
\end{split}
\end{equation*}
We conclude that
\begin{equation*}
\Phi'(z) = \varpi^{\frac{3u}{2}} \sum_{k \geq 0} a_k z^k \quad \textrm{with} \quad a_{k}=\sum_{h=0}^k b_{k,h} \sum_{j=1}^4 \eta_j \alpha_j^{1-2h}
\end{equation*}
for certain $b_{k,h} \in \mathcal{O}$ which can be explicitly computed. As in the preceeding two cases, our aim is to prove $v(a_0) \leq v(a_k)$ for all $k$ and express $v(a_0)$ in terms of $v_{\mathbf{m}}= v(m_1 m_2 - m_3 m_4)$. Unlike in the other cases, we only get an upper bound for $v(a_0)$ in terms of $v_{\mathbf{m}}$ and $u$, but this turns out to be sufficient. By tracing through the above calculations, we find that $a_ 0 = b_{0,0} \sum_{j=1}^4 \eta_j \alpha_j$ where
\begin{equation*}
b_{0,0}=A_{0,0} B_{0,0} =-\frac{1}{2t} \in \mathcal{O}^\times 
\end{equation*}
so $v(a_0)= v(\alpha_1 + \alpha_2 - \alpha_3 - \alpha_4)$. By Lemma \ref{elementary} and the expression for $a_k$, the inequalities $v(a_0) \leq v(a_k)$ would all follow, if we can show that
\begin{equation*}
v(\alpha_1 + \alpha_2 - \alpha_3 - \alpha_4) \leq v(\alpha_1^{-1}+ \alpha_2^{-1}-\alpha_3^{-1}-\alpha_4^{-1}).
\end{equation*}
We now have to work a bit harder than in the other cases because there are no obvious relations between $\alpha_1 , \alpha_2 , \alpha_3 , \alpha_4$. The only relations, we can use, are the equalities $(m_j t_0)^2 + 4 b^2=\varpi^u \alpha_j^2$ for $j=1,2,3,4$, and $m_1 + m_2 = m_3 + m_4$. The following lemma use this to relate valuations of expressions in $\alpha_1 ,\alpha_2 , \alpha_3 , \alpha_4$ to $v_{\mathbf{m}}$.

\begin{lemma}
We have the following inequalities:
\begin{enumerate}
\item
  \begin{equation*}
    \begin{split}
 2 v_{\mathbf{m}}& \geq v(\alpha_1 + \alpha_2 -\alpha_3 - \alpha_4)+v(\alpha_1 + \alpha_2 + \alpha_3 + \alpha_4)  \\  &+ \min \left\{ v_{\mathbf{m}}, u+v(\alpha_1 \alpha_2 - \alpha_3 \alpha_4) \right\}+3u;
\end{split}
\end{equation*}
\item
\begin{equation*}
v(\alpha_1 + \alpha_2 - \alpha_3 - \alpha_4) \leq v(\alpha_1^{-1} + \alpha_2^{-1} - \alpha_3^{-1} - \alpha_4^{-1});
\end{equation*}
\item
  \begin{equation*}
 v(\alpha_1 + \alpha_2 - \alpha_3 - \alpha_4) \leq v_{\mathbf{m}} - 2u;
\end{equation*}
\end{enumerate}  
\end{lemma}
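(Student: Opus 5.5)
The plan is to convert the defining relations $(m_j t_0)^2+4b^2=\varpi^u\alpha_j^2$ and $m_1+m_2=m_3+m_4$ into two exact identities between the elementary symmetric functions of the pairs $\{\alpha_1,\alpha_2\}$ and $\{\alpha_3,\alpha_4\}$. We then deduce the three inequalities by valuation bookkeeping. Put
\begin{equation*}
S=\alpha_1+\alpha_2,\quad T=\alpha_3+\alpha_4,\quad P=\alpha_1\alpha_2,\quad Q=\alpha_3\alpha_4,\quad D=m_1m_2-m_3m_4,
\end{equation*}
so that $v(D)=v_{\mathbf m}$. We also use the fact, established just before the lemma, that $m_1\equiv m_2\equiv m_3\equiv m_4 \pmod{p^u}$ and that all $m_j$ are units.

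\emph{First identity.} Summing the defining relations with signs $\eta_j$ and using $m_1^2+m_2^2-m_3^2-m_4^2=-2D$ (a consequence of $m_1+m_2=m_3+m_4$) gives
\begin{equation*}
\varpi^u\left(S^2-T^2-2(P-Q)\right)=-2t_0^2D .
\end{equation*}
\emph{Second identity.} Multiply the relations for $j=1,2$, and separately for $j=3,4$, and subtract:
\begin{equation*}
\varpi^{2u}(P^2-Q^2)-4b^2\varpi^u(\alpha_1^2+\alpha_2^2-\alpha_3^2-\alpha_4^2)=t_0^4\,D\,(m_1m_2+m_3m_4).
\end{equation*}
Substituting the first identity into the second yields
\begin{equation*}
\varpi^{2u}(P^2-Q^2)=t_0^2D\bigl[t_0^2(m_1m_2+m_3m_4)-8b^2\bigr].
\end{equation*}
Since $t_0^2m_j^2\equiv -4b^2\pmod{\varpi^u}$ and the $m_j$ are congruent modulo $p^u$, the bracket is congruent to $-16b^2$, a unit, at least when $u\ge 1$. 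The case $u=0$ needs a separate look, because then this congruence is empty. When $u\ge1$ we obtain the key exact relation
\begin{equation*}
v(P-Q)+v(P+Q)=v_{\mathbf m}-2u,
\end{equation*}
and in particular $v(P-Q)\le v_{\mathbf m}-2u$.

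\emph{Deducing (1).} By the first identity,
\begin{equation*}
S^2-T^2=2(P-Q)+(\text{unit})\cdot\varpi^{-u}D,
\end{equation*}
so $v(S-T)+v(S+T)$ is governed by $\min\{v(P-Q),\,v_{\mathbf m}-u\}$. Since $v(P-Q)\le v_{\mathbf m}-2u<v_{\mathbf m}-u$ when $u\ge1$, we get equality $v(S-T)+v(S+T)=v(P-Q)$. Then (1) reduces to
\begin{equation*}
v(P-Q)+\min\{v_{\mathbf m},\,u+v(P-Q)\}+3u\le 2v_{\mathbf m},
\end{equation*}
which follows from $2v(P-Q)+4u\le 2v_{\mathbf m}$. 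When $u=0$, the first identity gives $v(S^2-T^2)\ge\min\{v(P-Q),v_{\mathbf m}\}$; in the main case the minimum is attained, $v(S^2-T^2)=\min\{v(P-Q),v_{\mathbf m}\}$, and (1) follows directly. The equal-valuation sub-case, where cancellation could make $v(S^2-T^2)$ larger, is the loose end discussed below.

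\emph{Deducing (2).} Write
\begin{equation*}
\alpha_1^{-1}+\alpha_2^{-1}-\alpha_3^{-1}-\alpha_4^{-1}=\frac{SQ-TP}{PQ}=\frac{Q(S-T)-T(P-Q)}{PQ}.
\end{equation*}
Since $P,Q$ are units, it suffices to show $v(T)+v(P-Q)\ge v(S-T)$. I expect to get this from $v(S-T)+v(S+T)=v(P-Q)$ together with $v(S+T)\le v(T)$ or $v(S+T)\le v(S)$. The latter holds whenever $v(S-T)\ne v(S+T)$; the alternative, where $S\pm T$ have equal valuation, forces both to have valuation $0$ because $p$ is odd.

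\emph{Deducing (3).} Combine (1) with the exact relation for $v(P^2-Q^2)$ and bound $v(S+T)\ge0$. The aim is to show that $v(S-T)>v_{\mathbf m}-2u$ would force $v(P-Q)>v_{\mathbf m}-2u$, contradicting $v(P-Q)\le v_{\mathbf m}-2u$.

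The main obstacle is the non-generic sign configurations. The $\alpha_j$ are only determined up to sign by the choice of root $R_j$, so $S$, $T$, $S+T$ or $P+Q$ may be non-units; these degenerate cases are exactly where the $\min$ in (1) and the small-$u$ case ($u=0$, and the equal-valuation sub-case noted above) need care. I would first dispose of the generic case where $P+Q$ and $S+T$ are units, and then treat the remaining cases using the congruences $\alpha_j^2\equiv\alpha_1^2 \pmod{\varpi^{v(m_j-m_1)-u}}$.
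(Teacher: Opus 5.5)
Your route differs from the paper's, and it works for $u\ge 1$, but it leaves a genuine gap at $u=0$.

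\textbf{How your route compares, and the case $u\ge1$.} The paper squares $\alpha_1+\alpha_2\equiv\alpha_3+\alpha_4 \pmod{\mathfrak p^{v_1}}$ twice to eliminate the $\alpha_j$. This produces a congruence between polynomials in the $m_j$ whose difference is $4b^2t_0^6(m_1+m_2)^2D^2$. That gives (1) uniformly in $u$, and (2), (3) are then extracted from (1) by case analysis. Your two quadratic identities instead give, for $u\ge1$, the exact relations $v(P-Q)+v(P+Q)=v_{\mathbf{m}}-2u$ and $v(S-T)+v(S+T)=v(P-Q)$. From these, everything is immediate and no sign configuration needs separate treatment. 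Part (3) is $v(S-T)\le v(P-Q)\le v_{\mathbf{m}}-2u$. Part (2) is $v(T)+v(P-Q)\ge v(P-Q)=v(S-T)+v(S+T)\ge v(S-T)$. Your side claim that $v(S-T)=v(S+T)$ forces both to vanish is false (take $S,T\in\mathfrak p$), but it is not needed.

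\textbf{The gap at $u=0$.} This is not an edge case. Proposition~\ref{final_estimates}(3) covers $0\le u<n/4$, and unit $\Delta$ is the bulk of that range; all three parts are used there. For $u=0$ your bracket need not be a unit, so you only get $v(P-Q)+v(P+Q)\ge v_{\mathbf{m}}$, and your deductions of (2) and (3) have nothing to run on.

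All three parts reduce to excluding the cancellation case $v(P-Q)=v_{\mathbf{m}}<v(P-Q-t_0^2D)$ in $S^2-T^2=2(P-Q)-2t_0^2D$. Your two identities cannot exclude it. Put $P_0=t_0^2m_1m_2-4b^2$ and $Q_0=t_0^2m_3m_4-4b^2$, so that $P_0-Q_0=t_0^2D$ and your bracket is $P_0+Q_0$. Then both identities are compatible with $P-Q=\lambda t_0^2D$, $P+Q=\lambda^{-1}(P_0+Q_0)$ and $S^2-T^2=2(\lambda-1)t_0^2D$, for every $\lambda\in 1+\mathfrak p$. The congruences $\alpha_j^2\equiv\alpha_1^2$ you propose do not supply the missing information either.

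\textbf{What is missing, and how it closes the gap.} You need the individual relation $\varpi^{2u}P^2-P_0^2=4b^2t_0^2(m_1+m_2)^2$, and likewise for $Q,Q_0$. Its right side is a unit because $v(m_1+m_2)=0$. Suppose $u=0$ and cancellation occurs. Then $P-Q\equiv P_0-Q_0 \pmod{\mathfrak p}$. Via your second identity, $P+Q=(P_0+Q_0)\,t_0^2D/(P-Q)\equiv P_0+Q_0 \pmod{\mathfrak p}$. Hence $P\equiv P_0 \pmod{\mathfrak p}$, which contradicts $(P-P_0)(P+P_0)\in\mathcal O^\times$. So $v(S^2-T^2)=\min\{v(P-Q),v_{\mathbf{m}}\}$ also when $u=0$, and (1)--(3) follow exactly as in your main case. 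This unit is precisely what the paper's double squaring encodes, which is why its argument needs no case split in $u$.
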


\begin{proof}
(1) For the first part, let $v_1 := v(\alpha_1 + \alpha_2 - \alpha_3 - \alpha_4)$. The idea of the proof is to start from $\alpha_1 + \alpha_2 \equiv \alpha_3 + \alpha_4 \pmod{\mathfrak{p}_F^{v_1}}$, and then square and rearrange repeatedly until both sides are expressed only in terms of $m_1 , m_2 , m_3 , m_4$. Throughout the proof, we use the obvious fact that if $a \equiv b \pmod{\mathfrak{p}_F^v}$ then $a^2 \equiv  b^2 \pmod{\mathfrak{p}_F^{v+v(a + b)}}$. From $\alpha_1 + \alpha_2 \equiv \alpha_3 + \alpha_4 \pmod{\mathfrak{p}_F ^{v_1}}$ we get
\begin{equation*}
\varpi^{u}(\alpha_1 + \alpha_2)^2 \equiv \varpi^u(\alpha_3 + \alpha_4)^2 \pmod{\mathfrak{p}_F^{v_1 + v_2 +u}}
\end{equation*}
where $v_{2}:= v(\alpha_1 + \alpha_2 + \alpha_3 +\alpha_4)$. We expand both sides and use $\varpi^u \alpha_j^2 =(m_j t_0)^2 +  4 b^2$ and $m_1 + m_2 = m_3 + m_4$ to get
\begin{equation*}
t_0^2(m_1 m_2 - m_3 m_4) \equiv \varpi^u(\alpha_1 \alpha_2 - \alpha_3 \alpha_4) \pmod{\mathfrak{p}_F^{v_1 + v_2 +u}}.
\end{equation*}
Squaring both sides gives
\begin{equation*}
t_0^4(m_1 m_2 - m_3 m_4)^2 \equiv \varpi^{2u}(\alpha_1 \alpha_2 - \alpha_3 \alpha_4 )^2 \pmod{\mathfrak{p}_F^{v_1 + v_2 +v_3 + u}}
\end{equation*}
where $v_3 := v(t^2(m_1 m_2 - m_3 m_4)+\varpi^u(\alpha_1 \alpha_2 - \alpha_3 \alpha_4))$. We note that $v_3 \geq u$ since $(m_j t_0)^2 + 4 b^2 = \varpi^u \alpha_j^2$ and $m_j \equiv a \pmod{p}$ for all $j=1,2,3,4$ forces $m_1 \equiv m_2 \equiv m_3 \equiv m_4 \pmod{p^u}$. Expanding the right-hand side, and using $\varpi^{u} \alpha_j^2=(m_j t_0)^2 + 4 b^2 $, we find that, modulo $\mathfrak{p}_F^{v_1 + v_2 + v_3 + u}$, $\varpi^{2u}\alpha_1 \alpha_2 \alpha_3 \alpha_4$ is congruent to 
\begin{multline*}
-\frac{1}{2} [ t^4(m_1 m_2-m_3 m_4 )^2 -((m_1 t_0)^2 +4 b^2)((m_2 t_0)^2 +4 b^2)\\
   -((m_3 t_0)^2 + 4 b^2)(( m_4 t_0)^2 + 4 b^2) ]
\end{multline*}
which in turn equals
\begin{equation*}
 s_4(m_1 , m_2 , m_3 , m_4) t_0 ^4+2 b^2 s_1(m_1^2 , m_2^2 , m_3^2 , m_4^2) t_0^2+16 b^4
\end{equation*}
where
\begin{equation*}
s_i(x_1 , x_2 , x_3 , x_4) = \sum_{1 \leq j_1 <\cdots<j_i \leq 4} x_{j_1}\cdots x_{j_i}
\end{equation*}
is the $i$\textsuperscript{th} elementary symmetric polynomial in the variables $x_1 , x_2 , x_3 , x_4$. Since
\begin{equation*}
(\varpi^{2u} \alpha_1 \alpha_2 \alpha_3 \alpha_4)^2=(\varpi^u \alpha_1^2)(\varpi^u \alpha_2^2)(\varpi^u \alpha_3^2)(\varpi^u \alpha_4^2),
\end{equation*}
we can eleminate the remaining $\alpha_1 , \alpha_2 , \alpha_3 , \alpha_4$ by squaring both sides of the above congruence. We note that both sides have valuation at least $2u$. Indeed, it is clear the left-hand side $\varpi^{2u}\alpha_1 \alpha_2 \alpha_3 \alpha_4$ has valuation $2u$, and since $v_1 + v_2 + v_3 + u \geq 2u$ (since we noted earlier that $v_3 \geq u$), the left-hand side has valuation at least $2u$. Hence
\begin{multline*}
\prod_{j=1}^4 \left[(m_j t_0)^2 + 4 b^2 \right] \equiv \left[ s_4(m_1 , m_2 , m_3 , m_4)t_0^4 + 2 b^2 s_1(m_1^2 , m_2^2 , m_3^2 , m_4^2) t_0^2 + 16 b^4 \right]^2\\ \pmod{\mathfrak{p}^{v_1 + v_2 + v_3 + 3u}}.
\end{multline*}
It is easy to see that the left-hand side is
\begin{equation*}
\begin{split}
  & s_4(m_1^2 , m_2^2 , m_3^2 , m_4^2)t_0^8\\
+ & 4 b^2 s_3(m_1^2 , m_2^2 , m_3^2 , m_4^2) t_0^6 \\
+ & 16 b^4 s_2(m_1^2 , m_2^2 , m_3^2 , m_4^2) t_0^4 \\
+ & 64 b^6 s_1(m_1^2 , m_2^2 , m_3^2 , m_4^2) t_0^2 \\
+ & 256 b^8 ,
\end{split}
\end{equation*}
and by a direct calculation, the right-hand side equals
\begin{equation*}
\begin{split}
  & s_4(m_1^2 , m_2^2 , m_3^2 , m_4^2 ) t_0^8\\
+ & 4 b^2 s_1(m_1^2 , m_2^2 , m_3^2 , m_4^2) s_4(m_1 , m_2 ,m_3 , m_4) t_0^6\\
  + &  4 b^4 \left[ 8 s_4(m_1 , m_2 , m_3 , m_4)+s_1(m_1^2 , m_2^2 , m_3^2, m_4^2)^2\right] t_0^4\\
  + &  64 b^6 s_1(m_1^2 , m_2^2 , m_3^2 , m_4^2)t_0^2\\
  + & 256 b^8.
\end{split}
\end{equation*}
The difference between these two expressions is
\begin{equation}\label{expression}
\begin{split}
  & 4 b^2 \left[ s_3(m_1^2 , m_2^2 , m_3^2, m_4^2 )-s_1(m_1^2 , m_2^2, m_3^2, m_4^2) s_4(m_1 , m_2 , m_3 , m_4) \right] t_0^6\\
+ & 4 b^4 \left[ 4 s_2(m_1^2 , m_2^2 , m_3^2 , m_4^2)-8 s_4(m_1 , m_2 , m_3 , m_4)- s_1(m_1^2 , m_2^2 , m_3^2 , m_4^2)^2 \right] t_0^4.
\end{split}
\end{equation}
To simplify this further, we use that $m_1 + m_2 = m_3 - m_4 $. Let $y_1 =  m_1 m_2$, $y_2 = m_3 m_4$ and $y_3 = m_1 + m_2 = m_3 + m_4$. Then
\begin{equation*}
\begin{split}
  s_3(m_1^2 , m_2^2 , m_3^2 , m_4^2) & = y_1^2 y_3^2 + y_2^2 y_3^2 - 2y_1^2 y_2 -2 y_1 y_2^2\\
  s_1(m_1^2 , m_2^2 , m_3^2 , m_4^2) & = 2 (y_3^2 - y_1 - y_2)\\
  s_4(m_1 , m_2 , m_3 , m_4)        & = y_1 y_2\\
  s_2(m_1^2 , m_2^2 , m_3^2 , m_4^2) & = y_1^2 + y_2^2 +4 y_1 y_2 + y_3^4 - 2 y_1 y_3^2 - 2 y_2 y_3^2.
\end{split}
\end{equation*}
Here we use that $m_1^2 + m_2^2 = y_3^2 - 2 y_1$ and $m_3^2 + m_3^2 = y_3^2 -2 y_2$. The first term in \eqref{expression} is therefore $4 b^2 t_0^6$ times
\begin{equation*}
\begin{split}
  & y_1^2 y_3^2 + y_2^2 y_3^2 - 2 y_1^2 y_2 -2 y_1 y_2^2-2(y_3^2 -y_1 - y_2)\\
= &(y_1 - y_2)^2 y_3^2 \\
= &(m_1 m_2 - m_3 m_4)^2(m_1 + m_2)^2,
\end{split}
\end{equation*}
and the second term in \eqref{expression} is $4 b^2 t_0^4$ times
\begin{equation*}
4(y_1^2 + y_2^2 + 4 y_1 y_2 + y_3^4 - 2 y_1 y_3^2 -2 y_2 y_3^2)-8 y_1 y_2 -(2(y_3^2 -y_1 -y_2))^2 =0.
\end{equation*}
Since $v(m_1 + m_2)=0$, we conclude that $2 v_{\mathbf{m}} \geq v_1 + v_2 + v_3 + 3u$. The proof is complete once we note that
\begin{equation*}
v_3 = v(t^2(m_1 m_2 - m_3 m_4)+ \varpi^u(\alpha_1 \alpha_2 - \alpha_3 \alpha_4 )) \geq \min \left\{ v_{\mathbf{m}},u+ v(\alpha_1 \alpha_2 - \alpha_3 \alpha_4) \right\}.
\end{equation*}\\
\noindent
(2) For the second part, assume for the sake of contradiction that $v(\alpha_1 + \alpha_2 - \alpha_3 - \alpha_4)> v(\alpha_1^{-1} + \alpha_2^{-1} - \alpha_3^{-1} - \alpha_4^{-1})$. Since
\begin{equation*}
\alpha_1^{-1} + \alpha_2^{-1} - \alpha_3^{-1} - \alpha_4^{-1} = \frac{(\alpha_1 + \alpha_2)(\alpha_3 \alpha_4 - \alpha_1 \alpha_2 )}{\alpha_1 \alpha_2 \alpha_3 \alpha_4}+ \frac{\alpha_1 + \alpha_2 - \alpha_3 - \alpha_4}{ \alpha_3 \alpha_4},
\end{equation*}
and $\alpha_1 ,\alpha_2 , \alpha_3 , \alpha_4 \in \mathcal{O}^\times$, we deduce that
\begin{equation*}
v(\alpha_1^{-1} + \alpha_2^{-1} - \alpha_3^{-1} - \alpha_4^{-1})= v(\alpha_1 + \alpha_2)+ v(\alpha_1 \alpha_2 - \alpha_3 \alpha_4).
\end{equation*}
Hence $v(\alpha_1 + \alpha_2), v(\alpha_1 \alpha_2 - \alpha_3 \alpha_4)< v(\alpha_1 + \alpha_2 - \alpha_3 - \alpha_4)$. In particular, $v(\alpha_1 + \alpha_2 + \alpha_3 + \alpha_4)= v(\alpha_1 + \alpha_2)$. From the proof of part (1), we have
\begin{equation*}
t^2(m_1 m_2 - m_3 m_4) \equiv \varpi^{u}(\alpha_1 \alpha_2 - \alpha_3 \alpha_4) \pmod{\mathfrak{p}_F^{v_1 +v_2 +u}}
\end{equation*}
where $v_1 = v(\alpha_1 + \alpha_2 - \alpha_3 - \alpha_4)$, and $v_2 = v(\alpha_1 + \alpha_2 + \alpha_3 + \alpha_4)= v(\alpha_1 +\alpha_2)$. Since $v_1 + v_2 + u \geq v_1 +u > u + v(\alpha_1 \alpha_2 - \alpha_3 \alpha_4)$, we deduce that $v_{\mathbf{m}}=u + v(\alpha_1 \alpha_2 - \alpha_3 \alpha_4)$. By part (1), we therefore have
\begin{equation*}
2(u + v(\alpha_1 \alpha_2 - \alpha_3 \alpha_4)) \geq v(\alpha_1 + \alpha_2 - \alpha_3 - \alpha_4)+ v(\alpha_1 + \alpha_2 ) + u + v(\alpha_1 \alpha_2 - \alpha_3 \alpha_4)+3u.
\end{equation*}
Hence, using $v(\alpha_1 + \alpha_2 - \alpha_3 - \alpha_4)>v(\alpha_1 \alpha_2 - \alpha_3 \alpha_4)$, we find that
\begin{equation*}
  0\geq v(\alpha_1 + \alpha_2 - \alpha_3 - \alpha_4)+v(\alpha_1 + \alpha_2)-v(\alpha_1 \alpha_2 - \alpha_3 \alpha_4)+2u>v(\alpha_1 + \alpha_2)+2u
\end{equation*}
which is clearly a contradiction.\\

\noindent
(3) For the last part, we use the inequality from part (1) considering the two possibilites for the minimum of the right-hand side separately. If $v_{\mathbf{m}} \leq  u + v(\alpha_1 \alpha_2 - \alpha_3 \alpha_4)$, we get
\begin{equation*}
v(\alpha_1 + \alpha_2 - \alpha_3 \alpha_4) \leq v_{\mathbf{m}}-3u - v(\alpha_1 + \alpha_2+ \alpha_3 +\alpha_4 ) \leq v_{\mathbf{m} }-2u.
\end{equation*}
If $v_{\mathbf{m}}>u + v(\alpha_1 \alpha_2 - \alpha_3 \alpha_4)$, we instead, use the congruence
\begin{equation*}
t^2(m_1 m_2 - m_3 m_4) \equiv \varpi^u( \alpha_1 \alpha_2 - \alpha_3 \alpha_4) \pmod{\mathfrak{p}_F^{v_1 +v_2 + u}}
\end{equation*}
where $v_1 = v(\alpha_1 + \alpha_2 - \alpha_3 -\alpha_4)$, and $v_2 =v(\alpha_1 + \alpha_2 + \alpha_3 + \alpha_4)$. By our assumption, the right-hand side has strictly smaller valuation than the left-hand side. This forces $u + v(\alpha_1 \alpha_2 - \alpha_3 \alpha_4) \geq v_1 + v_2 +u$ so
\begin{equation*}
2 v_{\mathbf{m}} \geq 2(v(\alpha_1 + \alpha_2 - \alpha_3 - \alpha_4)+ v(\alpha_1 + \alpha_2 + \alpha_3 + \alpha_4))+4u \geq 2 v(\alpha_1 + \alpha_2 -\alpha_3 - \alpha_4)+4u.
\end{equation*}
This completes the proof.
\end{proof}

\noindent
We can now prove part (3) of Proposition \ref{final_estimates} which is the only thing left to do. We have $\Phi(z) = \Phi(0) -\varpi^{\frac{3u}{2}} \sum_{k \geq 1} \frac{1}{k} a_k z^k$ where $v(a_1) \leq v(a_k)$ for all $k$, and $v(a_1) \leq v_{\mathbf{m}}-2u$. Hence $\int_{z \in \mathfrak{p}_F} \Phi(\varpi^{- n/2} \Phi(z))d z$ vanishes for $-n_2 + \frac{3u}{2}+v_{\mathbf{m}}-2u < -1$, or equivalently, $v_{\mathbf{m}}<\frac{n}{2} + \frac{u}{2}-1$. By Lemma \ref{first_estimates} with $v_0 = \frac{n}{2} + \frac{u}{2}-1$, we have
\begin{equation*}
\mathcal{N}_a(\kappa W_b^{0,u}) \prec 1+ p^{n/2-u-\max \left\{ v_0 ,2u \right\}}\lVert \kappa W_b^{0,u} \rVert _\infty^4.
\end{equation*}
Since $u< \frac{n}{4}$, $v_0 \geq 2u$. Moreover, we see from Table \ref{heuristic_balanced} that  $\lVert \kappa W_b^{0,u} \rVert _\infty \ll p^{\frac{u}{4}}$ so we get the upper bound
\begin{equation*}
\mathcal{N}_a(\kappa W_b^{0,u}) \prec 1+ p^{n_2 -u-n_2 -\frac{u}{2}+1+u} \ll 1
\end{equation*}
as desired. The proof of the $L^4$-norm bound is complete.
\bibliography{refs2}{} \bibliographystyle{plain}

@phdthesis{assing_thesis,
title = {Local Analysis of Whittaker New Vectors and Global Applications},
author = {Edgar Assing},
year = {2019},
month = {January},
note  = {Available at \url{https://www.math.uni-bonn.de/people/assing/Thesis/Edgar-thesis.pdf}},
 school = {University of Bristol},
  type  = {PhD thesis}}

@book{Bump_1997,
 place={Cambridge}, 
series={Cambridge Studies in Advanced Mathematics}, 
title={Automorphic Forms and Representations}, 
publisher={Cambridge University Press}, 
author={Bump, Daniel},
 year={1997},
 collection={Cambridge Studies in Advanced Mathematics}}

@article{Ki2023L4norms,
    author  = {Ki, Haseo},
    title   = {{$L^4$-norms and sign changes of Maass forms}},
    journal = {arXiv preprint arXiv:2302.02625},
    archivePrefix = {arXiv},
    eprint  = {2302.02625},
    primaryClass = {math.NT},
    year    = {2023}
}

@article{Ghosh_2013,
   title={Nodal {D}omains of {M}aass {F}orms {I}},
   volume={23},
   ISSN={1420-8970},
   url={http://dx.doi.org/10.1007/s00039-013-0237-4},
   DOI={10.1007/s00039-013-0237-4},
   number={5},
   journal={Geometric and Functional Analysis},
   publisher={Springer Science and Business Media LLC},
   author={Ghosh, Amit and Reznikov, Andre and Sarnak, Peter},
   year={2013},
   month={jul}, pages={1515–1568} }

@misc{Nelson-notes,
title ={Notes on {H}aseo {K}i's ${L}^4$-norm bound},
author = {Nelson, P.},
note = {\url{https://ultronozm.github.io/math/20230331T100654--haseo-ki-l4-bound.html}}}

@book{goldfeld_hundley,
title={Automorphic Representations and L-Functions for the General Linear Group: Volume 1},
author={Goldfeld, D. and Hundley, J.},
ISBN={9781139500135},
series={Cambridge Studies in Advanced Mathematics},
year={2011},
publisher={Cambridge University Press},
edition={Special ICM Seoul edition}}

@book{silverman,
title={The Arithmetic of Elliptic Curves},
author = {Joseph H. Silverman},
series = {Graduate Texts in Mathematics},
DOI = {https://doi.org/10.1007/978-0-387-09494-6},
publisher = {Springer New York, NY},
edition = {1},
ISBN = {978-1-4419-1858-1},
year = {1986}
}

@article {assing_on_the_size,
    AUTHOR = {Assing, Edgar},
     TITLE = {On the size of {$p$}-adic {W}hittaker functions},
   JOURNAL = {Trans. Amer. Math. Soc.},
  FJOURNAL = {Transactions of the American Mathematical Society},
    VOLUME = {372},
      YEAR = {2019},
    NUMBER = {8},
     PAGES = {5287--5340},
      ISSN = {0002-9947,1088-6850},
   MRCLASS = {11F70 (11L40 11S80)},
  MRNUMBER = {4014277},
MRREVIEWER = {Ivan\ Mati\'c},
       DOI = {10.1090/tran/7685},
       URL = {https://doi.org/10.1090/tran/7685},
}

@article{PMIHES_2010__111__171_0,
     author = {Michel, Philippe and Venkatesh, Akshay},
     title = {The subconvexity problem for {GL2}},
     journal = {Publications Math\'ematiques de l'IH\'ES},
     pages = {171--271},
     publisher = {Springer-Verlag},
     volume = {111},
     year = {2010},
     doi = {10.1007/s10240-010-0025-8},
     mrnumber = {2653249},
     language = {en},
     url = {https://www.numdam.org/articles/10.1007/s10240-010-0025-8/}
}

@article{humphries-khan,
author = {Humphries, Peter and Khan, Rizwanur},
title = {${L}^p$-norm bounds for automorphic forms via spectral reciprocity},
journal = {Proceedings of the London Mathematical Society},
volume = {130},
number = {6},
pages = {e70061},
doi = {https://doi.org/10.1112/plms.70061},
url = {https://londmathsoc.onlinelibrary.wiley.com/doi/abs/10.1112/plms.70061},
eprint = {https://londmathsoc.onlinelibrary.wiley.com/doi/pdf/10.1112/plms.70061},
year = {2025}
}

@article{luo,
    author = {Luo, Wenzhi},
    title = {L4-Norms of the {D}ihedral {M}aass {F}orms},
    journal = {International Mathematics Research Notices},
    volume = {2014},
    number = {8},
    pages = {2294-2304},
    year = {2013},
    month = {01},
     issn = {1073-7928},
    doi = {10.1093/imrn/rns298},
    url = {https://doi.org/10.1093/imrn/rns298},
    eprint = {https://academic.oup.com/imrn/article-pdf/2014/8/2294/18916735/rns298.pdf},
}

@article{blomer,
author = {Blomer, Valentin},
title = {On the 4-norm of an automorphic form},
journal = {Journal of the European Mathematical Society},
volume = {15},
number = {5},
pages = {1825–1852},
year = {2013},
doi = {10.4171/JEMS/405}
}

@article{buttcane_khan,
author = {Buttcane, Jack and Khan, Rizwanur},
title = {${L}^4$-norms of {H}ecke newforms of large level},
journal = {Mathematische Annalen},
volume = {362},
pages = {699-715},
year = {2025},
doi = {https://doi.org/10.1007/s00208-014-1142-3}
}

@article{hu_nelson_saha,
author = {Hu, Yueke and Nelson, Paul D. and Saha, Abhishek},
title = {Some analytic aspects of automorphic forms on $\mathrm{GL}_2$ of minimal type},
journal = {Commentarii Mathematici Helvetici},
volume = {94},
pages = {767-801},
year = {2019},
doi = {https://doi.org/10.4171/CMH/473}
}

@article{Tunnell1978,
author = {Tunnell, Jerrold B.},
journal = {Inventiones mathematicae},
pages = {179-200},
title = {On the {L}ocal {L}anglands {C}onjecture for {GL(2)}.},
url = {http://eudml.org/doc/142559},
volume = {46},
year = {1978}
}

@book{Godement1970,
author = {Godement, Roger},
title = {Notes on Jacquet-Langlands' theory},
year = {1970},
publisher = {The Institute for Advanced Study}
}

@book{bushnell_henniart,
author = {Bushnell, Colin J. and Henniart, Guy},
title = {The Local Langlands Conjecture for GL(2)},
series = {Grundlehren der mathematischen Wissenschaften},
publisher = {Springer Berlin, Heidelberg},
edition = {1},
year = {2006},
doi = {https://doi.org/10.1007/3-540-31511-X}
}

@article{schmidt,
author = {Schmidt, Ralf},
title = {Some remarks on local newforms for {GL(2)}},
journal = {J. Ramanujan Math. Soc.},
volume = {17},
year = {2002},
pages = {115-147}
}

@article{Watson2008Rankin,
    author  = {Watson, Thomas C.},
    title   = {{Rankin Triple Products and Quantum Chaos}},
    journal = {arXiv preprint arXiv:0810.0425},
    archivePrefix = {arXiv},
    eprint  = {0810.0425},
    primaryClass = {math.NT},
    year    = {2008}
}

@article{saha,
author = {Saha, Abhishek},
title = {Hybrid sup-norm bounds for {M}aass newforms of powerful level},
volume = {11},
journal = {Algebra Number Theory},
number = {5},
publisher = {MSP},
pages = {1009 -- 1045},
year = {2017},
doi = {10.2140/ant.2017.11.1009},
URL = {https://doi.org/10.2140/ant.2017.11.1009}
}

@article{assing_toma_2024,
      title={The orbit method in number theory through the sup-norm problem for $\mathrm{GL}(2)$}, 
      author={Assing, Edgar and Toma, Radu},
      year={2024},
      journal = {arXiv preprint arXiv:2404.16480},
      eprint={2404.16480},
      archivePrefix={arXiv},
      primaryClass={math.NT},
      url={https://arxiv.org/abs/2404.16480}
}

@article{Buttcane_Khan_2017,
title={On the fourth moment of {H}ecke–{M}aass forms and the random wave conjecture},
volume={153},
DOI={10.1112/S0010437X17007199},
number={7},
journal={Compositio Mathematica},
author={Buttcane, Jack and Khan, Rizwanur},
year={2017},
pages={1479–1511}}

@article{saha_2024,
title={The {M}anin constant and the modular degree},
author={\v{C}esnavi\v{c}ius, K\c{e}stutis and Neururer, Michael and Saha, Abhishek},
journal = {Journal of the European Mathematical Society},
volume={26},
number={2},
pages={573-637},
year={2024},
DOI={10.4171/JEMS/1367}}

@article{marshall,
author = {Simon Marshall},
title = {{Local bounds for ${L}^p$ norms of {M}aass forms in the level aspect}},
volume = {10},
journal = {Algebra Number Theory},
number = {4},
publisher = {MSP},
pages = {803 -- 812},
year = {2016},
doi = {10.2140/ant.2016.10.803},
URL = {https://doi.org/10.2140/ant.2016.10.803}
}

@article{Hu_Saha_2020,
title={Sup-norms of eigenfunctions in the level aspect for compact arithmetic surfaces, {II}: newforms and subconvexity},
volume={156},
DOI={10.1112/S0010437X20007460},
number={11},
journal={Compositio Mathematica},
author={Hu, Yueke and Saha, Abhishek},
year={2020},
pages={2368–2398}}

@article{Iwaniec_Sarnak_1995,
 ISSN = {0003486X, 19398980},
 URL = {http://www.jstor.org/stable/2118522},
 author = {H. Iwaniec and P. Sarnak},
 journal = {Annals of Mathematics},
 number = {2},
 pages = {301--320},
 title = {${L}^{\infty}$ Norms of Eigenfunctions of Arithmetic Surfaces},
 urldate = {2026-01-16},
 volume = {141},
 year = {1995}
}

@article{BlomerHolowinsky2010,
  author    = {Blomer, Valentin and Holowinsky, Roman},
  title     = {Bounding sup-norms of cusp forms of large level},
  journal   = {Inventiones mathematicae},
  year      = {2010},
  volume    = {179},
  number    = {3},
  pages     = {645--681},
  doi       = {10.1007/s00222-009-0228-0},
  publisher = {Springer}
}

@article{LesterMatomakiRadziwill2018,
  author  = {Lester, Stephen and Matom{\"a}ki, Kaisa and Radziwi{\l}{\l}, Maksym},
  title   = {Small scale distribution of zeros and mass of modular forms},
  journal = {Journal of the European Mathematical Society},
  year    = {2018},
  volume  = {20},
  number  = {7},
  pages   = {1595--1627},
  doi     = {10.4171/JEMS/796}
}

@article{BlomerCorbett2021,
  author    = {Blomer, Valentin and Corbett, Andrew},
  title     = {A symplectic restriction problem},
  journal   = {Mathematische Annalen},
  year      = {2021},
  volume    = {381},
  number    = {1},
  pages     = {589--623},
  doi       = {10.1007/s00208-020-02113-5},
  publisher = {Springer}
}

@article{Nelson2018,
  author  = {Nelson, Paul D.},
  title   = {Microlocal lifts and quantum unique ergodicity on $\mathrm{GL}_2(\mathbb{Q}_p)$},
  journal = {Algebra Number Theory},
  year    = {2018},
  volume  = {12},
  number  = {9},
  pages   = {2033--2064},
  doi     = {10.2140/ant.2018.12.2033}
}

@article{humphries2018,
 author = {Humphries, Peter},
 title = {Equidistribution in shrinking sets and ${L}^4$-norm bounds for automorphic forms},
 journal = {Mathematische Annalen},
 year = {2018},
 volume = {371},
 pages = {1497-1543},
 doi = {10.1007/s00208-018-1677-9}
}

@phdthesis{spinu2003,
title = {The ${L}^4$-norm of Eisenstein series},
author = {Spinu, Florin},
year = {2003},
school = {Princeton University},
type  = {PhD thesis}}
\end{document}